\documentclass[11pt, twoside, leqno]{article}
\usepackage{mathrsfs}
\usepackage{amssymb}
\usepackage{amsmath}
\usepackage{mathrsfs}
\usepackage{amsthm}
\usepackage{amsfonts}
\usepackage{color}
\usepackage{latexsym}
\usepackage{txfonts}
\usepackage{indentfirst}
\usepackage{soul}

\allowdisplaybreaks
\pagestyle{myheadings}
\markboth{\footnotesize\rm\sc Chenfeng Zhu, Dachun Yang and Wen Yuan}
{\footnotesize\rm\sc Generalized
Brezis--Van Schaftingen--Yung Formulae}

\textwidth=15cm
\textheight=21cm
\oddsidemargin 0.46cm
\evensidemargin 0.46cm

\parindent=13pt

\newtheorem{theorem}{Theorem}[section]
\newtheorem{lemma}[theorem]{Lemma}
\newtheorem{corollary}[theorem]{Corollary}
\newtheorem{proposition}[theorem]{Proposition}

\theoremstyle{definition}
\newtheorem{remark}[theorem]{Remark}
\newtheorem{definition}[theorem]{Definition}

\numberwithin{equation}{section}

\begin{document}

\title{\bf\Large Generalized
Brezis--Van Schaftingen--Yung Formulae and Their Applications
in Ball Banach Sobolev Spaces
\footnotetext{\hspace{-0.35cm} 2020
{\it Mathematics Subject Classification}.
Primary 46E35; Secondary 26D10, 42B25, 26A33, 35A23.\endgraf
{\it Key words and phrases.}
ball Banach function space, ball Banach Sobolev space,
Brezis--Van Schaftingen--Yung formula, fractional Sobolev-type inequality,
fractional Gagliardo--Nirenberg-type inequality.
\endgraf
This project is supported by the National Natural Science Foundation of China
(Grant Nos. 12122102, 11971058 and 12071197)
and the National Key Research and Development Program of China
(Grant No. 2020YFA0712900).}}
\date{}
\author{Chenfeng Zhu,
Dachun Yang\footnote{Corresponding author,
E-mail: \texttt{dcyang@bnu.edu.cn}/{\color{red}
March 6, 2023}/Final version.}
\ and Wen Yuan}

\maketitle

\vspace{-0.7cm}

\begin{center}
\begin{minipage}{13cm}
{\small {\bf Abstract}\quad
Let $X$ be a ball Banach function space
on $\mathbb{R}^n$.
In this article, under some mild assumptions about both $X$
and the boundedness of the Hardy--Littlewood maximal operator on
both $X$ and the associate
space of its convexification, the authors
successfully recover the homogeneous ball Banach
Sobolev semi-norm $\|\,|\nabla f|\,\|_X$ via the
functional
$$
\sup_{\lambda\in(0,\infty)}\lambda
\left\|\left[\int_{\{y\in\mathbb{R}^n:\
|f(\cdot)-f(y)|>\lambda|\cdot-y|^{1+\frac{\gamma}{q}}\}}
\left|\cdot-y\right|^{\gamma-n}\,dy\right]^\frac{1}{q}\right\|_X
$$
for any distributions $f$ with $|\nabla f|\in X$,
as well as the corresponding limiting identities
with the limit for $\lambda\to\infty$ when $\gamma\in(0,\infty)$
or the limit for $\lambda\to0^+$ when $\gamma\in(-\infty,0)$,
where $\gamma\in\mathbb{R}\setminus\{0\}$ and
where $q\in(0,\infty)$ is related to $X$.
In particular, some of these results are still new
even when $X:=L^p(X)$ with $p\in[1,\infty)$.
As applications, the authors obtain
some fractional Sobolev-type and some fractional Gagliardo--Nirenberg-type
inequalities in the setting of $X$.
All these results are of quite wide generality and are applied
to various specific function spaces,
including Morrey, mixed-norm (or variable or weighted) Lebesgue,
Lorentz, and Orlicz (or Orlicz-slice) spaces,
some of which are new even in all these special cases.
The novelty of this article is to use both the method of the extrapolation
and the boundedness of the Hardy--Littlewood maximal operator on both
$X$ and the associate space of its convexification to overcome the
essential difficulties caused by the deficiency
of both the translation and the rotation invariance and an explicit
expression of the norm of $X$.
}
\end{minipage}
\end{center}
	
\vspace{0.2cm}
	
\tableofcontents

\vspace{0.2cm}

\section{Introduction}
Recall that, for any given $s\in(0,1)$ and $p\in[1,\infty)$,
the \emph{homogeneous fractional Sobolev space}
$\dot{W}^{s,p}(\mathbb{R}^n)$,
originally introduced by Gagliardo \cite{g1958},
is defined to be the set of all the measurable functions $f$
on $\mathbb{R}^n$ with the following finite \emph{Gagliardo semi-norm}
\begin{align}\label{Gnorm}
\|f\|_{\dot{W}^{s,p}(\mathbb{R}^n)}:&=
\left[\int_{\mathbb{R}^n}\int_{\mathbb{R}^n}
\frac{|f(x)-f(y)|^p}{|x-y|^{sp+n}}\,dx\,dy\right]^{\frac{1}{p}}\\
&=:\left\|\frac{f(x)-f(y)}{|x-y|^{s+\frac{n}{p}}}
\right\|_{L^p(\mathbb{R}^n\times\mathbb{R}^n)}.\nonumber
\end{align}
These spaces play
a major role in many questions involving
both harmonic analysis and partial
differential equations (see, for instance,
\cite{bbm2002,crs2010,cv2011,h2008,ht2008,
ma2011,m2011,npv2012}). Denote the \emph{gradient}
(in the sense of distributions) of a weakly differentiable function $f$
on $\mathbb{R}^n$ by $\nabla f:=(\partial_1f,\ldots,\partial_nf)$,
where, for any $j\in\{1,\ldots,n\}$, $\partial_j f$ denotes
the $j$-th weak derivative of $f$, that is,
for any $\phi\in C_{\mathrm{c}}^\infty(\mathbb{R}^n)$
(the set of all the infinitely differentiable
functions with compact support),
$$
\int_{\mathbb{R}^n}f(x)\partial_j\phi(x)\,dx
=-\int_{\mathbb{R}^n}\partial_jf(x)\phi(x)\,dx.
$$
Recall that, for any given $p\in[1,\infty)$,
the \emph{homogeneous Sobolev space}
$\dot{W}^{1,p}(\mathbb{R}^n)$ is defined to be the
set of all the locally integrable functions $f$ on $\mathbb{R}^n$
having the following finite \emph{homogeneous Sobolev semi-norm}
$$
\|f\|_{\dot{W}^{1,p}(\mathbb{R}^n)}:=\|\,|\nabla f|\,\|_{L^p(\mathbb{R}^n)}.
$$
A well-known \emph{drawback} of the Gagliardo semi-norm in \eqref{Gnorm}
is that it can not recover the above homogeneous Sobolev
semi-norm $\|\,|\nabla f|\,\|_{L^p(\mathbb{R}^n)}$
if we replace $s$ in \eqref{Gnorm} by $1$ directly.
Indeed, as was pointed out by Bourgain et al. \cite{bbm2000,bbm2002} that,
in the case $s=1$,
the integral in \eqref{Gnorm} is finite unless $f$ is a constant function.
Thus, it is natural to ask that, in which way,
the Gagliardo semi-norm in \eqref{Gnorm} with $s=1$
can recover the above homogeneous Sobolev
semi-norm $\|\,|\nabla f|\,\|_{L^p(\mathbb{R}^n)}$.
This question has attracted a lot of attention in recent years.
One    important approach
is due to Bourgain et al. \cite{bbm2001}
who proved that, for any given $p\in[1,\infty)$ and for any
$f\in W^{1,p}(\mathbb{R}^n)
:=\dot{W}^{1,p}(\mathbb{R}^n)\cap L^p(\mathbb{R}^n)$,
\begin{align}\label{1055}
\lim_{s\to1^-}(1-s)\|f\|_{\dot{W}^{s,p}(\mathbb{R}^n)}^p
=C_{(p,n)}\|\,|\nabla f|\,\|_{L^p(\mathbb{R}^n)}^p,
\end{align}
where the positive constant $C_{(p,n)}$
depends only on both $p$ and $n$.
Here and thereafter,
$s\to1^-$ means $s\in(0,1)$ and $s\to1$.
Another important approach
is due to Brezis et al. \cite{bvy2021}
who recently discovered an alternative
and smart way to repair the above defect
via simply replacing the $L^p$ norm
$\|\cdot\|_{L^p(\mathbb{R}^n\times\mathbb{R}^n)}$
in \eqref{Gnorm} by the Marcinkiewicz quasi-norm
[namely the weak product $L^p$ quasi-norm $\|\cdot\|_{
L^{p,\infty}(\mathbb{R}^n\times\mathbb{R}^n)}$].
To be more precise, Brezis et al. \cite{bvy2021} proved that,
for any given $p\in[1,\infty)$ and for
any $f\in C_{\mathrm{c}}^\infty(\mathbb{R}^n)$,
\begin{align}\label{1057}
\left\|\,\left|\nabla f\right|\,\right\|_{L^p(\mathbb{R}^n)}
\sim\left\|\frac{f(x)-f(y)}{|x-y|^{1+\frac{n}{p}}}
\right\|_{L^{p,\infty}(\mathbb{R}^n\times\mathbb{R}^n)}
\end{align}
and
\begin{align}\label{2244}
&\lim_{\lambda\to\infty}\lambda^p
\left|\left\{(x,y)\in\mathbb{R}^n\times\mathbb{R}^n:\
x\neq y,\ \frac{|f(x)-f(y)|}{|x-y|^{1+\frac{n}{p}}}>\lambda\right\}\right|\\
&\quad\sim\left\|\,\left|\nabla f\right|
\,\right\|_{L^p(\mathbb{R}^n)}^p,\nonumber
\end{align}
where the positive equivalence constants in both
\eqref{1057} and \eqref{2244} are independent of $f$
and, for any measurable function $g$ on $\mathbb{R}^n\times\mathbb{R}^n$,
$$
\|g\|_{L^{p,\infty}(\mathbb{R}^n\times\mathbb{R}^n)}
:=\sup_{\lambda\in(0,\infty)}
\lambda\left|\left\{(x,y)\in\mathbb{R}^n\times\mathbb{R}^n:\
|g(x,y)|>\lambda\right\}\right|^\frac{1}{p}.
$$
Here and thereafter, for any given
$m\in\mathbb{N}$ and any Lebesgue measurable
set $E\subset\mathbb{R}^m$, the \emph{symbol} $|E|$
denotes the $m$-dimensional Lebesgue measure of $E$.
Both \eqref{1057} and \eqref{2244}
were further extended to any
$f\in\dot{W}^{1,p}(\mathbb{R}^n)$ with $p\in[1,\infty)$
by Brezis et al. \cite{bsvy.arxiv}
who also proved  more general versions
of both \eqref{1057} and \eqref{2244}. More precisely,
let both $p\in(1,\infty)$ and $\gamma\in\mathbb{R}\setminus\{0\}$
or let both $p=1$ and $\gamma\in\mathbb{R}\setminus[-1,0]$.
Brezis et al. \cite{bsvy.arxiv} proved that,
for any $f\in\dot{W}^{1,p}(\mathbb{R}^n)$,
\begin{align}\label{2118}
\left\|\,\left|\nabla f\right|\,\right\|_{L^p(\mathbb{R}^n)}
\sim\left\|\frac{f(x)-f(y)}{|x-y|^{1+\frac{n}{p}}}
\right\|_{L^{p,\infty}(\mathbb{R}^n\times\mathbb{R}^n,\nu_\gamma)}
\end{align}
and, moreover,
\begin{align}\label{2119}
&\lim_{\lambda\to\infty}\lambda^p
\nu_\gamma\left(\left\{(x,y)\in\mathbb{R}^n\times\mathbb{R}^n:\
x\neq y,\ \frac{|f(x)-f(y)|}{|x-y|^{1+
\frac{\gamma}{p}}}>\lambda\right\}\right)\\
&\quad\sim\left\|\,\left|\nabla f\right|
\,\right\|_{L^p(\mathbb{R}^n)}^p\nonumber
\end{align}
if $\gamma\in(0,\infty)$, and
\begin{align}\label{2020}
&\lim_{\lambda\to0^+}\lambda^p
\nu_\gamma\left(\left\{(x,y)\in\mathbb{R}^n\times\mathbb{R}^n:\
x\neq y,\ \frac{|f(x)-f(y)|}{|x-y|^{1+
\frac{\gamma}{p}}}>\lambda\right\}\right)\\
&\quad\sim\left\|\,\left|\nabla f\right|
\,\right\|_{L^p(\mathbb{R}^n)}^p\nonumber
\end{align}
if $\gamma\in(-\infty,0)$,
where the positive equivalence constants in \eqref{2118},
\eqref{2119}, and \eqref{2020} are independent of $f$ and,
for any $\gamma\in\mathbb{R}$ and any Lebesgue measurable set
$E\subset\mathbb{R}^n\times\mathbb{R}^n$,
\begin{align}\label{nu}
\nu_\gamma(E):=\iint_{\{(x,y)\in E:\ x\neq y\}}
\left|x-y\right|^{\gamma-n}\,dx\,dy.
\end{align}
Here and thereafter,
$\lambda\to0^+$ means $\lambda\in(0,1)$ and $\lambda\to0$.
We refer the reader to \cite[p.\,3, Remark(i)]{bsvy.arxiv}
for more details about these two limits
in both \eqref{2119} and \eqref{2020};
see also the proof of \cite[Lemma~3.2]{bsvy.arxiv}.
These results in \cite{bsvy.arxiv}
unify and substantially extended the previous works
by Nguyen \cite{n2006} and by Brezis et al. \cite{bvy2021},
respectively. Indeed, if $\gamma=n$,
then \eqref{2118} coincides with \eqref{1057},
and \eqref{2119} coincides with \eqref{2244};
if $\gamma=-p\in(-\infty,-1)$, then \eqref{2020}
coincides with \cite[(3)]{n2006}
(see also \cite{bn2018,bn2020}).
Recall that  the aforementioned results obtained in both \cite{bsvy.arxiv} and
\cite{bvy2021} have found
some very interesting applications.
For instance, using \eqref{2118}, Brezis et al. \cite{bsvy.arxiv}
gave a characterization of $\dot{W}^{1,p}(\mathbb{R}^n)$.
In addition, as important applications of \eqref{1057},
Brezis et al. \cite{bvy2021} obtained  several
surprising alternative (weaker) estimates
of fractional Sobolev-type and fractional Gagliardo--Nirenberg-type
inequalities in some exceptional (critical) cases
involving $\dot{W}^{1,1}(\mathbb{R}^n)$,
where the anticipated fractional Sobolev-type
and fractional Gagliaro--Nirenberg-type estimates may fail.
We refer the reader to \cite{bsy2023,bn2016,bsvy,bvyCVPDE,dlyyz2022,
dm.arXiv,dm.arXiv1,dssvy,dt.arXiv,gy2021} for more related works
and to \cite{bm2018,bm2019} for more studies on the
Gagliardo--Nirenberg-type inequality.

On the other hand, recall that the concept of the
ball (quasi-)Banach function space were
originally introduced by Sawano et al. \cite{shyy2017}
to unify the study of several important function spaces,
all of which have wide applications in
many branches of analysis
(see Definition~\ref{1659} below for
the definition of the ball (quasi-)Banach function space).
It is less restrictive than the classical Banach function spaces
introduced by Bennett and Sharpley in \cite[Chapter 1]{bs1988}
and hence has both a wide range of generality and applications.
Many well-known and well-developed function spaces
are ball (quasi-)Banach function spaces,
such as Morrey, mixed-norm (or variable or weighted) Lebesgue,
Lorentz, and Orlicz (or Orlicz-slice) spaces
(see, respectively, Subsections~\ref{5.1} through~\ref{5.6} below
for their histories and definitions),
but some of which might not be the classical Banach function spaces;
see, for instance, \cite[Section~7]{shyy2017}.
After the concept of the
ball (quasi-)Banach function space was put forward,
it aroused a lot of interests of many harmonic analysts,
particularly, in both the study and the generalization of the aforementioned
Bourgain--Brezis--Mironescu (BBM) formula \eqref{1055}
and the aforementioned Brezis--Van Schaftingen--Yung (BVY) formula \eqref{1057}
in the setting of the ball (quasi-)Banach function space.
Very recently, Dai et al. \cite{dlyyz.arxiv} subtly
used the Poincar\'e inequality,
the exquisite geometry of $\mathbb{R}^n$,
a method of extrapolation,
and the exact operator norm
of the Hardy--Littlewood maximal operator
on the associate space of the convexification of $X$,
to get rid of the strong dependence of both the
rotation and the translation invariances of $L^p(\mathbb{R}^n)$
and also to overcome the difficulty coming from the deficiency of
the explicit expression of the norm of the general ball
(quasi-)Banach function space.
As a result, Dai et al. \cite{dlyyz.arxiv} successfully
extended both \eqref{1057}
and \eqref{2244} to the setting of ball Banach function spaces,
and then further established some fractional Sobolev-type and
some fractional Gagliardo--Nirenberg-type
inequalities in various specific function spaces.
At the same time, applying some similar ideas and techniques
to those used in \cite{dlyyz.arxiv},
Dai et al. \cite{dgpyyz2022} also extended the limiting result \eqref{1055}
obtained in \cite{bbm2001} to ball Banach function spaces.
For more studies on ball quasi-Banach function spaces,
we refer the reader to \cite{cwyz2020,lyh2320,s2018,shyy2017,wyy.arxiv,yhyy2022-1,
yhyy2022-2,yyy2020,zhyy2022} for Hardy spaces
associated with ball quasi-Banach function spaces,
to \cite{h2021,wyy2020,zwyy2021} for the
boundedness of operators on ball quasi-Banach function spaces,
and to \cite{hcy2021,ins2019,is2017,tyyz2021,wyyz2021}
for further applications of ball quasi-Banach function spaces.

Motivated by the aforementioned works, in this article,
we establish a suitable extension of
\eqref{2118} as well as the limiting behaviors as in
both \eqref{2119} and \eqref{2020} in the setting of ball
Banach function spaces.
Precisely speaking,
let $X$ be a ball Banach function space
on $\mathbb{R}^n$.
Under some mild assumptions about both $X$
and the boundedness of the Hardy--Littlewood maximal operator on
both $X$ and the associate
space of its convexification,
we establish the
following generalized version of \eqref{2118}:
for any $f\in\dot{W}^{1,X}(\mathbb{R}^n)$
[see Definition~\ref{2.7} below for the
definition of $\dot{W}^{1,X}(\mathbb{R}^n)$],
\begin{align}\label{2213}
\sup_{\lambda\in(0,\infty)}\lambda
\left\|\left[\int_{\mathbb{R}^n}
\mathbf{1}_{E_{\lambda,\frac{\gamma}{q}}[f]}(\cdot,y)
\left|\cdot-y\right|^{\gamma-n}\,dy\right]^\frac{1}{q}\right\|_X
\sim\left\|\,\left|\nabla f\right|\,\right\|_X,
\end{align}
where $q\in(0,\infty)$ is related to $X$,
$\gamma\in\mathbb{R}\setminus\{0\}$,
\begin{align}\label{Elambda}
E_{\lambda,\frac{\gamma}{q}}[f]:=\left\{(x,y)\in\mathbb{R}^n\times\mathbb{R}^n:\
x\neq y,\ \frac{|f(x)-f(y)|}{|x-y|^{1+\frac{\gamma}{q}}}>\lambda\right\}
\end{align}
for any $\lambda\in(0,\infty)$,
and the positive equivalence constants in \eqref{2213} are independent of $f$.
The corresponding generalizations of both \eqref{2119} and \eqref{2020}
on $X$ are also established.
In particular, compared with that in \cite{bsvy.arxiv},
some of these results are still new
even when $X:=L^p(X)$ with $p\in[1,\infty)$.
Then we use \eqref{2213} to further obtain some
fractional Sobolev-type and some fractional Gagliaro--Nirenberg-type
inequalities in the setting of ball
Banach function spaces in a more
general version with parameter $\gamma\in\mathbb{R}\setminus\{0\}$,
which when $\gamma:=n$
is just the fractional Sobolev-type and
the fractional Gagliaro--Nirenberg-type
inequalities in \cite[Corollaries~4.13 and 4.15]{dlyyz.arxiv}
and which when $\gamma\in\mathbb{R}\setminus\{0,n\}$
is new.
All these results are of quite wide generality and are applied
to various specific function spaces,
including Morrey, mixed-norm (or variable or weighted) Lebesgue,
Lorentz, and Orlicz (or Orlicz-slice) spaces,
some of which are new even in all these special cases.
The novelty of this article is to use both the method of extrapolation
and the boundedness of the Hardy--Littlewood maximal operator on both
$X$ and the associate space of its convexification to overcome the
essential difficulties caused by the deficiency
of both the translation and the rotation invariances and an explicit
expression of the norm of $X$.

As was shown in the proof of \cite[Theorem~2.1]{bsvy.arxiv},
the upper bound in \eqref{2118} when $p\in(1,\infty)$ follows directly
from both the Lusin--Lipschitz inequality
(see, for instance, \cite[(2.9)]{bvy2021})
and the boundedness of the Hardy--Littlewood maximal operator
on $L^p(\mathbb{R}^n)$.
Thus, the \emph{hard core of the proof of
the upper bound in \eqref{2118}}
is the case $p=1$.
In this core case,
the proofs of \cite[Propositions~2.1 and~2.2]{bvy2021}
when $\gamma\in(0,\infty)$
use the Vitali covering lemma
and a method of rotation, while
the proof of \cite[Theorem~2.2(b)]{bsvy.arxiv}
when $\gamma\in(-\infty,-1)$
uses a
stopping time argument and also a method of rotation
(both of these proofs actually
work for the full range of $p\in[1,\infty)$).
On the other hand,
note that some of the
aforementioned examples
of ball Banach function spaces are obviously not
the rotation invariance.
Thus, we point out that \eqref{2213},
namely the extension of \eqref{2118} from $L^p(\mathbb{R}^n)$
to the ball Banach function space $X$,
is quite \emph{nontrivial} because
the rotation invariance property does seem to play
an important role in the known proof of the upper bound in \eqref{2118}.
It is also worth noting that \eqref{2213}
gives an equivalence between the
Sobolev semi-norm and the positive functional
involving the difference of the function
under consideration.
Indeed, in approximation theory,
it is quite difficult to find an appropriate way to
characterize the smoothness of functions
via their finite differences
even for some simple weighted Lebesgue spaces in one dimension
(see \cite{k2015,mt2001} and the references therein).
The main obstacle is due to the fact that,
for any $h\in\mathbb{R}^n\setminus\{\bf{0}\}$,
the difference operator
$\Delta_hf:=f(\cdot+h)-f(\cdot)$
might be no longer bounded on the general weighted Lebesgue space.
To overcome this main difficulty,
the extrapolation theorem \cite[Lemmas~4.6 and 4.7]{dlyyz.arxiv}
plays an essential role, which is a bridge
connecting the ball Banach function space and
the weighted Lebesgue space.
With the help of the extrapolation theorem
and the exact operator norm of the Hardy--Littlewood
maximal operator on the associate space of
the convexification of $X$,
the proof of
\eqref{2213} can be reduced to establish
the following estimates in weighted Lebesgue spaces with
$A_1(\mathbb{R}^n)$ weights
[see Definitions~\ref{1557} and~\ref{1556} for the definitions of
both the Muckenhoupt weight class and
the homogeneous weighted Sobolev space
$\dot{W}^{1,p}_\omega(\mathbb{R}^n)$]:
If $p\in[1,\infty)$ and $\omega\in A_1(\mathbb{R}^n)$,
then, for any $f\in\dot{W}^{1,p}_\omega(\mathbb{R}^n)$,
\begin{align}\label{1506}
&\sup_{\lambda\in(0,\infty)}\lambda^p
\int_{\mathbb{R}^n}\left[\int_{\mathbb{R}^n}
\mathbf{1}_{E_{\lambda,\frac{\gamma}{q}}[f]}(x,y)
\left|x-y\right|^{\gamma-n}\,dy\right]^{\frac{p}{q}}
\omega(x)\,dx\\
&\quad\sim\int_{\mathbb{R}^n}\left|\nabla f(x)\right|^p\omega(x)\,dx,
\nonumber
\end{align}
where $\gamma\in\mathbb{R}\setminus\{0\}$,
$q\in(0,\infty)$ is related to $p$,
and the positive equivalence constants are
independent of $f$.
Notice that the integral in \eqref{1506} is not symmetric with respect
to both $x$ and $y$,
which brings a lot of difficulties when proving \eqref{1506}.
Another difficulty comes from the weight $\omega$
when proving \eqref{1506} in the case $p=1$
because the method of rotation used in the proof of \eqref{2118} in
the unweighted case in \cite{bsvy.arxiv,bvy2021}
seems to be inapplicable here.
We overcome these obstacles with the help of several useful tools
such as the Poincar\'e-type inequality, exquisite geometric properties
of $\mathbb{R}^n$ shown via the adjacent system of dyadic cubes of $\mathbb{R}^n$,
and a stopping time argument in the 1-dimensional case.

The organization of the remainder of this article is as follows.

In Section~\ref{section2},
we first recall some concepts of the ball (quasi-)Banach function
space as well as its related convexification and associate space.
Then we recall the concept of the homogeneous ball Banach Sobolev space
$\dot{W}^{1,X}(\mathbb{R}^n)$
and some preliminaries on both the Muckenhoupt weight class and the
weighted Lebesgue space.

The main target of Section~\ref{S3} is to prove
\eqref{2213} as well as the corresponding limiting identities.
In Subsection~\ref{ss3.1},
we give the upper
estimate of \eqref{2213} for any $f\in C^1(\mathbb{R}^n)$
with $|\nabla f|\in C_{\mathrm{c}}(\mathbb{R}^n)$.
To be more precise, using the Lusin--Lipschitz inequality
(or the classical Morrey inequality),
a boundedness assumption of the Hardy--Littlewood
maximal operator on $X$,
and a method of extrapolation,
we obtain the upper estimate of \eqref{2213}
(see Theorem~\ref{upperBound} below).
In the critical case
[for example, $X:=L^1(\mathbb{R}^n)$],
without assuming the boundedness of the Hardy--Littlewood
maximal operator on $X$,
we use both the Poincar\'e-type inequality
and the adjacent system of dyadic cubes in $\mathbb{R}^n$
in the case $\gamma\in(0,\infty)$
and use a stopping time argument in the case both
$\gamma\in(-\infty,-1)$ and $n=1$
to establish the upper estimate of \eqref{2213}
(see Theorems~\ref{gamma>0} and~\ref{n=1} below).
In Subsection~\ref{ss3.2}, when $X$ is only assumed to be
a ball quasi-Banach function space,
we prove the lower estimate of \eqref{2213}
via using both the Lebesgue differentiation
lemma in \cite[Lemma~3.1]{bsvy.arxiv}
and the Fatou property of $X$
(see Theorem~\ref{LimFormulaInf} below).
In particular, if $X$ is a ball Banach function space,
we further obtain two limiting identities
(see Corollary~\ref{LimFormulaEq} below).
In Subsection~\ref{ss3.3},
we extend the results in both Subsections~\ref{ss3.1} and~\ref{ss3.2}
from any $f\in C^1(\mathbb{R}^n)$ with
$|\nabla f|\in C_{\mathrm{c}}(\mathbb{R}^n)$
to any $f\in\dot{W}^{1,X}(\mathbb{R}^n)$ by a density argument
under the additional assumption that $X$ has an absolutely continuous norm
(see Theorems~\ref{2117} and~\ref{4.8} below).
As a corollary of Theorem~\ref{2117},
we obtain the generalized Brezis--Van Schaftingen--Yung formula
and the corresponding limiting identities
in the setting of $L^p(\mathbb{R}^n)$ with $p\in[1,\infty)$
(see Corollary~\ref{2120} below),
some of which are still new.

In Section~\ref{section4},
as applications of \eqref{2213},
we further establish two types of the
fractional Gagliaro--Nirenberg-type inequalities
on the ball Banach function space $X$
(see Theorems~\ref{2255} and~\ref{2256} below).
Theorem~\ref{2255} involves
both $\dot{W}^{1,X}(\mathbb{R}^n)$
and $X^p(\mathbb{R}^n)$ with $p\in[1,\infty)$
[or $L^\infty(\mathbb{R}^n)$ when $p=\infty$],
while Theorem~\ref{2256}
involves both $\dot{W}^{1,X}(\mathbb{R}^n)$
and its fractional version.
As a special case of Theorem~\ref{2255},
we also obtain the
fractional Sobolev-type estimate on $X$.

In Section~\ref{S5},
all these results obtained
in both Sections~\ref{S3} and~\ref{section4} are applied
to various specific ball Banach function spaces, such as
the Morrey space (see Subsection~\ref{5.1} below),
the mixed-norm Lebesgue space (see Subsection~\ref{5.2} below),
the variable Lebesgue space (see Subsection~\ref{5.3} below),
the weighted Lebesgue space (see Subsection~\ref{5.4} below),
the Lorentz space (see Subsection~\ref{5.7} below),
the Orlicz space (see Subsection~\ref{5.5} below),
and the Orlicz-slice space or the generalized amalgam
space (see Subsection~\ref{5.6} below).
Most of these results are new.

Due to the generality and the flexibility,
more applications of the results of this article are predictable.
In particular, the generalized Brezis--Van Schaftingen--Yung formula
\eqref{2213} (see also Theorem~\ref{2117} below)
can further be used to give a new characterization
of the homogeneous ball Banach Sobolev space
$\dot{W}^{1,X}(\mathbb{R}^n)$, which is presented in \cite{zyy2023}
to limit the length of this article.

Recall that, when $\gamma\in(-\infty,-1)$,
the upper estimate of \eqref{2118}
with $p=1$ holds true for any $f\in\dot{W}^{1,1}(\mathbb{R}^n)$
with $n\in\mathbb{N}$
(see, for instance, \cite[Theorem~2.2(b)]{bsvy.arxiv}).
This upper estimate when $p=1$ is generalized
to the ball Banach function case only
when $n=1$ in Theorem~\ref{n=1} below.
When $n\in\mathbb{N}\cap[2,\infty)$, it is still unknown
how to generalize this upper estimate
when $p=1$ to the ball Banach function space case
[see Remark~\ref{1920}(iv) below]. Observe that,
in Theorem \ref{upperBound}, we assume that
the Hardy--Littlewood maximal operator is bounded on $X$
which is not true when $X=L^1(\mathbb{R}^n)$ and hence
Theorem \ref{upperBound} can not give this upper estimate when
$X=L^1(\mathbb{R}^n)$.

At the end of this section, we make some conventions on notation.
We always let $\mathbb{N}:=\{1,2,\ldots\}$,
$\mathbb{Z}_+:=\mathbb{N}\cup\{0\}$, and $\mathbb{S}^{n-1}$
be the unit sphere of $\mathbb{R}^n$.
We denote by $C$ a \emph{positive constant} which is independent
of the main parameters, but may vary from line to line.
We use $C_{(\alpha,\dots)}$ to denote a positive constant depending
on the indicated parameters $\alpha,\, \dots$.
The symbol $f\lesssim g$ means $f\le Cg$
and, if $f\lesssim g\lesssim f$, then we write $f\sim g$.
If $f\le Cg$ and $g=h$ or $g\le h$,
we then write $f\lesssim g=h$ or $f\lesssim g\le h$.
For any index $q\in[1,\infty]$,
we denote by $q'$ its \emph{conjugate index},
that is, $\frac{1}{q}+\frac{1}{q'}=1$.
If $E$ is a subset of $\mathbb{R}^n$, we denote by ${\mathbf{1}}_E$ its
\emph{characteristic function} and
by $E^\complement$ the set $\mathbb{R}^n\setminus E$.
We use $\mathbf{0}$ to denote the \emph{origin} of $\mathbb{R}^n$.
For any $x\in\mathbb{R}^n$ and $r\in(0,\infty)$,
let $B(x,r):=\{y\in\mathbb{R}^n:\ |x-y|<r\}$ and
\begin{align}\label{1654}
\mathbb{B}:=\left\{B(x,r):\ x\in\mathbb{R}^n
\text{ and }r\in(0,\infty)\right\}
\end{align}
be the set of all balls in
$\mathbb{R}^n$. Usually, for any $r\in(0,\infty)$,
we  let $B_r:=B({\bf 0},r)$. For any $\lambda\in(0,\infty)$
and any ball $B:=B(x_B,r_B)$ in
$\mathbb{R}^n$ with both center $x_B\in\mathbb{R}^n$
and radius $r_B\in(0,\infty)$, let $\lambda B:=B(x_B,\lambda r_B)$.
For any measurable function $f$ on $\mathbb{R}^n$,
its support $\mathrm{supp\,}(f)$ is defined by setting
$\mathrm{supp\,}(f):=\{x\in\mathbb{R}^n:\ f(x)\neq0\}$.

Throughout this article, we denote by
$C_{\mathrm{c}}(\mathbb{R}^n)$
[resp. $C^k(\mathbb{R}^n)$ with $k\in\mathbb{N}$]
the set of all the continuous functions with compact support
[resp. all $k$-th continuously differentiable functions] on $\mathbb{R}^n$.
In particular, denote by   $C^\infty(\mathbb{R}^n)$
the set of all infinitely differentiable functions on $\mathbb{R}^n$.

For any $p\in(0,\infty]$,
the \emph{Lebesgue space}
$L^p(\mathbb{R}^n)$
is defined to be the set of
all the measurable functions $f$
on $\mathbb{R}^n$ such that
\begin{align*}
\|f\|_{L^p(\mathbb{R}^n)}:=
\left[\int_{\mathbb{R}^n}|f(x)|^p\,dx\right]^\frac{1}{p}<\infty.
\end{align*}
For any $p\in(0,\infty)$,
the set of all the locally $p$-integrable
functions on $\mathbb{R}^n$,
$L_{\mathrm{loc}}^p(\mathbb{R}^n)$,
is defined to be the set of all the
measurable functions
$f:\ \mathbb{R}^n\to\mathbb{C}$ satisfying that,
for any $x\in \mathbb{R}^n$ and $r\in(0,\infty)$,
$\|f\mathbf{1}_{B(x,r)}\|_{L^p(\mathbb{R}^n)}<\infty$.
For any $f\in L^1_{\mathrm{loc}}(\mathbb{R}^n)$,
its \emph{Hardy--Littlewood maximal function} $\mathcal{M}(f)$
is defined by setting, for any $x\in\mathbb{R}^n$,
\begin{align*}
\mathcal{M}(f)(x):=\sup_{B\ni x}\frac{1}{|B|}\int_B\left|f(y)\right|\,dy,
\end{align*}
where the supremum is taken over all the
balls $B\subset\mathbb{R}^n$ containing $x$.
For any $f\in L^1_{\mathrm{loc}}(\mathbb{R}^n)$ and $E\subset\mathbb{R}^n$
with $|E|<\infty$, let
\begin{align*}
f_E:=\fint_Ef(x)\,dx
:=\frac{1}{|E|}\int_Ef(x)\,dx.
\end{align*}
For any $q\in(0,\infty)$ and $n\in\mathbb{N}$, let
\begin{align}\label{kappaqn}
\kappa(q,n):=\int_{\mathbb{S}^{n-1}}\left|e\cdot\omega\right|^q\,d\omega
=\frac{2\Gamma(\frac{q+1}{2})\pi^{\frac{n-1}{2}}}{\Gamma(\frac{q+n}{2})},
\end{align}
where $e$ is any unit vector in $\mathbb{R}^n$
and $\Gamma$ is the Gamma function.
Finally, when we prove a theorem or the like,
we always use the same symbols in the wanted
proved theorem or the like.

\section{Ball Banach Function Spaces and Homogeneous
Ball Banach\\ Sobolev Spaces}
\label{section2}

In this section, we recall the concepts of
the ball (quasi-)Banach function space
as well as its convexification and its associate space,
the homogeneous ball Banach Sobolev space,
and the Muckenhoupt $A_p(\mathbb{R}^n)$ class.
First, we give some preliminaries on ball quasi-Banach function
spaces introduced in \cite{shyy2017}.
Throughout this article,
denote by the \emph{symbol} $\mathscr{M}(\mathbb{R}^n)$
the set of all measurable functions
on $\mathbb{R}^n$.

\begin{definition}\label{1659}
A quasi-Banach space $X\subset\mathscr{M}(\mathbb{R}^n)$,
equipped with a quasi-norm $\|\cdot\|_X$
which makes sense for all functions
in $\mathscr{M}(\mathbb{R}^n)$,
is called a \emph{ball quasi-Banach function space} if $X$ satisfies that
\begin{enumerate}
\item[\textup{(i)}]
for any $f\in\mathscr{M}(\mathbb{R}^n)$,
if $\|f\|_X=0$, then $f=0$ almost everywhere;
\item[\textup{(ii)}]
if $f,g\in\mathscr{M}(\mathbb{R}^n)$
with $|g|\leq|f|$ almost everywhere,
then $\|g\|_X\leq\|f\|_X$;
\item[\textup{(iii)}]
if a sequence $\{f_m\}_{m\in\mathbb{N}}\subset\mathscr{M}(\mathbb{R}^n)$
and $f\in\mathscr{M}(\mathbb{R}^n)$ satisfy
that $0\leq f_m\uparrow f$ almost everywhere
as $m\to\infty$, then $\|f_m\|_X\uparrow\|f\|_X$ as $m\to\infty$;
\item[\textup{(iv)}]
for any ball $B:=B(x,r)$ with both $x\in\mathbb{R}^n$ and $r\in(0,\infty)$,
$\mathbf{1}_B\in X$.
\end{enumerate}
Moreover, a ball quasi-Banach function space $X$ is called a
\emph{ball Banach function space} if
$X$ satisfies the following additional conditions:
\begin{enumerate}
\item[\textup{(v)}]
for any $f,g\in X$,
$$
\|f+g\|_X\leq\|f\|_X+\|g\|_X;
$$
\item[\textup{(vi)}]
for any ball $B$,
there exists a positive constant $C_{(B)}$, depending on $B$,
such that, for any $f\in X$,
$$
\int_B\left|f(x)\right|\,dx\leq C_{(B)}\|f\|_X.
$$
\end{enumerate}
\end{definition}

\begin{remark}
\begin{enumerate}
\item[\textup{(i)}]
Let $X$ be a ball quasi-Banach function space on $\mathbb{R}^n$.
Then, by \cite[Remark~2.5(i)]{yhyy2022-1}
(see also \cite[Remark~2.6(i)]{yhyy2022-2}),
we find that, for any $f\in\mathscr{M}(\mathbb{R}^n)$,
$\|f\|_X=0$ if and only if $f=0$ almost everywhere.
\item[\textup{(ii)}]
As was mentioned in \cite[Remark~2.5(ii)]{yhyy2022-1}
(see also \cite[Remark~2.6(ii)]{yhyy2022-2}),
we obtain an equivalent formulation
of Definition~\ref{1659} via replacing any
ball $B$ therein by any bounded measurable set $E$.
\item[\textup{(iii)}]
In Definition~\ref{1659}, if we replace any ball $B$ by any
measurable set $E$ with $|E|<\infty$, then we obtain the definition of
(quasi-)Banach function spaces which was originally introduced
by Bennett and Sharpley in
\cite[Definitions~1.1 and~1.3]{bs1988}.
Thus, a (quasi-)Banach function space is always a ball
(quasi-)Banach function space and the converse is
not necessary to be true.
\item[\textup{(iv)}]
By \cite[Proposition~1.2.36]{lyh2320}
(see also \cite[Theorem~2]{dfmn2021}),
we conclude that both (ii) and (iii) of
Definition~\ref{1659} imply that any ball quasi-Banach function
space is complete.
\end{enumerate}
\end{remark}

The following definition of the $p$-convexification of a ball
quasi-Banach function space can
be found in \cite[Definition~2.6]{shyy2017}.

\begin{definition}\label{tuhua}
Let $X$ be a ball quasi-Banach function space and $p\in(0,\infty)$.
The \emph{$p$-convexification} $X^p$ of $X$ is defined by setting
$X^p:=\{f\in\mathscr{M}(\mathbb{R}^n):\ |f|^p\in X\}$
equipped with the quasi-norm $\|f\|_{X^p}:=\|\,|f|^p\|_{X}^\frac{1}{p}$
for any $f\in X^p$.
\end{definition}

Next, we recall the definition of ball Banach function spaces with
absolutely continuous norm
(see, for instance, \cite[Definition~3.1]{bs1988}
and \cite[Definition~3.2]{wyy2020}).

\begin{definition}
A ball Banach function space $X$ is said
to have an \emph{absolutely continuous norm}
if, for any $f\in X$ and any sequence $\{E_j\}_{j\in\mathbb{N}}$
of measurable sets
satisfying that $\mathbf{1}_{E_j}\to0$
almost everywhere as $j\to\infty$, one has
$\|f\mathbf{1}_{E_j}\|_X\to0$ as $j\to\infty$.
\end{definition}

The following concept of the associate space of a ball
Banach function space can be found in \cite[p.\,9]{shyy2017};
see \cite[Chapter 1, Section 2]{bs1988} for more details.

\begin{definition}\label{associte}
Let $X$ be a ball Banach function space.
The \emph{associate space} (also called the \emph{K\"othe dual}) $X'$ of $X$
is defined by setting
\begin{align*}
X':=\left\{f\in\mathscr{M}(\mathbb{R}^n):\
\|f\|_{X'}:=\sup_{\{g\in X:\ \|g\|_{X}=1\}}
\left\|fg\right\|_{L^1(\mathbb{R}^n)}<\infty\right\},
\end{align*}
where $\|\cdot\|_{X'}$ is called the \emph{associate norm} of $\|\cdot\|_X$.
\end{definition}

\begin{remark}\label{2143}
Let $X$ be a ball Banach function space.
Then \cite[Proposition~2.3]{shyy2017} implies that
$X'$ is also a ball Banach function space.
\end{remark}

Now, we recall the concept of the homogeneous ball Banach Sobolev space,
which was introduced in \cite[Definition~2.4]{dlyyz.arxiv}.

\begin{definition}\label{2.7}
Let $X$ be a ball Banach function space.
The \emph{homogeneous ball Banach
Sobolev space} $\dot{W}^{1,X}(\mathbb{R}^n)$
is defined to be the set of all the
distributions $f$ on $\mathbb{R}^n$ such that $|\nabla f|\in X$ equipped
with the quasi-norm
$$
\|f\|_{\dot{W}^{1,X}(\mathbb{R}^n)}:=\left\|\,|\nabla f|\,\right\|_X,
$$
where $\nabla f:=(\partial_1f,\ldots,\partial_nf)$
denotes the distributional gradient of $f$.
\end{definition}

Next, we recall the concept of
the Muckenhoupt $A_p(\mathbb{R}^n)$ weight class
(see, for instance, \cite[Definitions~7.1.1 and~7.1.3]{g2014}).

\begin{definition}\label{1557}
An \emph{$A_p(\mathbb{R}^n)$-weight} $\omega$, with $p\in[1,\infty)$,
is a nonnegative locally integrable function
on $\mathbb{R}^n$ satisfying that,
when $p=1$
\begin{align}\label{A1}
[\omega]_{A_1(\mathbb{R}^n)}:=\sup_{Q\subset\mathbb{R}^n}
\frac{\|\omega^{-1}\|_{L^\infty(Q)}}{|Q|}\int_Q\omega(x)\,dx<\infty,
\end{align}
and, when $p\in(1,\infty)$
\begin{align}\label{Ap}
[\omega]_{A_p(\mathbb{R}^n)}:=\sup_{Q\subset\mathbb{R}^n}
\left[\frac{1}{|Q|}\int_Q\omega(x)\,dx\right]
\left\{\frac{1}{|Q|}\int_Q
\left[\omega(x)\right]^{1-p'}\,dx\right\}^{p-1}<\infty,
\end{align}
where $p'\in[1,\infty]$ denotes the conjugate index of $p$ and
the suprema in both \eqref{A1} and \eqref{Ap}
are taken over all cubes $Q\subset\mathbb{R}^n$.
Moreover, let
$$
A_\infty(\mathbb{R}^n):=\bigcup_{p\in[1,\infty)}A_p(\mathbb{R}^n).
$$
\end{definition}

We recall the concepts of both the weighted Lebesgue space and
the homogeneous weighted Sobolev space as follows.

\begin{definition}\label{1556}
\begin{enumerate}
\item[\textup{(i)}]
Let $p\in(0,\infty]$ and $\omega$ be a weight on $\mathbb{R}^n$.
The \emph{weighted Lebesgue space} $L^p_\omega(\mathbb{R}^n)$
is defined to be the set of all the $f\in\mathscr{M}(\mathbb{R}^n)$ such that
\begin{align*}
\|f\|_{L^p_\omega(\mathbb{R}^n)}:=
\left[\int_{\mathbb{R}^n}\left|f(x)\right|^p
\omega(x)\,dx\right]^{\frac{1}{p}}<\infty.
\end{align*}
\item[\textup{(ii)}]
Let $p\in[1,\infty]$ and $\omega$ be a weight on $\mathbb{R}^n$.
The \emph{homogeneous weighted Sobolev space}
$\dot{W}^{1,p}_\omega(\mathbb{R}^n)$
is defined to be the set of all the distributions $f$ on $\mathbb{R}^n$
whose distributional gradients $\nabla f\in L^p_\omega(\mathbb{R}^n)$.
Moreover, for any $f\in\dot{W}^{1,p}_\omega(\mathbb{R}^n)$, let
\begin{align*}
\|f\|_{\dot{W}^{1,p}_\omega(\mathbb{R}^n)}:=\left\|\,\left|\nabla f\right|\,
\right\|_{L^p_\omega(\mathbb{R}^n)}.
\end{align*}
\end{enumerate}
\end{definition}

For Muckenhoupt $A_p(\mathbb{R}^n)$-weights, we have the following
basic properties and related conclusions
(see, for instance, \cite[(7.3) and (7.5)]{D2000},
\cite[Proposition~7.1.5 and Theorem~7.1.9]{g2014},
and \cite[Theorem~2.7.4]{dhhr2011}).

\begin{lemma}\label{ApProperty}
Let $p\in[1,\infty)$ and $\omega\in A_p(\mathbb{R}^n)$.
Then the following statements hold true:
\begin{enumerate}
\item[\textup{(i)}]
if $p=1$, then $[\omega]_{A_1(\mathbb{R}^n)}\in[1,\infty)$
and, for almost every $x\in\mathbb{R}^n$,
$$
\mathcal{M}(\omega)(x)\leq[\omega]_{A_1(\mathbb{R}^n)}\omega(x);
$$
\item[\textup{(ii)}]
for any cubes $Q,S\subset\mathbb{R}^n$ with $Q\subset S$,
$$
\omega(S)\leq[\omega]_{A_p(\mathbb{R}^n)}
\left(\frac{|S|}{|Q|}\right)^p\omega(Q);
$$
\item[\textup{(iii)}]
$$
[\omega]_{A_p(\mathbb{R}^n)}=
\sup_{Q\subset\mathbb{R}^n}\sup_{\|f\mathbf{1}_Q\|_{L^p_\omega(\mathbb{R}^n)}
\in(0,\infty)}\frac{[\frac{1}{Q}\int_Q|f(t)|\,dt]^p}{\frac{1}{\omega(Q)}
\int_Q|f(t)|^p\omega(t)\,dt},
$$
where the first supremum is taken over all cubes $Q\subset\mathbb{R}^n$;
\item[\textup{(iv)}]
$\omega\in A_q(\mathbb{R}^n)$ for any $q\in[p,\infty)$; moreover,
$[\omega]_{A_q(\mathbb{R}^n)}\leq[\omega]_{A_p(\mathbb{R}^n)}$;
\item[\textup{(v)}]
if $p\in(1,\infty)$ and $\mu:=\omega^{1-p'}$,
then $\mu\in A_{p'}(\mathbb{R}^n)$,
\begin{align*}
[\mu]_{A_{p'}(\mathbb{R}^n)}^{p-1}=[\omega]_{A_p(\mathbb{R}^n)}
\leq[\omega]_{A_1(\mathbb{R}^n)},
\ \text{and}\
\left[L_\omega^p(\mathbb{R}^n)\right]'=L_\mu^{p'}(\mathbb{R}^n);
\end{align*}
\item[\textup{(vi)}]
if $p\in(1,\infty)$, then the Hardy--Littlewood
maximal operator $\mathcal{M}$ is bounded on $L^p_\omega(\mathbb{R}^n)$;
moreover, there exists a positive constant $C$,
independent of $\omega$, such that
$$
\left\|\mathcal{M}\right\|_{L^p_\omega(\mathbb{R}^n)
\to L^p_\omega(\mathbb{R}^n)}
\leq C[\omega]_{A_p(\mathbb{R}^n)}^{p'-1},
$$
where $\|\mathcal{M}\|_{L^p_\omega(\mathbb{R}^n)\to L^p_\omega(\mathbb{R}^n)}$
denotes the operator norm of $\mathcal{M}$ on $L^p_\omega(\mathbb{R}^n)$.
\end{enumerate}
\end{lemma}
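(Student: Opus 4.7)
These six assertions are all classical results in Muckenhoupt weight theory; the plan is to verify each item in the order listed, using only Hölder's inequality, Jensen's inequality, Lebesgue differentiation, and a standard duality argument, with the sharp maximal-function bound in (vi) being the only deep ingredient — which I would cite rather than reprove.

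For (i), I would apply the Lebesgue differentiation theorem to write $\omega(x)=\lim_{Q\downarrow x}\fint_Q\omega$ at a.e.\ $x$. Combined with $\fint_Q\omega\leq[\omega]_{A_1}\operatorname*{ess\,inf}_Q\omega$ (which is the $A_1$ definition rewritten using $\|\omega^{-1}\|_{L^\infty(Q)}=(\operatorname*{ess\,inf}_Q\omega)^{-1}$), this shows $\fint_Q\omega\leq[\omega]_{A_1}\omega(x)$ for a.e.\ $x\in Q$; taking the supremum over cubes containing $x$ yields the pointwise bound on $\mathcal{M}(\omega)$. The inequality $[\omega]_{A_1}\geq 1$ is immediate from $\fint_Q\omega\cdot\|\omega^{-1}\|_{L^\infty(Q)}\geq 1$. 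For (ii), I would apply the $A_p$ condition with the indicator test $\mathbf{1}_Q$ on the ambient cube $S$: Hölder (or a direct computation) gives $(|Q|/|S|)^p\leq[\omega]_{A_p}\omega(Q)/\omega(S)$, which rearranges to the claim.

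Item (iii) is simply the integrated reformulation of the $A_p$ condition. For the upper bound, apply Hölder to $\fint_Q|f|=\fint_Q|f|\omega^{1/p}\omega^{-1/p}$ and raise to the $p$-th power, obtaining $(\fint_Q|f|)^p\leq[\omega]_{A_p}\frac{1}{\omega(Q)}\int_Q|f|^p\omega$ cube by cube. For the matching lower bound, the choice $f=\omega^{1-p'}\mathbf{1}_Q$ saturates the inequality, recovering the original $[\omega]_{A_p}$. For (iv), a Jensen-inequality comparison between $\fint_Q\omega^{1-q'}$ and $\fint_Q\omega^{1-p'}$ (using the convexity of $t\mapsto t^{(q-1)/(p-1)}$ on $(0,\infty)$) gives the monotonicity $[\omega]_{A_q}\leq[\omega]_{A_p}$ with no loss of constant.

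For (v), the symmetry $[\mu]_{A_{p'}}^{p-1}=[\omega]_{A_p}$ is obtained by inspection: substituting $\mu=\omega^{1-p'}$ into the defining supremum interchanges the roles of the two averaged factors in \eqref{Ap}. The bound $[\omega]_{A_p}\leq[\omega]_{A_1}$ is the specialisation of (iv) to the starting point $p=1$. The duality identification $[L^p_\omega(\mathbb{R}^n)]'=L^{p'}_\mu(\mathbb{R}^n)$ reduces to ordinary $L^p$–$L^{p'}$ duality via the substitution $g\mapsto g\omega^{-1/p}$, which is an isometry between $L^p_\omega(\mathbb{R}^n)$ and $L^p(\mathbb{R}^n)$. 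Finally, (vi) is Muckenhoupt's theorem with Buckley's sharp exponent: I would quote it directly from \cite[Theorem~7.1.9]{g2014}, noting that its standard proof proceeds by reducing $\mathcal{M}$ to its dyadic version via a shifted-grid argument and then exploiting the reverse-Hölder inequality intrinsic to $A_p$ to extract the exponent $p'-1$. This last step is the main obstacle, but since it is fully documented in \cite{D2000,g2014,dhhr2011}, no new argument is required.
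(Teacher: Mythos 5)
The paper does not prove Lemma~\ref{ApProperty} at all: it is a collection of standard facts about Muckenhoupt weights and the authors simply cite \cite[(7.3) and (7.5)]{D2000}, \cite[Proposition~7.1.5 and Theorem~7.1.9]{g2014}, and \cite[Theorem~2.7.4]{dhhr2011}. Your proposal therefore cannot ``match or diverge from'' a proof in the paper; what you are offering is a from-scratch verification of a cited result, and the question is only whether your sketch would survive being filled in.

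On the whole it would. Items (i), (ii), (v) (the isometry/duality portion), and (vi) are handled correctly, and for (iii) and (iv) with $p\in(1,\infty)$ your H\"older and Jensen calculations close up exactly as you describe (checking: for (iv), convexity of $t\mapsto t^{(q-1)/(p-1)}$ applied to $\omega^{-1/(q-1)}$ gives $(\fint_Q\omega^{1-q'})^{q-1}\leq(\fint_Q\omega^{1-p'})^{p-1}$; for (iii), the test function $f=\omega^{1-p'}\mathbf{1}_Q$ gives exactly $\fint_Q\omega\cdot(\fint_Q\omega^{1-p'})^{p-1}$ in the quotient).

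The one place your sketch actually breaks, as written, is the $p=1$ endpoint in (iii) and (iv). When $p=1$ the exponent $1-p'$ is $-\infty$, so the saturating choice $f=\omega^{1-p'}\mathbf{1}_Q$ in (iii) has no meaning; one must instead approximate by taking $f=\mathbf{1}_{E_\varepsilon}$ with $E_\varepsilon:=\{x\in Q:\ \omega(x)<\operatorname{ess\,inf}_Q\omega+\varepsilon\}$ and let $\varepsilon\to0^+$. Similarly, the Jensen comparison in (iv) is formulated in terms of $(q-1)/(p-1)$ and so presupposes $p>1$; the inclusion $A_1\subset A_q$ with $[\omega]_{A_q}\leq[\omega]_{A_1}$ needs the separate (elementary) observation that $\bigl(\fint_Q\omega^{-1/(q-1)}\bigr)^{q-1}\leq(\operatorname{ess\,inf}_Q\omega)^{-1}$. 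Since your derivation of $[\omega]_{A_p}\leq[\omega]_{A_1}$ in (v) invokes (iv) started at $p=1$, this $p=1$ step is actually load-bearing in your write-up and should be supplied explicitly. These are routine fixes and do not affect the architecture of the argument, but as posed the lower-bound half of (iii) and the endpoint of (iv) would not compile.
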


\section{Generalized Brezis--Van Schaftingen--Yung Formulae
in Ball\\ Banach Function Spaces}\label{S3}

Let $X$ be a ball Banach function space.
In this section, under some mild assumptions about $X$,
we establish generalized
Brezis--Van Schaftingen--Yung formulae on $X$.
To be more precise, we aim to recover
the ball Banach Sobolev semi-norm
$\|\,|\nabla f|\,\|_X$ for any $f$ in
$\dot{W}^{1,X}(\mathbb{R}^n)$ or its certain density subspace
via the functional
\begin{align}\label{2012}
\sup_{\lambda\in(0,\infty)}\lambda
\left\|\left[\int_{\mathbb{R}^n}
\mathbf{1}_{E_{\lambda,\frac{\gamma}{q}}[f]}(\cdot,y)
\left|\cdot-y\right|^{\gamma-n}\,dy\right]^\frac{1}{q}\right\|_X,
\end{align}
 where $\gamma\in\mathbb{R}\setminus\{0\}$,
$q\in(0,\infty)$, and
$E_{\lambda,\frac{\gamma}{q}}[f]$
for any $\lambda\in(0,\infty)$
is the same as in \eqref{Elambda}.

\subsection{Upper Estimate}
\label{ss3.1}

This subsection is devoted to the upper estimate
of \eqref{2213}.
The following is one of the main result of this subsection.

\begin{theorem}\label{upperBound}
Let $X$ be a ball Banach function space, $\gamma\in\mathbb{R}\setminus\{0\}$,
$p\in[1,\infty)$, and $q\in(0,p]$.
Assume that
\begin{enumerate}
\item[\textup{(i)}]
$X^\frac{1}{p}$ is a ball Banach function space;
\item[\textup{(ii)}]
the Hardy--Littlewood maximal operator $\mathcal{M}$ is bounded on
both $X$ and $(X^\frac{1}{p})'$ with their operator norms denoted,
respectively, by
$\|\mathcal{M}\|_{X\to X}$ and
$\|\mathcal{M}\|_{(X^\frac{1}{p})'\to (X^\frac{1}{p})'}$.
\end{enumerate}
Then there exists a positive constant
$C$,
depending only on $p$, $q$, $\gamma$, $n$, as well as $\|\mathcal{M}\|_{X\to X}$
and $\|\mathcal{M}\|_{(X^\frac{1}{p})'\to(X^\frac{1}{p})'}$,
such that,
for any $f\in C^1(\mathbb{R}^n)$ with
$|\nabla f|\in C_{\mathrm{c}}(\mathbb{R}^n)$,
\begin{align*}
\sup_{\lambda\in(0,\infty)}\lambda
\left\|\left[\int_{\mathbb{R}^n}
\mathbf{1}_{E_{\lambda,\frac{\gamma}{q}}[f]}(\cdot,y)
\left|\cdot-y\right|^{\gamma-n}\,dy\right]^\frac{1}{q}\right\|_X
\leq C\left\|\,\left|\nabla f\right|\,\right\|_{X},
\end{align*}
where, for any $\lambda\in(0,\infty)$,
$E_{\lambda,\frac{\gamma}{q}}[f]$ is the same as in \eqref{Elambda}
and where the positive constant $C$ is continuous, respectively, with
respect to $\|\mathcal{M}\|_{X\to X}$
or $\|\mathcal{M}\|_{(X^\frac{1}{p})'\to (X^\frac{1}{p})'}$
and increases, respectively, as $\|\mathcal{M}\|_{X\to X}$
or $\|\mathcal{M}\|_{(X^\frac{1}{p})'\to(X^\frac{1}{p})'}$ increases.
\end{theorem}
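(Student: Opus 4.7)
The plan is to reduce the inequality to a corresponding weighted Lebesgue-space estimate with $A_1(\mathbb{R}^n)$ weights, and then lift it back to the ball Banach function space $X$ through a Rubio de Francia--type extrapolation. Since hypotheses (i) and (ii) are precisely what is required to invoke the extrapolation machinery of \cite[Lemmas~4.6 and~4.7]{dlyyz.arxiv}, it suffices to establish the following weighted analog: for every $\omega\in A_1(\mathbb{R}^n)$ and every $f\in C^1(\mathbb{R}^n)$ with $|\nabla f|\in C_{\mathrm{c}}(\mathbb{R}^n)$,
\begin{align*}
\sup_{\lambda\in(0,\infty)}\lambda^p\int_{\mathbb{R}^n}\left[\int_{\mathbb{R}^n}
\mathbf{1}_{E_{\lambda,\gamma/q}[f]}(x,y)|x-y|^{\gamma-n}\,dy\right]^{\frac{p}{q}}\omega(x)\,dx
\lesssim\int_{\mathbb{R}^n}|\nabla f(x)|^p\omega(x)\,dx,
\end{align*}
with the implicit constant depending polynomially on $[\omega]_{A_1(\mathbb{R}^n)}$, which guarantees the correct continuity of the final constant upon transfer to $X$.

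For the weighted estimate, I would start from the Lusin--Lipschitz inequality (see, e.g., \cite[(2.9)]{bvy2021}),
\begin{align*}
|f(x)-f(y)|\leq C_n|x-y|\left[\mathcal{M}|\nabla f|(x)+\mathcal{M}|\nabla f|(y)\right],
\end{align*}
and set $g:=\mathcal{M}|\nabla f|$. This yields $E_{\lambda,\gamma/q}[f]\subseteq A_\lambda\cup B_\lambda$, where $A_\lambda:=\{(x,y):\ g(x)\geq c\lambda|x-y|^{\gamma/q}\}$ and $B_\lambda$ is defined analogously with $g(y)$ in place of $g(x)$. For the $A_\lambda$-contribution, the section over $y$ is a ball of radius $(g(x)/(c\lambda))^{q/\gamma}$ when $\gamma>0$ or its complement when $\gamma<0$; in either case a direct radial integration yields the pointwise bound $\lambda[\int\mathbf{1}_{A_\lambda}(x,y)|x-y|^{\gamma-n}\,dy]^{1/q}\lesssim g(x)$. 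Raising to the $p$-th power, integrating against $\omega$, and invoking the strong-type boundedness of $\mathcal{M}$ on $L^p_\omega(\mathbb{R}^n)$ (valid for $\omega\in A_1(\mathbb{R}^n)\subset A_p(\mathbb{R}^n)$ in the relevant range) controls this piece by $\||\nabla f|\|_{L^p_\omega(\mathbb{R}^n)}^p$.

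The core difficulty lies in the $B_\lambda$-contribution, for which the weight $\omega(x)$ in the outer integration breaks the $(x,y)$-symmetry exploited in the unweighted proofs of \cite{bsvy.arxiv,bvy2021}. I would attack it by Fubini-swapping the order of integration and then invoking the following $A_1$-weighted Riesz-type kernel estimate: for $\gamma>0$,
\begin{align*}
\int_{B(y,R)}|x-y|^{\gamma-n}\omega(x)\,dx\lesssim[\omega]_{A_1(\mathbb{R}^n)}R^\gamma\omega(y),
\end{align*}
with the analogous bound when $\gamma<0$ where $B(y,R)$ is replaced by its complement; both are proved via a dyadic-annulus decomposition around $y$ together with the defining $A_1$-inequality $\omega(B)\leq[\omega]_{A_1(\mathbb{R}^n)}|B|\omega(y)$ for almost every $y\in B$. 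This closes the case $p=q$ directly; for $p>q$, one either reduces to the $p=q$ case via Rubio de Francia extrapolation, or refines the bound through a further dyadic decomposition in the values of $g(y)$ combined with a level-set estimate of the form $|\{g>2^j\}\cap B|\lesssim 2^{-jr}|B|\mathcal{M}(g^r)(x)$ for a suitable $r>q$. The main obstacle is therefore this asymmetric weighted kernel estimate: extracting it with a constant of polynomial order in $[\omega]_{A_1(\mathbb{R}^n)}$ is exactly what enables the extrapolation lemmas to carry the inequality back to $X$, effectively replacing the rotation/symmetry tools that are no longer available in the general ball Banach function space setting.
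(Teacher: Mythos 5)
Your decomposition via the Lusin--Lipschitz inequality and the $A_1$-weighted Riesz-kernel estimate
$$
\int_{B(y,R)}|x-y|^{\gamma-n}\omega(x)\,dx
\lesssim[\omega]_{A_1(\mathbb{R}^n)}R^\gamma\omega(y)
$$
(and its complementary-ball analogue for $\gamma<0$) are exactly the ingredients the paper uses, and the extrapolation framework (Lemmas~\ref{4.5} and~\ref{4.6}) is the right vehicle. However, the overall architecture of your argument --- ``first prove a self-contained $A_1$-weighted Lebesgue inequality
$\sup_\lambda\lambda^p\int[\int\mathbf 1_{E_\lambda}|x-y|^{\gamma-n}dy]^{p/q}\omega\,dx\lesssim\int|\nabla f|^p\omega$ for all $\omega\in A_1$, then extrapolate'' --- has a genuine gap when $p=1$, a case the theorem explicitly includes.

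Tracing it through: after the kernel estimate, both your $A_\lambda$- and $B_\lambda$-contributions reduce to $\lambda^{-p}\int_{\mathbb{R}^n}[\mathcal{M}(|\nabla f|)(y)]^p\omega(y)\,dy$ (with $g=\mathcal{M}(|\nabla f|)$), so closing the weighted estimate would require $\|\mathcal{M}\|_{L^p_\omega\to L^p_\omega}<\infty$ for $\omega\in A_1(\mathbb{R}^n)$. This is true for $p>1$ (with polynomial dependence on $[\omega]_{A_1}$, as you want), but it is \emph{false} at $p=1$: $\mathcal{M}$ is not strong-type $(1,1)$ on $L^1_\omega$ for generic $\omega\in A_1$. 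Your parenthetical ``(valid \ldots\ in the relevant range)'' silently excludes precisely the endpoint $p=1$ that is allowed in the hypotheses. The paper avoids this by \emph{never} proving a self-contained weighted estimate for the $\mathcal{M}(|\nabla f|)$-terms. For the $x$-localized piece $E_1$, it bounds the section by polar coordinates and immediately invokes $\|\mathcal{M}(|\nabla f|)\|_X\le\|\mathcal{M}\|_{X\to X}\||\nabla f|\|_X$ directly in $X$; no weight appears at all. For the asymmetric $y$-localized piece $E_2$, after Fubini and the $A_1$-kernel estimate the integral collapses to $\lambda^{-q}\int[\mathcal{M}(|\nabla f|)(y)]^q\,R_{(X^{1/q})'}g(y)\,dy$, which the paper controls \emph{not} by weighted boundedness of $\mathcal{M}$ but by the H\"older duality of Lemma~\ref{1639}: $\int h\,R_{(X^{1/q})'}g\le\|h\|_{X^{1/q}}\,\|R_{(X^{1/q})'}g\|_{(X^{1/q})'}\lesssim\|\mathcal{M}(|\nabla f|)\|_X^q$, followed again by $\|\mathcal{M}\|_{X\to X}$. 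In other words, the specific membership $R_{(X^{1/p})'}g\in(X^{1/p})'$ is used, not merely the fact that it is an $A_1$ weight, and this is what rescues the case $p=1$. To repair your proposal you should handle the $A_\lambda$-piece directly in the $X$-norm (no extrapolation, no weight), and for the $B_\lambda$-piece route back to $X$ via the duality pairing with $R_{(X^{1/q})'}g$ before applying the maximal operator, rather than relying on $\mathcal{M}$ being bounded on $L^p_\omega$.
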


To prove Theorem~\ref{upperBound}, we need three technical lemmas.
The following lemma is just \cite[Lemma~4.6]{dlyyz.arxiv}.

\begin{lemma}\label{4.5}
Let $X\subset\mathscr{M}(\mathbb{R}^n)$ be a
linear normed space, equipped with a norm $\|\cdot\|_X$
which makes sense for all functions in $\mathscr{M}(\mathbb{R}^n)$.
Assume that the Hardy--Littlewood maximal operator $\mathcal{M}$
is bounded on $X$ with its operator norm
denoted by $\|\mathcal{M}\|_{X\to X}$.
For any $g\in X$ and $x\in\mathbb{R}^n$,
let
\begin{align}\label{RXg}
R_{X}g(x):=\sum_{k=0}^\infty
\frac{\mathcal{M}^kg(x)}{2^k\|\mathcal{M}\|^k_{X\to X}},
\end{align}
where, for any $k\in\mathbb{N}$,
$\mathcal{M}^k$ is the $k$ iterations of $\mathcal{M}$
and $\mathcal{M}^0g(x):=|g(x)|$.
Then, for any $g\in X$,
\begin{enumerate}
\item[\textup{(i)}]
for any $x\in\mathbb{R}^n$, $|g(x)|\leq R_{X}g(x)$;
\item[\textup{(ii)}]
$R_Xg\in A_1(\mathbb{R}^n)$ and $[R_Xg]_{A_1(\mathbb{R}^n)}
\leq2\|\mathcal{M}\|_{X\to X}$;
\item[\textup{(iii)}]
$\|R_{X}g\|_X\leq2\|g\|_X$.
\end{enumerate}
\end{lemma}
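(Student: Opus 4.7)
The plan is to verify the three claims in order, recognizing the sum $R_X g$ as a Rubio de Francia-type iteration algorithm. All three assertions will follow from splitting the series, using the sublinearity and monotonicity of $\mathcal{M}$, and applying the assumed operator bound $\|\mathcal{M}\|_{X\to X}$.

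For (i), I would simply isolate the $k=0$ term of the series defining $R_Xg$ in \eqref{RXg}, which is $|g(x)|$, and observe that every remaining term $\mathcal{M}^k g(x)/(2^k\|\mathcal{M}\|^k_{X\to X})$ is nonnegative; thus $|g(x)|\le R_Xg(x)$ pointwise. For (iii), I would apply the norm $\|\cdot\|_X$ to the partial sums, use the triangle inequality (valid since $X$ is a linear normed space and each partial sum is a finite linear combination of elements of $X$), pass to the limit using property (iii) of Definition~\ref{1659} (monotone convergence of the $X$-quasi-norm applied to the increasing sequence of partial sums, which is legitimate because each term is nonnegative), and then estimate
\[
\|R_Xg\|_X\le\sum_{k=0}^{\infty}\frac{\|\mathcal{M}^kg\|_X}{2^k\|\mathcal{M}\|^k_{X\to X}}\le\sum_{k=0}^{\infty}\frac{\|\mathcal{M}\|^k_{X\to X}\|g\|_X}{2^k\|\mathcal{M}\|^k_{X\to X}}=2\|g\|_X,
\]
where the second inequality is an induction using boundedness of $\mathcal{M}$ on $X$.

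For (ii), which is the heart of the lemma, I would show pointwise almost everywhere the self-improving inequality
\[
\mathcal{M}(R_Xg)(x)\le 2\|\mathcal{M}\|_{X\to X}\,R_Xg(x).
\]
The key point is to move $\mathcal{M}$ inside the infinite sum. I would do this by first writing $R_Xg$ as the pointwise increasing limit of its nonnegative partial sums $S_N:=\sum_{k=0}^{N}\mathcal{M}^kg/(2^k\|\mathcal{M}\|^k_{X\to X})$, applying the sublinearity of $\mathcal{M}$ to each $S_N$, and passing to the limit via the Lebesgue monotone convergence theorem inside the averages defining $\mathcal{M}$. This yields
\[
\mathcal{M}(R_Xg)(x)\le\sum_{k=0}^{\infty}\frac{\mathcal{M}^{k+1}g(x)}{2^k\|\mathcal{M}\|^k_{X\to X}}=2\|\mathcal{M}\|_{X\to X}\sum_{k=1}^{\infty}\frac{\mathcal{M}^kg(x)}{2^k\|\mathcal{M}\|^k_{X\to X}}\le 2\|\mathcal{M}\|_{X\to X}\,R_Xg(x).
\]
Dividing by a cube average then taking supremum yields exactly the $A_1$ condition \eqref{A1} with constant $2\|\mathcal{M}\|_{X\to X}$, modulo the standard fact that pointwise control by the maximal function implies the $A_1$ bound.

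The main obstacle, and the only subtle point, is justifying the interchange of $\mathcal{M}$ with the infinite sum; everywhere else the argument is a direct telescoping. This interchange is handled cleanly by monotone convergence because all summands are nonnegative and measurable, so no delicate uniform-integrability issues arise. Once (ii) is in hand, the $A_1$ characterization via pointwise majorization by a constant times the function itself immediately gives $[R_Xg]_{A_1(\mathbb{R}^n)}\le 2\|\mathcal{M}\|_{X\to X}$.
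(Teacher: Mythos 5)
Your proof is correct and reproduces the standard Rubio de Francia iteration argument; the paper itself gives no proof of this lemma, instead citing \cite[Lemma~4.6]{dlyyz.arxiv}, and your reasoning is precisely the expected one. One small remark: in part (iii) you invoke the Fatou property of Definition~\ref{1659}(iii), which is a property of ball quasi-Banach function spaces rather than a consequence of the bare ``linear normed space'' hypothesis in the lemma statement; this is harmless here because the lemma is only ever applied with $X$ a ball Banach function space or the associate space of one (itself a ball Banach function space by Remark~\ref{2143}), but strictly speaking some monotone convergence or Riesz--Fischer property must be read into the hypotheses in order to pass from finite partial sums to the full series. In part (ii), your monotone-convergence justification for interchanging $\mathcal{M}$ with the sum is fine; an even more direct route is to apply Tonelli's theorem inside each average $\frac{1}{|B|}\int_B$, which gives $\mathcal{M}\bigl(\sum_k f_k\bigr)\le\sum_k\mathcal{M}(f_k)$ for nonnegative $f_k$ in one line, but both arguments are equivalent.
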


The following lemma is a simple
corollary of \cite[Lemma~4.7 and Remark~4.8]{dlyyz.arxiv}.

\begin{lemma}\label{4.6}
Let $X$ be a ball Banach function space and $p\in[1,\infty)$.
Assume that $X^\frac{1}{p}$ is a ball Banach function space and
the Hardy--Littlewood maximal operator is bounded on $(X^\frac{1}{p})'$.
Then, for any $f\in X$,
$$
\|f\|_X\leq\sup_{\|g\|_{(X^\frac{1}{p})'}\leq1}
\left[\int_{\mathbb{R}^n}
\left|f(x)\right|^pR_{(X^\frac{1}{p})'}g(x)\,dx\right]^\frac{1}{p}
\leq2^\frac{1}{p}\|f\|_X.
$$
\end{lemma}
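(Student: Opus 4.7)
The plan is to recognize this lemma as essentially a duality identity for $X$ rewritten through its $\tfrac{1}{p}$-convexification, combined with the $R$-operator construction of Lemma~\ref{4.5}.

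First, I would translate the $X$-norm into the $X^{1/p}$-norm. By Definition~\ref{tuhua} applied with exponent $\tfrac{1}{p}$, one has $\|f\|_X^p=\||f|^p\|_{X^{1/p}}$. Since $X^{1/p}$ is assumed to be a ball Banach function space, it satisfies the Fatou property built into Definition~\ref{1659}(iii), so the Lorentz--Luxemburg-type duality $(X^{1/p})''=X^{1/p}$ (with equal norms) holds; equivalently, for any nonnegative $h\in X^{1/p}$,
\begin{equation*}
\|h\|_{X^{1/p}}=\sup_{\|g\|_{(X^{1/p})'}\le 1}\int_{\mathbb{R}^n} h(x)|g(x)|\,dx.
\end{equation*}
Applying this with $h:=|f|^p$ yields
\begin{equation*}
\|f\|_X^{p}=\sup_{\|g\|_{(X^{1/p})'}\le 1}\int_{\mathbb{R}^n}|f(x)|^p|g(x)|\,dx.
\end{equation*}

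For the lower bound in the lemma, I would use Lemma~\ref{4.5}(i), which gives $|g(x)|\le R_{(X^{1/p})'}g(x)$ pointwise. Thus the integrand $|f|^p|g|$ is dominated by $|f|^pR_{(X^{1/p})'}g$, and after taking the supremum over $\|g\|_{(X^{1/p})'}\le 1$ and raising to the power $\tfrac{1}{p}$, the displayed identity for $\|f\|_X$ becomes an inequality $\le$ the middle expression of the lemma.

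For the upper bound, fix any $g$ with $\|g\|_{(X^{1/p})'}\le 1$. By Lemma~\ref{4.5}(iii) applied in the space $(X^{1/p})'$, one has $\|R_{(X^{1/p})'}g\|_{(X^{1/p})'}\le 2$, so $\tfrac{1}{2}R_{(X^{1/p})'}g$ is an admissible test function in the duality identity above. H\"older's inequality for the pair $(X^{1/p},(X^{1/p})')$ then gives
\begin{equation*}
\int_{\mathbb{R}^n}|f(x)|^pR_{(X^{1/p})'}g(x)\,dx\le \big\||f|^p\big\|_{X^{1/p}}\big\|R_{(X^{1/p})'}g\big\|_{(X^{1/p})'}\le 2\|f\|_X^{p}.
\end{equation*}
Raising to the power $\tfrac{1}{p}$ and taking the supremum over $g$ with $\|g\|_{(X^{1/p})'}\le 1$ produces the factor $2^{1/p}\|f\|_X$ claimed.

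The main potential obstacle is making sure the duality identity $\|h\|_{X^{1/p}}=\sup_{\|g\|_{(X^{1/p})'}\le 1}\int h|g|$ is invokable: this requires $X^{1/p}$ to be a ball Banach function space with the Fatou property, which is exactly hypothesis~(i). A secondary, but easy point, is the use of Lemma~\ref{4.5} in $(X^{1/p})'$, which needs $(X^{1/p})'$ to be a normed space on which $\mathcal{M}$ is bounded; this is precisely the second hypothesis. After these verifications are in place, the argument is essentially two applications of the associate-space duality sandwiched around Lemma~\ref{4.5}(i) and (iii), so no further technical machinery is needed.
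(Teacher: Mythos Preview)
Your proof is correct and is exactly the standard Rubio de Francia duality argument that underlies the cited result; the paper does not write out a proof but simply invokes \cite[Lemma~4.7 and Remark~4.8]{dlyyz.arxiv}, and your argument (convexification identity, Lorentz--Luxemburg duality via Lemma~\ref{1555}, then Lemma~\ref{4.5}(i) for the lower bound and Lemma~\ref{4.5}(iii) with H\"older (Lemma~\ref{1639}) for the upper bound) is precisely how that lemma is proved.
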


The following lemma is just \cite[Lemma~2.6]{zwyy2021}.

\begin{lemma}\label{1555}
Let $X$ be a ball Banach function space.
Then $X$ coincides with its second associate space $X''$.
In other words, a function $f\in X$ if and only if
$f\in X''$ and, in that case,
$$
\|f\|_X=\|f\|_{X''}.
$$
\end{lemma}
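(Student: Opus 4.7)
The plan is to establish the two inequalities $\|f\|_{X''}\le\|f\|_X$ and $\|f\|_X\le\|f\|_{X''}$ separately, following the Lorentz--Luxemburg blueprint adapted to the ball-based axioms of Definition \ref{1659}. Throughout, note that Remark \ref{2143} ensures $X'$ is a ball Banach function space, so iterating Definition \ref{associte} to obtain $X''$ is legitimate.

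The direction $\|f\|_{X''}\le\|f\|_X$ is a double application of the defining supremum. For any $g\in X'$ with $\|g\|_{X'}\le1$, the definition of the associate norm applied to $X$ itself gives $\int_{\mathbb{R}^n}|fg|\,dx\le\|f\|_X\|g\|_{X'}\le\|f\|_X$; taking the supremum over such $g$ shows $\|f\|_{X''}\le\|f\|_X$, in particular yielding the embedding $X\hookrightarrow X''$.

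For the reverse inequality, I would first reduce to the case of a nonnegative $h\in X$ that is bounded and supported in a ball. Replacing $f$ by $|f|$, set $f_k:=\min\{|f|,k\}\mathbf{1}_{B(\mathbf{0},k)}$ for each $k\in\mathbb{N}$; then properties (ii) and (iv) of Definition \ref{1659} place each $f_k$ in $X$, and $f_k\uparrow|f|$ pointwise. The Fatou-type property (iii), applied separately in $X$ and in $X''$ (which inherits (iii) because it is itself a ball Banach function space), yields $\|f_k\|_X\uparrow\|f\|_X$ and $\|f_k\|_{X''}\uparrow\|f\|_{X''}$. Hence it suffices to show $\|h\|_X\le\|h\|_{X''}$ whenever $h\ge 0$ is bounded with $\mathrm{supp\,}(h)\subset B$ for some ball $B\in\mathbb{B}$.

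The hard step, and the main obstacle, is this reduced case, which I would settle by a Hahn--Banach separation localized to $B$. Suppose, for contradiction, that $\|h\|_X>\alpha>\|h\|_{X''}$ for some $\alpha>0$. By axiom (vi) of Definition \ref{1659}, every $u\in X$ with $\mathrm{supp\,}(u)\subset B$ satisfies $\|u\|_{L^1(B)}\le C_{(B)}\|u\|_X$, so
$$
C_\alpha:=\left\{u\in X:\ u\ge 0,\ \|u\|_X\le\alpha,\ \mathrm{supp\,}(u)\subset B\right\}
$$
is a convex subset of $L^1(B)$. Moreover $C_\alpha$ is closed in $L^1(B)$: given an $L^1$-convergent sequence $u_n\to u$ inside $C_\alpha$, passing to an a.e.-convergent subsequence and applying (iii) to the monotone sequence $v_n:=\inf_{m\ge n}u_m\uparrow u$ together with monotonicity (ii) produces $\|u\|_X\le\liminf_n\|u_n\|_X\le\alpha$. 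Since $h\notin C_\alpha$, the Hahn--Banach separation theorem in $L^1(B)$, combined with the duality $(L^1(B))^\ast=L^\infty(B)$, supplies $\Lambda\in L^\infty(B)$ and $\beta\in[0,\infty)$ such that $\int_Bh\Lambda\,dx>\beta\ge\sup_{u\in C_\alpha}\int_Bu\Lambda\,dx$. Replacing $\Lambda$ by $\Lambda_+$ preserves both inequalities because $h,u\ge 0$, and testing with $\alpha|v|\mathbf{1}_B$ for arbitrary $v\in X$ with $\|v\|_X\le 1$ (which lies in $C_\alpha$ thanks to (ii)) yields $\beta>0$ together with $\|\Lambda_+\mathbf{1}_B\|_{X'}\le\beta/\alpha$. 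Setting $g:=(\alpha/\beta)\Lambda_+\mathbf{1}_B$ then gives $\|g\|_{X'}\le 1$ and $\int_{\mathbb{R}^n}hg\,dx>\alpha$, contradicting $\|h\|_{X''}<\alpha$. The delicate points are the $L^1(B)$-closedness of $C_\alpha$ and the passage from the separating $L^\infty$ functional to a unit element of $X'$; both depend in an essential way on the ball-based axioms (iii) and (vi) of Definition \ref{1659}, which is precisely what allows the classical Lorentz--Luxemburg argument to survive in the present ball Banach setting.
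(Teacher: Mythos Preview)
Your argument is correct and follows the classical Lorentz--Luxemburg route (reduction via the Fatou property to bounded, ball-supported nonnegative functions, then Hahn--Banach separation in $L^1(B)$ using axiom (vi) to embed the $X$-ball into $L^1(B)$). One small point worth making explicit: when you replace $\Lambda$ by $\Lambda_+$, the inequality $\sup_{u\in C_\alpha}\int_B u\Lambda_+\,dx\le\beta$ is preserved not simply ``because $u\ge0$'' but because $C_\alpha$ is stable under multiplication by indicators of measurable subsets of $B$ (via axiom (ii)), so that $u\mathbf{1}_{\{\Lambda>0\}}\in C_\alpha$ and $\int_B u\Lambda_+\,dx=\int_B u\mathbf{1}_{\{\Lambda>0\}}\Lambda\,dx\le\beta$.

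As for comparison with the paper: the paper does not prove this lemma at all but simply quotes it as \cite[Lemma~2.6]{zwyy2021}. Your write-up therefore supplies what the paper omits, and is in fact the standard proof one finds behind that citation (ultimately going back to the Bennett--Sharpley treatment, adapted here so that axiom (vi) replaces the full $L^1$-embedding on sets of finite measure). There is nothing to compare in terms of strategy; your proof is self-contained and appropriate for the ball Banach setting.
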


Now, we show Theorem~\ref{upperBound}.

\begin{proof}[Proof of Theorem~\ref{upperBound}]
We only show the present theorem in the case $\gamma\in(-\infty,0)$,
because the proof for  the case $\gamma\in(0,\infty)$ is quite similar.
To this end, let $f\in C^1(\mathbb{R}^n)$ and
$|\nabla f|\in C_{\mathrm{c}}(\mathbb{R}^n)$.
By the Lusin--Lipschitz inequality
(see, for instance, \cite[(2.9)]{bvy2021}),
we find that there exists a positive constant $\widetilde{C}$,
independent of $f$, such that,
for any $(x,y)\in E_{\lambda,\frac{\gamma}{q}}[f]$,
\begin{align*}
\lambda|x-y|^\frac{\gamma}{q}<\frac{|f(x)-f(y)|}{|x-y|}
\leq\widetilde{C}\left[\mathcal{M}(|\nabla f|)(x)+
\mathcal{M}(|\nabla f|)(y)\right],
\end{align*}
which further implies that
\begin{align}\label{E12}
E_{\lambda,\frac{\gamma}{q}}[f]&\subset
\left\{(x,y)\in\mathbb{R}^n\times\mathbb{R}^n:\ x\neq y,\
\frac{\mathcal{M}(|\nabla f|)(x)}
{\lambda|x-y|^\frac{\gamma}{q}}>\frac{1}{2\widetilde{C}}\right\}\\
&\quad\cup\left\{(x,y)\in\mathbb{R}^n\times\mathbb{R}^n:\ x\neq y,\
\frac{\mathcal{M}(|\nabla f|)(y)}
{\lambda|x-y|^\frac{\gamma}{q}}>\frac{1}{2\widetilde{C}}\right\}\nonumber\\
&=:E_1\cup E_2.\nonumber
\end{align}
On the one hand,
from the definition of $E_1$, the polar coordinate,
and the assumption that $\mathcal{M}$
is bounded on $X$,
we deduce that, for any $\lambda\in(0,\infty)$,
\begin{align}\label{E1}
&\left\|\left[\int_{\mathbb{R}^n}
\mathbf{1}_{E_1}(\cdot,y)
\left|\cdot-y\right|^{\gamma-n}\,dy\right]^\frac{1}{q}\right\|_X\\
&\quad=\left\|\left[\int_{\{y\in\mathbb{R}^n:\ |\cdot-y|^\frac{\gamma}{q}
<2\widetilde{C}\lambda^{-1}\mathcal{M}(|\nabla f|)(\cdot)\}}
\left|\cdot-y\right|^{\gamma-n}\,dy\right]^\frac{1}{q}\right\|_X\nonumber\\
&\quad=\left\|\left\{\int_{\mathbb{S}^{n-1}}
\int_{[2\widetilde{C}\lambda^{-1}
\mathcal{M}(|\nabla f|)(\cdot)]^\frac{q}{\gamma}}^\infty
r^{\gamma-1}\,dr\,d\sigma\right\}^\frac{1}{q}\right\|_X\nonumber\\
&\quad\sim\frac{1}{\lambda}\left\|\mathcal{M}(|\nabla f|)\right\|_{X}
\leq\frac{\|\mathcal{M}\|_{X\to X}}{\lambda}
\left\|\,\left|\nabla f\right|\,\right\|_{X}.\nonumber
\end{align}
On the other hand, to estimate $E_2$,
we consider the following two cases on both $p$ and $q$.

\emph{Case 1)} $p=q$. In this case,
by Lemma~\ref{4.6}, the Tonelli theorem,
and the definition of $E_2$,
we conclude that
\begin{align}\label{1031}
&\left\|\left[\int_{\mathbb{R}^n}
\mathbf{1}_{E_2}(\cdot,y)
\left|\cdot-y\right|^{\gamma-n}\,dy\right]^\frac{1}{q}\right\|_X^q\\
&\quad\leq\sup_{\|g\|_{(X^\frac{1}{q})'}\leq1}
\int_{\mathbb{R}^n}\left[\int_{\mathbb{R}^n}
\mathbf{1}_{E_2}(x,y)\left|x-y\right|^{\gamma-n}\,dy\right]
R_{(X^\frac{1}{q})'}g(x)\,dx\nonumber\\
&\quad=\sup_{\|g\|_{(X^\frac{1}{q})'}\leq1}
\int_{\mathbb{R}^n}\int_{\{x\in\mathbb{R}^n:\
\frac{\mathcal{M}(|\nabla f|)(y)}
{\lambda|x-y|^\frac{\gamma}{q}}>\frac{1}{2\widetilde{C}}\}}
\left|x-y\right|^{\gamma-n}
R_{(X^\frac{1}{q})'}g(x)\,dx\,dy.\nonumber
\end{align}
From $\gamma\in(-\infty,0)$ and Lemma~\ref{ApProperty}(ii),
we deduce that,
for any $y\in\mathbb{R}^n$, $r\in(0,\infty)$,
and $\omega\in A_1(\mathbb{R}^n)$,
\begin{align}\label{1906}
&\int_{[B(y,r)]^{\complement}}
\left|x-y\right|^{\gamma-n}\omega(x)\,dx\\
&\quad=\sum_{j=1}^\infty\int_{2^jB(y,r)\setminus 2^{j-1}B(y,r)}
\left|x-y\right|^{\gamma-n}\omega(x)\,dx\nonumber\\
&\quad\leq\sum_{j=1}^\infty\left(2^{j-1}r\right)^{\gamma-n}
\omega\left(2^jB(y,r)\right)\nonumber\\
&\quad\lesssim[\omega]_{A_1(\mathbb{R}^n)}
\sum_{j=1}^\infty2^{j\gamma}
r^{\gamma-n}
\omega\left(B(y,r)\right)
\sim[\omega]_{A_1(\mathbb{R}^n)}
r^{\gamma-n}\omega\left(B(y,r)\right),\nonumber
\end{align}
where the implicit positive constants are
independent of $y$, $r$, and $\omega$,
which, combined with \eqref{1031},
Lemmas~\ref{4.5}(ii) and~\ref{ApProperty}(i),
and the assumption that $\mathcal{M}$
is bounded on $X$,
further implies that
\begin{align}\label{1039}
&\left\|\left[\int_{\mathbb{R}^n}
\mathbf{1}_{E_2}(\cdot,y)
\left|\cdot-y\right|^{\gamma-n}\,dy\right]^\frac{1}{q}\right\|_X^q\\
&\quad\lesssim\sup_{\|g\|_{(X^\frac{1}{q})'}\leq1}
\int_{\mathbb{R}^n}
\left[\frac{\mathcal{M}(|\nabla f|)(y)}
{\lambda}\right]^{\frac{q(\gamma-n)}{\gamma}}\nonumber\\
&\qquad\times\int_{B(y,[2\widetilde{C}\lambda^{-1}
\mathcal{M}(|\nabla f|)(y)]^\frac{q}{\gamma})}
R_{(X^\frac{1}{q})'}g(x)\,dx\,dy\nonumber\\
&\quad\leq\sup_{\|g\|_{(X^\frac{1}{q})'}\leq1}
\int_{\mathbb{R}^n}
\left[\frac{\mathcal{M}(|\nabla f|)(y)}{\lambda}
\right]^{\frac{q(\gamma-n)}{\gamma}}\nonumber\\
&\qquad\times\inf_{x\in B(y,[2\widetilde{C}\lambda^{-1}
\mathcal{M}(|\nabla f|)(y)]^\frac{q}{\gamma})}
\mathcal{M}\left(R_{(X^\frac{1}{q})'}g\right)(x)\nonumber\\
&\qquad\times
\left|B(y,[2\widetilde{C}\lambda^{-1}
\mathcal{M}(|\nabla f|)(y)]^\frac{q}{\gamma})\right|
\,dy\nonumber\\
&\quad\lesssim\left[R_{(X^\frac{1}{q})'}g\right]_{A_1(\mathbb{R}^n)}
\sup_{\|g\|_{(X^\frac{1}{q})'}\leq1}
\int_{\mathbb{R}^n}
\left[\frac{\mathcal{M}(|\nabla f|)(y)}{\lambda}\right]^{q}\nonumber\\
&\qquad\times\inf_{x\in B(y,[2\widetilde{C}\lambda^{-1}
\mathcal{M}(|\nabla f|)(y)]^\frac{q}{\gamma})}
R_{(X^\frac{1}{q})'}g(x)\,dy\nonumber\\
&\quad\lesssim\left\|\mathcal{M}\right\|_{(X^\frac{1}{q})'\to(X^\frac{1}{q})'}
\sup_{\|g\|_{(X^\frac{1}{q})'}\leq1}
\int_{\mathbb{R}^n}
\left[\frac{\mathcal{M}(|\nabla f|)(y)}{\lambda}\right]^{q}
R_{(X^\frac{1}{q})'}g(y)\,dy\nonumber\\
&\quad\lesssim\lambda^{-q}\|\mathcal{M}\|_{(X^\frac{1}{q})'\to(X^\frac{1}{q})'}
\left\|\mathcal{M}(|\nabla f|)\right\|_{X}^q\nonumber\\
&\quad\leq\lambda^{-q}\|\mathcal{M}\|_{(X^\frac{1}{q})'\to(X^\frac{1}{q})'}
\|\mathcal{M}\|_{X\to X}^q
\left\|\,\left|\nabla f\right|\,\right\|_{X}^q.\nonumber
\end{align}

\emph{Case 2)} $q\in(0,p)$. In this case,
let $r:=\frac{p}{q}\in(1,\infty)$, $\omega\in A_1(\mathbb{R}^n)$,
and $\mu:=\omega^{1-r'}$.
By both (i) and (v) of Lemma~\ref{ApProperty}, we find that
$\omega\in A_r(\mathbb{R}^n)$, $\mu\in A_{r'}(\mathbb{R}^n)$,
$[\mu]_{A_{r'}(\mathbb{R}^n)}^{r-1}=[\omega]_{A_r(\mathbb{R}^n)}
\leq[\omega]_{A_1(\mathbb{R}^n)}$,
and
$[L_\omega^r(\mathbb{R}^n)]'=L_\mu^{r'}(\mathbb{R}^n)$.
From this, Lemma~\ref{1555} with $X:=L_\omega^r(\mathbb{R}^n)$,
Definition~\ref{associte}, (i), (ii),
and (vi) of Lemma~\ref{4.5}, Lemma \ref{ApProperty}(vi),
and an argument similar to that used in the estimation of \eqref{1039},
we infer that
\begin{align*}
&\left\|\int_{\mathbb{R}^n}
\mathbf{1}_{E_2}(\cdot,y)|\cdot-y|^{\gamma-n}\,dy
\right\|_{L_\omega^r(\mathbb{R}^n)}\\
&\quad=\left\|\int_{\mathbb{R}^n}
\mathbf{1}_{E_2}(\cdot,y)|\cdot-y|^{\gamma-n}\,dy
\right\|_{[L_\omega^r(\mathbb{R}^n)]''}
=\left\|\int_{\mathbb{R}^n}
\mathbf{1}_{E_2}(\cdot,y)|\cdot-y|^{\gamma-n}\,dy
\right\|_{[L_\mu^{r'}(\mathbb{R}^n)]'}\nonumber\\
&\quad=\sup_{\|g\|_{L_\mu^{r'}(\mathbb{R}^n)}=1}
\int_{\mathbb{R}^n}\left[\int_{\mathbb{R}^n}
\mathbf{1}_{E_2}(x,y)|x-y|^{\gamma-n}\,
dy\right]g(x)\,dx\nonumber\\
&\quad\leq\sup_{\|g\|_{L_\mu^{r'}(\mathbb{R}^n)}=1}
\int_{\mathbb{R}^n}\left[\int_{\mathbb{R}^n}
\mathbf{1}_{E_2}(x,y)|x-y|^{\gamma-n}\,
dy\right]Rg(x)\,dx\nonumber\\
&\quad\lesssim\sup_{\|g\|_{L_\mu^{r'}(\mathbb{R}^n)}=1}
[Rg]_{A_1(\mathbb{R}^n)}\lambda^{-q}
\int_{\mathbb{R}^n}\left[\mathcal{M}
\left(\left|\nabla f\right|\right)(x)\right]^qRg(x)\,dx\nonumber\\
&\quad\lesssim\sup_{\|g\|_{L_\mu^{r'}(\mathbb{R}^n)}=1}
\left\|\mathcal{M}\right\|_{L_\mu^{r'}
(\mathbb{R}^n)\to L_\mu^{r'}(\mathbb{R}^n)}
\lambda^{-q}
\left\|\left[\mathcal{M}\left(\left|\nabla f\right|
\right)\right]^q\right\|_{L^r_\omega(\mathbb{R}^n)}
\left\|Rg\right\|_{L_\mu^{r'}(\mathbb{R}^n)}\nonumber\\
&\quad\lesssim
[\mu]_{A_{r'}(\mathbb{R}^n)}^{r-1}
\lambda^{-q}
\left\|\left[\mathcal{M}\left(\left|\nabla f\right|
\right)\right]^q\right\|_{L^r_\omega(\mathbb{R}^n)}
\leq[\omega]_{A_1(\mathbb{R}^n)}
\lambda^{-q}
\left\|\left[\mathcal{M}\left(\left|\nabla f\right|
\right)\right]^q\right\|_{L^r_\omega(\mathbb{R}^n)},
\end{align*}
where $Rg:=R_Xg$ is the same as in \eqref{RXg}
with $X$ replaced by $L_\mu^{r'}(\mathbb{R}^n)$,
which, together with Lemmas~\ref{4.6} and~\ref{4.5}(ii)
and the assumption that $\mathcal{M}$
is bounded on $X$, further implies that
\begin{align*}
&\left\|\left[\int_{\mathbb{R}^n}
\mathbf{1}_{E_2}(\cdot,y)
\left|\cdot-y\right|^{\gamma-n}\,dy\right]^\frac{1}{q}\right\|_X^q\nonumber\\
&\quad\leq\sup_{\|g\|_{(X^\frac{1}{p})'}\leq1}
\left\{\int_{\mathbb{R}^n}\left[\int_{\mathbb{R}^n}
\mathbf{1}_{E_2}(x,y)
\left|x-y\right|^{\gamma-n}\,dy\right]^{\frac{p}{q}}
R_{(X^\frac{1}{p})'}g(x)\,dx\right\}^{\frac{q}{p}}\nonumber\\
&\quad\lesssim\lambda^{-q}\sup_{\|g\|_{(X^\frac{1}{p})'}\leq1}
\left[R_{(X^\frac{1}{p})'}g\right]_{A_1(\mathbb{R}^n)}
\left\{\int_{\mathbb{R}^n}\left[\mathcal{M}
\left(\left|\nabla f\right|\right)(x)\right]^p
R_{(X^\frac{1}{p})'}g(x)\,dx\right\}^{\frac{q}{p}}\\
&\quad\lesssim\lambda^{-q}
\left\|\mathcal{M}\right\|_{(X^\frac{1}{p})'\to(X^\frac{1}{p})'}
\left\|\mathcal{M}(|\nabla f|)\right\|_{X}^q\\
&\quad\leq\lambda^{-q}
\left\|\mathcal{M}\right\|_{(X^\frac{1}{p})'\to(X^\frac{1}{p})'}
\left\|\mathcal{M}\right\|_{X\to X}^q
\left\|\,\left|\nabla f\right|\,\right\|_{X}^q.
\end{align*}
By this, Definition~\ref{1659}(v),
\eqref{E12}, \eqref{E1}, and \eqref{1039},
we conclude that
\begin{align*}
&\sup_{\lambda\in(0,\infty)}\lambda
\left\|\left[\int_{\mathbb{R}^n}
\mathbf{1}_{E_{\lambda,\frac{\gamma}{q}}[f]}(\cdot,y)
\left|\cdot-y\right|^{\gamma-n}\,dy\right]^\frac{1}{q}\right\|_X\\
&\quad\lesssim\sup_{\lambda\in(0,\infty)}\lambda
\left\|\left[\int_{\mathbb{R}^n}
\mathbf{1}_{E_1}(\cdot,y)
\left|\cdot-y\right|^{\gamma-n}\,dy\right]^\frac{1}{q}\right.\\
&\qquad\left.
+\left[\int_{\mathbb{R}^n}
\mathbf{1}_{E_2}(\cdot,y)
\left|\cdot-y\right|^{\gamma-n}\,dy\right]^\frac{1}{q}\right\|_X\\
&\quad\leq\sup_{\lambda\in(0,\infty)}\lambda
\left\|\left[\int_{\mathbb{R}^n}
\mathbf{1}_{E_1}(\cdot,y)
\left|\cdot-y\right|^{\gamma-n}\,dy\right]^\frac{1}{q}\right\|_X\\
&\qquad+\sup_{\lambda\in(0,\infty)}\lambda
\left\|\left[\int_{\mathbb{R}^n}
\mathbf{1}_{E_2}(\cdot,y)
\left|\cdot-y\right|^{\gamma-n}\,dy\right]^\frac{1}{q}\right\|_X\\
&\quad\lesssim
\left[1+\left\|\mathcal{M}\right\|_{(X^\frac{1}{p})'
\to(X^\frac{1}{p})'}^\frac{1}{q}\right]
\left\|\mathcal{M}\right\|_{X\to X}
\left\|\,\left|\nabla f\right|\,\right\|_{X}.
\end{align*}
This finishes the proof of Theorem~\ref{upperBound}.
\end{proof}

\begin{remark}
In Theorem~\ref{upperBound},
if $X:=L^p(\mathbb{R}^n)$ with $p\in(1,\infty)$
and if $q\in(0,p]$, then both $X$ and $X^\frac{1}{p}=L^1(\mathbb{R}^n)$
are ball Banach function spaces and
the Hardy--Littlewood maximal operator is bounded
on both $X$ and $(X^\frac{1}{p})'=L^\infty(\mathbb{R}^n)$.
Thus, Theorem~\ref{upperBound}
in this case
holds true, which when $q=p\in(1,\infty)$
is just \cite[Proposition 2.1]{bsvy.arxiv}
and which when $q\in(0,p)$ is new.
\end{remark}

Recall that the classical Morrey inequality states that,
for any given $r\in(n,\infty)$, any
$f\in C^1(\mathbb{R}^n)$ with $|\nabla f|\in C_{\mathrm{c}}(\mathbb{R}^n)$,
and almost every $x,y\in\mathbb{R}^n$ with $x\neq y$,
\begin{align*}
\frac{|f(x)-f(y)|}{|x-y|}\lesssim
\left[\mathcal{M}(|\nabla f|^r)(x)\right]^\frac{1}{r},
\end{align*}
where the implicit positive constant depends only on both $r$ and $n$
(see, for instance, \cite[Theorem~4.10(i)]{eg2015}).
By this and an argument similar to that used
in the estimation of \eqref{E1},
we obtain the following conclusion of the upper estimate of \eqref{2213}
with a full range of $q\in(0,\infty)$; we omit the details here.

\begin{theorem}\label{2001}
Let $X$ be a ball Banach function space, $\gamma\in\mathbb{R}\setminus\{0\}$,
$r\in(n,\infty)$, and $q\in(0,\infty)$.
Assume that the Hardy--Littlewood maximal operator $\mathcal{M}$
is bounded on $X^\frac{1}{r}$ with its operator norm denoted by
$\|\mathcal{M}\|_{X^\frac{1}{r}
\to X^\frac{1}{r}}$.
Then there exists a positive constant
$C$,
depending only on $r$, $q$, $\gamma$, $n$, and $\|\mathcal{M}\|_{X^\frac{1}{r}
\to X^\frac{1}{r}}$, such that,
for any $f\in C^1(\mathbb{R}^n)$ with
$|\nabla f|\in C_{\mathrm{c}}(\mathbb{R}^n)$,
\begin{align*}
\sup_{\lambda\in(0,\infty)}\lambda
\left\|\left[\int_{\mathbb{R}^n}
\mathbf{1}_{E_{\lambda,\frac{\gamma}{q}}[f]}(\cdot,y)
\left|\cdot-y\right|^{\gamma-n}\,dy\right]^\frac{1}{q}\right\|_X
\leq C\left\|\,\left|\nabla f\right|\,\right\|_{X},
\end{align*}
where, for any $\lambda\in(0,\infty)$,
$E_{\lambda,\frac{\gamma}{q}}[f]$ is the same as in \eqref{Elambda}.
\end{theorem}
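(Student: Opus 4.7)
The plan is to mirror the treatment of the set $E_1$ in the proof of Theorem~\ref{upperBound}, but substituting the Morrey inequality for the Lusin--Lipschitz inequality. The decisive gain is that the Morrey inequality provides a pointwise estimate for $|f(x)-f(y)|/|x-y|$ that depends only on $x$, so there is no asymmetric counterpart of the set $E_2$ to deal with; this removes the need for the extrapolation machinery (Lemmas~\ref{4.5} and~\ref{4.6}) and reduces the hypotheses to the mere boundedness of $\mathcal{M}$ on $X^{1/r}$.

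Given $f\in C^1(\mathbb{R}^n)$ with $|\nabla f|\in C_{\mathrm{c}}(\mathbb{R}^n)$, $\lambda\in(0,\infty)$, and $(x,y)\in E_{\lambda,\gamma/q}[f]$, I would first rewrite the defining inequality as $\lambda|x-y|^{\gamma/q}<|f(x)-f(y)|/|x-y|$ and then invoke the Morrey inequality recalled just before the statement to obtain
$$
|x-y|^{\gamma/q}\lesssim\lambda^{-1}\left[\mathcal{M}(|\nabla f|^r)(x)\right]^{1/r}.
$$
Setting $\rho:=\rho(x,\lambda):=(C\lambda^{-1}[\mathcal{M}(|\nabla f|^r)(x)]^{1/r})^{q/\gamma}$ with $C$ the implicit Morrey constant, the $x$-slice $\{y\in\mathbb{R}^n:(x,y)\in E_{\lambda,\gamma/q}[f]\}$ is contained in $B(x,\rho)$ when $\gamma>0$ and in $\mathbb{R}^n\setminus B(x,\rho)$ when $\gamma<0$; this sign bookkeeping is the only real subtlety in the argument.

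Next, I would evaluate the inner integral in polar coordinates. A direct computation shows that, in either sign case,
$$
\int_{\mathbb{R}^n}\mathbf{1}_{E_{\lambda,\gamma/q}[f]}(x,y)|x-y|^{\gamma-n}\,dy\lesssim\frac{|\mathbb{S}^{n-1}|}{|\gamma|}\rho^{\gamma}\sim\lambda^{-q}\left[\mathcal{M}(|\nabla f|^r)(x)\right]^{q/r},
$$
with constants depending only on $\gamma$, $q$, and $n$. Raising to the power $1/q$ and then taking the $X$-norm of both sides gives, uniformly in $\lambda\in(0,\infty)$,
$$
\lambda\left\|\left[\int_{\mathbb{R}^n}\mathbf{1}_{E_{\lambda,\gamma/q}[f]}(\cdot,y)\left|\cdot-y\right|^{\gamma-n}\,dy\right]^{1/q}\right\|_X\lesssim\left\|\left[\mathcal{M}(|\nabla f|^r)\right]^{1/r}\right\|_X.
$$

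Finally, by Definition~\ref{tuhua} one has $\|[\mathcal{M}(|\nabla f|^r)]^{1/r}\|_X=\|\mathcal{M}(|\nabla f|^r)\|_{X^{1/r}}^{1/r}$, and the assumed boundedness of $\mathcal{M}$ on $X^{1/r}$ bounds this by $\|\mathcal{M}\|_{X^{1/r}\to X^{1/r}}^{1/r}\||\nabla f|^r\|_{X^{1/r}}^{1/r}=\|\mathcal{M}\|_{X^{1/r}\to X^{1/r}}^{1/r}\|\,|\nabla f|\,\|_X$. Taking the supremum over $\lambda$ on the left yields the conclusion with constant as described. The main obstacle, as noted, is purely bookkeeping: correctly reversing the inequality when $\gamma<0$ so that the polar-coordinate integral is taken over the exterior of a ball and still produces the same power $\rho^\gamma$, thereby making the unified estimate possible.
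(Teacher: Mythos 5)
Your proof is correct and is precisely the argument the paper has in mind: the paper explicitly omits the details, noting only that one should combine the Morrey inequality with "an argument similar to that used in the estimation of \eqref{E1}," which is exactly what you have carried out (including the sign-dependent ball/complement dichotomy, the polar-coordinate evaluation yielding $\rho^\gamma\sim\lambda^{-q}[\mathcal{M}(|\nabla f|^r)]^{q/r}$, and the passage to $X^{1/r}$ via Definition~\ref{tuhua}). Your observation that the one-sided Morrey bound eliminates the counterpart of $E_2$ and hence the extrapolation machinery is the correct reason the hypotheses here are weaker than in Theorem~\ref{upperBound}.
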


\begin{remark}
In Theorem~\ref{2001},
if $X:=L^p(\mathbb{R}^n)$ with $p\in(n,\infty)$
and if $q\in(0,\infty)$, then there exists an $r\in(n,p)$ such that
the Hardy--Littlewood maximal operator is bounded on
$X^\frac{1}{r}=L^\frac{p}{r}(\mathbb{R}^n)$
and hence Theorem~\ref{2001}
in this case
holds true, which when $q=p\in(n,\infty)$
is a part of \cite[Proposition 2.1]{bsvy.arxiv}
and which when $q\in(0,\infty)\setminus\{p\}$ is new.
\end{remark}

Recall that, in the endpoint case, that is, when $p=1$,
the upper estimate in \eqref{2118} also holds true
(see, for instance, \cite[Theorem~2.2]{bsvy.arxiv}).
Notice that, in Theorem~\ref{upperBound},
we assume that the Hardy--Littlewood maximal operator
$\mathcal{M}$ is bounded on $X$
which is not true when $X:=L^1(\mathbb{R}^n)$
and hence Theorem~\ref{upperBound} can not give the
upper estimate in \eqref{2213} when $X:=L^1(\mathbb{R}^n)$.
Thus, to establish \eqref{2213} in the setting of ball Banach function space
$X$ on which $\mathcal{M}$ might be not known to be bounded,
we have two other conclusions
(see Theorems~\ref{gamma>0} and~\ref{n=1} below).
The following is on the case $\gamma\in(0,\infty)$.

\begin{theorem}\label{gamma>0}
Let $X$ be a ball Banach function space,
$\gamma\in(0,\infty)$, $p\in[1,\infty)$,
and $q\in(0,\infty)$ satisfy $n(\frac{1}{p}-\frac{1}{q})<1$.
Assume both that
$X^\frac{1}{p}$ is a ball Banach function space
and that the Hardy--littlewood maximal operator $\mathcal{M}$
is bounded on $(X^\frac{1}{p})'$ with its operator norm
denoted by $\|\mathcal{M}\|_{(X^\frac{1}{p})'\to(X^\frac{1}{p})'}$.
Then there exists a positive constant $C$,
depending only on $p$, $q$, $\gamma$, $n$,
and $\|\mathcal{M}\|_{(X^\frac{1}{p})'\to(X^\frac{1}{p})'}$, such that,
for any $f\in C^1(\mathbb{R}^n)$ with
$|\nabla f|\in C_{\mathrm{c}}(\mathbb{R}^n)$,
\begin{align*}
\sup_{\lambda\in(0,\infty)}\lambda
\left\|\left[\int_{\mathbb{R}^n}
\mathbf{1}_{E_{\lambda,\frac{\gamma}{q}}[f]}(\cdot,y)
\left|\cdot-y\right|^{\gamma-n}\,dy\right]^\frac{1}{q}\right\|_X
\leq C\left\|\,|\nabla f|\,\right\|_X,
\end{align*}
where $\kappa(q,n)$ and $E_{\lambda,\frac{\gamma}{q}}[f]$
with $\lambda\in(0,\infty)$ are the same as, respectively, in
\eqref{kappaqn} and \eqref{Elambda},
and $C$ is continuous with
respect to $\|\mathcal{M}\|_{(X^\frac{1}{p})'\to(X^\frac{1}{p})'}$
and increases as $\|\mathcal{M}\|_{(X^\frac{1}{p})'\to(X^\frac{1}{p})'}$ increases.
\end{theorem}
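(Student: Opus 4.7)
Since the Hardy--Littlewood maximal operator is not assumed bounded on $X$ here, the Lusin--Lipschitz shortcut that drove Theorem~\ref{upperBound} is unavailable. Instead, my plan is to combine the extrapolation principle of Lemmas~\ref{4.5} and~\ref{4.6} with the adjacent system of dyadic cubes of $\mathbb{R}^n$ and an $L^p$-Poincar\'e inequality. The role of $(X^{1/p})'$ is solely to furnish $A_1$ weights via Lemma~\ref{4.5}(ii), so the problem reduces to a weighted $L^p$ inequality on $\mathbb{R}^n$ in the spirit of \eqref{1506}.

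First, I would apply Lemma~\ref{4.6} to the nonnegative function $[\int_{\mathbb{R}^n}\mathbf{1}_{E_{\lambda,\gamma/q}[f]}(\cdot,y)|\cdot-y|^{\gamma-n}\,dy]^{1/q}$, reducing the desired $X$-norm estimate to the uniform weighted bound
$$\int_{\mathbb{R}^n}\left[\int_{\mathbb{R}^n}\mathbf{1}_{E_{\lambda,\gamma/q}[f]}(x,y)|x-y|^{\gamma-n}\,dy\right]^{p/q}\omega(x)\,dx\;\lesssim\;\lambda^{-p}\int_{\mathbb{R}^n}|\nabla f(x)|^p\omega(x)\,dx,$$
required uniformly in $\omega:=R_{(X^{1/p})'}g$ with $\|g\|_{(X^{1/p})'}\le 1$; by Lemma~\ref{4.5}(ii), each such $\omega$ satisfies $[\omega]_{A_1(\mathbb{R}^n)}\le 2\|\mathcal{M}\|_{(X^{1/p})'\to(X^{1/p})'}$, so it suffices to prove this weighted inequality uniformly over $A_1$-weights.

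Next, I would fix a finite collection $\{\mathcal{D}^\alpha\}_{\alpha=1}^N$ of adjacent dyadic systems of $\mathbb{R}^n$ so that every pair $(x,y)$ with $x\neq y$ is contained in a common cube $Q\in\mathcal{D}^\alpha$ (for some $\alpha$) of side length $\ell(Q)\sim|x-y|$, and I would decompose the inner integral dyadically over the scales $|x-y|\in(2^{-k-1},2^{-k}]$, $k\in\mathbb{Z}$. Whenever $(x,y)\in E_{\lambda,\gamma/q}[f]$ lies in such a cube $Q$, the triangle inequality forces either $|f(x)-f_Q|$ or $|f(y)-f_Q|$ to exceed $\tfrac12\lambda\ell(Q)^{1+\gamma/q}$, and Chebyshev's inequality combined with the $L^p$-Poincar\'e inequality $\int_Q|f-f_Q|^p\lesssim \ell(Q)^p\int_Q|\nabla f|^p$ controls the measure of the ``bad'' slice in terms of $\int_Q|\nabla f|^p$ and an explicit power of $\ell(Q)$.

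Finally, I would assemble these cube-wise bounds into the weighted integral by separating the ``$x$-bad'' and ``$y$-bad'' subcases and summing over $Q$, $\alpha$, and $k$. In the ``$x$-bad'' case the weight $\omega$ pairs naturally with $|\nabla f|^p$ on $Q$; in the asymmetric ``$y$-bad'' case the weight must be transferred between variables via the $A_1$-chain $\omega(Q)\le[\omega]_{A_1}|Q|\inf_Q\omega$ from Lemma~\ref{ApProperty}, after which the sum over cubes and scales collapses into a control by $\lambda^{-p}\int|\nabla f|^p\omega$. The hard part will be precisely this ``$y$-bad'' case: the asymmetry of the integrand $|x-y|^{\gamma-n}\,dy$ in $(x,y)$ forces the delicate weight-transfer above, and the convergence of the resulting geometric series over the scales $k$ holds exactly when $n(1/p-1/q)<1$, which is the scaling hypothesis of the theorem.
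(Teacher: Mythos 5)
Your overall strategy matches the paper's: reduce to a weighted inequality with $A_1$ weights via Lemmas~\ref{4.5} and~\ref{4.6}, split into an ``$x$-bad'' and a ``$y$-bad'' case, use adjacent dyadic systems (Lemma~\ref{2115}) and a Poincar\'e-type estimate, and transfer the weight in the asymmetric case via Lemma~\ref{ApProperty}. This is indeed how the paper proceeds (Proposition~\ref{2010} followed by the extrapolation step).

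However, there is a real gap in the central mechanism. You propose fixing the cube $Q$ at the \emph{same} scale $\ell(Q)\sim|x-y|$, declaring $x$ ``bad'' when $|f(x)-f_Q|>\tfrac12\lambda\ell(Q)^{1+\gamma/q}$, and then bounding the (weighted) measure of the bad slice by Chebyshev together with the $L^p$-Poincar\'e inequality. Carrying this out gives, for a cube $Q$ of side $\sim 2^{-k}$,
\begin{align*}
\omega(B_Q)\lesssim\lambda^{-p}\,\ell(Q)^{-p\gamma/q}\int_Q|\nabla f|^p\,\omega ,
\end{align*}
and when you feed this into the outer integral (the inner $y$-integral contributes $\sim 2^{-k\gamma}$ per bad scale, then raise to the power $p/q$), the factors $\ell(Q)^{\pm p\gamma/q}$ cancel exactly:
\begin{align*}
\sum_k 2^{-k\gamma p/q}\sum_{Q:\ \ell(Q)\sim2^{-k}}\omega(B_Q)
\lesssim \lambda^{-p}\sum_k\sum_{Q}\int_Q|\nabla f|^p\,\omega
= \lambda^{-p}\sum_k\int_{\mathbb{R}^n}|\nabla f|^p\,\omega ,
\end{align*}
which diverges. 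There is no geometric gain in the scale variable, and the bound $\omega(B_Q)$ alone gives no information about the nesting of the bad sets across scales that could repair this. The hypothesis $n(\tfrac1p-\tfrac1q)<1$ never enters your $x$-bad accounting and cannot be the sole saviour in the $y$-bad case, because the scale-sum would already diverge in the symmetric case.

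The missing ingredient is the paper's \emph{stopping-time/pigeonhole} step. The paper does not use an integral Poincar\'e inequality plus Chebyshev, but the pointwise estimate of Lemma~\ref{Poin},
\begin{align*}
|f(x)-f_{B_1}|\le C_{(n)}\,r\sum_{j=0}^\infty 2^{-j}\fint_{2^{-j}B}|\nabla f|,
\end{align*}
which resolves the difference $|f(x)-f_{B_{x,y}}|$ into a series over scales $2^{-j}|x-y|$ with a built-in $2^{-j}$ decay. Since $(x,y)\in E_{\lambda,\gamma/q}[f]$ forces the sum to exceed $\tfrac12\lambda|x-y|^{\gamma/q}$, a pigeonhole against the convergent comparison series $\sum_j 2^{-j\varepsilon}$ produces a stopping scale $j_{x,y}$ for which
\begin{align*}
\fint_{B_{j_{x,y}}}|\nabla f| > c_1\, 2^{j_{x,y}(1-\varepsilon)}\left|2^{j_{x,y}}B_{j_{x,y}}\right|^{\frac{\gamma}{qn}} ,
\end{align*}
as in~\eqref{2218}. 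The factor $2^{j(1-\varepsilon)}$ is the whole point: via Lemma~\ref{ApProperty}(iii) it turns into the damping factor $2^{-jp(1-\varepsilon)}$ in~\eqref{2230}, which is what makes the sum over $j$ converge. Selecting \emph{maximal} cubes $\mathscr A^{\mathrm{max}}_{\alpha,j}$ at each fixed $j$ restores disjointness so that $\sum_Q\int_Q|\nabla f|^p\omega\le\int|\nabla f|^p\omega$. In the $y$-bad case the geometry costs an extra $2^{jn(1-p/q)}$, and only here does the hypothesis $n(\tfrac1p-\tfrac1q)<1$ (via the auxiliary parameter $\eta$ in~\eqref{1101}) interact with $2^{-jp(1-\eta)}$ to keep the series summable; see~\eqref{2245}. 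To repair your argument, you should replace the Chebyshev step by this multi-scale pointwise Poincar\'e decomposition together with the stopping-time selection of cubes with large gradient average.
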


To prove Theorem~\ref{gamma>0},
we first show the following proposition.

\begin{proposition}\label{2010}
Let $\gamma\in(0,\infty)$,
$p\in[1,\infty)$, and $q\in(0,\infty)$
satisfy $n(\frac{1}{p}-\frac{1}{q})<1$.
Assume that $\omega\in A_1(\mathbb{R}^n)$.
Then there exists a positive constant $C_{([\omega]_{A_1(\mathbb{R}^n)})}$,
depending on $[\omega]_{A_1(\mathbb{R}^n)}$, such that,
for any $f\in C^1(\mathbb{R}^n)$ with
$|\nabla f|\in C_{\mathrm{c}}(\mathbb{R}^n)$,
\begin{align}\label{1607}
&\sup_{\lambda\in(0,\infty)}\lambda^p
\int_{\mathbb{R}^n}\left[\int_{\mathbb{R}^n}
\mathbf{1}_{E_{\lambda,\frac{\gamma}{q}}[f]}
(x,y)|x-y|^{\gamma-n}\,dy\right]^{\frac{p}{q}}\omega(x)\,dx\\
&\quad\leq C\int_{\mathbb{R}^n}\left|\nabla f(x)\right|^p\omega(x)\,dx,\nonumber
\end{align}
where $E_{\lambda,\frac{\gamma}{q}}[f]$
for any $\lambda\in(0,\infty)$ is the same as in \eqref{Elambda}
and $C_{([\omega]_{A_1(\mathbb{R}^n)})}$ is continuous with
respect to $[\omega]_{A_1(\mathbb{R}^n)}$
and increases as $[\omega]_{A_1(\mathbb{R}^n)}$ increases.
\end{proposition}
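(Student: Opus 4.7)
I plan to split the proof into the two regimes $p\in(1,\infty)$ and $p=1$, since these require rather different tools. Write $g:=|\nabla f|$ and set
$$F_\lambda(x):=\int_{\mathbb{R}^n}\mathbf{1}_{E_{\lambda,\gamma/q}[f]}(x,y)|x-y|^{\gamma-n}\,dy.$$

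For $p\in(1,\infty)$, Lemma~\ref{ApProperty}(iv) and (vi) guarantee that $\mathcal{M}$ is bounded on $L^p_\omega(\mathbb{R}^n)$ with operator norm controlled continuously by $[\omega]_{A_1(\mathbb{R}^n)}$. I would then follow the Lusin--Lipschitz strategy from the proof of Theorem~\ref{upperBound} with $X:=L^p_\omega(\mathbb{R}^n)$: the pointwise Lusin--Lipschitz inequality forces $E_{\lambda,\gamma/q}[f]\subset E_1\cup E_2$ as in the decomposition~\eqref{E12}; polar coordinates around $x$ yield $\int_{\mathbb{R}^n}\mathbf{1}_{E_1}(x,y)|x-y|^{\gamma-n}\,dy\lesssim\lambda^{-q}[\mathcal{M}g(x)]^q$, after which the weighted maximal bound settles the $E_1$-piece. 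For the $E_2$-piece a duality argument mirroring Case~1 of the proof of Theorem~\ref{upperBound} applies, with the key ingredient being the $\gamma\in(0,\infty)$ analog of~\eqref{1906}, namely $\int_{B(y,r)}|x-y|^{\gamma-n}\omega(x)\,dx\lesssim[\omega]_{A_1(\mathbb{R}^n)}r^\gamma\omega(y)$, proved via a dyadic decomposition inside $B(y,r)$ together with the pointwise $A_1$-bound $\mathcal{M}\omega\leq[\omega]_{A_1(\mathbb{R}^n)}\omega$ from Lemma~\ref{ApProperty}(i).

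The main obstacle is the endpoint $p=1$, where $\mathcal{M}$ is not bounded on $L^1_\omega(\mathbb{R}^n)$ and the strategy above collapses. In this critical case I would bypass the maximal function entirely and instead use the adjacent system of dyadic cubes of $\mathbb{R}^n$: for any distinct $x,y$ there is a cube $Q$ in one of the $3^n$ adjacent dyadic systems with $x,y\in Q$ and $\ell(Q)\sim|x-y|$. Decomposing $F_\lambda(x)$ into the dyadic annuli $\{2^{k-1}\leq|x-y|<2^k\}$ and associating each scale with its adjacent cube $Q_k(x)\supset B(x,2^k)$ of sidelength $\sim 2^k$, I would bound each annular contribution by $2^{k(\gamma-n)}|S_k(x)|$ with $S_k(x):=\{y\in B(x,2^k):|f(x)-f(y)|>c\lambda 2^{k(1+\gamma/q)}\}$. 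Chebyshev together with the Poincar\'e inequality on $Q_k(x)$, applied to the splitting $|f(x)-f(y)|\leq|f(x)-f_{Q_k(x)}|+|f(y)-f_{Q_k(x)}|$, then bounds each $|S_k(x)|$ in terms of $\int_{Q_k(x)}|\nabla f(z)|\,dz$. Raising to the $1/q$th power (subadditively when $q\geq 1$, via a layer-cake decomposition when $q<1$), integrating against $\omega(x)\,dx$, and using the $A_1$-bound $\mathcal{M}\omega\leq[\omega]_{A_1(\mathbb{R}^n)}\omega$ to convert the cube-by-cube estimate into a global integral against $|\nabla f|\cdot\omega$, would produce~\eqref{1607}. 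The hypothesis $n(1-1/q)<1$ is exactly what guarantees the geometric convergence of the resulting $k$-sum; and the non-symmetry of the integrand in $x$ and $y$ is precisely what forces the use of adjacent (rather than a single) dyadic systems, since any single dyadic grid misses generic pairs.
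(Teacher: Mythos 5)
Your split into $p=1$ (Poincar\'e and adjacent dyadic cubes) versus $p>1$ (Lusin--Lipschitz and the weighted maximal function) misses a substantial part of the hypotheses. Proposition~\ref{2010} allows $q>p$: the constraint $n(\tfrac1p-\tfrac1q)<1$ is automatic when $q\leq p$ and only becomes binding when $q>p$ (e.g.\ it allows any $q\in(0,\infty)$ if $p\geq n$). Your $p>1$ strategy, however, only works for $q\leq p$. You write that for the $E_2$-piece ``a duality argument mirroring Case~1 of the proof of Theorem~\ref{upperBound} applies'', but Case~1 there handles $q=p$, and its extension (Case~2) handles $q<p$ by dualizing in $L^{p/q}_\omega(\mathbb{R}^n)$, which is only a Banach space when $p/q\geq1$. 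If $q>p$ the exponent $p/q$ is less than one, the Tonelli/duality step is unavailable, and the Lusin--Lipschitz-plus-maximal route collapses. This is exactly why Theorem~\ref{upperBound} is stated with the restriction $q\in(0,p]$. In fact the paper runs the Poincar\'e-plus-adjacent-dyadic-cubes argument uniformly for \emph{all} $p\in[1,\infty)$, not only $p=1$; the constraint $n(\tfrac1p-\tfrac1q)<1$ is spent there, via the choice $\eta\in(0,1-n(\tfrac1p-\tfrac1q))$ in \eqref{1101}, to make the $j$-sum converge for the asymmetric piece. Your remark that ``the hypothesis $n(1-\tfrac1q)<1$ is exactly what guarantees the geometric convergence'' conflates the $p=1$ instance with the general constraint and attributes it to the wrong regime.

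A secondary, more local point on the $p=1$ sketch: you propose always adapting the cube $Q_k(x)$ to $x$, whereas the paper splits $|f(x)-f(y)|$ using $f_{B_{x,y}}$ with $B_{x,y}:=B(\tfrac{x+y}{2},|x-y|)$ and then runs a Poincar\'e argument centered at $x$ for the piece $E^{(1)}$ and a Poincar\'e argument centered at $y$ for the piece $E^{(2)}$. It is $E^{(2)}$ that carries the real difficulty: there the stopping cubes $Q$ live near $y$ while the integration is $\omega(x)\,dx$ over the dilated cubes $2^jQ$, and the mismatch must be paid for through Lemma~\ref{ApProperty}(ii), producing the $2^{jn(1-p/q)}$ factor in \eqref{2039} against which $\eta$ must be tuned. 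Your Chebyshev bound on $|S_k(x)|$ treats both halves of the Poincar\'e splitting on an equal footing and does not confront this asymmetry. If you redo the $p=1$ argument with the paper's two-sided splitting and then observe that it applies verbatim to every $p\in[1,\infty)$, the proposal becomes complete and matches the paper's proof.
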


To prove Proposition~\ref{2010}, we need three technical lemmas.
Let us begin with recalling the
following useful inequality.

\begin{lemma}\label{dis}
Let $r\in(0,1]$
and $\{a_j\}_{j\in\mathbb{N}}\subset\mathbb{C}$.
Then
$(\sum_{j\in\mathbb{N}}|a_j|)^r
\leq\sum_{j\in\mathbb{N}}|a_j|^r$.
\end{lemma}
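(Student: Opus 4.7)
The statement is the classical subadditivity of the power function $t\mapsto t^r$ for $r\in(0,1]$, extended to infinite sums of complex numbers. The plan is to reduce everything to the nonnegative case via $|a_j|$, then prove the finite two-term inequality, extend by induction to any finite sum, and finally pass to the infinite sum by monotone convergence.

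For the core step, I would fix $a,b\in[0,\infty)$ and show $(a+b)^r\le a^r+b^r$. If $a+b=0$ the inequality is trivial, so assume $a+b>0$ and set $s:=a/(a+b)$, $t:=b/(a+b)$, so $s,t\in[0,1]$ and $s+t=1$. Since $r\le 1$, for any $x\in[0,1]$ one has $x^r\ge x$ (because $x^{r-1}\ge 1$ when $x\in(0,1]$, and the case $x=0$ is trivial). Hence $s^r+t^r\ge s+t=1$, which upon multiplying by $(a+b)^r$ gives $a^r+b^r\ge(a+b)^r$, as required.

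Next, an immediate induction on $N\in\mathbb{N}$ yields $(\sum_{j=1}^{N}|a_j|)^r\le\sum_{j=1}^{N}|a_j|^r$ for any finite family $\{a_j\}_{j=1}^N\subset\mathbb{C}$: assuming the estimate at step $N$, the two-term inequality gives
\begin{align*}
\left(\sum_{j=1}^{N+1}|a_j|\right)^r
=\left(\sum_{j=1}^{N}|a_j|+|a_{N+1}|\right)^r
\le\left(\sum_{j=1}^{N}|a_j|\right)^r+|a_{N+1}|^r
\le\sum_{j=1}^{N+1}|a_j|^r.
\end{align*}
Finally, to pass to the countable sum, let $S_N:=\sum_{j=1}^{N}|a_j|$ and $T_N:=\sum_{j=1}^{N}|a_j|^r$. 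Both sequences are nondecreasing, so $S_N\uparrow S:=\sum_{j\in\mathbb{N}}|a_j|$ and $T_N\uparrow T:=\sum_{j\in\mathbb{N}}|a_j|^r$ in $[0,\infty]$. By continuity of $t\mapsto t^r$ on $[0,\infty]$ (with the convention $\infty^r=\infty$), the finite-sum inequality $S_N^r\le T_N$ passes to the limit to give $S^r\le T$, which is exactly the claim.

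The argument is entirely elementary; there is no real obstacle. The only subtlety worth noting is the boundary case $r=1$ (where equality holds throughout, so nothing to prove) and the handling of possibly divergent series, which is cleanly dispatched by allowing values in $[0,\infty]$ and using monotone limits.
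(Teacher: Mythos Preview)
Your proof is correct and entirely standard. The paper does not actually prove this lemma; it merely records it as a ``useful inequality'' to be recalled, so there is no approach to compare against.
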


The following Poincar\'e-type lemma is just \cite[Lemma~3.12]{dlyyz.arxiv}.

\begin{lemma}\label{Poin}
Let $p\in[1,\infty)$, $\omega\in A_p(\mathbb{R}^n)$,
and $f\in\dot{W}^{1,p}_\omega(\mathbb{R}^n)$.
Then there exists a set $A\subset\mathbb{R}^n$ with $|A|=0$
and a positive constant $C_{(n)}$, depending only on $n$, such that,
for any $x\in\mathbb{R}^n\setminus A$, any $r\in(0,\infty)$,
and any ball $B_1\subset B:=B(x,r)\subset 3B_1$,
\begin{align*}
\left|f(x)-f_{B_1}\right|\leq C_{(n)}r\sum_{j=0}^\infty
2^{-j}\fint_{2^{-j}B}\left|\nabla f(y)\right|\,dy.
\end{align*}
\end{lemma}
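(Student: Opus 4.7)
The statement is a pointwise telescoping Poincar\'e inequality, and the natural route is the classical dyadic telescoping chain combined with the $L^1$ Poincar\'e inequality on each ball, plus a comparison step converting an average over $B$ to one over $B_1$.

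First I would set $B_j:=B(x,2^{-j}r)$ for $j\in\mathbb Z_+$, so that $B_0=B$ and the right-hand side is $C_{(n)}r\sum_{j=0}^\infty 2^{-j}\fint_{B_j}|\nabla f|$. Since $\omega\in A_p(\mathbb R^n)$, one has $\omega^{1-p'}\in L^1_{\mathrm{loc}}(\mathbb R^n)$ when $p>1$ (and $\omega^{-1}\in L^\infty_{\mathrm{loc}}$ when $p=1$), so H\"older's inequality against $\nabla f\in L^p_\omega(\mathbb R^n)$ shows $\nabla f\in L^1_{\mathrm{loc}}(\mathbb R^n)$. Consequently the distribution $f$ can be identified with a locally integrable function (unique up to an additive constant), and the Lebesgue differentiation theorem yields a null set $A\subset\mathbb R^n$ such that, for every $x\in\mathbb R^n\setminus A$, $f_{B_j}\to f(x)$ as $j\to\infty$.

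Next, for such $x$ I would telescope
\begin{align*}
f(x)-f_{B_1}
&=\left(f_B-f_{B_1}\right)+\sum_{j=0}^\infty\left(f_{B_{j+1}}-f_{B_j}\right),
\end{align*}
and estimate each piece by the classical $L^1$ Poincar\'e inequality on a ball, $\fint_{B_j}|f-f_{B_j}|\,dy\le C_{(n)}\,2^{-j}r\fint_{B_j}|\nabla f|\,dy$. For the telescoping terms, since $B_{j+1}\subset B_j$ and $|B_j|=2^n|B_{j+1}|$,
\begin{align*}
\left|f_{B_{j+1}}-f_{B_j}\right|
\le\fint_{B_{j+1}}|f(y)-f_{B_j}|\,dy
\le 2^n\fint_{B_j}|f(y)-f_{B_j}|\,dy
\le C_{(n)}\,2^{-j}r\fint_{B_j}|\nabla f|\,dy.
\end{align*}
For the correction term, the hypothesis $B_1\subset B\subset 3B_1$ gives $|B|\le 3^n|B_1|$, hence
\begin{align*}
\left|f_B-f_{B_1}\right|
\le\fint_{B_1}|f(y)-f_B|\,dy
\le 3^n\fint_B|f(y)-f_B|\,dy
\le C_{(n)}\,r\fint_B|\nabla f|\,dy,
\end{align*}
which is absorbed into the $j=0$ term on the right-hand side.

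Summing the estimates and applying the triangle inequality produce
\begin{align*}
\left|f(x)-f_{B_1}\right|
\le C_{(n)}\,r\sum_{j=0}^\infty 2^{-j}\fint_{B_j}|\nabla f(y)|\,dy,
\end{align*}
which is exactly the claim. The only genuinely delicate point is the initial one, namely verifying that the distribution $f$ admits a locally integrable representative so that the averages $f_{B_j}$ and the limit $f(x)$ are meaningful; this is where the $A_p$-hypothesis on $\omega$ is used, via H\"older's inequality, to pass from $\nabla f\in L^p_\omega(\mathbb R^n)$ to $\nabla f\in L^1_{\mathrm{loc}}(\mathbb R^n)$ and hence (modulo a constant) to $f\in L^1_{\mathrm{loc}}(\mathbb R^n)$. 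Once this is in place, everything reduces to the unweighted telescoping argument, and the constant depends only on $n$ as stated.
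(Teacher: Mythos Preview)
Your argument is correct and is the standard telescoping proof of this pointwise Poincar\'e-type estimate. Note that the paper does not give its own proof of this lemma; it simply quotes it as \cite[Lemma~3.12]{dlyyz.arxiv}. Your route---showing $\nabla f\in L^1_{\mathrm{loc}}(\mathbb{R}^n)$ via H\"older against the $A_p$ weight, identifying $f$ with a $W^{1,1}_{\mathrm{loc}}$ representative, taking $A$ to be the complement of the Lebesgue set of $f$, and then running the dyadic chain $f(x)-f_{B_1}=(f_B-f_{B_1})+\sum_{j\ge0}(f_{B_{j+1}}-f_{B_j})$ with the $L^1$ Poincar\'e inequality on each $B_j$---is exactly the expected argument and yields a constant depending only on $n$, as required. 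The only cosmetic point is that the phrase ``unique up to an additive constant'' is slightly off: $f$ is a given distribution, so once you know it lies in $L^1_{\mathrm{loc}}(\mathbb{R}^n)$ the representative is determined almost everywhere, not merely up to a constant; the constant ambiguity would arise only if you were reconstructing $f$ from $\nabla f$.
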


The following adjacent system of dyadic cubes can be
found in \cite[Section~2.2]{lsu2012}.

\begin{lemma}\label{2115}
For any $\alpha\in\{0,\frac{1}{3},\frac{2}{3}\}^n$, let
$$
\mathcal{D}^\alpha:=\left\{2^j\left[k+(0,1]^n+(-1)^j\alpha\right]:\
j\in\mathbb{Z},\ k\in\mathbb{Z}^n\right\}.
$$
Then
\begin{enumerate}
\item[\textup{(i)}]
for any $Q,P\in\mathcal{D}^\alpha$ with the same
$\alpha\in\{0,\frac{1}{3},\frac{2}{3}\}^n$,
$$
Q\cap P\in\left\{\emptyset,Q,P\right\};
$$
\item[\textup{(ii)}]
for any ball $B\subset\mathbb{R}^n$, there exists an
$\alpha\in\{0,\frac{1}{3},\frac{2}{3}\}^n$ and a $Q\in\mathcal{D}^\alpha$
such that $B\subset Q\subset CB$,
where the positive constant $C$ depends only on $n$.
\end{enumerate}
\end{lemma}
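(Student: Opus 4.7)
The plan is to prove the two claims essentially in the order given, by unpacking the definition of $\mathcal{D}^\alpha$ and carrying out two elementary but distinct arguments: a nesting/tiling argument for (i), and a coordinate-wise covering argument for (ii).

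For (i), the first step is to observe that, within a single family $\mathcal{D}^\alpha$, the cubes at each fixed level $j\in\mathbb{Z}$ form a partition of $\mathbb{R}^n$ (up to a set of measure zero), since they are just the standard half-open unit cubes $k+(0,1]^n$ rescaled by $2^j$ and translated by $2^j(-1)^j\alpha$. Thus two distinct level-$j$ cubes in $\mathcal{D}^\alpha$ are disjoint. The crux is to verify that a level-$(j+1)$ cube of $\mathcal{D}^\alpha$ decomposes into exactly $2^n$ level-$j$ cubes of $\mathcal{D}^\alpha$; here the apparent obstruction is the parity-dependent shift $(-1)^j\alpha$, which changes sign when $j$ moves to $j+1$. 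The key algebraic observation, and the first step I would highlight, is that for $\alpha\in\{0,\tfrac{1}{3},\tfrac{2}{3}\}^n$ one has $3\alpha\in\{0,1,2\}^n\subset\mathbb{Z}^n$, so the ``drift'' $3(-1)^j\alpha$ between adjacent levels is an integer vector and is absorbed into the $\mathbb{Z}^n$ parameter $k$; a direct computation then shows that $2^{j+1}(k+(0,1]^n+(-1)^{j+1}\alpha)$ is the disjoint union, over $m\in\{0,1\}^n$, of the cubes $2^j(k'_m+(0,1]^n+(-1)^j\alpha)$ with $k'_m:=2k-3(-1)^j\alpha+m\in\mathbb{Z}^n$. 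Iterating this downward/upward gives the standard dyadic ancestry, from which (i) follows by the usual argument: given $Q,P\in\mathcal{D}^\alpha$ with, say, $\operatorname{level}(Q)\le \operatorname{level}(P)$, either $Q\subset P$ (if they share an ancestor relationship) or $Q\cap P=\emptyset$.

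For (ii), the strategy is to fix the scale first and then use the three shifts per coordinate to capture the ball. Given $B=B(x,r)$, I would pick the unique $j\in\mathbb{Z}$ with $6r\le 2^j<12r$, so that any level-$j$ cube has side length comparable to $r$. Working coordinate by coordinate, I claim that for each $i\in\{1,\dots,n\}$ there exists $\alpha_i\in\{0,\tfrac{1}{3},\tfrac{2}{3}\}$ such that the point $x_i$ lies in the middle third of some level-$j$ interval of the one-dimensional family indexed by $\alpha_i$; this is simply because the three one-dimensional grids are translates of each other by $\tfrac{1}{3}\cdot 2^j$ (up to the sign $(-1)^j$, which only relabels the shifts) and together cover each interval thrice. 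Because the middle third has width $\tfrac{1}{3}\cdot 2^j\ge 2r$, the interval $[x_i-r,x_i+r]$ is contained in that level-$j$ interval. Taking $\alpha:=(\alpha_1,\dots,\alpha_n)$ and the corresponding $k\in\mathbb{Z}^n$ produces $Q\in\mathcal{D}^\alpha$ with $B\subset Q$; since $Q$ has side length at most $12r$ and contains the center of $B$, it is contained in $B(x,12\sqrt{n}\,r)$, yielding $Q\subset CB$ with $C:=12\sqrt{n}$ depending only on $n$.

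The main obstacle is purely notational, namely keeping track of the parity-dependent sign $(-1)^j$ in the shift $2^j(-1)^j\alpha$. In (i) this sign threatens to break the dyadic ancestor relation across levels, but is defused by the observation $3\alpha\in\mathbb{Z}^n$; in (ii) it is harmless because, at a fixed level $j$, the three values of $\alpha_i$ still produce three grids whose shifts differ by multiples of $\tfrac{1}{3}\cdot 2^j$, which is all that the coordinate-wise middle-third argument requires. Once these sign bookkeeping issues are disposed of, both (i) and (ii) reduce to the classical facts about dyadic grids and the Hyt\"onen--Kairema adjacent-dyadic construction, and no deeper machinery is needed.
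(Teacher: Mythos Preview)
Your proof is correct and self-contained. The paper, however, does not supply a proof of this lemma at all: it simply records the statement and cites \cite[Section~2.2]{lsu2012} as its source. So there is no ``paper's own proof'' to compare against; you have filled in what the paper deliberately left as a black-box citation.

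For the record, your argument matches the standard one in the adjacent-dyadic literature. The key algebraic point in (i), that $3\alpha\in\mathbb{Z}^n$ absorbs the parity drift $2(-1)^{j+1}\alpha-(-1)^j\alpha=-3(-1)^j\alpha$ into the lattice parameter, is exactly the mechanism that makes these shifted grids genuinely dyadic. Your middle-third argument for (ii) is also standard; the observation that the middle thirds of the three one-dimensional grids at level $j$ tile $\mathbb{R}$ is precisely what makes the construction work, and your scale choice $6r\le 2^j<12r$ gives the clean constant $C=12\sqrt{n}$.
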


Now, we are ready to show Proposition~\ref{2010}.

\begin{proof}[Proof of Proposition~\ref{2010}]
We borrow some ideas from the proof of \cite[Theorem~3.13]{dlyyz.arxiv}.
Let $f\in C^1(\mathbb{R}^n)$ with $|\nabla f|\in C_{\mathrm{c}}(\mathbb{R}^n)$.
Without loss of generality,
we may assume that $\lambda=1$.
Otherwise, we can replace $f$ by $\frac{f}{\lambda}$ for any $\lambda\in(0,\infty)$.
Let $A\subset\mathbb{R}^n$ be the same as in Lemma~\ref{Poin}.
For any $x,y\in\mathbb{R}^n$ with $x\neq y$,
let $B_{x,y}:=B(\frac{x+y}{2},|x-y|)$.
Then
\begin{align*}
&E_{1,\frac{\gamma}{q}}[f]\setminus
\left[(A\times\mathbb{R}^n)\cup(\mathbb{R}^n\times A)\right]\\
&\quad\subset\left\{(x,y)\in\mathbb{R}^n\times\mathbb{R}^n:\
x\neq y,\
\frac{|f(x)-f_{B_{x,y}}|}{|x-y|^{1+\frac{\gamma}{q}}}>\frac{1}{2}\right\}\setminus
(A\times\mathbb{R}^n)\\
&\qquad\cup\left\{(x,y)\in\mathbb{R}^n\times\mathbb{R}^n:\
x\neq y,\
\frac{|f(y)-f_{B_{x,y}}|}{|x-y|^{1+\frac{\gamma}{q}}}>\frac{1}{2}\right\}\setminus
(\mathbb{R}^n\times A)\\
&\quad=:E^{(1)}\cup E^{(2)}.
\end{align*}
From this, we deduce that, to prove the present theorem,
it suffices to show that there exists a positive constant $C$,
independent of $f$, such that
\begin{align}\label{e(1)}
&\int_{\mathbb{R}^n}\left[\int_{\mathbb{R}^n}\mathbf{1}_{E^{(1)}}
(x,y)|x-y|^{\gamma-n}\,dy\right]^{\frac{p}{q}}\omega(x)\,dx\\
&\quad\leq C\int_{\mathbb{R}^n}
\left|\nabla f(x)\right|^p\omega(x)\,dx\nonumber
\end{align}
and
\begin{align}\label{e(2)}
&\int_{\mathbb{R}^n}\left[\int_{\mathbb{R}^n}\mathbf{1}_{E^{(2)}}
(x,y)|x-y|^{\gamma-n}\,dy\right]^{\frac{p}{q}}\omega(x)\,dx\\
&\quad\leq C\int_{\mathbb{R}^n}
\left|\nabla f(x)\right|^p\omega(x)\,dx.\nonumber
\end{align}

We first prove \eqref{e(1)}. Let $\varepsilon\in(0,1)$.
By the definition of $E^{(1)}$ and
Lemma~\ref{Poin} with both $B:=B(x,2|x-y|)$
and $B_1:=B_{x,y}$, we find that there exists a
positive constant $c_0$, depending only on $n$, such that,
for any given $(x,y)\in E^{(1)}$,
\begin{align}\label{1110}
\frac{|x-y|^{\frac{\gamma}{q}}}{2}<
\frac{|f(x)-f_{B_{x,y}}|}{|x-y|}\leq c_{0}\sum_{j=0}^\infty
2^{-j}\fint_{2^{-j}B}\left|\nabla f(y)\right|\,dy,
\end{align}
which further implies that there exists
a $j_{x,y}\in\mathbb{Z}_+$, depending only on both $x$ and $y$,
such that
\begin{align}\label{2218}
\fint_{B_{j_{x,y}}}\left|\nabla f(z)\right|\,dz
>c_12^{j_{x,y}(1-\varepsilon)}
\left|2^{j_{x,y}}B_{j_{x,y}}\right|^{\frac{\gamma}{qn}},
\end{align}
where $B_{j_{x,y}}:=B(x,2^{-j_{x,y}+1}|x-y|)$ and
$c_1:=\frac{1-2^\varepsilon}{2^{\frac{\gamma}{q}+1}c_0}\in(0,\infty)$.
Indeed, if, for any $j\in\mathbb{Z}_+$,
$$
\fint_{2^{-j}B}\left|\nabla f(z)\right|\,dz
\leq c_12^{j(1-\varepsilon)}
\left|B\right|^{\frac{\gamma}{qn}}
=\frac{1-2^\varepsilon}{2c_0}2^{j(1-\varepsilon)}|x-y|^\frac{q}{\gamma},
$$
then
\begin{align*}
\sum_{j=0}^\infty
2^{-j}\fint_{2^{-j}B}\left|\nabla f(y)\right|\,dy
\leq\frac{|x-y|^{\frac{\gamma}{q}}}{2c_0},
\end{align*}
which contradicts \eqref{1110}. Thus, \eqref{2218} holds true.
Applying Lemma~\ref{2115}(ii) to $B_{j_{x,y}}$, we find that
there exists an $\alpha_{x,y}\in\{0,\frac{1}{3},\frac{2}{3}\}^n$
and a dyadic cube $Q_{x,y}\in\mathcal{D}^{\alpha_{x,y}}$ such that
\begin{align}\label{2212}
B_{j_{x,y}}\subset Q_{j_{x,y}}\subset\widetilde{C}B_{j_{x,y}},
\end{align}
where $\widetilde{C}$ is the same constant as in Lemma~\ref{2115}(ii),
which, combined with \eqref{2218},
further implies that there exists a positive constant $c_2$,
depending only on $q$, $\gamma$, $n$, and $\varepsilon$, such that
\begin{align}\label{3.4}
\fint_{Q_{j_{x,y}}}\left|\nabla f(z)\right|\,dz
>c_22^{j_{x,y}(1-\varepsilon)}\left|2^{j_{x,y}}Q_{j_{x,y}}
\right|^{\frac{\gamma}{qn}}.
\end{align}
For any $\alpha\in\{0,\frac{1}{3},\frac{2}{3}\}^n$ and $j\in\mathbb{Z}_+$,
let
\begin{align*}
\mathscr{A}_{\alpha,j}:=\left\{Q\in\mathcal{D}^\alpha:\
Q\text{ satisfies }\eqref{3.4}\text{ with }j_{x,y}\text{ replaced by }j\right\}.
\end{align*}
From this, \eqref{3.4}, and both (iii) and (iv) of Lemma~\ref{ApProperty},
we deduce that, for any $\alpha\in\{0,\frac{1}{3},\frac{2}{3}\}^n$,
$j\in\mathbb{Z}_+$, and $Q\in\mathscr{A}_{\alpha,j}$,
\begin{align}\label{2230}
\omega(Q)\left|2^jQ\right|^\frac{p\gamma}{qn}
&<[\omega]_{A_p(\mathbb{R}^n)}\frac{\int_Q|\nabla f(z)|^p\omega(z)\,dz}
{[\fint_{Q}|\nabla f(z)|\,dz]^p}
\left[\frac{\fint_{Q}|\nabla f(z)|\,dz}{c_2
2^{j(1-\varepsilon)}}\right]^p\\
&\leq c_2^{-p}[\omega]_{A_1(\mathbb{R}^n)}2^{-jp(1-\varepsilon)}
\int_Q\left|\nabla f(z)\right|^p\omega(z)\,dz.\nonumber
\end{align}
Moreover, observing that, since $f\in C^1(\mathbb{R}^n)$
with $|\nabla f|\in C_{\mathrm{c}}(\mathbb{R}^n)$,
\eqref{3.4} further implies that, for any given
$\alpha\in\{0,\frac{1}{3},\frac{2}{3}\}^n$ and
$j\in\mathbb{Z}_+$,
$\sup_{Q\in\mathscr{A}_{\alpha,j}}l_{(Q)}<\infty$
with $l_{(Q)}$ being the edge length of $Q$.
Thus, each dyadic cube $Q\in\mathscr{A}_{\alpha,j}$ is contained
in a dyadic cube in $\mathscr{A}_{\alpha,j}$ that is
maximal with respect to the set inclusion.
For any $\alpha\in\{0,\frac{1}{3},\frac{2}{3}\}^n$ and $j\in\mathbb{Z}_+$,
denote by  $\mathscr{A}_{\alpha,j}^{\mathrm{max}}$
the set of all the dyadic cubes in $\mathscr{A}_{\alpha,j}$ that are maximal.
Then, by \eqref{2218}, Lemma~\ref{2115}(ii), and
the definitions of both $\mathscr{A}_{\alpha,j}$ and
$\mathscr{A}_{\alpha,j}^{\mathrm{max}}$, we conclude that,
for any $(x,y)\in E^{(1)}$, there exists a $j_{x,y}\in\mathbb{Z}_+$,
an $\alpha_{x,y}\in\{0,\frac{1}{3},\frac{2}{3}\}^n$,
a dyadic cube $Q_{j_{x,y}}\in
\mathcal{D}^{\alpha_{x,y}}$ satisfying \eqref{2212},
and a maximal dyadic cube $\widetilde{Q}_{j_{x,y}}\subset
\mathscr{A}_{\alpha_{x,y},j_{x,y}}^{\mathrm{max}}$ such that
$$
(x,y)\in\left(B_{j_{x,y}}\times2^{j_{x,y}}B_{j_{x,y}}\right)
\subset\left(Q_{j_{x,y}}\times2^{j_{x,y}}Q_{j_{x,y}}\right)
\subset\left(\widetilde{Q}_{j_{x,y}}
\times2^{j_{x,y}}\widetilde{Q}_{j_{x,y}}\right),
$$
which further implies that
\begin{align}\label{2224}
E^{(1)}\subset\bigcup_{j=0}^\infty
\bigcup_{\alpha\in\{0,\frac{1}{3},\frac{2}{3}\}^n}
\bigcup_{Q\in\mathscr{A}_{\alpha,j}^{\mathrm{max}}}\left(Q\times2^jQ\right).
\end{align}
Let $Q\subset\mathbb{R}^n$ be an arbitrary dyadic cube.
Notice that, for any given $x\in Q$
and $j\in\mathbb{Z}_+$ and for any $y\in 2^jQ$,
$$
|x-y|\leq|x-z_Q|+|z_Q-y|
\leq\frac{n^\frac{1}{2}l_{(Q)}}{2}+\frac{2^jn^\frac{1}{2}l_{(Q)}}{2}=:\frac M2,
$$
where $z_Q$ denotes the center of $Q$,
and hence $2^jQ\subset B(x,M)$.
From this and $M\sim2^jl_{(Q)}$, we deduce that,
for any given $x\in Q$ and $j\in\mathbb{Z}_+$,
\begin{align*}
\int_{2^jQ}|x-y|^{\gamma-n}\,dy
&\leq\int_{B(x,M)}|x-y|^{\gamma-n}\,dy\\
&\leq\int_{\mathbb{S}^{n-1}}\int_0^M
r^{\gamma-1}\,dr\,d\sigma
\sim|2^jQ|^{\frac{\gamma}{n}},
\end{align*}
which, together with \eqref{2224},
further implies that, for any $x\in\mathbb{R}^n$,
\begin{align*}
&\int_{\mathbb{R}^n}\mathbf{1}_{E^{(1)}}(x,y)|x-y|^{\gamma-n}\,dy\\
&\quad\leq\sum_{j=0}^\infty
\sum_{\alpha\in\{0,\,\frac{1}{3},\,\frac{2}{3}\}^n}
\sum_{Q\in\mathscr{A}_{\alpha,j}^{\mathrm{max}}}
\int_{2^jQ}|x-y|^{\gamma-n}\,dy\mathbf{1}_Q(x)\\
&\quad\lesssim\sum_{j=0}^\infty
\sum_{\alpha\in\{0,\,\frac{1}{3},\,\frac{2}{3}\}^n}
\sum_{Q\in\mathscr{A}_{\alpha,j}^{\mathrm{max}}}
|2^jQ|^{\frac{\gamma}{n}}\mathbf{1}_Q(x).
\end{align*}
This, combined with \eqref{2230} and
an argument similar to that used in the proofs of
both Case 1) and Case 2) in the proof of \cite[Lemma~3.14]{dlyyz.arxiv}
with $|2^jQ|$ replaced by $|2^jQ|^{\frac{\gamma}{n}}$,
further implies that \eqref{e(1)} holds true; we omit the details here.

Next, we prove \eqref{e(2)}.
Since $n(\frac{1}{p}-\frac{1}{q})<1$, we can take an $\eta\in(0,1)$ such that
\begin{align}\label{1101}
\eta<1-n\left(\frac{1}{p}-\frac{1}{q}\right).
\end{align}
By an argument similar to that used in the proof of \eqref{e(1)},
we conclude that
\begin{enumerate}
\item[(i)]
there exists a positive constant $C_1$ such that, for any $(x,y)\in E^{(2)}$,
there exists a $\widetilde{j}_{x,y}\in\mathbb{Z}_+$ satisfying that
\begin{align*}
\fint_{\widetilde{B}_{j_{x,y}}}\left|\nabla f(z)\right|\,dz
>C_12^{\widetilde{j}_{x,y}(1-\eta)}
\left|2^{\widetilde{j}}\widetilde{B}_{j_{x,y}}\right|^{\frac{\gamma}{q}},
\end{align*}
where $\widetilde{B}_{j_{x,y}}:=B(y,2^{-\widetilde{j}_{x,y}+1}|x-y|)$;
\item[(ii)]
for any given $(x,y)\in E^{(2)}$
with $\widetilde{B}_{j_{x,y}}$ the same as in the above,
there exists an $\widetilde{\alpha}_{x,y}\in\{0,\frac{1}{3},\frac{2}{3}\}^n$
and a dyadic cube $\widetilde{Q}_{x,y}\in
\mathcal{D}^{\widetilde{\alpha}_{x,y}}$ such that
\begin{align*}
\widetilde{B}_{\widetilde{j}_{x,y}}\subset\widetilde{Q}_{\widetilde{j}_{x,y}}
\subset\widetilde{C}\,\widetilde{B}_{\widetilde{j}_{x,y}},
\end{align*}
where $\widetilde{C}$ is the same constant as in Lemma~\ref{2115}(ii),
and hence there exists a positive constant $C_2$,
depending only on $n$, $q$, $\gamma$, and $\varepsilon$, such that
\begin{equation}\label{822}
\fint_{\widetilde{Q}_{j_{x,y}}}\left|\nabla f(z)\right|\,dz
>C_22^{\widetilde{j}_{x,y}(1-\eta)}
\left|2^{\widetilde{j}_{x,y}}\widetilde{Q}_{\widetilde{j}_{x,y}}
\right|^{\frac{\gamma}{qn}};
\end{equation}
\item[(iii)]
for any $\alpha\in\{0,\frac{1}{3},\frac{2}{3}\}^n$ and $j\in\mathbb{Z}_+$,
let
\begin{align*}
\widetilde{\mathscr{A}}_{\alpha,j}:=\left\{Q\in\mathcal{D}^\alpha:\
Q\text{ satisfies }\eqref{822}\text{ with }
\widetilde{j}_{x,y}\text{ replaced by }j\right\}
\end{align*}
and $\widetilde{\mathscr{A}}_{\alpha,j}^{\mathrm{max}}$ denote the set
of all the dyadic cubes in $\widetilde{\mathscr{A}}_{\alpha,j}$ that are maximal
with respect to the set inclusion. Then
there exists a positive constant $C_3$,
depending only on $n$, $q$, $\gamma$, and $\eta$, such that,
for any $j\in\mathbb{Z}_+$, $\alpha\in\{0,\frac{1}{3},\frac{2}{3}\}^n$,
and $Q\in\widetilde{\mathscr{A}}_{\alpha,j}^{\mathrm{max}}$,
\begin{align}\label{1009}
\omega(Q)\left|2^jQ\right|^\frac{p\gamma}{qn}
\leq C_3[\omega]_{A_1(\mathbb{R}^n)}2^{-jp(1-\eta)}
\int_Q\left|\nabla f(z)\right|^p\omega(z)\,dz;
\end{align}
\item[(iv)]
we also have
\begin{align}\label{1777}
E^{(2)}\subset\bigcup_{j=0}^\infty
\bigcup_{\alpha\in\{0,\frac{1}{3},\frac{2}{3}\}^n}
\bigcup_{Q\in\widetilde{\mathscr{A}}_{\alpha,j
}^{\mathrm{max}}}\left(2^jQ\times Q\right).
\end{align}
\end{enumerate}

When $q\in[p,\infty)$,
to prove \eqref{e(2)},
we first claim that
\begin{align}\label{1653}
&\int_{\mathbb{R}^n}\left[\int_{\mathbb{R}^n}
\mathbf{1}_{E^{(2)}}(x,y)|x-y|^{\gamma-n}\,dy
\right]^{\frac{p}{q}}\omega(x)\,dx\\
&\quad\lesssim[\omega]_{A_1(\mathbb{R}^n)}
\sum_{j=0}^\infty\sum_{\alpha\in\{0,\,\frac{1}{3},\,\frac{2}{3}\}^n}
\sum_{Q\in\widetilde{\mathscr{A}}_{\alpha,j}^{\mathrm{max}}}
2^{jn(1-\frac{p}{q})}\left|2^jQ\right|^{\frac{p\gamma}{qn}}\omega(Q).\nonumber
\end{align}
To this end, we consider the following two cases on $\gamma$.

\emph{Case 1)} $\gamma\in[n,\infty)$. In this case,
let $Q\subset\mathbb{R}^n$ be an arbitrary dyadic cube
and $j\in\mathbb{Z}_+$.
Then, for any given $x\in2^jQ$,
\begin{align*}
\int_{Q}|x-y|^{\gamma-n}\,dy\leq|Q|\max_{y\in Q}|x-y|^{\gamma-n}
\lesssim2^{-jn}\left|2^jQ\right|^{\frac{\gamma}{n}}.
\end{align*}
From this, \eqref{1777}, Lemma~\ref{dis}
with $r:=\frac{p}{q}\in(0,1]$,
and Lemma~\ref{ApProperty}(ii),
we further deduce that
\begin{align*}
&\int_{\mathbb{R}^n}\left[\int_{\mathbb{R}^n}
\mathbf{1}_{E^{(2)}}(x,y)|x-y|^{\gamma-n}\,dy\right]^{\frac{p}{q}}\omega(x)\,dx\\
&\quad\leq\int_{\mathbb{R}^n}\left[\sum_{j=0}^\infty
\sum_{\alpha\in\{0,\,\frac{1}{3},\,\frac{2}{3}\}^n}
\sum_{Q\in\widetilde{\mathscr{A}}_{\alpha,j}^{\mathrm{max}}}
\int_{Q}|x-y|^{\gamma-n}\,dy
\mathbf{1}_{2^jQ}(x)\right]^{\frac{p}{q}}\omega(x)\,dx\\
&\quad\lesssim\sum_{j=0}^\infty\sum_{\alpha\in\{0,\,\frac{1}{3},\,\frac{2}{3}\}^n}
\sum_{Q\in\widetilde{\mathscr{A}}_{\alpha,j}^{\mathrm{max}}}
2^{-\frac{jnp}{q}}|2^jQ|^{\frac{p\gamma}{qn}}\omega(2^jQ)\\
&\quad\leq[\omega]_{A_1(\mathbb{R}^n)}
\sum_{j=0}^\infty\sum_{\alpha\in\{0,\,\frac{1}{3},\,\frac{2}{3}\}^n}
\sum_{Q\in\widetilde{\mathscr{A}}_{\alpha,j}^{\mathrm{max}}}
2^{jn(1-\frac{p}{q})}
\left|2^jQ\right|^{\frac{p\gamma}{qn}}\omega(Q).
\end{align*}
This finishes the proof of \eqref{1653} in the case $\gamma\in[n,\infty)$.

\emph{Case 2)} $\gamma\in(0,n)$.
In this case, by \eqref{1777} and Lemma~\ref{dis}
with $r:=\frac{p}{q}\in(0,1]$, we find that
\begin{align*}
&\int_{\mathbb{R}^n}\left[\int_{\mathbb{R}^n}
\mathbf{1}_{E^{(2)}}(x,y)|x-y|^{\gamma-n}\,dy\right]^{\frac{p}{q}}\omega(x)\,dx\\
&\quad\leq\int_{\mathbb{R}^n}\left[\sum_{j=0}^\infty
\sum_{\alpha\in\{0,\,\frac{1}{3},\,\frac{2}{3}\}^n}
\sum_{Q\in\widetilde{\mathscr{A}}_{\alpha,j}^{\mathrm{max}}}\int_{Q}
|x-y|^{\gamma-n}\,dy\mathbf{1}_{2^jQ}(x)\right]^{\frac{p}{q}}\omega(x)\,dx\\
&\quad\leq\sum_{j=0}^\infty
\sum_{\alpha\in\{0,\,\frac{1}{3},\,\frac{2}{3}\}^n}
\sum_{Q\in\widetilde{\mathscr{A}}_{\alpha,j}^{\mathrm{max}}}
\int_{2^jQ}\left(\int_{Q}
|x-y|^{\gamma-n}\,dy\right)^{\frac{p}{q}}\omega(x)\,dx.
\end{align*}
Thus, to prove \eqref{1653} in this case, it suffices to show that,
for any $j\in\mathbb{Z}_+$, $\alpha\in\{0,\,\frac{1}{3},\,\frac{2}{3}\}^n$,
and $Q\in\widetilde{\mathscr{A}}_{\alpha,j}^{\mathrm{max}}$,
\begin{align}\label{2039}
\int_{2^jQ}\left(\int_{Q}
|x-y|^{\gamma-n}\,dy\right)^{\frac{p}{q}}\omega(x)\,dx
\lesssim[\omega]_{A_1(\mathbb{R}^n)}2^{jn(1-\frac{p}{q})}
\left|2^jQ\right|^{\frac{p\gamma}{qn}}\omega(Q),
\end{align}
where the implicit positive constant is independent of $j$, $\alpha$, and $Q$.
To this end, we consider the following three subcases on $j$.

\emph{Subcase 1)} $j=0$. In this subcase,
using $Q\subset B(x,\sqrt{n}l_{(Q)})$ for any $x\in Q$,
the polar coordinate,
and Lemma~\ref{ApProperty}(i),
we find that
\begin{align}\label{2037}
&\int_{Q}\left(\int_{Q}
|x-y|^{\gamma-n}\,dy\right)^{\frac{p}{q}}\omega(x)\,dx\\
&\quad\leq\int_{Q}\left[\int_{B(x,\sqrt{n}l_{(Q)})}
|x-y|^{\gamma-n}\,dy\right]^{\frac{p}{q}}\omega(x)\,dx\nonumber\\
&\quad=\int_{Q}\left[\int_{\mathbb{S}^{n-1}}\int_0^{\sqrt{n}l_{(Q)}}
r^{\gamma-1}\,dr\,d\sigma\right]^{\frac{p}{q}}\omega(x)\,dx\nonumber\\
&\quad\sim|Q|^{\frac{p\gamma}{qn}}\omega(Q)
\leq[\omega]_{A_1(\mathbb{R}^n)}|Q|^{\frac{p\gamma}{qn}}\omega(Q).\nonumber
\end{align}

\emph{Subcase 2)} $j=1$. In this subcase,
from Lemma~\ref{ApProperty}(ii) and
$Q\subset B(x,\frac{3\sqrt{n}}{2}l_{(Q)})$
for any $x\in 2Q$,
we deduce that
\begin{align}\label{2038}
&\int_{2Q}\left(\int_{Q}
|x-y|^{\gamma-n}\,dy\right)^{\frac{p}{q}}\omega(x)\,dx\\
&\quad\leq\int_{2Q}\left[\int_{B(x,\frac{3\sqrt{n}}{2}l_{(Q)})}
|x-y|^{\gamma-n}\,dy\right]^{\frac{p}{q}}\omega(x)\,dx\nonumber\\
&\quad=\int_{2Q}\left[\int_{\mathbb{S}^{n-1}}\int_0^{\frac{3\sqrt{n}}{2}l_{(Q)}}
r^{\gamma-1}\,dr\,d\sigma\right]^{\frac{p}{q}}\omega(x)\,dx\nonumber\\
&\quad\sim|Q|^{\frac{p\gamma}{qn}}\omega(2Q)
\lesssim[\omega]_{A_1(\mathbb{R}^n)}
\left|2Q\right|^{\frac{p\gamma}{qn}}\omega(Q),\nonumber
\end{align}
where the implicit positive constant depends only on
$p$, $q$, $\gamma$, and $n$.

\emph{Subcase 3)} $j\in\mathbb{N}\setminus\{1\}$. In this subcase,
by both $\gamma\in(0,n)$ and Lemma~\ref{ApProperty}(ii), we find that,
for any $k\in\mathbb{N}\cap[2,\infty)$,
\begin{align*}
&\int_{2^kQ\setminus2^{k-1}Q}\left(\int_{Q}
|x-y|^{\gamma-n}\,dy\right)^{\frac{p}{q}}\omega(x)\,dx\\
&\quad\leq\int_{2^kQ\setminus2^{k-1}Q}
|Q|^\frac{p}{q}
\left[\frac{(2^{k-1}-1)l_{(Q)}}{2}\right]^{\frac{p(\gamma-n)}{q}}\omega(x)\,dx\\
&\quad\lesssim2^{\frac{kp(\gamma-n)}{q}}|Q|^{\frac{p\gamma}{qn}}\omega(2^kQ)\leq
[\omega]_{A_1(\mathbb{R}^n)}2^{k[\frac{p\gamma}{q}
+\frac{(q-p)n}{q}]-\frac{jp\gamma}{q}}
\left|2^jQ\right|^{\frac{p\gamma}{qn}}\omega(Q),
\end{align*}
which, together with both \eqref{2038}
and $q\in[p,\infty)$, further implies that
\begin{align*}
&\int_{2^jQ}\left(\int_{Q}
|x-y|^{\gamma-n}\,dy\right)^{\frac{p}{q}}\omega(x)\,dx\\
&\quad=\int_{2Q}\left(\int_{Q}
|x-y|^{\gamma-n}\,dy\right)^{\frac{p}{q}}\omega(x)\,dx\\
&\qquad+\sum_{k=2}^j\int_{2^kQ\setminus2^{k-1}Q}\left(\int_{Q}
|x-y|^{\gamma-n}\,dy\right)^{\frac{p}{q}}\omega(x)\,dx\\
&\quad\lesssim[\omega]_{A_1(\mathbb{R}^n)}\left\{2^{n(1-\frac{p}{q})}
\left|2Q\right|^{\frac{p\gamma}{qn}}\omega(Q)
+\sum_{k=2}^j2^{k[\frac{p\gamma}{q}+\frac{(q-p)n}{q}]-\frac{jp\gamma}{q}}
\left|2^jQ\right|^{\frac{p\gamma}{qn}}\omega(Q)\right\}\\
&\quad\lesssim[\omega]_{A_1(\mathbb{R}^n)}2^{jn(1-\frac{p}{q})}
\left|2^jQ\right|^{\frac{p\gamma}{qn}}\omega(Q).
\end{align*}
Combining this, \eqref{2037}, and \eqref{2038},
we further obtain \eqref{2039},
which completes the proof of \eqref{1653} and hence the above claim.

From the above claim, \eqref{1009}, \eqref{1101},
the definition of $\widetilde{\mathscr{A}}_{\alpha,j}^{\mathrm{max}}$,
and Lemma~\ref{2115}(i),
we infer that
\begin{align}\label{2245}
&\int_{\mathbb{R}^n}\left[\int_{\mathbb{R}^n}
\mathbf{1}_{E^{(2)}}(x,y)|x-y|^{\gamma-n}\,dy
\right]^{\frac{p}{q}}\omega(x)\,dx\\
&\quad\lesssim[\omega]_{A_1(\mathbb{R}^n)}\sum_{j=0}^\infty
\sum_{\alpha\in\{0,\,\frac{1}{3},\,\frac{2}{3}\}^n}
\sum_{Q\in\widetilde{\mathscr{A}}_{\alpha,j}^{\mathrm{max}}}
2^{jn(1-\frac{p}{q})}|2^jQ|^{\frac{p\gamma}{qn}}\omega(Q)\nonumber\\
&\quad\lesssim[\omega]_{A_1(\mathbb{R}^n)}^2\sum_{j=0}^\infty
2^{-jp[1-n(\frac{1}{p}-\frac{1}{q})-\eta]}
\sum_{\alpha\in\{0,\,\frac{1}{3},\,\frac{2}{3}\}^n}
\sum_{Q\in\widetilde{\mathscr{A}}_{\alpha,j}^{\mathrm{max}}}
\int_Q\left|\nabla f(z)\right|^p\omega(z)\,dz\nonumber\\
&\quad\lesssim[\omega]_{A_1(\mathbb{R}^n)}^2
\int_{\mathbb{R}^n}\left|\nabla f(z)\right|^p\omega(z)\,dz,\nonumber
\end{align}
which completes the proof of \eqref{e(2)} when $q\in[p,\infty)$.

When $q\in(0,p)$, by \eqref{2245} with $q:=p$ and
by an argument similar to that used in Case 2)
of the proof of Theorem~\ref{upperBound},
we find that \eqref{e(2)} also holds true in this case,
which, together with \eqref{2245},
then completes the proof of \eqref{e(2)} and hence Proposition~\ref{2010}.
\end{proof}

Now, we use both Proposition~\ref{2010} and a method of extrapolation
to show Theorem~\ref{gamma>0}.

\begin{proof}[Proof of Theorem~\ref{gamma>0}]
By Lemma~\ref{4.6}, Proposition~\ref{2010},
and Lemma~\ref{4.5}(ii), we conclude that,
for any $f\in C^1(\mathbb{R}^n)$ with
$|\nabla f|\in C_{\mathrm{c}}(\mathbb{R}^n)$,
\begin{align*}
&\sup_{\lambda\in(0,\infty)}\lambda
\left\|\left[\int_{\mathbb{R}^n}
\mathbf{1}_{E_{\lambda,\frac{\gamma}{q}}[f]}(\cdot,y)
\left|\cdot-y\right|^{\gamma-n}\,dy\right]^\frac{1}{q}\right\|_X\\
&\quad\leq\sup_{\|g\|_{(X^\frac{1}{p})'}\leq1}
\sup_{\lambda\in(0,\infty)}\lambda^p
\int_{\mathbb{R}^n}\left[\int_{\mathbb{R}^n}
\mathbf{1}_{E_{\lambda,\frac{\gamma}{q}}[f]}
(x,y)|x-y|^{\gamma-n}\,dy\right]^{\frac{p}{q}}
R_{(X^\frac{1}{p})'}g(x)\,dx\nonumber\\
&\quad\leq\sup_{\|g\|_{(X^\frac{1}{p})'}\leq1}
C_{([R_{(X^\frac{1}{p})'}g]_{A_1(\mathbb{R}^n)})}
\int_{\mathbb{R}^n}\left|\nabla f(x)\right|^p
R_{(X^\frac{1}{p})'}g(x)\,dx\\
&\quad\leq C_{(2\|\mathcal{M}\|_{(X^\frac{1}{p})'\to(X^\frac{1}{p})'})}
\sup_{\|g\|_{(X^\frac{1}{p})'}\leq1}
\int_{\mathbb{R}^n}\left|\nabla f(x)\right|^p
R_{(X^\frac{1}{p})'}g(x)\,dx
\lesssim\left\|\,\left|\nabla f\right|\,\right\|_{X},
\end{align*}
where the positive constants $C_{([R_{(X^\frac{1}{p})'}g]_{A_1(\mathbb{R}^n)})}$
and $C_{(2\|\mathcal{M}\|_{(X^\frac{1}{p})'\to(X^\frac{1}{p})'})}$ are
the same constants as in \eqref{1607} with $[\omega]_{A_1(\mathbb{R}^n)}$
replaced, respectively, by
$[R_{(X^\frac{1}{p})'}g]_{A_1(\mathbb{R}^n)}$
and $2\|\mathcal{M}\|_{(X^\frac{1}{p})'\to(X^\frac{1}{p})'}$.
This finishes the proof of Theorem~\ref{gamma>0}.
\end{proof}

\begin{remark}\label{2100}
\begin{enumerate}
\item[\textup{(i)}]
Theorem~\ref{gamma>0} when $\gamma=n$
coincides with \cite[Theorem~4.5]{dlyyz.arxiv}.

\item[\textup{(ii)}]
In Theorem~\ref{gamma>0}(iii), if $X:=L^p(\mathbb{R}^n)$ with $p\in[1,\infty)$
and if $q\in(0,\infty)$ satisfies $n(\frac{1}{p}-\frac{1}{q})<1$,
then both $X$ and $X^\frac{1}{p}=L^1(\mathbb{R}^n)$
are ball Banach function spaces and
the Hardy--Littlewood maximal operator is bounded
on $(X^\frac{1}{p})'=L^\infty(\mathbb{R}^n)$.
Thus, Theorem~\ref{gamma>0}(iii) in this case holds true, which when $p=q\in[1,\infty)$
is just \cite[(2.3) and (1.7)]{bsvy.arxiv} in the case $\gamma\in(0,\infty)$
and which when $p\neq q$ is new.

\item[\textup{(iii)}]
As was pointed out in \cite[Remark~3.6(iii)]{dlyyz.arxiv},
the assumption $n(\frac{1}{p}-\frac{1}{q})<1$
in Proposition~\ref{2010} seems to be \emph{sharp}.
To be more precise, let $p,q\in[1,\infty)$ satisfy
$n\max\{0,\,\frac{1}{p}-\frac{1}{q}\}<s<1$.
On the one hand, it follows from \cite[Theorem~1.3]{h2022} that,
if $f\in L_{{\mathrm{loc}}}^{\min\{p,\,q\}}(\mathbb{R}^n)$,
then $f\in F^s_{p,q}(\mathbb{R}^n)$ if and only if
the following inhomogeneous Gagliaro quasi-norm
\begin{align}\label{1916}
\mathrm{I}:=\|f\|_{L^p(\mathbb{R}^n)}+
\left\|\left[\int_{\mathbb{R}^n}
\frac{|f(\cdot)-f(y)|^q}{|\cdot-y|^{n+q}}
\,dy\right]^\frac{1}{q}\right\|_{L^p(\mathbb{R}^n)}<\infty,
\end{align}
where $F^s_{p,q}(\mathbb{R}^n)$ denotes the classical
Triebel--Lizorkin space (see \cite[Section~2.3]{t1983}
for its definition);
moreover, one has $\mathrm{I}\sim\|f\|_{F^s_{p,q}(\mathbb{R}^n)}$
with the positive equivalence constants independent of $f$.
On the other hand, from \cite[Theorem~1.5]{h2022},
we deduce that, under the assumption that $p,q\in[1,\infty)$,
for any $f\in L_{{\mathrm{loc}}}^{\min\{p,\,q\}}(\mathbb{R}^n)$,
$\mathrm{I}\sim\|f\|_{F^s_{p,q}(\mathbb{R}^n)}$
only if $n\max\{0,\frac{1}{p}-\frac{1}{q}\}\leq s<1$.
In this sense, when $s\in(0,1)$, $n\max\{0,\frac{1}{p}-\frac{1}{q}\}<s<1$
is sharp.
Corresponding to the case $s=1$,
an important observation is that,
for any given $p\in[1,\infty)$ and $q\in[1,p]$,
$$
\left\|\left[\int_{\mathbb{R}^n}
\frac{|f(\cdot)-f(y)|^q}{|\cdot-y|^{n+q}}
\,dy\right]^\frac{1}{q}\right\|_{L^p(\mathbb{R}^n)}<\infty
$$
unless $f$ is a constant,
which can be deduced from \cite[Theorem~4.3]{dlyyz.arxiv}.
Thus, the quasi-norm in \eqref{1916} can not recover
the Triebel--Lizorkin quasi-norm
$\|\cdot\|_{F^1_{p,q}(\mathbb{R}^n)}$
in the case $s=1$.
Indeed, as is shown in Proposition~\ref{2010} with $\omega\equiv1$
(see also \cite[Theorem~3.5]{dlyyz.arxiv}),
a suitable replacement of \eqref{1916} in the case $s=1$ is that,
for any $f\in\dot{W}^{1,p}(\mathbb{R}^n)$,
\begin{align*}
\left\|f\right\|_{F^1_{p,2}(\mathbb{R}^n)}
&\sim\|f\|_{L^p(\mathbb{R}^n)}
+\sup_{\lambda\in(0,\infty)}\lambda
\Bigg\{\int_{\mathbb{R}^n}\bigg[\int_{\mathbb{R}^n}
\mathbf{1}_{E_{\lambda,\frac{\gamma}{q}}[f]}(x,y)\\
&\quad\times|x-y|^{\gamma-n}\,dy
\bigg]^{\frac{p}{q}}\,dx\Bigg\}^\frac{1}{p}
\end{align*}
with any given $\gamma\in(0,\infty)$.
\end{enumerate}
\end{remark}

When $\gamma\in(-\infty,0)$,
we have the following upper estimate of \eqref{2213}
in the case both $n=1$
and $\gamma\in(-\infty,-1)$ without assuming that
the Hardy--Littlewood maximal operator is bounded on $X$.

\begin{theorem}\label{n=1}
Let $X$ be a ball Banach function space
on $\mathbb{R}$, $p\in[1,\infty)$, $q\in[1,p]$,
and $\gamma\in(-\infty,-1)$.
Assume that
$X^\frac{1}{p}$ is a ball Banach function space and that
the Hardy--Littlewood maximal operator $\mathcal{M}$ is bounded on
$(X^\frac{1}{p})'$ with its operator norm denoted by
$\|\mathcal{M}\|_{(X^\frac{1}{p})'\to(X^\frac{1}{p})'}$.
Then there exists a positive constant $C$,
depending only on $p$, $q$, $\gamma$, $n$,
and $\|\mathcal{M}\|_{(X^\frac{1}{p})'\to(X^\frac{1}{p})'}$, such that,
for any $f\in C^1(\mathbb{R})$ with $f'\in C_{\mathrm{c}}(\mathbb{R})$,
\begin{align*}
\sup_{\lambda\in(0,\infty)}\lambda
\left\|\left[\int_{\mathbb{R}}
\mathbf{1}_{E_{\lambda,\frac{\gamma}{q}}[f]}(\cdot,y)
\left|\cdot-y\right|^{\gamma-1}\,dy\right]^\frac{1}{q}\right\|_X
\leq C\left\|f'\right\|_{X},
\end{align*}
where, for any $\lambda\in(0,\infty)$,
$E_{\lambda,\frac{\gamma}{q}}[f]$ is the same as in \eqref{Elambda}
and where the positive constant $C$ is continuous with respect to
$\|\mathcal{M}\|_{(X^\frac{1}{p})'\to(X^\frac{1}{p})'}$ and increases
as $\|\mathcal{M}\|_{(X^\frac{1}{p})'\to(X^\frac{1}{p})'}$ increases.
\end{theorem}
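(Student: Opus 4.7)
The plan is to mirror the structure of the proof of Theorem~\ref{gamma>0}: first establish a one-dimensional weighted analogue with $A_1(\mathbb{R})$-weights, and then promote it to the ball Banach space $X$ via the extrapolation Lemma~\ref{4.6}. Specifically, setting
$$
h_\lambda(x):=\left[\int_{\mathbb{R}}\mathbf{1}_{E_{\lambda,\gamma/q}[f]}(x,y)\,|x-y|^{\gamma-1}\,dy\right]^{1/q},
$$
Lemma~\ref{4.6} combined with Lemma~\ref{4.5}(ii) reduces matters to showing that, for every $\omega\in A_1(\mathbb{R})$ and every $f\in C^1(\mathbb{R})$ with $f'\in C_{\mathrm{c}}(\mathbb{R})$,
$$
\sup_{\lambda>0}\lambda^p\int_{\mathbb{R}}\left[\int_{\mathbb{R}}\mathbf{1}_{E_{\lambda,\gamma/q}[f]}(x,y)\,|x-y|^{\gamma-1}\,dy\right]^{p/q}\omega(x)\,dx\le C_{([\omega]_{A_1(\mathbb{R})})}\int_{\mathbb{R}}|f'(x)|^p\,\omega(x)\,dx,
$$
with $C_{(\cdot)}$ continuous and increasing in $[\omega]_{A_1(\mathbb{R})}$; then we apply this with $\omega:=R_{(X^{1/p})'}g$ exactly as in the proof of Theorem~\ref{gamma>0}.

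For the weighted estimate, a scaling argument reduces matters to $\lambda=1$. Exploiting the one-dimensional structure, the fundamental theorem of calculus gives $|f(x)-f(y)|\le\int_{I(x,y)}|f'(t)|\,dt$, where $I(x,y)$ is the interval joining $x$ and $y$; hence $(x,y)\in E_{1,\gamma/q}[f]$ implies $\fint_{I(x,y)}|f'(t)|\,dt>|x-y|^{\gamma/q}$. Adapting the stopping-time argument from \cite[Theorem~2.2(b)]{bsvy.arxiv}, for each $x$ we treat the half-lines $\{y>x\}$ and $\{y<x\}$ separately; on each we select, via a Vitali-type procedure, a disjoint family of anchored stopping intervals $\{J_k^{\pm}(x)\}_{k}$ on each of which $\fint_{J_k^{\pm}(x)}|f'|\,dt\gtrsim|J_k^{\pm}(x)|^{\gamma/q}$, and such that the slice $\{y:(x,y)\in E_{1,\gamma/q}[f]\}$ is covered by their dilates. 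Because $\gamma<-1$, the tail $\int_r^\infty s^{\gamma-1}\,ds$ is finite and comparable to $r^\gamma$, so summing the geometric tails yields the pointwise bound
$$
\int_{\mathbb{R}}\mathbf{1}_{E_{1,\gamma/q}[f]}(x,y)\,|x-y|^{\gamma-1}\,dy\lesssim\sum_{\pm}\sum_{k}\left[\fint_{J_k^{\pm}(x)}|f'(t)|\,dt\right]^{q}.
$$

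Raising this pointwise bound to the $p/q$-th power and integrating against $\omega$, one reduces to estimating $\sum_{\pm}\sum_k[\fint_{J_k^{\pm}(x)}|f'(t)|\,dt]^p$ in $L^1_\omega(\mathbb{R})$. When $q=p$ and $p>1$, this sum is dominated for almost every $x$ by $[\mathcal{M}(|f'|)(x)]^p$, and Lemma~\ref{ApProperty}(vi) (applicable since $\omega\in A_1\subset A_p$) closes the argument with a constant controlled by a power of $[\omega]_{A_1(\mathbb{R})}$. When $q\in[1,p)$, one applies Lemma~\ref{4.6} a second time to the $(p/q)$-convexification, exactly as in Case~2) of the proof of Theorem~\ref{upperBound}, reducing everything to the $q=p$ case at the cost of another $A_1$-weight. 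Tracking constants through each step yields the asserted continuous dependence on $\|\mathcal{M}\|_{(X^{1/p})'\to(X^{1/p})'}$.

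The main obstacle is the endpoint case $p=q=1$, where the weighted Hardy--Littlewood inequality fails. In this case, the bound must be extracted from the stopping-time construction itself: using that the intervals $\{J_k^{\pm}(x)\}$ are essentially disjoint and anchored at $x$, one interchanges the sum over $k$ with the weighted integration over $x$ via Fubini, and then invokes the defining inequality of $A_1(\mathbb{R})$ to replace unweighted averages of $|f'|$ over the stopping intervals by $\omega$-weighted averages, thereby recovering $\int_{\mathbb{R}}|f'(x)|\omega(x)\,dx$ directly without relying on $\mathcal{M}$ being bounded on $L^1_\omega(\mathbb{R})$. The one-dimensional hypothesis is decisive here: the stopping intervals have a linear order along $\mathbb{R}$, which is what permits the Vitali-type selection and the disjointness needed to run the Fubini/$A_1$-exchange; no analogous structure appears to be available in higher dimensions without recourse to the rotation invariance that $X$ is not assumed to possess, cf.\ Remark~\ref{1920}(iv).
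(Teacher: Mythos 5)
Your overall strategy --- extrapolation via Lemma~\ref{4.6} to reduce to a weighted $A_1(\mathbb{R})$ estimate, and a stopping-time argument in one dimension --- is the right one and matches the paper's. However, the two arguments diverge in a way that leaves a genuine gap in yours, precisely at the endpoint $p=q=1$ that is the whole point of the theorem.

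The paper does not reduce via the fundamental theorem of calculus alone. Instead it applies H\"older's inequality to get
$|f(y)-f(x)|\le (y-x)^{1-1/q}\bigl[\int_x^y|f'(s)|^q\,ds\bigr]^{1/q}$,
which yields the inclusion $E_{\lambda,\gamma/q}[f]\subset E(\lambda^{-q}|f'|^q,\gamma)$, where $E(g,\gamma)$ is the set in \eqref{1552} defined entirely in terms of $\int_x^y g$. It is this set that is amenable to a \emph{global} stopping time on the fixed function $g:=\lambda^{-q}|f'|^q$: one constructs a single finite increasing sequence $a_1<\cdots<a_{K+1}$, independent of $x$, with $(a_{i+1}-a_i)^{-(\gamma+1)}\int_{a_i}^{a_{i+1}}g=\tfrac12$, and then decomposes $E_+(g,\gamma)=\bigcup_i\mathcal{E}_i$ with $\mathcal{E}_i\subset(a_i,\infty)\times(-\infty,a_i)$. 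Because the partition $\{I_i\}$ is independent of $x$, the Fubini/Tonelli exchange in the weighted estimate is trivial, and the $A_1$ condition of Lemma~\ref{ApProperty}(i)--(ii) alone closes the argument --- no boundedness of $\mathcal{M}$ on $L^1_\omega$ is ever invoked. This is also where the hypothesis $q\ge 1$ is used (see Remark~\ref{1920}(i)): it is needed for the H\"older step that produces $E(|f'|^q/\lambda^q,\gamma)$.

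Your proposal replaces this with a \emph{per-$x$} (``anchored'') Vitali-type stopping construction producing intervals $J_k^{\pm}(x)$ that depend on $x$. That is where the argument breaks: in the endpoint case $p=q=1$ you want to pass from
$\int_{\mathbb{R}}\bigl[\sum_k\fint_{J_k^{\pm}(x)}|f'|\bigr]\omega(x)\,dx$
to $\int_{\mathbb{R}}|f'(t)|\omega(t)\,dt$ by ``interchanging the sum over $k$ with the weighted integration over $x$ via Fubini.'' But the intervals $J_k^{\pm}(x)$ move with $x$, so after the exchange you are faced with
$\int|f'(t)|\bigl[\int_{\{x:\,t\in J_k^{\pm}(x)\}}\tfrac{\omega(x)}{|J_k^{\pm}(x)|}\,dx\bigr]\,dt$,
and there is no monotonicity or uniform control of the map $x\mapsto J_k^{\pm}(x)$ that would let the $A_1$ condition bound the inner integral by $[\omega]_{A_1(\mathbb{R})}\,\omega(t)$. (Disjointness of the $J_k^{\pm}(x)$ in $k$ for \emph{fixed} $x$ does not help; what one needs is bounded overlap of $\{(x,J_k^{\pm}(x))\}$ over varying $x$, which is exactly what the global stopping sequence provides and the per-$x$ one does not.) For $p=q>1$ the maximal-function route you sketch is fine, but that range is already covered by Theorem~\ref{upperBound}; the content of Theorem~\ref{n=1} is $p=q=1$, and your argument does not go through there. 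The fix is precisely the paper's: reduce via H\"older to the set $E(|f'|^q/\lambda^q,\gamma)$ of Lemma~\ref{1558}, run a \emph{single} global stopping time on $|f'|^q$, decompose $E_+$ along the resulting partition, and estimate each piece using only the $A_1$ inequality.

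Two minor additional points. First, your ``anchored and disjoint'' description is internally inconsistent (intervals all containing $x$ cannot be pairwise disjoint unless there are at most two), which obscures exactly what covering you intend. Second, your claimed pointwise bound
$\int\mathbf{1}_{E_{1,\gamma/q}[f]}(x,y)|x-y|^{\gamma-1}\,dy\lesssim\sum_k[\fint_{J_k^{\pm}(x)}|f'|]^q$
needs the inner region $|y-x|$ small to be excluded, since $\gamma-1<-2$ makes $\int_0 r^{\gamma-1}\,dr$ divergent; this is true but must be argued, and again the cleanest route is through the FTC-plus-H\"older inclusion into $E(|f'|^q/\lambda^q,\gamma)$, which handles it automatically.
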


To prove Theorem~\ref{n=1}, we need the following lemma.

\begin{lemma}\label{1558}
Let $X$ be a ball Banach function space on $\mathbb{R}$,
$p\in[1,\infty)$, $q\in(0,p]$,
and $\gamma\in(-\infty,-1)$.
Assume that
$X^\frac{1}{p}$ is a ball Banach function space and that
the Hardy--Littlewood maximal operator $\mathcal{M}$ is bounded on
$(X^\frac{1}{p})'$ with its operator norm denoted by
$\|\mathcal{M}\|_{(X^\frac{1}{p})'\to(X^\frac{1}{p})'}$.
Then there exists a positive constant $C$,
depending only on $p$, $q$, $\gamma$, $n$,
and $\|\mathcal{M}\|_{(X^\frac{1}{p})'\to(X^\frac{1}{p})'}$, such that,
for any $f\in C_{\mathrm{c}}(\mathbb{R})$,
\begin{align}\label{1633}
\left\|\left[\int_{\mathbb{R}}
\mathbf{1}_{E(f,\gamma)}(\cdot,y)
\left|\cdot-y\right|^{\gamma-1}\,dy\right]^\frac{1}{q}\right\|_X
\leq C\left\|\,\left|f\right|^\frac{1}{q}\right\|_{X},
\end{align}
where
\begin{align}\label{1552}
E(f,\gamma):=\left\{(x,y)\in\mathbb{R}\times\mathbb{R}:\
x\neq y,\ \left|\int_x^y|f(s)|\,ds\right|>|x-y|^{\gamma+1}\right\}
\end{align}
and $C$ is continuous with respect to
$\|\mathcal{M}\|_{(X^\frac{1}{p})'\to(X^\frac{1}{p})'}$ and increases
as $\|\mathcal{M}\|_{(X^\frac{1}{p})'\to(X^\frac{1}{p})'}$ increases.
\end{lemma}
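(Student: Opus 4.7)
The plan is to combine the extrapolation technique (Lemmas~\ref{4.5} and~\ref{4.6}) with a one-dimensional stopping-time argument that exploits the monotonicity afforded by the hypothesis $\gamma+1<0$.

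\emph{Step 1 (Extrapolation reduction).} Applying Lemma~\ref{4.6} to both $[\int_{\mathbb{R}}\mathbf{1}_{E(f,\gamma)}(\cdot,y)|\cdot-y|^{\gamma-1}\,dy]^{1/q}$ and $|f|^{1/q}$ reduces \eqref{1633} to the weighted estimate: for every $\omega\in A_1(\mathbb{R})$,
\begin{align*}
\int_{\mathbb{R}}\left[\int_{\mathbb{R}}\mathbf{1}_{E(f,\gamma)}(x,y)|x-y|^{\gamma-1}\,dy\right]^{p/q}\omega(x)\,dx\le C_{([\omega]_{A_1(\mathbb{R})})}\int_{\mathbb{R}}|f(x)|^{p/q}\omega(x)\,dx,
\end{align*}
with $C_{([\omega]_{A_1(\mathbb{R})})}$ continuous and nondecreasing in $[\omega]_{A_1(\mathbb{R})}$. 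Substituting $\omega:=R_{(X^{1/p})'}g$ and applying Lemma~\ref{4.5}(ii) give $[\omega]_{A_1(\mathbb{R})}\le 2\|\mathcal{M}\|_{(X^{1/p})'\to(X^{1/p})'}$, so the chain closes upon taking the supremum over $g$ in the unit ball of $(X^{1/p})'$.

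\emph{Step 2 (Pointwise slice identity via monotonicity).} Fix $x\in\mathbb{R}$ and set $F_x^+(r):=\int_x^{x+r}|f(s)|\,ds$, $F_x^-(r):=\int_{x-r}^x|f(s)|\,ds$ for $r>0$. Because $\gamma+1<0$ and $F_x^\pm$ is nondecreasing with $F_x^\pm(0)=0$, direct differentiation gives
\begin{align*}
\frac{d}{dr}\left[\frac{F_x^\pm(r)}{r^{\gamma+1}}\right]=r^{-\gamma-2}\bigl[r|f(x\pm r)|+|\gamma+1|F_x^\pm(r)\bigr]\ge 0\quad\text{a.e.},
\end{align*}
so $r\mapsto F_x^\pm(r)/r^{\gamma+1}$ is nondecreasing. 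Consequently, $\{r>0:\,F_x^\pm(r)>r^{\gamma+1}\}=(s_0^\pm(x),\infty)$ for some first-passage point $s_0^\pm(x)\in(0,\infty]$ satisfying $F_x^\pm(s_0^\pm(x))=s_0^\pm(x)^{\gamma+1}$ (with the convention $s_0^\pm(x)^\gamma:=0$ when $s_0^\pm(x)=\infty$, a case that only arises outside a bounded set since $f\in C_{\mathrm{c}}(\mathbb{R})$). Splitting the slice integral according to $y>x$ and $y<x$ and computing yield the pointwise identity
\begin{align*}
\int_{\mathbb{R}}\mathbf{1}_{E(f,\gamma)}(x,y)|x-y|^{\gamma-1}\,dy=-\frac{s_0^+(x)^\gamma+s_0^-(x)^\gamma}{\gamma},
\end{align*}
and one readily verifies $s_0^\pm(x)^\gamma=\fint_{I_x^\pm}|f|$ with $I_x^+:=[x,x+s_0^+(x)]$ and $I_x^-:=[x-s_0^-(x),x]$; in particular, $s_0^\pm(x)^\gamma\le\mathcal{M}(|f|)(x)$.

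\emph{Step 3 (Weighted estimate; main obstacle).} Raising the identity in Step~2 to the $p/q$-th power reduces Step~1 to bounding $\int[\mathcal{M}(|f|)(x)]^{p/q}\omega(x)\,dx$ by a constant multiple of $\int|f(x)|^{p/q}\omega(x)\,dx$. When $p/q>1$, this follows immediately from $A_1(\mathbb{R})\subset A_{p/q}(\mathbb{R})$ and the strong-type maximal estimate in Lemma~\ref{ApProperty}(vi). The genuinely delicate case is the endpoint $p=q$, where $\mathcal{M}$ is not bounded on $L^1_\omega(\mathbb{R})$; here the crude pointwise bound by $\mathcal{M}(|f|)$ must be refined using the specific geometry of the stopping intervals. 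The key additional input is the dual identification, obtained by applying the monotonicity argument of Step~2 to the leftward primitive at $t$,
\begin{align*}
\{x\in\mathbb{R}:\,t\in I_x^+\}=[t-s_0^-(t),\,t],
\end{align*}
together with the symmetric identity for $I_x^-$. Fubini's theorem then recasts $\int s_0^+(x)^\gamma\omega(x)\,dx$ as $\int|f(t)|\int_{t-s_0^-(t)}^t\omega(x)/s_0^+(x)\,dx\,dt$, and a careful decomposition of the inner integral, combining the pointwise inequality $s_0^+(x)\ge t-x$ on the integration interval with the $A_1$-bound $\fint_I\omega\le[\omega]_{A_1(\mathbb{R})}\omega(t)$ valid for any interval $I\ni t$, allows one to control it by $C[\omega]_{A_1(\mathbb{R})}\omega(t)$. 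This endpoint analysis, which essentially uses both the 1D geometry of $\mathbb{R}$ and the duality of the stopping times $s_0^\pm$, is the core technical difficulty of the lemma.
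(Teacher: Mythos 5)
Your Steps 1 and 2 are correct: the extrapolation reduction matches the paper's, and the pointwise identity $\int_{\mathbb{R}}\mathbf{1}_{E(f,\gamma)}(x,y)|x-y|^{\gamma-1}\,dy=-\gamma^{-1}[s_0^+(x)^\gamma+s_0^-(x)^\gamma]$ together with the dual stopping-time identity $\{x:\,t\in I_x^+\}=[t-s_0^-(t),\,t]$ (which rests on the monotonicity of $x\mapsto x+s_0^+(x)$, itself a consequence of $\gamma+1<0$) is a nice ``local'' alternative to the paper's global stopping-time partition. The $p/q>1$ case via the strong maximal inequality in $L^{p/q}_\omega$ is also fine.

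The endpoint $p=q$ argument, however, contains a genuine gap: the claimed pointwise bound $\int_{t-s_0^-(t)}^t\omega(x)/s_0^+(x)\,dx\lesssim[\omega]_{A_1(\mathbb{R})}\,\omega(t)$ is \emph{false}. Take $\omega\equiv1$, $f=\mathbf{1}_{[0,1]}$, $\gamma=-2$. Then $s_0^-(t)=1/t$ for $t\in(0,1)$, while $s_0^+(x)=\frac{-x+\sqrt{x^2+4}}{2}\sim|x|$ for $x\ll0$, so the inner integral grows like $\log(1/t)\to\infty$ as $t\to0^+$, whereas $\omega(t)=1$. With only the two inputs you cite (the pointwise bound $s_0^+(x)\ge t-x$ and the $A_1$ averaging inequality), each dyadic shell $t-x\sim2^{-k}s_0^-(t)$ contributes a full $O([\omega]_{A_1(\mathbb{R})}\omega(t))$, so the geometric sum over scales diverges logarithmically. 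The double integral $\int|f(t)|\int_{t-s_0^-(t)}^t(\cdots)\,dx\,dt$ is in fact finite in this example, but only because of cancellation between the logarithmic growth and the geometry of $\mathrm{supp}(f)$, which is precisely what a pointwise-in-$t$ bound cannot see. The paper circumvents this by constructing a \emph{fixed} partition $\{I_i\}$ of $\mathrm{supp}(f)$ with $\int_{I_i}f=\frac{1}{2}|I_i|^{\gamma+1}$, invoking the structural fact from Brezis--Seeger--Van Schaftingen--Yung that any $(x,y)\in E_+(f,\gamma)$ separated by $a_i$ satisfies $|x-y|\ge\min\{|I_i|,|I_{i-1}|\}$, and then bounding each of the two resulting pieces $\mathrm{Z}_1,\mathrm{Z}_2$ of the weighted integral by a constant times $[\omega]_{A_1(\mathbb{R})}^2\int_{I_{i-1}\cup I_i}f\,\omega$; the disjointness of the $I_i$ then gives summability. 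To salvage your local stopping-time formulation, you would need a similar global device (e.g.\ decomposing the $t$-integral dyadically within $I_x^+$ so that each scale picks up a genuine geometric gain), rather than a pointwise bound in $t$.
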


\begin{proof}
To prove this lemma, we borrow some ideas from the
proof of \cite[Proposition~2.3(ii)]{bsvy.arxiv}.
Without loss of generality, we may assume that $f$ is both nonnegative
and not identically zero. We write
\begin{align*}
E(f,\gamma)&=\left\{(x,y)\in E(f,\gamma):\ x>y\right\}
\cup\left\{(x,y)\in E(f,\gamma):\ x<y\right\}\nonumber\\
&=:E_+(f,\gamma)\cup E_-(f,\gamma).
\end{align*}
Thus, to show \eqref{1633},
it suffices to prove that
\begin{align}\label{1634}
\left\|\left[\int_{\mathbb{R}}
\mathbf{1}_{E_+(f,\gamma)}(\cdot,y)
\left|\cdot-y\right|^{\gamma-1}\,dy\right]^\frac{1}{q}\right\|_X
\lesssim\left\|f^\frac{1}{q}\right\|_{X}
\end{align}
and
\begin{align}\label{1635}
\left\|\left[\int_{\mathbb{R}}
\mathbf{1}_{E_-(f,\gamma)}(\cdot,y)
\left|\cdot-y\right|^{\gamma-1}\,dy\right]^\frac{1}{q}\right\|_X
\lesssim\left\|f^\frac{1}{q}\right\|_{X}.
\end{align}

We first show \eqref{1634}. To this end,
by Lemma~\ref{4.6} and the assumptions of the present lemma,
we find that it suffices to prove that, for any $\omega\in A_1(\mathbb{R})$
with $[\omega]_{A_1(\mathbb{R})}\leq
2\|\mathcal{M}\|_{(X^\frac{1}{p})'\to(X^\frac{1}{p})'}$,
\begin{align}\label{1626}
\int_{\mathbb{R}}
\left[\int_{\mathbb{R}}\mathbf{1}_{E_+(f,\gamma)}(x,y)
(x-y)^{\gamma-1}\,dy\right]^{\frac{p}{q}}
\omega(x)\,dx\lesssim
\int_{\mathbb{R}}\left|f(x)\right|^{\frac{p}{q}}
\omega(x)\,dx.
\end{align}
Moreover, by an argument similar to that used in
Case 2) in the proof of Theorem~\ref{upperBound}
(briefly speaking, a duality argument of both the
Muckenhoupt weight and the
weighted Lebesgue space),
we conclude that, to prove \eqref{1626} for any $q\in(0,p]$,
we only need to show \eqref{1626}
in the case $q=p$.

Assume that $\mathrm{supp\,}(f)\subset[a,b]$
with $-\infty<a<b<\infty$.
By $\gamma\in(-\infty,-1)$ and a stopping time
argument similar to that used
in the proof of \cite[Proposition~2.3(ii)]{bsvy.arxiv},
we find that exists a $K\in\mathbb{N}$,
depending only on both $f$ and $\gamma$,
and a sequence $\{a_i\}_{i=1}^{K+1}\subset\mathbb{R}$ satisfying
\begin{enumerate}
\item[\textup{(i)}]
for any $i\in\mathbb{N}\cap[1,K]$,
$a_i<a_{i+1}$;
\item[\textup{(ii)}]
$a_1=a$ and $a_K<b\leq a_{K+1}$;
\item[\textup{(iii)}]
for any $i\in\mathbb{N}\cap[1,K]$,
\begin{align}\label{2042}
(a_{i+1}-a_i)^{-(\gamma+1)}\int_{a_i}^{a_{i+1}}f(s)\,ds=\frac{1}{2}.
\end{align}
\end{enumerate}
We write $I_0:=(-\infty,a_1]$, $I_{K+1}:=[a_{K+1},\infty)$, and
$I_i:=[a_i,a_{i+1}]$ for any $i\in\mathbb{N}\cap[1,K]$.
Moreover, the proof of \cite[Proposition~2.3(ii)]{bsvy.arxiv} also
shows that
\begin{align}\label{2140}
E_+(f,\gamma)=\bigcup_{i=1}^{K+1}\left\{E_+(f,\gamma)\cap
\left[\left(a_i,\infty\right)\times\left(-\infty,a_i\right)\right]\right\}
=:\bigcup_{i=1}^{K+1}\mathcal{E}_i
\end{align}
and, for any given $i\in\mathbb{N}\cap[1,K+1]$ and for any
$(x,y)\in E_+(f,\gamma)$ satisfying $y<a_i<x$,
$$
|x-y|\ge\min\left\{|I_i|,|I_{i-1}|\right\}.
$$
From this,
we deduce that, for any given $i\in\mathbb{N}\cap[1,K+1]$
and $\omega\in A_1(\mathbb{R})$,
\begin{align}\label{Z12}
&\int_{\mathbb{R}}
\left[\int_{\mathbb{R}}\mathbf{1}_{\mathcal{E}_i}(x,y)
(x-y)^{\gamma-1}\,dy\right]
\omega(x)\,dx\\
&\quad\leq\int_{a_i}^\infty
\left[\int_{-\infty}^{\min\{a_i,x-\min\{|I_i|,|I_{i-1}|\}\}}
(x-y)^{\gamma-1}\,dy\right]\omega(x)\,dx\nonumber\\
&\quad=-\frac{1}{\gamma}\int_{a_i}^\infty
\left[\max\{x-a_i,\min\{|I_i|,|I_{i-1}|\}\}
\right]^\gamma\omega(x)\,dx\nonumber\\
&\quad=-\frac{1}{\gamma}
\left\{\int_{a_i}^{a_i+\min\{|I_i|,|I_{i-1}|\}}
\left[\min\{|I_i|,|I_{i-1}|\}\right]^\gamma\omega(x)\,dx\right.\nonumber\\
&\qquad+\left.\int_{a_i+\min\{|I_i|,|I_{i-1}|\}}^\infty
(x-a_i)^\gamma\omega(x)\,dx\right\}\nonumber\\
&\quad=:-\frac{1}{\gamma}(\mathrm{Z}_1+\mathrm{Z}_2).\nonumber
\end{align}

To deal with $\mathrm{Z}_1$,
we consider the following two cases on both $|I_i|$ and $|I_{i-1}|$.

\emph{Case 1)} $|I_i|\leq|I_{i-1}|$. In this case,
by the definition of $\mathcal{M}$,
\eqref{2042}, and Lemma~\ref{ApProperty}(i),
we conclude that
\begin{align}\label{2101}
\mathrm{Z}_1&=|I_i|^\gamma
\omega\left(I_i\right)
\leq|I_i|^{\gamma+1}\inf_{t\in I_i}\mathcal{M}(\omega)(t)\\
&\lesssim[\omega]_{A_1(\mathbb{R})}
\inf_{t\in I_i}\omega(t)\int_{I_i}f(s)\,ds
\leq[\omega]_{A_1(\mathbb{R})}
\int_{I_i}f(s)\omega(s)\,ds.\nonumber
\end{align}

\emph{Case 2)} $|I_i|>|I_{i-1}|$. In this case,
from \eqref{2042}, the definition of $\mathcal{M}$,
and Lemma~\ref{ApProperty}(i),
we infer that
\begin{align*}
\mathrm{Z}_1
&=|I_{i-1}|^\gamma\omega\left((a_i,a_{i}+|I_{i-1}|)\right)
\leq|I_{i-1}|^{\gamma}
\omega\left((a_{i-1},2a_i-a_{i-1})\right)\\
&\leq2[\omega]_{A_1(\mathbb{R})}|I_{i-1}|^\gamma
\omega(I_{i-1})\leq2[\omega]_{A_1(\mathbb{R})}|I_{i-1}|^{\gamma+1}
\inf_{t\in I_{i-1}}\mathcal{M}(\omega)(t)\\
&\leq4[\omega]^2_{A_1(\mathbb{R})}
\inf_{t\in I_{i-1}}\omega(t)\int_{I_{i-1}}f(s)\,ds
\leq4[\omega]^2_{A_1(\mathbb{R})}\int_{I_{i-1}}
f(s)\omega(s)\,ds,
\end{align*}
which, combined with \eqref{2101}, further implies that
\begin{align}\label{Z1}
\mathrm{Z}_1\lesssim\max\left\{[\omega]_{A_1(\mathbb{R})},\,
[\omega]^2_{A_1(\mathbb{R})}\right\}\int_{I_{i-1}\cup I_i}
f(s)\omega(s)\,ds,
\end{align}
where the implicit positive constant
is independent of $i$, $\omega$, and $f$.

To deal with $\mathrm{Z}_2$, we also
consider the following two cases on both $|I_i|$ and $|I_{i-1}|$.

\emph{Case 1)} $|I_i|\leq|I_{i-1}|$. In this case,
by both (i) and (ii) of Lemma~\ref{ApProperty},
$\gamma\in(-\infty,-1)$,
and \eqref{2042},
we conclude that
\begin{align}\label{2102}
\mathrm{Z}_2
&=\int_{a_i+|I_i|}^\infty(x-a_i)^\gamma\omega(x)\,dx
=\int_{|I_i|}^\infty x^\gamma\omega(x+a_i)\,dx\\
&=\sum_{n=1}^\infty\int_{2^{n-1}|I_i|}^{2^n|I_i|}
x^\gamma\omega(x+a_i)\,dx
\leq\sum_{n=1}^\infty2^{(n-1)\gamma}|I_i|^\gamma
\int_{2^{n-1}|I_i|}^{2^n|I_i|}
\omega(x+a_i)\,dx\nonumber\\
&\leq2^{-\gamma}\sum_{n=1}^\infty
2^{n\gamma}|I_i|^\gamma
\omega\left((a_i,a_i+2^n|I_i|)\right)\nonumber\\
&\leq2^{-\gamma}[\omega]_{A_1(\mathbb{R})}\sum_{n=1}^\infty
2^{n(\gamma+1)}|I_i|^\gamma
\omega(I_i)\nonumber\\
&\leq2^{-\gamma}[\omega]_{A_1(\mathbb{R})}\sum_{n=1}^\infty
2^{n(\gamma+1)}|I_i|^{\gamma+1}
\inf_{t\in I_i}\mathcal{M}(\omega)(t)\nonumber\\
&\lesssim[\omega]_{A_1(\mathbb{R})}^2
\inf_{t\in I_i}\omega(t)\int_{I_i}f(s)\,ds
\leq[\omega]_{A_1(\mathbb{R})}^2
\int_{I_i}f(s)\omega(s)\,ds.\nonumber
\end{align}

\emph{Case 2)} $|I_i|>|I_{i-1}|$. In this case,
from both (i) and (ii) of Lemma~\ref{ApProperty},
$\gamma\in(-\infty,-1)$,
and \eqref{2042},
it follows that
\begin{align*}
\mathrm{Z}_2
&=\int_{a_i+|I_{i-1}|}^\infty(x-a_i)^\gamma\omega(x)\,dx
=\int_{|I_{i-1}|}^\infty x^\gamma\omega(x+a_i)\,dx\nonumber\\
&=\sum_{n=1}^\infty\int_{2^{n-1}|I_{i-1}|}^{2^n|I_{i-1}|}
x^\gamma\omega(x+a_i)\,dx
\leq\sum_{n=1}^\infty2^{(n-1)\gamma}|I_{i-1}|^\gamma
\int_{2^{n-1}|I_{i-1}|}^{2^n|I_{i-1}|}
\omega(x+a_i)\,dx\nonumber\\
&\lesssim\sum_{n=1}^\infty
2^{n\gamma}|I_{i-1}|^{\gamma}
\omega\left((a_{i-1},a_i+2^n|I_{i-1}|)\right)\nonumber\\
&\leq[\omega]_{A_1(\mathbb{R})}\sum_{n=1}^\infty
2^{n\gamma}(2^n+1)|I_{i-1}|^{\gamma}
\omega(I_{i-1})\nonumber\\
&\lesssim[\omega]_{A_1(\mathbb{R})}
\inf_{t\in I_{i-1}}\mathcal{M}(\omega)(t)\int_{I_{i-1}}f(s)\,ds
\nonumber\\
&\leq[\omega]_{A_1(\mathbb{R})}^2
\inf_{t\in I_{i-1}}\omega(t)\int_{I_{i-1}}f(s)\,ds
\leq[\omega]_{A_1(\mathbb{R})}^2
\int_{I_{i-1}}f(s)\omega(s)\,ds,
\end{align*}
which, together with \eqref{2102}, further implies that
\begin{align*}
\mathrm{Z}_2\lesssim[\omega]_{A_1(\mathbb{R})}^2
\int_{I_{i-1}\cup I_i}f(s)\omega(s)\,ds,
\end{align*}
where the implicit positive constant is independent of $i$, $\omega$, and $f$.
By this, \eqref{2140}, the Tonelli theorem,
\eqref{Z12}, and \eqref{Z1},
we conclude that,
for any $\omega\in A_1(\mathbb{R})$
with $[\omega]_{A_1(\mathbb{R})}
\leq2\|\mathcal{M}\|_{(X^\frac{1}{p})'\to(X^\frac{1}{p})'}$,
\begin{align*}
&\int_{\mathbb{R}}
\left[\int_{\mathbb{R}}\mathbf{1}_{E_+(f,\gamma)}(x,y)
(x-y)^{\gamma-1}\,dy\right]
\omega(x)\,dx\\
&\quad=\sum_{i=1}^{K+1}\int_{\mathbb{R}}\left[\int_{\mathbb{R}}
\mathbf{1}_{\mathcal{E}_i}(x,y)
(x-y)^{\gamma-1}\,dy\right]
\omega(x)\,dx\nonumber\\
&\quad\lesssim
\max\left\{[\omega]_{A_1(\mathbb{R})},\,
[\omega]_{A_1(\mathbb{R})}^2\right\}
\sum_{i=1}^{K+1}
\int_{I_{i-1}\cup I_i}f(s)\omega(s)\,ds\nonumber\\
&\quad\lesssim
\max\left\{\|\mathcal{M}\|_{(X^\frac{1}{p})'\to(X^\frac{1}{p})'},\,
\|\mathcal{M}\|_{(X^\frac{1}{p})'\to(X^\frac{1}{p})'}^2\right\}
\int_{\mathbb{R}}f(s)\omega(s)\,ds,\nonumber
\end{align*}
which completes the proof of \eqref{1626} in the case $q=p$
and hence \eqref{1626} for any $q\in(0,p]$.
This finishes the proof of \eqref{1634}.
From an argument similar
to that used in the proof of \eqref{1634},
we further deduce that \eqref{1635} holds true,
which, combined with \eqref{1634},
then completes the proof of Lemma~\ref{1558}.
\end{proof}

\begin{remark}
In Lemma~\ref{1558}, if $X:=L^p(\mathbb{R})$ with $p\in[1,\infty)$
and if $q\in(0,p]$,
then both $X$ and $X^\frac{1}{p}=L^1(\mathbb{R}^n)$
are ball Banach function spaces and
the Hardy--Littlewood maximal operator is bounded
on $(X^\frac{1}{p})'=L^\infty(\mathbb{R}^n)$.
Thus, Lemma~\ref{1558}
in this case holds true,
which when $q=p\in[1,\infty)$
coincides with \cite[Proposition~2.3(ii)]{bsvy.arxiv}
in the 1-dimensional Euclidean space
and which when $q\in(0,p)$ is new.
\end{remark}

Now, we turn to show Theorem~\ref{n=1}.

\begin{proof}[Proof of Theorem~\ref{n=1}]
Let $f\in C^1(\mathbb{R})$ with $f'\in C_{\mathrm{c}}(\mathbb{R})$.
By the H\"older inequality and
$q\in[1,\infty)$, we find that, for any $x,y\in\mathbb{R}$
with $x<y$,
\begin{align}\label{2122}
\left|f(y)-f(x)\right|
\leq\int_x^y\left|f'(s)\right|\,ds
\leq\left(y-x\right)^{1-\frac{1}{q}}
\left[\int_x^y\left|f'(s)\right|^q\,ds\right]^\frac{1}{q},
\end{align}
which further implies that, for any given $\lambda\in(0,\infty)$ and
for any $x,y\in E_{\lambda,\frac{\gamma}{q}}[f]$ with $x<y$,
$$
\int_x^y\left|f'(s)\right|^q\,ds\ge
\frac{|f(y)-f(x)|^q}{(y-x)^{q-1}}
>\lambda^q(y-x)^{\gamma+1}.
$$
From this and \eqref{1552}, we infer that,
for any $\lambda\in(0,\infty)$,
$$
E_{\lambda,\frac{\gamma}{q}}[f]\subset
E\left(\frac{|f'|^q}{\lambda^q},\gamma\right),
$$
where $E(g,\gamma)$ is the same as in \eqref{1552}
for any $g\in C_{\mathrm{c}}(\mathbb{R})$ and $\gamma\in\mathbb{R}$.
By this and Lemma~\ref{1558}
with $f:=\lambda^{-q}|f'|^q$,
we conclude that,
for any $\lambda\in(0,\infty)$,
\begin{align*}
&\lambda\left\|\left[\int_{\mathbb{R}}
\mathbf{1}_{E_{\lambda,\frac{\gamma}{q}}[f]}(\cdot,y)
\left|\cdot-y\right|^{\gamma-1}\,dy\right]^\frac{1}{q}\right\|_X\\
&\quad\leq\lambda\left\|\left[\int_{\mathbb{R}}
\mathbf{1}_{E(\frac{|f'|^q}{\lambda^q},\gamma)}(\cdot,y)
\left|\cdot-y\right|^{\gamma-1}\,dy\right]^\frac{1}{q}\right\|_X
\lesssim\left\|f'\right\|_{X}.
\end{align*}
Taking the supremum over all $\lambda\in(0,\infty)$,
we then complete the proof of Theorem~\ref{n=1}.
\end{proof}

\begin{remark}\label{1920}
\begin{enumerate}
\item[(i)]
We should point out that,
in Theorem~\ref{n=1},
the assumption $q\in[1,\infty)$ is only used
in the estimation of \eqref{2122}.
\item[(ii)]
In Theorem~\ref{n=1}, if $X:=L^p(\mathbb{R})$ with $p\in[1,\infty)$
and if $q\in[1,p]$,
then both $X$ and $X^\frac{1}{p}=L^1(\mathbb{R}^n)$
are ball Banach function spaces and
the Hardy--Littlewood maximal operator is bounded
on $(X^\frac{1}{p})'=L^\infty(\mathbb{R}^n)$.
Thus, Theorem~\ref{n=1}
in this case holds true,
which when $q=p\in[1,\infty)$
is a part of \cite[Theorem~2.2(b)]{bsvy.arxiv}
and which when $q\in[1,p)$ is new.
\item[(iii)]
When $\gamma\in(-\infty,0)$,
the assumption $\gamma\in(-\infty,-1)$ in Theorem~\ref{n=1}
is sharp in the case $X:=L^1(\mathbb{R})$.
Indeed, on the one hand, note that $L^1(\mathbb{R})$
satisfies all the assumptions on $X$ in Theorem~\ref{n=1}
and hence Theorem~\ref{n=1}
with $X:=L^1(\mathbb{R})$ holds true.
On the other hand, the proof of \cite[Proposition 6.1]{bsvy.arxiv}
gives several counterexamples such that
Theorem~\ref{n=1} with $X:=L^1(\mathbb{R})$ and any $\gamma\in[-1,0)$
fails.

\item[(iv)]
When $n\in\mathbb{N}\cap[2,\infty)$,
it is still unknown whether or not
Theorem~\ref{n=1} with $\mathbb{R}$ replaced by $\mathbb{R}^n$
still holds true.
\end{enumerate}
\end{remark}

At the end of this subsection,
we give a remark on \eqref{2213} in the case $\gamma=0$.

\begin{remark}
As was shown in \cite[Corollary~1.6]{bsvy.arxiv},
if $f\in L_{\mathrm{loc}}^1(\mathbb{R}^n)$
with $|\nabla f|\in L_{\mathrm{loc}}^1(\mathbb{R}^n)$
and if
$$
\iint_{\{(x,y)\in\mathbb{R}^n\times\mathbb{R}^n:\
\frac{|f(x)-f(y)|}{|x-y|}>\lambda\}}|x-y|^{-n}\,dx\,dy<\infty
$$
for any $\lambda\in(0,\infty)$,
then $f$ equals to a constant function almost everywhere.
Thus, even in the case $X:=L^p(\mathbb{R}^n)$ with $p\in[1,\infty)$
and $q=p$,
\eqref{2213} with $\gamma=0$ does not hold true unless $f$
equals to a constant function almost everywhere.
\end{remark}

\subsection{Lower Estimate and
Two Limiting Identities}
\label{ss3.2}

The  main target of this subsection is to establish the lower estimate
of \eqref{2213}. In addition, as generalizations of both
\eqref{2119} and \eqref{2020} from $L^p(\mathbb{R}^n)$
to the ball Banach function space $X$, we also obtain two
limiting identities with the limit $\lambda\to\infty$
when $\gamma\in(0,\infty)$ and the limit $\lambda\to0^+$
when $\gamma\in(-\infty,0)$. We begin with the following conclusion.

\begin{theorem}\label{LimFormulaInf}
Let $X$ be a ball quasi-Banach function space, $q\in(0,\infty)$,
and $\gamma\in\mathbb{R}\setminus\{0\}$.
For any $f\in C^1(\mathbb{R}^n)$
with $|\nabla f|\in C_{\mathrm{c}}(\mathbb{R}^n)$,
\begin{enumerate}
\item[\textup{(i)}]
if $\gamma\in(0,\infty)$, then
\begin{align*}
\liminf_{\lambda\to\infty}
\lambda
\left\|\left[\int_{\mathbb{R}^n}
\mathbf{1}_{E_{\lambda,\frac{\gamma}{q}}[f]}(\cdot,y)
\left|\cdot-y\right|^{\gamma-n}\,dy\right]^\frac{1}{q}\right\|_X
\ge\left[\frac{\kappa(q,n)}{\gamma}\right]^\frac{1}{q}
\left\|\,\left|\nabla f\right|\,\right\|_{X};
\end{align*}
\item[\textup{(ii)}]
if $\gamma\in(-\infty,0)$, then
\begin{align*}
\liminf_{\lambda\to0^+}
\lambda
\left\|\left[\int_{\mathbb{R}^n}
\mathbf{1}_{E_{\lambda,\frac{\gamma}{q}}[f]}(\cdot,y)
\left|\cdot-y\right|^{\gamma-n}\,dy\right]^\frac{1}{q}\right\|_X
\ge\left[-\frac{\kappa(q,n)}{\gamma}\right]^\frac{1}{q}
\left\|\,\left|\nabla f\right|\,\right\|_{X},
\end{align*}
\end{enumerate}
where $\kappa(q,n)$ and $E_{\lambda,\frac{\gamma}{q}}[f]$
with $\lambda\in(0,\infty)$ are the same as, respectively, in
\eqref{kappaqn} and \eqref{Elambda}.
\end{theorem}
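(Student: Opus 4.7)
The plan is to reduce both assertions to a pointwise convergence statement and then invoke the Fatou property of the ball quasi-Banach function space $X$ [Definition~\ref{1659}(iii)]. Specifically, I would first establish that, for $f\in C^1(\mathbb{R}^n)$ with $|\nabla f|\in C_{\mathrm{c}}(\mathbb{R}^n)$ and for almost every $x\in\mathbb{R}^n$,
\[
\lim_{\lambda\to\infty}\lambda^q\int_{\mathbb{R}^n}
\mathbf{1}_{E_{\lambda,\frac{\gamma}{q}}[f]}(x,y)|x-y|^{\gamma-n}\,dy
=\frac{\kappa(q,n)}{\gamma}\left|\nabla f(x)\right|^q
\]
when $\gamma\in(0,\infty)$, together with the analogous identity in which $\lambda\to0^+$ and the constant is $-\kappa(q,n)/\gamma$ when $\gamma\in(-\infty,0)$. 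The underlying heuristic is standard: passing to polar coordinates $y=x+r\omega$ and using the first-order expansion $|f(x)-f(y)|=r|\nabla f(x)\cdot\omega|+o(r)$, the defining inequality of $E_{\lambda,\gamma/q}[f]$ reduces, to leading order, to $r<(|\nabla f(x)\cdot\omega|/\lambda)^{q/\gamma}$ when $\gamma>0$ and to $r>(|\nabla f(x)\cdot\omega|/\lambda)^{q/\gamma}$ when $\gamma<0$; integrating $r^{\gamma-1}$ over the resulting interval in $r$ yields $|\gamma|^{-1}(|\nabla f(x)\cdot\omega|/\lambda)^q$, and integrating $|\nabla f(x)\cdot\omega|^q$ over $\mathbb{S}^{n-1}$ with the aid of \eqref{kappaqn} produces the stated limit. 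The rigorous form of this computation, including uniform control of the $o(r)$ remainder in $\omega\in\mathbb{S}^{n-1}$, is precisely the Lebesgue differentiation lemma \cite[Lemma~3.1]{bsvy.arxiv} flagged in the introduction, which I would invoke directly.

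With the pointwise limit in hand, I would set
\[
g_\lambda:=\lambda\left[\int_{\mathbb{R}^n}
\mathbf{1}_{E_{\lambda,\frac{\gamma}{q}}[f]}(\cdot,y)
\left|\cdot-y\right|^{\gamma-n}\,dy\right]^{\frac{1}{q}}
\]
and pick any sequence $\{\lambda_m\}_{m\in\mathbb{N}}$ tending to $\infty$ (respectively, to $0^+$). Letting $h_m:=\inf_{k\ge m}g_{\lambda_k}$, the pointwise limit gives $0\le h_m\uparrow [\kappa(q,n)/|\gamma|]^{1/q}|\nabla f|$ almost everywhere. Applying (ii) and (iii) of Definition~\ref{1659} then yields both $\|h_m\|_X\le\inf_{k\ge m}\|g_{\lambda_k}\|_X$ and $\|h_m\|_X\uparrow[\kappa(q,n)/|\gamma|]^{1/q}\|\,|\nabla f|\,\|_X$. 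Sending $m\to\infty$ produces $[\kappa(q,n)/|\gamma|]^{1/q}\|\,|\nabla f|\,\|_X\le\liminf_{k\to\infty}\|g_{\lambda_k}\|_X$, and taking the infimum over all such sequences converts this into the liminf inequality over the continuous parameter $\lambda$ claimed in both (i) and (ii).

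The main obstacle is the pointwise convergence step. When $\gamma<0$ the weight $|x-y|^{\gamma-n}$ is non-integrable at $y=x$ and, when moreover $\gamma<-q$, the set $E_{\lambda,\gamma/q}[f]$ also spreads out to infinity in $y$, so one must simultaneously identify the leading contribution from the threshold $r\sim(|\nabla f(x)\cdot\omega|/\lambda)^{q/\gamma}$ and verify that the remaining contributions vanish in the limit. Since $f\in C^1(\mathbb{R}^n)$ with $|\nabla f|\in C_{\mathrm{c}}(\mathbb{R}^n)$ forces $f$ to be Lipschitz and constant outside a compact set, \cite[Lemma~3.1]{bsvy.arxiv} supplies the required limit without further input; once it is invoked, the remainder of the argument is the routine Fatou-property manipulation described above.
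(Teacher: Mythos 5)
Your proposal is correct and follows essentially the same route as the paper: establish the pointwise a.e.\ limit via the Lebesgue differentiation lemma of \cite[Lemma~3.1]{bsvy.arxiv} (as in the proof of \cite[Lemma~3.2]{bsvy.arxiv}), then pass to the quasi-norm by a Fatou-type argument. The only cosmetic difference is that you re-derive the Fatou inequality directly from (ii) and (iii) of Definition~\ref{1659}, whereas the paper simply invokes the prepackaged Lemma~\ref{FatouX}, which encodes exactly that argument.
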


To prove Theorem~\ref{LimFormulaInf},
we need the following  Fatou lemma
on ball quasi-Banach function spaces;
see, for instance, \cite[Lemma~2.4]{wyy.arxiv}.

\begin{lemma}\label{FatouX}
Let $X$ be a ball quasi-Banach function space and
$\{f_k\}_{k\in\mathbb{N}}\subset X$.
Then
\begin{align*}
\left\|\liminf_{k\to\infty}\left|f_k\right|\right\|_X
\leq\liminf_{k\to\infty}\left\|f_k\right\|_X.
\end{align*}
\end{lemma}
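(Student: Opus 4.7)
The plan is to reduce the Fatou-type inequality for $X$ to the monotone convergence axiom of Definition~\ref{1659}(iii) via the classical trick of writing the liminf as an increasing limit of pointwise infima. Concretely, I set, for each $k\in\mathbb{N}$,
\begin{align*}
g_k:=\inf_{m\ge k}\left|f_m\right|.
\end{align*}
Then $g_k$ is measurable (as a pointwise infimum of countably many measurable functions, so $g_k\in\mathscr{M}(\mathbb{R}^n)$ and the quantity $\|g_k\|_X$ is well defined by Definition~\ref{1659}), $0\le g_k\le g_{k+1}$ everywhere, and by the very definition of the liminf,
\begin{align*}
g_k\uparrow\liminf_{k\to\infty}\left|f_k\right|\quad\text{pointwise as }k\to\infty.
\end{align*}

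Next, I apply the two defining properties of a ball quasi-Banach function space. On the one hand, Definition~\ref{1659}(iii) applied to $\{g_k\}_{k\in\mathbb{N}}$ yields
\begin{align*}
\left\|g_k\right\|_X\uparrow\left\|\liminf_{k\to\infty}\left|f_k\right|\right\|_X\quad\text{as }k\to\infty.
\end{align*}
On the other hand, for every $m\ge k$ one has $g_k\le|f_m|$ pointwise, so Definition~\ref{1659}(ii) gives $\|g_k\|_X\le\|f_m\|_X$, and taking the infimum over $m\ge k$ on the right produces $\|g_k\|_X\le\inf_{m\ge k}\|f_m\|_X$. Combining these two displays and passing to the limit $k\to\infty$ gives the desired inequality
\begin{align*}
\left\|\liminf_{k\to\infty}\left|f_k\right|\right\|_X
=\lim_{k\to\infty}\left\|g_k\right\|_X
\le\lim_{k\to\infty}\inf_{m\ge k}\left\|f_m\right\|_X
=\liminf_{k\to\infty}\left\|f_k\right\|_X.
\end{align*}

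There is no real obstacle here; the argument is the standard measure-theoretic Fatou proof abstracted to the quasi-norm setting, and it uses only the monotonicity axiom (ii) and the Fatou-type monotone convergence axiom (iii) of Definition~\ref{1659}. In particular, no Banach-space structure (axioms (v), (vi)) and no ball-integrability condition are needed, which is why the lemma is stated for general ball quasi-Banach function spaces. I should only take care at the measurability step (the infimum over a countable family stays in $\mathscr{M}(\mathbb{R}^n)$, and by convention the quasi-norm is applied to any element of $\mathscr{M}(\mathbb{R}^n)$, possibly taking the value $+\infty$, so no finiteness hypothesis on $\{\|f_k\|_X\}$ is required—if $\liminf_{k\to\infty}\|f_k\|_X=+\infty$ the conclusion is trivial).
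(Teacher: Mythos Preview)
Your proof is correct and is precisely the standard abstract Fatou argument. Note that the paper does not actually prove this lemma; it simply cites it from \cite[Lemma~2.4]{wyy.arxiv}, so there is no in-paper proof to compare against, but your argument is exactly what one would expect the cited proof to be.
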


Now, we prove Theorem~\ref{LimFormulaInf}.

\begin{proof}[Proof of Theorem~\ref{LimFormulaInf}]
By an argument similar to that
used in the proof of \cite[Lemma~3.2]{bsvy.arxiv}
with the Fatou lemma on $L^1(\mathbb{R}^n)$
replaced by Lemma~\ref{FatouX},
we obtain the desired result of the present theorem;
we omit the details here.
This finishes the proof of Theorem~\ref{LimFormulaInf}.
\end{proof}

\begin{remark}
\begin{enumerate}
\item[(i)]
In Theorem~\ref{LimFormulaInf},
if $X:=L^p(\mathbb{R}^n)$ with $p\in[1,\infty)$
and if $q\in(0,\infty)$,
then Theorem~\ref{LimFormulaInf}
in this case holds true,
which when $q=p\in[1,\infty)$ is just \cite[Lemma~3.2]{bsvy.arxiv}
and which when $q\in(0,\infty)$ with $q\neq p$ is new.
\item[(ii)]
Theorem~\ref{LimFormulaInf} when $\gamma=n$ coincides with
\cite[(3.25)]{dlyyz.arxiv}.
\end{enumerate}
\end{remark}

Moreover, if $X$ is a ball Banach function space,
we then have the following upper bounds on $X$
in two limiting cases that $\lambda\to\infty$
when $\gamma\in(0,\infty)$ and $\lambda\to0^+$
when $\gamma\in(-\infty,0)$.

\begin{theorem}\label{LimFormulaSup}
Let $X$ be a ball Banach function space.
\begin{enumerate}
\item[\textup{(i)}]
If $\gamma,q\in(0,\infty)$,
then, for any $f\in C^1(\mathbb{R}^n)$
with $|\nabla f|\in C_{\mathrm{c}}(\mathbb{R}^n)$,
\begin{align}\label{2014}
\limsup_{\lambda\to\infty}
\lambda
\left\|\left[\int_{\mathbb{R}^n}
\mathbf{1}_{E_{\lambda,\frac{\gamma}{q}}[f]}(\cdot,y)
\left|\cdot-y\right|^{\gamma-n}\,dy\right]^\frac{1}{q}\right\|_X
\leq\left[\frac{\kappa(q,n)}{\gamma}\right]^\frac{1}{q}
\left\|\,\left|\nabla f\right|\,\right\|_{X},
\end{align}
where $\kappa(q,n)$ and $E_{\lambda,\frac{\gamma}{q}}[f]$
with $\lambda\in(0,\infty)$ are the same as, respectively, in
\eqref{kappaqn} and \eqref{Elambda}.
\item[\textup{(ii)}]
Let $p\in[1,\infty)$, $\gamma\in(-\infty,0)$,
and $q\in(0,\frac{n-\gamma}{n}p)$.
Assume that $X^\frac{1}{p}$ is a ball Banach function space
and that the Hardy--Littlewood maximal operator $\mathcal{M}$
is bounded on $(X^\frac{1}{p})'$.
Then, for any $f\in C^1_{\mathrm{c}}(\mathbb{R}^n)$,
\begin{align}\label{1144}
\limsup_{\lambda\to0^+}
\lambda
\left\|\left[\int_{\mathbb{R}^n}
\mathbf{1}_{E_{\lambda,\frac{\gamma}{q}}[f]}(\cdot,y)
\left|\cdot-y\right|^{\gamma-n}\,dy\right]^\frac{1}{q}\right\|_X
\leq\left[-\frac{\kappa(q,n)}{\gamma}\right]^\frac{1}{q}
\left\|\,\left|\nabla f\right|\,\right\|_{X}.
\end{align}
\end{enumerate}
\end{theorem}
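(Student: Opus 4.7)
The plan is as follows. Both parts rest on the pointwise limit
\begin{align*}
\lim\, \lambda^q \int_{\mathbb{R}^n}\mathbf{1}_{E_{\lambda,\gamma/q}[f]}(x,y)|x-y|^{\gamma-n}\,dy = \frac{\kappa(q,n)}{|\gamma|}|\nabla f(x)|^q,
\end{align*}
valid at every $x\in\mathbb{R}^n$ (the limit being $\lambda\to\infty$ in (i) and $\lambda\to 0^+$ in (ii)), obtained by the substitution $y=x+r\omega$, the uniform Taylor expansion $|f(x+r\omega)-f(x)|=r|\nabla f(x)\cdot\omega|+o(r)$, and a routine radial integration, essentially as in the proof of \cite[Lemma~3.2]{bsvy.arxiv}. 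Writing $g_\lambda(x):=\lambda[\int_{\mathbb{R}^n}\mathbf{1}_{E_{\lambda,\gamma/q}[f]}(x,y)|x-y|^{\gamma-n}\,dy]^{1/q}$, the identity reads $g_\lambda(x)\to [\kappa(q,n)/|\gamma|]^{1/q}|\nabla f(x)|$, with the constants matching (i) and (ii).

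For part (i), the plan is to upgrade this pointwise convergence to uniform convergence on a fixed compact set and then transfer to the $X$-norm via $\mathbf{1}_K\in X$ (Definition~\ref{1659}(iv)). Since $|\nabla f|\in C_{\mathrm{c}}(\mathbb{R}^n)$, the function $f$ is bounded on $\mathbb{R}^n$ and is constant on each unbounded connected component of $\mathbb{R}^n\setminus\mathrm{supp\,}(\nabla f)$; this boundedness, combined with the Lusin--Lipschitz inequality used in the proof of Theorem~\ref{upperBound}, produces a fixed compact set $K$ and a constant $c_f>0$ such that, for every $\lambda$ sufficiently large, $E_{\lambda,\gamma/q}[f]\subset K\times K$ and $|x-y|\leq c_f\lambda^{-q/\gamma}$ throughout $E_{\lambda,\gamma/q}[f]$. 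The uniform continuity of $\nabla f$ then promotes the Taylor expansion to one uniform in $x\in K$, whence $\|g_\lambda-[\kappa(q,n)/\gamma]^{1/q}|\nabla f|\|_{L^\infty(\mathbb{R}^n)}\to 0$; Definition~\ref{1659}(iv) yields
\begin{align*}
\|g_\lambda-[\kappa(q,n)/\gamma]^{1/q}|\nabla f|\|_X \leq \|g_\lambda-[\kappa(q,n)/\gamma]^{1/q}|\nabla f|\|_{L^\infty(\mathbb{R}^n)}\,\|\mathbf{1}_K\|_X \longrightarrow 0,
\end{align*}
which gives \eqref{2014} in the sharper form of an actual limit.

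For part (ii), $E_{\lambda,\gamma/q}[f]$ swells as $\lambda\to 0^+$ and no uniform compact confinement is available, so the plan is to route the argument through extrapolation: by Lemma~\ref{4.6}, the $X$-norm of any admissible $\phi$ is controlled by $\sup_h\|\phi\|_{L^p_{R_{(X^{1/p})'}h}}$ as $h$ ranges over the unit ball of $(X^{1/p})'$, and, by Lemma~\ref{4.5}(ii), each weight $\omega:=R_{(X^{1/p})'}h$ lies in $A_1(\mathbb{R}^n)$ with $[\omega]_{A_1(\mathbb{R}^n)}\leq 2\|\mathcal{M}\|_{(X^{1/p})'\to(X^{1/p})'}$. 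The reduction is then to the uniform weighted $L^p$ limit
\begin{align*}
\lim_{\lambda\to 0^+}\int_{\mathbb{R}^n} g_\lambda^p\,\omega\,dx = [-\kappa(q,n)/\gamma]^{p/q}\int_{\mathbb{R}^n}|\nabla f|^p\omega\,dx,
\end{align*}
obtained by Vitali's theorem: pointwise convergence is the opening identity, and the uniform integrability in $L^1_\omega$ is built from the Lusin--Lipschitz decomposition $E_{\lambda,\gamma/q}[f]\subset E_1\cup E_2$ of \eqref{E12}. The $E_1$-piece admits the pointwise majorant $[\mathcal{M}(|\nabla f|)(x)]^p$ via the polar integration in \eqref{E1}, which lies in $L^p_\omega$ by Lemma~\ref{ApProperty}(vi) (since $\omega\in A_p$ by Lemma~\ref{ApProperty}(iv)); the $E_2$-piece, lacking pointwise control due to the asymmetry of the kernel, is majorized after dualization against $\omega$ by Fubini, using the $A_1$-tail estimate \eqref{1906} exactly as in the proof of Theorem~\ref{upperBound}, with constant depending on $\omega$ only through $[\omega]_{A_1(\mathbb{R}^n)}$. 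The hypothesis $q<(n-\gamma)p/n$ plays the same role as $n(1/p-1/q)<1$ does in Proposition~\ref{2010}, guaranteeing the convergence of the annular geometric series.

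The main obstacle will be securing the \emph{tight} constant $[-\kappa(q,n)/\gamma]^{1/q}$: a direct application of Lemma~\ref{4.6} to $g_\lambda$ followed by $\limsup_\lambda$ and $\sup_h$ does not commute cleanly and bleeds the sharp constant. The plan to bypass this is to apply extrapolation instead to the difference $g_\lambda-[-\kappa(q,n)/\gamma]^{1/q}|\nabla f|$: the uniform-in-$\omega$ $L^p_\omega$-bound from the above $E_1\cup E_2$ analysis, combined with the pointwise convergence and Vitali's theorem in $L^1_\omega$, shows $\int |g_\lambda-[-\kappa(q,n)/\gamma]^{1/q}|\nabla f||^p\omega\,dx\to 0$ uniformly over admissible $\omega$, so Lemma~\ref{4.6} then forces $\|g_\lambda-[-\kappa(q,n)/\gamma]^{1/q}|\nabla f|\|_X\to 0$, yielding \eqref{1144} with the sharp constant.
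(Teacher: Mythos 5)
Part (i) of your proposal is sound and is essentially the same as the paper's argument: both hinge on the confinement $E_{\lambda,\gamma/q}[f]\subset\widetilde{B_0}\times\mathbb{R}^n$ for $\lambda\geq\|\,|\nabla f|\,\|_{L^\infty}$, the radial bound $r\leq\{\lambda^{-1}[\,|\nabla f(x)\cdot\omega|+\rho(\cdot)\,]\}^{q/\gamma}$ with the modulus of continuity $\rho$ going to $0$ uniformly as $\lambda\to\infty$ (since $\gamma>0$), and the transfer to the $X$-norm via $\mathbf{1}_K\in X$. The paper keeps the estimate parametrised by $\theta_1,\theta_2$ and closes with Lemma~\ref{1111} plus the triangle inequality rather than passing through uniform $L^\infty$-convergence, but these are cosmetic differences; your version is, if anything, cleaner.

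Part (ii), however, has a genuine gap. You plan to run Vitali's theorem in $L^1_\omega$ for every admissible weight $\omega=R_{(X^{1/p})'}h$ and then push the resulting convergence, \emph{uniformly over} $h$, through Lemma~\ref{4.6}. There are three obstructions. First, for $p=1$ the $E_1$-majorant you propose is $\mathcal{M}(|\nabla f|)$, which does not lie in $L^1_\omega$ in general (already for $\omega\equiv1$, $\mathcal{M}(|\nabla f|)(x)\sim|x|^{-n}\notin L^1(\mathbb{R}^n)$), and the statement must cover $p=1$. Second, for the $E_2$-piece your dualization via Fubini and the tail estimate \eqref{1906} only yields the uniform bound $\sup_{\lambda}\int\bigl[\lambda^q\int_{E_2}|x-y|^{\gamma-n}\,dy\bigr]^{p/q}\omega\,dx\lesssim\int|\nabla f|^p\omega\,dx$; uniform boundedness of the $L^1_\omega$-norms is strictly weaker than uniform integrability, so Vitali does not close. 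Third, even granted Vitali for each fixed $\omega$, extrapolation via Lemma~\ref{4.6} requires a convergence rate that is uniform over the entire unit ball of $(X^{1/p})'$; that needs a single pointwise $L^1_\omega$-majorant (uniformly over the weights) which the $E_2$-piece simply does not supply.

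The paper bypasses all three difficulties by exploiting the compact support of $f$ geometrically rather than through the Hardy--Littlewood maximal function: it splits $E_{\lambda,\gamma/q}[f]$ along $E_\lambda\cap(2B\times\mathbb{R}^n)$, $E_\lambda\cap[(2B)^\complement\times B]$, and $E_\lambda\cap[(2B)^\complement\times B^\complement]$, where $B\supset\mathrm{supp\,}(f)$. The last piece is empty because $f$ vanishes there; the middle piece satisfies the clean pointwise bound $g_\lambda(x)\lesssim\lambda|x|^{(\gamma-n)/q}$ on $(2B)^\complement$, which carries an explicit factor $\lambda$ and therefore vanishes in the limit $\lambda\to0^+$ once one knows $\|\,|\cdot|^{(\gamma-n)/q}\mathbf{1}_{(2B)^\complement}\|_X<\infty$ --- this finiteness is exactly where $q<\frac{n-\gamma}{n}p$, Lemma~\ref{4.6}, and the $A_1$-doubling property of $R_{(X^{1/p})'}g$ enter, not a geometric series; and only the near-field piece on $2B$ carries the main term, which is then handled exactly as in part (i), using the lower bound on $r$ from the uniform Taylor expansion. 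This decomposition produces the sharp constant directly, works at $p=1$, and requires no uniform integrability; you should replace the Vitali step with it.
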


To prove Theorem~\ref{LimFormulaSup},
we need
the following well-known inequality
(see, for instance, \cite[p.\,699]{b2002}).

\begin{lemma}\label{1111}
Let $q\in(0,\infty)$.
Then, for any $\theta\in(0,1)$,
there exists a positive constant $C_\theta$ such that,
for any $a,b\in(0,\infty)$,
$$
(a+b)^q\leq(1+\theta)a^q+C_\theta b^q.
$$
\end{lemma}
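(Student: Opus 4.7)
The plan is to handle the two regimes $q\in(0,1]$ and $q\in(1,\infty)$ separately, since the former is essentially trivial while the latter requires a small amount of case-analysis.

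For $q\in(0,1]$, I would invoke the standard subadditivity of $t\mapsto t^q$ on $[0,\infty)$, which yields $(a+b)^q\le a^q+b^q\le(1+\theta)a^q+b^q$ for every $a,b\in(0,\infty)$ and any $\theta\in(0,1)$. Thus $C_\theta:=1$ suffices in this range, with no dependence on $\theta$.

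For $q\in(1,\infty)$, the strategy is a dichotomy based on the relative size of $a$ and $b$. Given $\theta\in(0,1)$, I would first choose $\delta=\delta(\theta,q)\in(0,\infty)$ small enough that $(1+\delta)^q\le1+\theta$; such a $\delta$ exists because $(1+\delta)^q\to1$ as $\delta\to0^+$. I would then split into two cases. If $b\le\delta a$, then $a+b\le(1+\delta)a$, so $(a+b)^q\le(1+\delta)^qa^q\le(1+\theta)a^q$, which is already dominated by $(1+\theta)a^q+C_\theta b^q$ for any positive $C_\theta$. If instead $b>\delta a$, then $a<b/\delta$, so $a+b<(1+1/\delta)b$, giving $(a+b)^q\le(1+1/\delta)^qb^q$, again dominated by $(1+\theta)a^q+(1+1/\delta)^qb^q$. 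Combining the two cases, the constant $C_\theta:=(1+1/\delta)^q$ works.

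There is really no obstacle here; the only delicate point is choosing $\delta$ so that $(1+\delta)^q\le1+\theta$, which is an immediate consequence of the continuity of $\delta\mapsto(1+\delta)^q$ at $0$, and the whole argument produces the explicit value $C_\theta=(1+1/\delta(\theta,q))^q$. If one preferred a proof that treats both ranges uniformly, one could alternatively note that for any $q>0$ the function $g(t):=(1+t)^q-(1+\theta)$ satisfies $g(0)=-\theta<0$ and $g(t)/t^q\to1$ as $t\to\infty$, so $\sup_{t>0}g(t)/t^q$ is finite, and a rescaling $t:=b/a$ then yields the stated inequality with this supremum as $C_\theta$; however, the two-case argument above is more concrete and will be what I present.
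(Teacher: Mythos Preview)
Your proof is correct. The paper does not actually give a proof of this lemma; it simply records it as a well-known inequality with a reference to \cite[p.\,699]{b2002}. Your two-regime argument (subadditivity for $q\le1$, and for $q>1$ a dichotomy based on whether $b\le\delta a$ with $\delta$ chosen so that $(1+\delta)^q\le1+\theta$) is a standard and entirely valid way to establish the estimate, and it even yields the explicit constant $C_\theta=(1+1/\delta)^q$.
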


\begin{proof}[Proof of Theorem~\ref{LimFormulaSup}]
We first show (i). Let $f\in C^1(\mathbb{R}^n)$
with $|\nabla f|\in C_{\mathrm{c}}(\mathbb{R}^n)$.
Since $\lambda\to\infty$, without loss of generality,
we may first assume that $\lambda\in[L,\infty)$ and then
let $\lambda\to\infty$, where
$L:=\|\,|\nabla f|\,\|_{L^\infty(\mathbb{R}^n)}$.
Let $B_0$ be a ball centered at $\mathbf{0}$ such that
$\mathrm{supp\,}(|\nabla f|)\subset B_0$.
Let
$\widetilde{B_0}:=B(\mathbf{0},1+r_{B_0})$ with
radius $r_{B_0}$, the same as $B_0$.
From the same argument as that used in
the proof of \cite[Lemma~3.3(i)]{bsvy.arxiv},
we deduce that
\begin{align}\label{1921}
E_{\lambda,\frac{\gamma}{q}}[f]\subset\widetilde{B_0}\times\mathbb{R}^n.
\end{align}
On the other hand,
by the same argument as that used in the proof of \cite[(3.7)]{bsvy.arxiv},
we find that, if $(x,y)\in E_{\lambda,\frac{\gamma}{q}}[f]$ with
writing $y:=x+r\omega$, where
$r\in(0,\infty)$ and $\omega\in\mathbb{S}^{n-1}$,
then
\begin{align*}
r\leq\left\{\lambda^{-1}\left[|\nabla f(x)\cdot\omega|
+\rho\left(\left(\frac{L}{\lambda}\right)^\frac{q}{\gamma}
\right)\right]\right\}^\frac{q}{\gamma}=:r(x,\omega,\lambda),
\end{align*}
where, for any $t\in(0,\infty)$,
\begin{align*}
\rho(t):=\sup_{x\in\mathbb{R}^n}\sup_{|h|\leq t}
\left|\nabla f(x+h)-\nabla f(x)\right|.
\end{align*}
From this, \eqref{1921}, the polar coordinate, Lemma~\ref{1111},
and Definition~\ref{1659}(v), we deduce that,
for any given $\theta_1,\theta_2\in(0,1)$,
there exist $C_{\theta_1},C_{\theta_2}\in(0,\infty)$
such that
\begin{align*}
&\lambda\left\|\left[\int_{\mathbb{R}^n}
\mathbf{1}_{E_{\lambda,\frac{\gamma}{q}}[f]}(\cdot,y)
\left|\cdot-y\right|^{\gamma-n}\,dy\right]^\frac{1}{q}\right\|_X\\
&\quad\leq\lambda\left\|\left[\int_{\mathbb{S}^{n-1}}
\int_0^{r(\cdot,\omega,\lambda)}r^{\gamma-1}\,dr\,d\omega\right]^\frac{1}{q}
\mathbf{1}_{\widetilde{B_0}}(\cdot)\right\|_X\\
&\quad\leq\left\|\left\{\int_{\mathbb{S}^{n-1}}
(1+\theta_1)|\nabla f(\cdot)\cdot\omega|^q+C_{\theta_1}\left[\rho
\left(\left\{\frac{L}{\lambda}\right\}^\frac{q}{\gamma}
\right)\right]^q
\,d\omega\right\}^\frac{1}{q}
\mathbf{1}_{\widetilde{B_0}}(\cdot)\right\|_X\\
&\quad\leq(1+\theta_1)^\frac{1}{q}(1+\theta_2)
\left\|\left[\int_{\mathbb{S}^{n-1}}
|\nabla f(\cdot)\cdot\omega|^q
\,d\omega\right]^\frac{1}{q}
\mathbf{1}_{\widetilde{B_0}}(\cdot)\right\|_X\\
&\qquad+(C_{\theta_1})^\frac{1}{q}C_{\theta_2}
\rho\left(\left[\frac{L}{\lambda}\right]^\frac{q}{\gamma}
\right)\left\|
\mathbf{1}_{\widetilde{B_0}}(\cdot)\right\|_X\\
&\quad\leq(1+\theta_1)^\frac{1}{q}(1+\theta_2)
\left[\frac{\kappa(q,n)}{\gamma}\right]^\frac{1}{q}
\left\|\,\left|\nabla f\right|\,\right\|_{X}\\
&\qquad+(C_{\theta_1})^\frac{1}{q}C_{\theta_2}
\rho\left(\left[\frac{L}{\lambda}\right]^\frac{q}{\gamma}
\right)\left\|
\mathbf{1}_{\widetilde{B_0}}(\cdot)\right\|_X,
\end{align*}
which, combined with both Definition~\ref{1659}(iv)
and the obvious estimate that $\rho(t)\to0$
as $t\to\infty$ [because $|\nabla f|\in C_{\mathrm{c}}(\mathbb{R}^n)$],
via first letting $\lambda\to\infty$
and then letting $\theta_1,\theta_2\to0$,
further implies that
\eqref{2014} holds true.
This finishes the proof of (i).

Next, we show (ii). Let
$f\in C^1_{\mathrm{c}}(\mathbb{R}^n)$
and $B$ be a ball centered at $\mathbf{0}$ such that
$\mathrm{supp\,}(f)\subset B$. By Lemma~\ref{1111} and
Definition~\ref{1659}(v), we find that,
for any given $\eta_1,\eta_2\in(0,1)$,
there exist $C_{\eta_1},C_{\eta_2}\in(0,\infty)$
such that
\begin{align}\label{I123}
&\limsup_{\lambda\to0^+}\lambda
\left\|\left[\int_{\mathbb{R}^n}
\mathbf{1}_{E_{\lambda,\frac{\gamma}{q}}[f]}(\cdot,y)
\left|\cdot-y\right|^{\gamma-n}\,dy\right]^\frac{1}{q}\right\|_X\\
&\quad\leq\limsup_{\lambda\to0^+}\lambda
\left\|(1+\eta_1)\left[\int_{\mathbb{R}^n}
\mathbf{1}_{E_{\lambda,\frac{\gamma}{q}}[f]\cap(2B\times\mathbb{R}^n)}(\cdot,y)
\left|\cdot-y\right|^{\gamma-n}\,dy\right]^\frac{1}{q}\right.\nonumber\\
&\qquad+C_{\eta_1}(1+\eta_2)\left[\int_{\mathbb{R}^n}
\mathbf{1}_{E_{\lambda,\frac{\gamma}{q}}[f]
\cap[(2B)^\complement\times B]}(\cdot,y)
\left|\cdot-y\right|^{\gamma-n}\,dy\right]^\frac{1}{q}\nonumber\\
&\qquad+\left.C_{\eta_1}C_{\eta_2}\left[\int_{\mathbb{R}^n}
\mathbf{1}_{E_{\lambda,\frac{\gamma}{q}}[f]\cap
[(2B)^\complement\times B^\complement]}(\cdot,y)
\left|\cdot-y\right|^{\gamma-n}\,dy\right]^\frac{1}{q}
\right\|_X\nonumber\\
&\quad\leq(1+\eta_1)\limsup_{\lambda\to0^+}\lambda
\left\|\left[\int_{\mathbb{R}^n}
\mathbf{1}_{E_{\lambda,\frac{\gamma}{q}}[f]
\cap(2B\times\mathbb{R}^n)}(\cdot,y)
\left|\cdot-y\right|^{\gamma-n}\,dy\right]^\frac{1}{q}\right\|_X\nonumber\\
&\qquad+C_{\eta_1}(1+\eta_2)\limsup_{\lambda\to0^+}
\lambda\nonumber\\
&\qquad\times\left\|\left[\int_{\mathbb{R}^n}
\mathbf{1}_{E_{\lambda,\frac{\gamma}{q}}[f]
\cap[(2B)^\complement\times B]}(\cdot,y)
\left|\cdot-y\right|^{\gamma-n}\,dy\right]^\frac{1}{q}\right\|_X\nonumber\\
&\qquad+C_{\eta_1}C_{\eta_2}\limsup_{\lambda\to0^+}
\lambda\left\|\left[\int_{\mathbb{R}^n}
\mathbf{1}_{E_{\lambda,\frac{\gamma}{q}}[f]\cap
[(2B)^\complement\times B^\complement]}(\cdot,y)
\left|\cdot-y\right|^{\gamma-n}\,dy\right]^\frac{1}{q}
\right\|_X\nonumber\\
&\quad=:(1+\eta_1)\mathrm{I}_1
+C_{\eta_1}(1+\eta_2)\mathrm{I}_2
+C_{\eta_1}C_{\eta_2}\mathrm{I}_3.\nonumber
\end{align}
To deal with $\mathrm{I}_1$,
for any given $\varepsilon\in(0,\infty)$,
let $\delta(\varepsilon)\in(0,\infty)$
satisfy that $\rho(r)\leq\varepsilon$
for any $r\in(0,\delta(\varepsilon))$.
From the same argument as that
used in the proof of \cite[Lemma~3.3(ii)]{bsvy.arxiv},
it follows that, if $(x,y)\in E_{\lambda,\frac{\gamma}{q}}[f]$ with
writing $y:=x+r\omega$, where
$r\in(0,\infty)$ and $\omega\in\mathbb{S}^{n-1}$,
then
$$
r\ge\min\left\{\delta(\varepsilon),\,
\left[\frac{\lambda}{|\nabla f(x)\cdot
\omega|+\varepsilon}\right]^\frac{-q}{\gamma}\right\}.
$$
By this,  Lemma~\ref{1111},
and Definition~\ref{1659}(v),
we conclude that, for any given $\theta,\theta_1,\theta_2\in(0,1)$,
there exist positive constants $C_\theta$,
$C_{\theta_1}$, and $C_{\theta_2}$ such that
\begin{align*}
\mathrm{I}_1
&\leq\limsup_{\lambda\to0^+}\lambda
\left\|\left[\int_{\mathbb{S}^{n-1}}\int_{\min\{\delta(\varepsilon),\,
(\frac{\lambda}{|\nabla f\cdot\omega|+\varepsilon})^\frac{-q}{\gamma}\}}^\infty
r^{\gamma-1}\,dr\,d\omega\right]^{\frac{1}{q}}
\mathbf{1}_{2B}\right\|_{X}\nonumber\\
&=\left(-\frac{1}{\gamma}\right)^{\frac{1}{q}}
\limsup_{\lambda\to0^+}\lambda
\left\|\left[\int_{\mathbb{S}^{n-1}}
\max\left\{[\delta(\varepsilon)]^\gamma,\,
\left(\frac{\lambda}{|\nabla f\cdot\omega|+\varepsilon}\right)^{-q}\right\}
\,d\omega\right]^{\frac{1}{q}}\mathbf{1}_{2B}\right\|_{X}\nonumber\\
&\leq\left(-\frac{1}{\gamma}\right)^{
\frac{1}{q}}C_\theta\limsup_{\lambda\to0^+}\lambda
\left\|\left\{\int_{\mathbb{S}^{n-1}}[\delta(\varepsilon)]^\gamma
\,d\omega\right\}^{\frac{1}{q}}\mathbf{1}_{2B}\right\|_{X}\nonumber\\
&\quad+\left(-\frac{1}{\gamma}\right)^{
\frac{1}{q}}(1+\theta)\limsup_{\lambda\to0^+}\lambda
\left\|\left[\int_{\mathbb{S}^{n-1}}
\left(\frac{|\nabla f\cdot\omega|+\varepsilon}{\lambda}\right)^{q}
\,d\omega\right]^{\frac{1}{q}}\mathbf{1}_{2B}\right\|_{X}\nonumber\\
&=\left(-\frac{1}{\gamma}\right)^{\frac{1}{q}}(1+\theta)
\left\|\left[\int_{\mathbb{S}^{n-1}}
\left(|\nabla f\cdot\omega|+\varepsilon\right)^{q}
\,d\omega\right]^{\frac{1}{q}}\mathbf{1}_{2B}\right\|_{X}\nonumber\\
&\leq\left(-\frac{1}{\gamma}\right)^{\frac{1}{q}}(1+\theta)
\left\{(1+\theta_1)^{\frac{1}{q}}(1+\theta_2)
\left\|\left[\int_{\mathbb{S}^{n-1}}
|\nabla f\cdot\omega|^{q}
\,d\omega\right]^{\frac{1}{q}}\mathbf{1}_{2B}\right\|_{X}\right.\nonumber\\
&\quad\left.
+C_{\theta_1}^{\frac{1}{q}}C_{\theta_2}\left\|\left[\int_{\mathbb{S}^{n-1}}
\varepsilon^{q}
\,d\omega\right]^{\frac{1}{q}}\mathbf{1}_{2B}\right\|_{X}\right\},
\end{align*}
which, together with the arbitrariness of $\varepsilon\in(0,\infty)$,
further implies that
\begin{align*}
\mathrm{I_1}&\leq
\left(-\frac{1}{\gamma}\right)^{\frac{1}{q}}(1+\theta)
(1+\theta_1)^{\frac{1}{q}}(1+\theta_2)\left\|\left[\int_{\mathbb{S}^{n-1}}
|\nabla f\cdot\omega|^{q}
\,d\omega\right]^{\frac{1}{q}}\mathbf{1}_{2B}\right\|_{X}\\
&=\left(-\frac{1}{\gamma}\right)^{\frac{1}{q}}(1+\theta)
(1+\theta_1)^{\frac{1}{q}}(1+\theta_2)\left\|\left[\int_{\mathbb{S}^{n-1}}
\left|\frac{\nabla f}{|\nabla f|}\cdot\omega\right|^q
\,d\omega\right]^{\frac{1}{q}}
\left|\nabla f\right|\mathbf{1}_{2B}\right\|_{X}\\
&\leq\left(-\frac{1}{\gamma}\right)^{\frac{1}{q}}(1+\theta)
(1+\theta_1)^{\frac{1}{q}}(1+\theta_2)\left[\kappa(q,n)\right]^{\frac{1}{q}}
\left\|\,\left|\nabla f\right|\,\right\|_{X}.
\end{align*}
Letting $\theta,\theta_1,\theta_2\to0$, we then further obtain
\begin{align}\label{I1}
\mathrm{I}_1\leq
\left[-\frac{\kappa(q,n)}{\gamma}\right]^{\frac{1}{q}}
\left\|\,\left|\nabla f\right|\,\right\|_{X}.
\end{align}
To deal with $\mathrm{I}_2$,
we first claim that, for any given ball
$B_r:=B(\mathbf{0},r)$ with $r\in(0,\infty)$,
\begin{align}\label{yxst}
\left\||\cdot|^{\frac{\gamma-n}{q}}\mathbf{1}_{
(B_r)^\complement}(\cdot)\right\|_X<\infty.
\end{align}
Indeed, by Lemmas~\ref{4.6}, \ref{ApProperty}(ii),
and~\ref{4.5}(ii),
$q\in(0,\frac{n-\gamma}{n}p)$,
and Definition~\ref{1659}(iv),
we find that
\begin{align}\label{1511}
&\left\||\cdot|^{\frac{\gamma-n}{q}}
\mathbf{1}_{(B_r)^\complement}\right\|_X\\
&\quad\leq\sup_{\|g\|_{(X^\frac{1}{p})'}\leq1}
\left[\int_{(B_r)^\complement}|x|^{\frac{p(\gamma-n)}{q}}
R_{(X^\frac{1}{p})'}g(x)\,dx\right]^\frac{1}{p}\nonumber\\
&\quad=\sup_{\|g\|_{(X^\frac{1}{p})'}\leq1}
\left[\sum_{j=1}^\infty
\int_{2^jB_r\setminus2^{j-1}B_r}
|x|^{\frac{p(\gamma-n)}{q}}
R_{(X^\frac{1}{p})'}g(x)\,dx\right]^\frac{1}{p}\nonumber\\
&\quad\leq\sup_{\|g\|_{(X^\frac{1}{p})'}\leq1}
\left[\sum_{j=1}^\infty(2^{j-1}r)^{\frac{p(\gamma-n)}{q}}
\int_{2^jB_r}
R_{(X^\frac{1}{p})'}g(x)\,dx\right]^\frac{1}{p}\nonumber\\
&\quad\leq\sup_{\|g\|_{(X^\frac{1}{p})'}\leq1}
\left[\sum_{j=1}^\infty(2^{j-1}r)^{\frac{p(\gamma-n)}{q}}
C_{\left[[R_{(X^\frac{1}{p})'}g\right]_{A_1(\mathbb{R}^n)}}2^{jn}
\int_{B_r}
R_{(X^\frac{1}{p})'}g(x)\,dx\right]^\frac{1}{p}\nonumber\\
&\quad=2^{\frac{n-\gamma}{q}+\frac{1}{p}}
\left\|\mathcal{M}\right\|_{(X^\frac{1}{p})'\to(X^\frac{1}{p})'}^{\frac{1}{p}}
r^{\frac{\gamma-n}{q}}
\left\{\sum_{j=1}^\infty2^{j[\frac{p
(\gamma-n)}{q}+n]}\right\}^{\frac{1}{p}}\nonumber\\
&\qquad\times\sup_{\|g\|_{(X^\frac{1}{p})'}\leq1}\left[\int_{B_r}
R_{(X^\frac{1}{p})'}g(x)\,dx\right]^{\frac{1}{p}}\nonumber\\
&\quad\lesssim\left\|\mathcal{M}\right\|_{(X^\frac{1}{p})'
\to(X^\frac{1}{p})'}^{\frac{1}{p}}
r^{\frac{\gamma-n}{q}}
\left\|\mathbf{1}_{B_r}\right\|_X<\infty,\nonumber
\end{align}
where the implicit positive constants
depend only on $p$, $q$, $\gamma$, and $n$,
which completes the proof of the above claim.
Notice that, for any given $x\in(2B)^\complement$
and for any $y\in B$, we have $|x-y|\ge|x|-|y|\ge|x|/2$.
From this and \eqref{yxst} with $B_r$
replaced by $2B$,
we deduce that
\begin{align}\label{I2}
\mathrm{I_2}
&=\limsup_{\lambda\to0^+}\lambda^q
\left\|\left[\int_{B}
\left|\cdot-y\right|^{\gamma-n}\,dy\right]^{\frac{1}{q}}
\mathbf{1}_{(2B)^\complement}(\cdot)\right\|_{X}\\
&\leq2^{\frac{n-\gamma}{q}}\limsup_{\lambda\to0^+}
\lambda^q|B|^{\frac{1}{q}}
\left\||\cdot|^{\frac{\gamma-n}{q}}
\mathbf{1}_{(2B)^\complement}(\cdot)\right\|_{X}=0.\nonumber
\end{align}
To deal with $\mathrm{I}_3$,
notice that both $(x,y)\in(2B)^\complement
\times B^\complement$ and $\mathrm{supp\,}(f)\subset B$
imply that $f(x)=0=f(y)$. Thus,
$$
E_{\lambda,\frac{\gamma}{q}}[f]\cap\left[(2B)^\complement
\times B^\complement\right]=\emptyset,
$$
which further implies that
$\mathrm{I}_3=0$.
Combining this, \eqref{I123}, \eqref{I1}, and \eqref{I2}
and letting $\eta_1,\eta_2\to0$, we conclude that
$$
\limsup_{\lambda\to0^+}
\lambda
\left\|\left[\int_{\mathbb{R}^n}
\mathbf{1}_{E_{\lambda,\frac{\gamma}{q}}[f]}(\cdot,y)
\left|\cdot-y\right|^{\gamma-n}\,dy\right]^\frac{1}{q}\right\|_X
\leq\left[-\frac{\kappa(q,n)}{\gamma}\right]^\frac{1}{q}
\left\|\,\left|\nabla f\right|\,\right\|_{X}.
$$
This finishes the proof of (ii) and hence Theorem~\ref{LimFormulaSup}.
\end{proof}

\begin{remark}\label{1546}
When $n\in\mathbb{N}\cap[2,\infty)$,
from the proof of Theorem~\ref{LimFormulaSup}(ii),
we deduce that \eqref{1144} also holds true for any
$f\in C^1(\mathbb{R}^n)$ with $|\nabla f|\in C_{\mathrm{c}}(\mathbb{R}^n)$.
\end{remark}

When $n=1$,
we can also weaken the hypothesis
$f\in C^1_{\mathrm{c}}(\mathbb{R})$ in Theorem~\ref{LimFormulaSup}(ii)
to $f\in C^1(\mathbb{R})$ with $f'\in C_{\mathrm{c}}(\mathbb{R})$
in the case that $p\in[1,\infty)$, $\gamma\in(-\infty,0)$,
and $q\in(0,-p\gamma)$ as follows.

\begin{theorem}\label{n1}
Let $X$ be a ball Banach function space on $\mathbb{R}$,
$p\in[1,\infty)$, $\gamma\in(-\infty,0)$,
and $q\in(0,-p\gamma)$.
Assume that
\begin{enumerate}
\item[\textup{(i)}]
$X^\frac{1}{p}$ is a ball Banach function space;
\item[\textup{(ii)}]
the Hardy--Littlewood maximal operator $\mathcal{M}$
is bounded on $(X^\frac{1}{p})'$.
\end{enumerate}
Then, for any $f\in C^1(\mathbb{R})$
with $f'\in C_{\mathrm{c}}(\mathbb{R})$,
\begin{align*}
\lim_{\lambda\to0^+}
\lambda
\left\|\left[\int_{\mathbb{R}}
\mathbf{1}_{E_{\lambda,\frac{\gamma}{q}}[f]}(\cdot,y)
\left|\cdot-y\right|^{\gamma-1}\,dy\right]^\frac{1}{q}\right\|_X
\leq\left[-\frac{\kappa(q,1)}{\gamma}\right]^\frac{1}{q}
\left\|f'\right\|_{X},
\end{align*}
where $\kappa(q,1)$ is the same as in
\eqref{kappaqn} with $n$ replaced by $1$
and where $E_{\lambda,\frac{\gamma}{q}}[f]$
with $\lambda\in(0,\infty)$ is the same as in \eqref{Elambda}.
\end{theorem}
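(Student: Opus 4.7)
The plan is to mimic the argument for Theorem~\ref{LimFormulaSup}(ii), inserting a tail-estimate step to cope with the fact that $f$ need not be compactly supported when $f'\in C_{\mathrm c}(\mathbb R)$. Fix $f\in C^1(\mathbb R)$ with $f'\in C_{\mathrm c}(\mathbb R)$ and choose $R\in(0,\infty)$ so large that $\mathrm{supp}(f')\subset B:=B(\mathbf 0,R)$; in dimension one this forces $f\equiv a:=f(-R)$ on $(-\infty,-R]$ and $f\equiv b:=f(R)$ on $[R,\infty)$, a structural fact that drives the new estimate.

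Following \eqref{I123}, I would split the functional through the indicators of $2B\times\mathbb R$, $(2B)^{\complement}\times B$, and $(2B)^{\complement}\times B^{\complement}$ by two applications of Lemma~\ref{1111} together with Definition~\ref{1659}(v), producing three limsup terms $\mathrm I_1,\mathrm I_2,\mathrm I_3$. The estimate of $\mathrm I_1$ goes through verbatim, since it depends only on the pointwise values of $f'$ on $2B$ and yields the sharp constant $[-\kappa(q,1)/\gamma]^{1/q}\|f'\|_X$ after letting the auxiliary parameters $\theta,\theta_1,\theta_2\to 0$. The $\mathrm I_2$ estimate also goes through, because $|x-y|\ge|x|/2$ on $(2B)^{\complement}\times B$ together with the one-dimensional instance of \eqref{yxst}, whose proof in \eqref{1511} requires only $q<(1-\gamma)p$ (implied by $q<-p\gamma$), yields $\mathrm I_2=0$ just as in \eqref{I2}.

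The new phenomenon appears in the $\mathrm I_3$ term. In dimension one, $(2B)^{\complement}\times B^{\complement}$ splits into four quadrants determined by the signs of $x$ and $y$; on two of them $f(x)=f(y)$ and the corresponding piece of $E_{\lambda,\gamma/q}[f]$ is empty, while on the remaining two ``cross'' quadrants one has $|f(x)-f(y)|=|b-a|$. If $a=b$ this contribution vanishes; otherwise I drop the indicator of $E_{\lambda,\gamma/q}[f]$ and compute directly
\[
\int_{\{y\in B^{\complement}:\;\mathrm{sgn}(y)=-\mathrm{sgn}(x)\}}|x-y|^{\gamma-1}\,dy=\frac{(|x|+R)^{\gamma}}{-\gamma}\qquad\text{for every }x\in(2B)^{\complement},
\]
using $\gamma<0$ for convergence of the tail integral. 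Raising this pointwise bound to the $1/q$ and taking the $X$-norm reduces $\mathrm I_3$ to controlling $\lambda\,(-\gamma)^{-1/q}\bigl\|(|\cdot|+R)^{\gamma/q}\mathbf 1_{(2B)^{\complement}}\bigr\|_X$.

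The main obstacle is establishing the finiteness of this $X$-norm; once that is done, the prefactor $\lambda$ forces $\mathrm I_3\to 0$ as $\lambda\to 0^+$. I would prove finiteness by the extrapolation scheme used for \eqref{yxst} at \eqref{1511}: apply Lemma~\ref{4.6}, decompose $(2B)^{\complement}$ into dyadic annuli, and transfer each annular integral back to $B$ via the doubling of $R_{(X^{1/p})'}g\in A_1(\mathbb R)$ supplied by Lemmas~\ref{4.5}(ii) and~\ref{ApProperty}(ii). The resulting geometric series has ratio $2^{p\gamma/q+1}$, which converges precisely when $q<-p\gamma$; this pinpoints where the hypothesis, strictly stronger than that of Theorem~\ref{LimFormulaSup}(ii), is required. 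Combining the $\mathrm I_1,\mathrm I_2,\mathrm I_3$ bounds and letting $\eta_1,\eta_2\to 0$ then yields the asserted inequality with constant $[-\kappa(q,1)/\gamma]^{1/q}\|f'\|_X$.
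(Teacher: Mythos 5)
Your proof is correct and takes essentially the same route as the paper's: the $|x|<2R$ region is handled exactly as in \eqref{I1} to produce the sharp constant, and the tail $|x|\ge 2R$ is controlled by exploiting the constancy of $f$ on $(-\infty,-R]$ and on $[R,\infty)$ together with a dyadic $A_1$-weighted estimate whose geometric series converges precisely when $q<-p\gamma$. The only cosmetic difference is that the paper absorbs the entire tail into a single term $\mathrm{I}_2$ (observing directly that $x\ge2\beta$ and $(x,y)\in E_{\lambda,\frac{\gamma}{q}}[f]$ force $y<\beta$, and symmetrically), whereas you retain the three-way split \eqref{I123} from the compactly supported case and therefore treat the $(2B)^\complement\times B$ and $(2B)^\complement\times B^\complement$ pieces separately, both of which reduce to the same convergent tail computation.
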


\begin{proof}
Let $\beta\in(0,\infty)$ be such that $\mathrm{supp\,}(f')\subset(-\beta,\beta)$.
By Lemma~\ref{1111} and
Definition~\ref{1659}(v), we find that,
for any given $\theta\in(0,1)$,
there exists a positive constant $C_{\theta}$
such that
\begin{align}\label{I12}
&\limsup_{\lambda\to0^+}
\lambda
\left\|\left[\int_{\mathbb{R}}
\mathbf{1}_{E_{\lambda,\frac{\gamma}{q}}[f]}(\cdot,y)
\left|\cdot-y\right|^{\gamma-1}\,dy\right]^\frac{1}{q}\right\|_X\\
&\quad\leq(1+\theta)\limsup_{\lambda\to0^+}\lambda
\left\|\left[\int_{\mathbb{R}}
\mathbf{1}_{E_{\lambda,\frac{\gamma}{q}}[f]\cap
[(-2\beta,2\beta)\times\mathbb{R}]}(\cdot,y)
\left|\cdot-y\right|^{\gamma-1}\,dy\right]^\frac{1}{q}\right\|_X\nonumber\\
&\qquad+C_{\theta}\limsup_{\lambda\to0^+}
\lambda\left\|\left[\int_{\mathbb{R}}
\mathbf{1}_{E_{\lambda,\frac{\gamma}{q}}[f]\cap
[(-2\beta,2\beta)^\complement\times\mathbb{R}]}(\cdot,y)
\left|\cdot-y\right|^{\gamma-1}\,dy\right]^\frac{1}{q}
\right\|_X\nonumber\\
&\quad=:(1+\theta)\mathrm{I}_1
+C_{\theta}\mathrm{I}_2.\nonumber
\end{align}
On the one hand, an argument similar to that
used in the estimation of \eqref{I1} implies that
\begin{align}\label{1814}
\mathrm{I}_1\leq
\left[-\frac{\kappa(q,1)}{\gamma}\right]^{\frac{1}{q}}
\left\|f'\right\|_{X}.
\end{align}
On the other hand, since $f$ is a constant on $(\beta,\infty)$
and another constant on $(-\infty,-\beta)$, it follows that,
if $(x,y)\in E_{\lambda,\frac{\gamma}{q}}[f]$ and $x\in[2\beta,\infty)$,
then $y\in(-\infty,\beta)$ and,
if $(x,y)\in E_{\lambda,\frac{\gamma}{q}}[f]$ and $x\in(-\infty,-2\beta]$,
then $y\in(-\beta,\infty)$. From this, Lemma~\ref{4.6},
an argument similar to that used in the estimation of \eqref{1906},
and $q\in(0,-p\gamma)$,
we deduce that
\begin{align}\label{xzl}
\mathrm{I}_2^p
&\leq\limsup_{\lambda\to0^+}
\lambda^p\sup_{\|g\|_{(X^\frac{1}{p})'}\leq1}\left\{\int_{-\infty}^{-2\beta}
\left[\int_{-\beta}^\infty
\left(y-x\right)^{\gamma-1}\,dy\right]^\frac{p}{q}
R_{(X^\frac{1}{p})'}g(x)\,dx\right.\\
&\quad\left.+\int_{2\beta}^\infty
\left[\int_{-\infty}^\beta
\left(x-y\right)^{\gamma-1}\,dy\right]^\frac{p}{q}
R_{(X^\frac{1}{p})'}g(x)\,dx\right\}\nonumber\\
&=\left(-\frac{1}{\gamma}\right)^{\frac{p}{q}}\limsup_{\lambda\to0^+}
\lambda^p\sup_{\|g\|_{(X^\frac{1}{p})'}\leq1}\left[\int_{-\infty}^{-2\beta}
\left(-x-\beta\right)^\gamma
R_{(X^\frac{1}{p})'}g(x)\,dx\right.\nonumber\\
&\quad\left.+\int_{2\beta}^\infty
\left(x-\beta\right)^\gamma
R_{(X^\frac{1}{p})'}g(x)\,dx\right]\nonumber\\
&\leq2^{3-\frac{p\gamma}{q}}
\left\|\mathcal{M}\right\|_{(X^\frac{1}{p})'\to(X^\frac{1}{p})'}
\left(-\frac{1}{\gamma}\right)^{\frac{p}{q}}\nonumber\\
&\quad\times\limsup_{\lambda\to0^+}
\lambda^p\sup_{\|g\|_{(X^\frac{1}{p})'}\leq1}
\sum_{j=1}^\infty2^{j(\frac{p\gamma}{q}+1)}\beta^\gamma
\int_{-\beta}^\beta R_{(X^\frac{1}{p})'}g(x)\,dx\nonumber\\
&\lesssim\left\|\mathcal{M}
\right\|_{(X^\frac{1}{p})'\to(X^\frac{1}{p})'}\limsup_{\lambda\to0^+}
\lambda^p\beta^\gamma
\left\|\mathbf{1}_{(-\beta,\beta)}\right\|_{X}^p=0,\nonumber
\end{align}
where the implicit positive constant depends only
on $p$, $q$, and $\gamma$,
which, together with both \eqref{I12} and \eqref{1814},
further implies that
\begin{align*}
&\limsup_{\lambda\to0^+}
\lambda
\left\|\left[\int_{\mathbb{R}}
\mathbf{1}_{E_{\lambda,\frac{\gamma}{q}}[f]}(\cdot,y)
\left|\cdot-y\right|^{\gamma-1}\,dy\right]^\frac{1}{q}\right\|_X\nonumber\\
&\quad\leq(1+\theta)\left[-\frac{\kappa(q,1)}{\gamma}\right]^{\frac{1}{q}}
\left\|f'\right\|_{X}.
\end{align*}
Letting $\theta\to0$, we then obtain the desired result.
This finishes the proof of Theorem~\ref{n1}.
\end{proof}

Using Remark~\ref{1546} and Theorems~\ref{LimFormulaInf},
\ref{LimFormulaSup}, and~\ref{n1}, we
immediately obtain the following conclusion;
we omit the details here.

\begin{corollary}\label{LimFormulaEq}
Let $X$ be a ball Banach function space.
\begin{enumerate}
\item[\textup{(i)}]
If $\gamma,q\in(0,\infty)$,
then, for any $f\in C^1(\mathbb{R}^n)$
with $|\nabla f|\in C_{\mathrm{c}}(\mathbb{R}^n)$,
\begin{align*}
\limsup_{\lambda\to\infty}
\lambda
\left\|\left[\int_{\mathbb{R}^n}
\mathbf{1}_{E_{\lambda,\frac{\gamma}{q}}[f]}(\cdot,y)
\left|\cdot-y\right|^{\gamma-n}\,dy\right]^\frac{1}{q}\right\|_X
=\left[\frac{\kappa(q,n)}{\gamma}\right]^\frac{1}{q}
\left\|\,\left|\nabla f\right|\,\right\|_{X},
\end{align*}
where $\kappa(q,n)$ and $E_{\lambda,\frac{\gamma}{q}}[f]$
with $\lambda\in(0,\infty)$ are the same as, respectively, in
\eqref{kappaqn} and \eqref{Elambda}.
\item[\textup{(ii)}]
Let $p\in[1,\infty)$, $\gamma\in(-\infty,0)$,
and $q\in(0,\frac{n-\gamma}{n}p)$.
Assume that $X^\frac{1}{p}$ is a ball Banach function space
and that the Hardy--Littlewood maximal operator
is bounded on $(X^\frac{1}{p})'$.
Then, for any $f\in C^1_{\mathrm{c}}(\mathbb{R}^n)$,
\begin{align}\label{1548}
\lim_{\lambda\to0^+}
\lambda
\left\|\left[\int_{\mathbb{R}^n}
\mathbf{1}_{E_{\lambda,\frac{\gamma}{q}}[f]}(\cdot,y)
\left|\cdot-y\right|^{\gamma-n}\,dy\right]^\frac{1}{q}\right\|_X
=\left[-\frac{\kappa(q,n)}{\gamma}\right]^\frac{1}{q}
\left\|\,\left|\nabla f\right|\,\right\|_{X};
\end{align}
moreover, if $n\in\mathbb{N}\cap[2,\infty)$
or if both $n=1$ and $q\in(0,-p\gamma)$,
then \eqref{1548} also holds true for any $f\in C^1(\mathbb{R}^n)$
with $|\nabla f|\in C_{\mathrm{c}}(\mathbb{R}^n)$.
\end{enumerate}
\end{corollary}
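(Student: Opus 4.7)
The plan is to derive Corollary~\ref{LimFormulaEq} as a direct consequence of the matched one-sided bounds already established in Theorems~\ref{LimFormulaInf}, \ref{LimFormulaSup}, and~\ref{n1}, together with the extension noted in Remark~\ref{1546}. First I would observe that, for any functional $\lambda\mapsto F(\lambda)$ on $(0,\infty)$ and any constant $L\in[0,\infty)$, the two inequalities
$$
\liminf_{\lambda\to L_0} F(\lambda)\ge L\quad\text{and}\quad \limsup_{\lambda\to L_0}F(\lambda)\le L
$$
(where $L_0$ is either $\infty$ or $0^+$) force $\liminf=\limsup=L$, so that the limit exists and equals $L$. Thus, in each case it suffices to combine one lower-bound theorem with one upper-bound theorem on the same class of test functions.

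For part (i), I would apply Theorem~\ref{LimFormulaInf}(i) to obtain
$$
\liminf_{\lambda\to\infty}\lambda\left\|\left[\int_{\mathbb{R}^n}
\mathbf{1}_{E_{\lambda,\gamma/q}[f]}(\cdot,y)|\cdot-y|^{\gamma-n}\,dy\right]^{\frac{1}{q}}\right\|_X
\ge\left[\frac{\kappa(q,n)}{\gamma}\right]^{\frac{1}{q}}\|\,|\nabla f|\,\|_X,
$$
and Theorem~\ref{LimFormulaSup}(i) to obtain the matching inequality for the $\limsup$. The squeeze argument described above then yields the asserted equality for every $f\in C^1(\mathbb{R}^n)$ with $|\nabla f|\in C_{\mathrm{c}}(\mathbb{R}^n)$.

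For part (ii), the lower bound
$$
\liminf_{\lambda\to0^+}\lambda\left\|\left[\int_{\mathbb{R}^n}
\mathbf{1}_{E_{\lambda,\gamma/q}[f]}(\cdot,y)|\cdot-y|^{\gamma-n}\,dy\right]^{\frac{1}{q}}\right\|_X
\ge\left[-\frac{\kappa(q,n)}{\gamma}\right]^{\frac{1}{q}}\|\,|\nabla f|\,\|_X
$$
comes from Theorem~\ref{LimFormulaInf}(ii). For $f\in C^1_{\mathrm{c}}(\mathbb{R}^n)$, the matching $\limsup$-estimate is exactly Theorem~\ref{LimFormulaSup}(ii), under the hypotheses $p\in[1,\infty)$, $\gamma\in(-\infty,0)$, $q\in(0,(n-\gamma)p/n)$, $X^{1/p}$ a ball Banach function space, and $\mathcal{M}$ bounded on $(X^{1/p})'$. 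Invoking the squeeze again gives \eqref{1548} for such $f$.

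Finally, to upgrade \eqref{1548} to the wider class of $f\in C^1(\mathbb{R}^n)$ with $|\nabla f|\in C_{\mathrm{c}}(\mathbb{R}^n)$, I would separate cases on the dimension. When $n\in\mathbb{N}\cap[2,\infty)$, Remark~\ref{1546} extends Theorem~\ref{LimFormulaSup}(ii) to this larger class, and combining with Theorem~\ref{LimFormulaInf}(ii) closes the argument. When $n=1$, Theorem~\ref{LimFormulaSup}(ii) alone is not enough, but Theorem~\ref{n1}, valid under the extra restriction $q\in(0,-p\gamma)$, supplies the needed $\limsup$-bound; combining it with Theorem~\ref{LimFormulaInf}(ii) again closes the gap. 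There is no genuine obstacle here, since all the substantive analysis has been done upstream; the only point requiring care is to verify that the hypotheses assumed in the corollary (namely the ball Banach structure of $X^{1/p}$ and the boundedness of $\mathcal{M}$ on $(X^{1/p})'$, together with the range restriction on $q$) are precisely those needed to quote each of the cited results.
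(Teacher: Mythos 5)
Your proposal is correct and follows exactly the route the paper itself indicates: the paper states that Corollary~\ref{LimFormulaEq} follows immediately from Remark~\ref{1546} and Theorems~\ref{LimFormulaInf}, \ref{LimFormulaSup}, and~\ref{n1} (omitting details), and you supply precisely those details by pairing the matching $\liminf$ and $\limsup$ bounds via the standard squeeze argument and selecting the appropriate upper-bound source in each case ($n\ge 2$ via Remark~\ref{1546}, $n=1$ via Theorem~\ref{n1}).
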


\begin{remark}
\begin{enumerate}
\item[\rm (i)]
Corollary~\ref{LimFormulaEq}(i) when $\gamma=n$
coincides with \cite[(3.26)]{dlyyz.arxiv}.
\item[\rm (ii)]
Let $p\in[1,\infty)$ and $q\in(0,\infty)$ be the same as in
Corollary~\ref{LimFormulaEq}.
If $X:=L^p(\mathbb{R}^n)$,
then both $X$ and $X^\frac{1}{p}=L^1(\mathbb{R}^n)$
are ball Banach function spaces
and the Hardy--Littlewood maximal operator
is bounded on $(X^\frac{1}{p})'=L^\infty(\mathbb{R}^n)$.
Thus, Corollary~\ref{LimFormulaEq} with $X:=L^p(\mathbb{R}^n)$
holds true, which when $q=p$ is just \cite[(1.7) and (1.8)]{bsvy.arxiv}
and which when $\gamma\in(0,\infty)$ and $q\in(0,\infty)$
with $q\neq p$,
or when $\gamma\in(-\infty,0)$, $n\in\mathbb{N}\cap[2,\infty)$, and
$q\in(0,\frac{n-\gamma}{n}p)$
with $q\neq p$, or when $\gamma\in(-\infty,0)$,
$n=1$, and $q\in(0,-p\gamma)$ with $q\neq p$
is new for any $f\in C^1(\mathbb{R}^n)$
with $|\nabla f|\in C_{\mathrm{c}}(\mathbb{R}^n)$.
In addition, Corollary~\ref{LimFormulaEq} with $X:=L^p(\mathbb{R}^n)$
when $\gamma\in(-\infty,0)$ and
$q\in(0,\frac{n-\gamma}{n}p)$
with $q\neq p$ is new for any
$f\in C^1_{\mathrm{c}}(\mathbb{R}^n)$.
\end{enumerate}
\end{remark}

\subsection{Extension to $\dot{W}^{1,X}(\mathbb{R}^n)$}
\label{ss3.3}

In this subsection,
under some additional mild assumptions about $X$,
we extend the main results of both Subsections~\ref{ss3.1}
and~\ref{ss3.2}
from any $f\in C^1(\mathbb{R}^n)$ with
$|\nabla f|\in C_{\mathrm{c}}(\mathbb{R}^n)$
to any $f\in\dot{W}^{1,X}(\mathbb{R}^n)$
via a density argument.
To this end, we begin with
the following density lemma
which is just \cite[Corollary~2.18]{dlyyz.arxiv}.

\begin{lemma}\label{828}
Let $X$ be a ball Banach function space
and $p\in[1,\infty)$.
Assume that $X^\frac{1}{p}$ is a ball Banach function space,
that the Hardy--Littlewood maximal operator $\mathcal{M}$ is
bounded on $(X^\frac{1}{p})'$, and that $X$ has an absolutely
continuous norm.
Then, for any $f\in\dot{W}^{1,X}(\mathbb{R}^n)$,
there exists a sequence
$\{f_k\}_{k\in\mathbb{N}}\subset C^\infty(\mathbb{R}^n)$
with $|\nabla f_k|\in C_{\mathrm{c}}(\mathbb{R}^n)$
for any $k\in\mathbb{N}$ such that, for any $R\in(0,\infty)$,
\begin{align}\label{dense}
\lim_{k\to\infty}\left\|f-f_k\right\|_{\dot{W}^{1,X}(\mathbb{R}^n)}=0
\quad\text{and}\quad
\lim_{k\to\infty}\left\|(f-f_k)\mathbf{1}_{B({\bf 0},R)}\right\|_X=0.
\end{align}
\end{lemma}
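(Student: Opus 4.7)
The plan is to construct the approximating sequence by combining standard mollification with a truncation, centering the function by a suitable constant so that the derivative of the cutoff can be absorbed via a Poincar\'e-type estimate. First, fix $\phi\in C_{\mathrm{c}}^\infty(\mathbb{R}^n)$ with $\phi\ge0$, $\int\phi=1$, and $\mathrm{supp\,}(\phi)\subset B_1$, and a cutoff $\eta\in C_{\mathrm{c}}^\infty(\mathbb{R}^n)$ with $\eta\equiv1$ on $B_1$, $\mathrm{supp\,}(\eta)\subset B_2$, and $|\nabla\eta|\le 2$. For $k\in\mathbb{N}$, set $\phi_k(x):=k^n\phi(kx)$ and $\eta_k(x):=\eta(x/k)$. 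Note that since $|\nabla f|\in X$, Definition~\ref{1659}(vi) yields $X\hookrightarrow L^1_{\mathrm{loc}}(\mathbb{R}^n)$, so $f$ can be identified (up to a constant) with a locally integrable function. Choose $c_k$ to be the average of $f$ on the annulus $B_{2k}\setminus B_k$, and define
\[
f_k:=\eta_k\bigl[(f-c_k)*\phi_k\bigr]+c_k.
\]
Then $f_k\in C^\infty(\mathbb{R}^n)$, $f_k-c_k$ is compactly supported, and hence $|\nabla f_k|\in C_{\mathrm{c}}(\mathbb{R}^n)$.

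Next, I would decompose the gradient error as
\[
\nabla(f-f_k)=(1-\eta_k)\nabla f+\eta_k\bigl(\nabla f-(\nabla f)*\phi_k\bigr)-\bigl[(f-c_k)*\phi_k\bigr]\nabla\eta_k
\]
and show each piece tends to $0$ in $X$. The term $(1-\eta_k)\nabla f$ converges to $0$ because $(1-\eta_k)|\nabla f|\to 0$ pointwise, $(1-\eta_k)|\nabla f|\le|\nabla f|\in X$, and $X$ has absolutely continuous norm. For the mollification term, I would use Lemma~\ref{4.6} to write
$\|\cdot\|_X\lesssim\sup_{g}[\int|\cdot|^p R_{(X^{1/p})'}g\,dx]^{1/p}$,
reducing the problem to convergence in the weighted Lebesgue space $L^p_{R_{(X^{1/p})'}g}(\mathbb{R}^n)$ with an $A_1$-weight (Lemma~\ref{4.5}(ii)); convergence in this weighted space is classical for $C^\infty(\mathbb{R}^n)$-approximation by mollifiers, and the exact constant $[R_{(X^{1/p})'}g]_{A_1(\mathbb{R}^n)}\le2\|\mathcal{M}\|_{(X^{1/p})'\to(X^{1/p})'}$ provides uniform control in the supremum over $g$. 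For the boundary term, the support of $\nabla\eta_k$ lies in $B_{2k}\setminus B_k$ and $|\nabla\eta_k|\lesssim 1/k$, and by the choice of $c_k$ together with the Poincar\'e-type inequality (Lemma~\ref{Poin}, transferred from weighted $L^p$ to $X$ by the same extrapolation scheme), one has $|(f-c_k)*\phi_k|\lesssim k\,\mathcal{M}(|\nabla f|\mathbf{1}_{B_{3k}})$ on the support of $\nabla\eta_k$, so this term is pointwise dominated by $\mathcal{M}(|\nabla f|)\mathbf{1}_{B_{2k}\setminus B_k}\in X$, which tends to $0$ by absolute continuity.

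For the second convergence in \eqref{dense}, fix $R\in(0,\infty)$; for all $k$ with $B(\mathbf{0},R+1)\subset\{\eta_k\equiv1\}$ and $1/k<1$, one has $f_k=(f-c_k)*\phi_k+c_k$ on $B(\mathbf{0},R)$, whence
\[
(f-f_k)\mathbf{1}_{B(\mathbf{0},R)}=\bigl[(f-c_k)-(f-c_k)*\phi_k\bigr]\mathbf{1}_{B(\mathbf{0},R)}.
\]
Since $f-c_k\in L^1(B(\mathbf{0},R+1))$ (using Poincar\'e on a large ball), this equals an $L^1$ mollification remainder on $B(\mathbf{0},R)$ which tends to $0$ almost everywhere and is dominated by $[(|f-c_k|)*\phi_k+|f-c_k|]\mathbf{1}_{B(\mathbf{0},R+1)}$, a fixed element of $X$; absolute continuity of the $X$-norm then yields convergence to $0$ in $X$.

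The main obstacle will be the mollification convergence of $(\nabla f)*\phi_k$ to $\nabla f$ in $X$: because $X$ lacks an explicit norm formula and translations need not be isometries on $X$, one cannot argue directly. The extrapolation bridge provided by Lemmas~\ref{4.5} and~\ref{4.6} is precisely what overcomes this, which explains why the three hypotheses (that $X^{1/p}$ be a ball Banach function space, $\mathcal{M}$ be bounded on $(X^{1/p})'$, and $X$ have absolutely continuous norm) are all indispensable.
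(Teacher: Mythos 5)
Your overall strategy---mollify, truncate by a cutoff $\eta_k$, and recenter by an additive constant $c_k$---is the natural one and is essentially the route of the cited reference (the paper itself does not prove this lemma; it quotes it from \cite[Corollary~2.18]{dlyyz.arxiv}, which is obtained by first showing that the present hypotheses force the uniform boundedness of $\{\mathcal{B}_r\}_{r>0}$ on $X$, and then invoking the variant stated here as Lemma~\ref{1516}). Two of your key steps, however, have genuine gaps.

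For the mollification term $\eta_k[\nabla f-(\nabla f)*\phi_k]$, you assert that the uniform bound $[R_{(X^{1/p})'}g]_{A_1(\mathbb{R}^n)}\le 2\|\mathcal{M}\|_{(X^{1/p})'\to(X^{1/p})'}$ yields $\sup_{\|g\|\le1}\int_{\mathbb{R}^n}|\nabla f-(\nabla f)*\phi_k|^p\,R_{(X^{1/p})'}g\,dx\to 0$. This does not follow: for a fixed $h$, the rate at which $\|h-h*\phi_k\|_{L^p_\omega(\mathbb{R}^n)}\to0$ depends on the weight $\omega$ itself and not only on $[\omega]_{A_1(\mathbb{R}^n)}$---the classical argument approximates $h$ by a compactly supported function in $L^p_\omega(\mathbb{R}^n)$, producing a \emph{different} approximant for each $\omega$, while the supremum over $g$ must be taken before letting $k\to\infty$. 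What is actually needed is a single $h'\in C_{\mathrm{c}}^\infty(\mathbb{R}^n)$ with $\|\,|\nabla f|-h'\,\|_X<\varepsilon$, i.e.\ the density of $C_{\mathrm{c}}^\infty(\mathbb{R}^n)$ in $X$ (Lemma~\ref{938}, whose hypotheses follow from yours once one has verified, via Lemmas~\ref{4.6} and~\ref{4.5}(ii), that $\sup_{r}\|\mathcal{B}_r\|_{X\to X}<\infty$); after that, a three-$\varepsilon$ argument with $|h*\phi_k|\lesssim\mathcal{B}_{1/k}(|h|)$ closes the loop.

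For the boundary term, you dominate $[(f-c_k)*\phi_k]\nabla\eta_k$ by $\mathcal{M}(|\nabla f|)\mathbf{1}_{B_{2k}\setminus B_k}$ and invoke absolute continuity, which requires $\mathcal{M}(|\nabla f|)\in X$. But the hypotheses only give boundedness of $\mathcal{M}$ on $(X^{1/p})'$, not on $X$, and the latter fails at $p=1$: the lemma applies with $X=L^1(\mathbb{R}^n)$ and $p=1$ (so $(X^{1/p})'=L^\infty(\mathbb{R}^n)$), yet $\mathcal{M}(|\nabla f|)\notin L^1(\mathbb{R}^n)$ for any nonzero $\nabla f$. Instead, keep the convolution as a centered ball average, $|(f-c_k)*\phi_k|\lesssim\mathcal{B}_{1/k}(|f-c_k|)$, and use a Poincar\'e inequality for the $X$-norm on the annulus (obtained from Lemma~\ref{Poin} by the same Rubio de Francia extrapolation), giving $k^{-1}\|(f-c_k)\mathbf{1}_{B_{2k}\setminus B_k}\|_X\lesssim\|\,|\nabla f|\,\mathbf{1}_{B_{3k}\setminus B_{k/2}}\|_X\to0$ by absolute continuity. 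The same issue touches your verification of the second limit in \eqref{dense}: your dominator depends on $k$ through $c_k$ and need not lie in $X$. It is cleaner to observe that on $B(\mathbf{0},R)$ the constant cancels, so $f-f_k=f-f*\phi_k$ there for large $k$, and then subtract the mean and combine the first convergence with an $X$-Poincar\'e inequality on $B(\mathbf{0},R)$.
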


The first main result of this subsection reads as follows.

\begin{theorem}\label{2117}
Let $X$ be a ball Banach function space
and $p\in[1,\infty)$.
Assume that $X^\frac{1}{p}$ is a ball Banach function space,
that the Hardy--Littlewood maximal operator $\mathcal{M}$ is
bounded on $(X^\frac{1}{p})'$ with its operator norm denoted by
$\|\mathcal{M}\|_{(X^\frac{1}{p})'\to(X^\frac{1}{p})'}$,
and that $X$ has an absolutely
continuous norm. Assume that one of the following
four statements hold true:
\begin{enumerate}
\item[\rm(a)]
$\gamma\in\mathbb{R}\setminus\{0\}$, $q\in(0,p]$,
and $\mathcal{M}$ is bounded on $X$.
\item[\rm(b)]
$\gamma\in\mathbb{R}\setminus\{0\}$, $q\in(0,\infty)$,
and there exists an $r\in(n,\infty)$ such that
$\mathcal{M}$ is bounded on $X^\frac{1}{r}$.
\item[\rm(c)]
$\gamma\in(0,\infty)$ and $q\in(0,\infty)$ satisfies
$n(\frac{1}{p}-\frac{1}{q})<1$.
\item[\rm(d)]
$\gamma\in(-\infty,-1)$, $n=1$, and $q\in[1,p]$.
\end{enumerate}
Then the following statements hold true.
\begin{enumerate}
\item[\textup{(i)}]
There exists a positive constant $C$ such that,
for any $f\in\dot{W}^{1,X}(\mathbb{R}^n)$,
\begin{align*}
\left[\frac{\kappa(q,n)}{|\gamma|}\right]^{\frac{1}{q}}
\left\|\,|\nabla f|\,\right\|_X
&\leq\sup_{\lambda\in(0,\infty)}\lambda
\left\|\left[\int_{\mathbb{R}^n}
\mathbf{1}_{E_{\lambda,\frac{\gamma}{q}}[f]}(\cdot,y)
\left|\cdot-y\right|^{\gamma-n}\,dy\right]^\frac{1}{q}\right\|_X\\
&\leq C\left\|\,|\nabla f|\,\right\|_X,
\end{align*}
where $\kappa(q,n)$ and $E_{\lambda,\frac{\gamma}{q}}[f]$
with $\lambda\in(0,\infty)$ are the same as, respectively, in
\eqref{kappaqn} and \eqref{Elambda}
and where $C$ is continuous with
respect to $\|\mathcal{M}\|_{(X^\frac{1}{p})'\to(X^\frac{1}{p})'}$
and increases as $\|\mathcal{M}\|_{(X^\frac{1}{p})'\to(X^\frac{1}{p})'}$ increases.
\item[\textup{(ii)}]
If $\gamma\in(0,\infty)$,
then, for any $f\in\dot{W}^{1,X}(\mathbb{R}^n)$,
\begin{align*}
\lim_{\lambda\to\infty}
\lambda\left\|\left[\int_{\mathbb{R}^n}
\mathbf{1}_{E_{\lambda,\frac{\gamma}{q}}[f]}(\cdot,y)
\left|\cdot-y\right|^{\gamma-n}\,dy\right]^\frac{1}{q}\right\|_X
=\left[\frac{\kappa(q,n)}{\gamma}\right]^\frac{1}{q}
\left\|\,\left|\nabla f\right|\,\right\|_{X}.
\end{align*}
\item[\textup{(iii)}]
If $\gamma\in(-\infty,0)$,
assume further that both $n\in\mathbb{N}\cap[2,\infty)$
and $q\in(0,\frac{n-\gamma}{n}p)$
or that $n=1$, $q\in(0,-p\gamma)$,
and $\gamma\in(-\infty,-1)$.
Then, for any $f\in\dot{W}^{1,X}(\mathbb{R}^n)$,
\begin{align*}
\lim_{\lambda\to0^+}
\lambda
\left\|\left[\int_{\mathbb{R}^n}
\mathbf{1}_{E_{\lambda,\frac{\gamma}{q}}[f]}(\cdot,y)
\left|\cdot-y\right|^{\gamma-n}\,dy\right]^\frac{1}{q}\right\|_X
=\left[-\frac{\kappa(q,n)}{\gamma}\right]^\frac{1}{q}
\left\|\,\left|\nabla f\right|\,\right\|_{X}.
\end{align*}
\end{enumerate}
\end{theorem}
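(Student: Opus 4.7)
The strategy is to reduce to the case $f\in C^1(\mathbb{R}^n)$ with $|\nabla f|\in C_{\mathrm{c}}(\mathbb{R}^n)$, already settled in Subsections~\ref{ss3.1} and~\ref{ss3.2}, via the density Lemma~\ref{828}. Given $f\in\dot{W}^{1,X}(\mathbb{R}^n)$, pick $\{f_k\}\subset C^\infty(\mathbb{R}^n)$ with $|\nabla f_k|\in C_{\mathrm{c}}(\mathbb{R}^n)$ satisfying $\|\nabla(f-f_k)\|_X\to0$ and $\|(f-f_k)\mathbf{1}_B\|_X\to0$ for every ball $B$. Since Definition~\ref{1659}(vi) gives the embedding $X\hookrightarrow L^1_{\mathrm{loc}}(\mathbb{R}^n)$, the local $X$-convergence yields $L^1$-convergence on every ball and hence, after passing to a subsequence which I still denote by $\{f_k\}$, $f_k\to f$ almost everywhere on $\mathbb{R}^n$. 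Throughout, I will write
$$
F_\lambda(g):=\lambda\left\|\left[\int_{\mathbb{R}^n}\mathbf{1}_{E_{\lambda,\frac{\gamma}{q}}[g]}(\cdot,y)\,|\cdot-y|^{\gamma-n}\,dy\right]^{\frac{1}{q}}\right\|_X
$$
for any measurable $g$.

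The upper bound in (i) comes from a two-level Fatou argument. Fix $\lambda\in(0,\infty)$ and $\lambda'\in(0,\lambda)$. If $(x,y)\in E_{\lambda,\gamma/q}[f]$ and $f_k\to f$ at both $x$ and $y$ (which holds almost everywhere), then $(x,y)\in E_{\lambda',\gamma/q}[f_k]$ for all large $k$, hence $\mathbf{1}_{E_{\lambda,\gamma/q}[f]}\le\liminf_k\mathbf{1}_{E_{\lambda',\gamma/q}[f_k]}$ almost everywhere. Applying the classical Fatou lemma to the $y$-integral and then Lemma~\ref{FatouX} to the $X$-norm of the $1/q$-power yields $F_\lambda(f)\le(\lambda/\lambda')\liminf_k F_{\lambda'}(f_k)$. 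Under each of the four case hypotheses (a)--(d), the corresponding theorem among Theorem~\ref{upperBound}, Theorem~\ref{2001}, Theorem~\ref{gamma>0}, and Theorem~\ref{n=1} gives $F_{\lambda'}(f_k)\le C\|\nabla f_k\|_X\to C\|\nabla f\|_X$. Letting $\lambda'\uparrow\lambda$ and taking $\sup_\lambda$ completes this bound.

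The limit identities in (ii) and (iii), from which the lower bound in (i) follows as a byproduct, rest on the subadditivity
$$
\mathbf{1}_{E_{\lambda,\frac{\gamma}{q}}[f]}\le\mathbf{1}_{E_{\lambda-\mu,\frac{\gamma}{q}}[f_k]}+\mathbf{1}_{E_{\mu,\frac{\gamma}{q}}[f-f_k]}\qquad(0<\mu<\lambda),
$$
immediate from $|f(x)-f(y)|\le|f_k(x)-f_k(y)|+|(f-f_k)(x)-(f-f_k)(y)|$, together with its mirror obtained by swapping $f\leftrightarrow f_k$. Setting $\mu=\varepsilon\lambda$, integrating against $|x-y|^{\gamma-n}\,dy$, applying Lemma~\ref{1111} with parameter $\theta$ to dominate the $1/q$-power (only needed when $q<1$), and invoking the triangle inequality in $X$ produce
$$
F_\lambda(f)\le\frac{(1+\theta)^{\frac{1}{q}}}{1-\varepsilon}\,F_{(1-\varepsilon)\lambda}(f_k)+\frac{C_\theta^{\frac{1}{q}}}{\varepsilon}\,F_{\varepsilon\lambda}(f-f_k)
$$
together with a symmetric estimate where $f$ and $f_k$ trade roles. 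For (ii), take $\limsup_{\lambda\to\infty}$: Corollary~\ref{LimFormulaEq}(i) evaluates $\lim_{\lambda\to\infty}F_\lambda(f_k)=[\kappa(q,n)/\gamma]^{1/q}\|\nabla f_k\|_X$; the upper bound in (i) applied to $f-f_k\in\dot{W}^{1,X}$ controls $\sup_\lambda F_\lambda(f-f_k)$ by $\|\nabla(f-f_k)\|_X\to0$; letting $k\to\infty$ and then $\varepsilon,\theta\to0$ gives $\limsup_\lambda F_\lambda(f)\le[\kappa(q,n)/\gamma]^{1/q}\|\nabla f\|_X$. The matching lower bound on $\liminf_\lambda F_\lambda(f)$ comes from the mirror estimate, together with Theorem~\ref{LimFormulaInf}(i) to lower-bound $\liminf_\lambda F_\lambda(f_k)$. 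The argument for (iii) is the same with $\lambda\to0^+$, using Corollary~\ref{LimFormulaEq}(ii) (and, for $n=1$, Theorem~\ref{n1}) in place of Corollary~\ref{LimFormulaEq}(i). In the cases of (i) with $\gamma<0$ not covered by (iii), the mirror inequality combined with Theorem~\ref{LimFormulaInf}(ii) alone already yields $[-\kappa(q,n)/\gamma]^{1/q}\|\nabla f\|_X\le\liminf_{\lambda\to0^+}F_\lambda(f)$, giving the remaining lower bounds.

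The main obstacle is preserving the sharp constant $[\kappa(q,n)/|\gamma|]^{1/q}$ through the approximation. A naive splitting $\mu=\lambda/2$ introduces a multiplicative factor $2^{(1/q-1)_++1}$ that is incompatible with the asserted equality in (ii) and (iii); only the two-parameter perturbation ($\varepsilon$ for the threshold split and $\theta$ from Lemma~\ref{1111}) allows both constants to be sent to $1$ after first taking $k\to\infty$, recovering the exact constant. A secondary care is the order of limits: fix $k$ first to invoke the nice-function results, then take $\lambda\to\infty$ or $\lambda\to0^+$, then $k\to\infty$, and finally $\varepsilon,\theta\to0$.
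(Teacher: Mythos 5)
Your proof is correct, and for the upper estimate in (i) you use a genuinely different route from the paper. The paper restricts to the compact-in-$(x,y)$, bounded-away-from-the-diagonal set $\mathcal{F}_N=\{(x,y)\in B(\mathbf{0},N)\times B(\mathbf{0},N):|x-y|\geq N^{-1}\}$, splits $E_{\lambda,\gamma/q}[f]\cap\mathcal{F}_N$ into three pieces (one involving only $f_k$ and two involving $f-f_k$ or $f-f_k$ evaluated at the other variable), estimates each piece explicitly -- the cross terms in terms of $\|(f-f_k)\mathbf{1}_{B(\mathbf{0},N)}\|_X$ with an $N$-dependent constant -- and then sends $k\to\infty$ followed by $N\to\infty$ via the Fatou property Definition~\ref{1659}(iii). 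You instead extract a subsequence with $f_k\to f$ almost everywhere (justified by $X\hookrightarrow L^1_{\mathrm{loc}}$ from Definition~\ref{1659}(vi) and a diagonal argument), observe the pointwise lower semicontinuity $\mathbf{1}_{E_{\lambda,\gamma/q}[f]}\leq\liminf_k\mathbf{1}_{E_{\lambda',\gamma/q}[f_k]}$ a.e.\ for $\lambda'<\lambda$, and then apply classical Fatou to the $y$-integral followed by Lemma~\ref{FatouX} to the $X$-norm, letting $\lambda'\uparrow\lambda$ at the end. This avoids the truncation $\mathcal{F}_N$ and the three-term decomposition altogether, and is cleaner; what it costs you is the need to pass to an a.e.-convergent subsequence, which is harmless here since the $f_k$ are auxiliary. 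The arguments you give for the limit identities (ii)--(iii) and the lower estimate in (i) follow essentially the same scheme as the paper (the two-parameter perturbation via Lemma~\ref{1111}, invoking Corollary~\ref{LimFormulaEq} for the smooth approximants and the already-established upper bound for $f-f_k$, then letting $k\to\infty$ before sending the perturbation parameters to their limits), so they agree with the paper modulo notation. One small slip worth fixing: when you apply Lemma~\ref{1111} at the level of the $1/q$-power, the constants in your displayed bound should read $(1+\theta)$ and $C_\theta$, not $(1+\theta)^{1/q}$ and $C_\theta^{1/q}$; this does not affect the conclusion since both still tend to $1$ and to a finite constant, respectively, but it is an inconsistency.
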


To prove Theorem~\ref{2117}, we need the following technical lemma
which is just \cite[Lemma~2.13]{dlyyz.arxiv}.

\begin{lemma}\label{1639}
Let $X$ be a ball Banach function space and $X'$ its associate
space. If $f\in X$ and $g\in X'$, then $fg$ is integrable and
$$
\int_{\mathbb{R}^n}\left|f(x)g(x)\right|\,dx
\leq\|f\|_X\|g\|_{X'}.
$$
\end{lemma}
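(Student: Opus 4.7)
The plan is to extend each of the bounds and limiting identities already proved in Subsections~\ref{ss3.1} and~\ref{ss3.2} for smooth functions with compactly supported gradient to an arbitrary $f\in\dot W^{1,X}(\mathbb{R}^n)$ via density. By Lemma~\ref{828}, fix a sequence $\{f_k\}_{k\in\mathbb N}\subset C^\infty(\mathbb{R}^n)$ with $|\nabla f_k|\in C_{\mathrm c}(\mathbb{R}^n)$ satisfying \eqref{dense}, and extract a subsequence (still written $f_k$) so that $f_k\to f$ pointwise almost everywhere. Write
\begin{align*}
G_\lambda[g](x):=\left[\int_{\mathbb{R}^n}\mathbf 1_{E_{\lambda,\gamma/q}[g]}(x,y)\,|x-y|^{\gamma-n}\,dy\right]^{1/q}
\end{align*}
so that the target functional is $\sup_\lambda\lambda\|G_\lambda[f]\|_X$. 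In each of the cases (a)-(d), the corresponding one of Theorems~\ref{upperBound}, \ref{2001}, \ref{gamma>0}, or~\ref{n=1} applied to $f_k$ supplies $\sup_\lambda\lambda\|G_\lambda[f_k]\|_X\le C\||\nabla f_k|\|_X$ with a constant $C$ independent of $k$, while Corollary~\ref{LimFormulaEq} (whose hypotheses match those of (ii) and (iii) precisely in each subcase listed there) gives the sharp limiting identity for each $f_k$.

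For the upper bound in (i), the a.e.\ convergence $f_k\to f$ forces $\mathbf 1_{E_{\lambda,\gamma/q}[f]}(x,y)\le\liminf_k\mathbf 1_{E_{\lambda,\gamma/q}[f_k]}(x,y)$ almost everywhere on $\mathbb{R}^n\times\mathbb{R}^n$, because any $(x,y)\in E_{\lambda,\gamma/q}[f]$ on which $f_k(x)\to f(x)$ and $f_k(y)\to f(y)$ also satisfies the defining strict inequality for $f_k$ with $k$ large. The classical Fatou lemma in $y$ then gives $G_\lambda[f]\le\liminf_k G_\lambda[f_k]$ almost everywhere, Lemma~\ref{FatouX} yields $\|G_\lambda[f]\|_X\le\liminf_k\|G_\lambda[f_k]\|_X$, and the elementary bound $\sup_\lambda\liminf_k\le\liminf_k\sup_\lambda$ produces
\begin{align*}
\sup_\lambda\lambda\|G_\lambda[f]\|_X\le\liminf_k\sup_\lambda\lambda\|G_\lambda[f_k]\|_X\le C\||\nabla f|\|_X.
\end{align*}

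For the limiting identities (ii) and (iii), couple $f$ with $f_k$ via the pair of triangle-inequality inclusions, valid for every $\varepsilon\in(0,1)$:
\begin{align*}
E_{(1+\varepsilon)\lambda,\gamma/q}[f]\subset E_{\lambda,\gamma/q}[f_k]\cup E_{\varepsilon\lambda,\gamma/q}[f-f_k],\quad
E_{\lambda,\gamma/q}[f_k]\subset E_{(1-\varepsilon)\lambda,\gamma/q}[f]\cup E_{\varepsilon\lambda,\gamma/q}[f-f_k].
\end{align*}
Integrating each against $|x-y|^{\gamma-n}\,dy$ yields $G_{(1+\varepsilon)\lambda}[f]^q\le G_\lambda[f_k]^q+G_{\varepsilon\lambda}[f-f_k]^q$ and its symmetric analogue. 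To extract $q$-th roots without spoiling the sharp constant when $0<q<1$, apply Lemma~\ref{1111} with exponent $1/q$: for each $\theta\in(0,1)$ there is $C_\theta\in(0,\infty)$ with $(a+b)^{1/q}\le(1+\theta)a^{1/q}+C_\theta b^{1/q}$. Taking $\|\cdot\|_X$, multiplying by the appropriate $\lambda$-factor, sending $\lambda\to\infty$ (when $\gamma>0$) or $\lambda\to 0^+$ (when $\gamma<0$), applying Corollary~\ref{LimFormulaEq} to $f_k$ for the main term, and using the upper bound of (i) applied to $f-f_k$ (together with $\||\nabla(f-f_k)|\|_X\to 0$) to kill the error term, one obtains
\begin{align*}
\limsup\lambda\|G_\lambda[f]\|_X\le(1+\theta)(1+\varepsilon)[\kappa(q,n)/|\gamma|]^{1/q}\||\nabla f|\|_X
\end{align*}
and
\begin{align*}
\liminf\lambda\|G_\lambda[f]\|_X\ge\frac{1-\varepsilon}{1+\theta}[\kappa(q,n)/|\gamma|]^{1/q}\||\nabla f|\|_X
\end{align*}
after letting $k\to\infty$. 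Sending $\varepsilon,\theta\to 0$ then yields the sharp limits in (ii) and (iii). Finally, the lower bound in (i) is immediate, since $\sup_\lambda\lambda\|G_\lambda[f]\|_X$ dominates the one-sided limit produced by (ii) when $\gamma>0$ and by (iii) when $\gamma<0$. The main technical obstacle is the $\varepsilon$-$\theta$ balancing in this subadditivity step: the straightforward inequality $(a+b)^{1/q}\le a^{1/q}+b^{1/q}$ fails when $q<1$, and only the refined Lemma~\ref{1111} with a vanishing perturbation $\theta$ allows the spurious constant introduced by the $1/q$-th root to collapse back to $1$ in the limit, preserving the exact factor $[\kappa(q,n)/|\gamma|]^{1/q}$.
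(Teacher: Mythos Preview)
Your proposal addresses the wrong statement. The lemma in question is a H\"older-type inequality for ball Banach function spaces: if $f\in X$ and $g\in X'$ then $\int_{\mathbb{R}^n}|fg|\,dx\le\|f\|_X\|g\|_{X'}$. This follows essentially from the definition of the associate norm in Definition~\ref{associte}: if $\|f\|_X>0$, then $h:=f/\|f\|_X$ satisfies $\|h\|_X=1$, so $\|fg\|_{L^1(\mathbb{R}^n)}/\|f\|_X=\|hg\|_{L^1(\mathbb{R}^n)}\le\|g\|_{X'}$; the case $\|f\|_X=0$ is trivial since then $f=0$ almost everywhere. The paper does not prove this lemma either, citing it directly from \cite[Lemma~2.13]{dlyyz.arxiv}.

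What you have written is a proof sketch of Theorem~\ref{2117} (the extension of the generalized Brezis--Van Schaftingen--Yung formula from smooth functions to $\dot W^{1,X}(\mathbb{R}^n)$ by density), not of the present lemma. Your argument uses density (Lemma~\ref{828}), Fatou-type passage to the limit (Lemma~\ref{FatouX}), a triangle-inequality splitting of the level sets with an $\varepsilon$--$\theta$ balancing via Lemma~\ref{1111}, and Corollary~\ref{LimFormulaEq} for the approximants. That is broadly the same strategy the paper uses for Theorem~\ref{2117}, though the paper handles the upper bound in (i) differently: rather than relying on pointwise a.e.\ convergence of $f_k$ and Fatou, it truncates to the compact sets $\mathcal F_N$ and uses Lemma~\ref{1639} itself to control the cross terms explicitly, which avoids the need to pass to a pointwise-convergent subsequence. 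In any case, none of this is relevant to the statement actually under consideration.
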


\begin{proof}[Proof of Theorem~\ref{2117}]
Let $f\in\dot{W}^{1,X}(\mathbb{R}^n)$.
By Lemma~\ref{828}, we conclude that there exists a
sequence $\{f_k\}_{k\in\mathbb{N}}\subset C^\infty(\mathbb{R}^n)$
with $|\nabla f_k|\in C_{\mathrm{c}}(\mathbb{R}^n)$
for any $k\in\mathbb{N}$ such that \eqref{dense} holds true.
From Theorems~\ref{upperBound},~\ref{2001},~\ref{gamma>0},
and~\ref{n=1} and Corollary~\ref{LimFormulaEq},
we deduce that, for any $k\in\mathbb{N}$,
(i), (ii), and (iii) hold true for $f_k$.

Next, we show that the upper estimate in (i)
holds true for such an $f$.
To this end, for any $N\in\mathbb{N}$, let
$$
\mathcal{F}_N:=\left\{(x,y)\in B(\mathbf{0},N)\times
B(\mathbf{0},N):\ |x-y|\ge N^{-1}\right\}.
$$
Notice that, for any $\lambda\in(0,\infty)$ and $k,N\in\mathbb{N}$,
\begin{align*}
E_{\lambda,\frac{\gamma}{q}}[f]\cap\mathcal{F}_N
&\subset\left(E_{\frac{\lambda}{3},\frac{\gamma}{q}}[f_k]
\cap\mathcal{F}_N\right)\nonumber\\
&\quad\cup\left(\left\{(x,y)\in\mathbb{R}^n\times\mathbb{R}^n:\
x\neq y,\,\frac{|f_k(x)-f(x)|}{|x-y|^{1+\frac{\gamma}{q}}}
>\frac{\lambda}{3}\right\}
\cap\mathcal{F}_N\right)\nonumber\\
&\quad\cup\left(\left\{(x,y)\in\mathbb{R}^n\times\mathbb{R}^n:\
x\neq y,\,\frac{|f_k(y)-f(y)|}{|x-y|^{1+\frac{\gamma}{q}}}
>\frac{\lambda}{3}\right\}
\cap\mathcal{F}_N\right)\nonumber\\
&=:E_1\cup E_2\cup E_3,
\end{align*}
which, combined with Definition~\ref{1659}(v), further implies that,
for any $\lambda\in(0,\infty)$,
\begin{align}\label{YJ123}
&\lambda\left\|\left[\int_{\mathbb{R}^n}
\mathbf{1}_{E_{\lambda,\frac{\gamma}{q}}[f]\cap\mathcal{F}_N}(\cdot,y)
\left|\cdot-y\right|^{\gamma-n}\,dy\right]^\frac{1}{q}\right\|_X\\
&\quad\lesssim\sum_{i=1}^3\sup_{\lambda\in(0,\infty)}\lambda
\left\|\left[\int_{\mathbb{R}^n}
\mathbf{1}_{E_i}(\cdot,y)
\left|\cdot-y\right|^{\gamma-n}\,dy\right]^\frac{1}{q}\right\|_X
=:\sum_{i=1}^3\mathrm{I}_i.\nonumber
\end{align}
To deal with $\mathrm{I}_1$,
by (i) for $f_k$ and Definition~\ref{1659}(v),
we conclude that, for any $k\in\mathbb{N}$,
\begin{align}\label{YJ1}
\mathrm{I}_1
&\lesssim\left\|\,\left|\nabla f_k\right|\,\right\|_{X}
\leq\left\|\,\left|\nabla f\right|\,\right\|_{X}
+\left\|\,\left|\nabla(f_k-f)\right|\,\right\|_{X}.
\end{align}
To deal with $\mathrm{I}_2$, from the definitions of both
$E_2$ and $\mathcal{F}_N$,
we deduce that, for any $k,N\in\mathbb{N}$,
\begin{align}\label{YJ2}
\mathrm{I}_2
&\lesssim\left\|\left[\int_{\mathbb{R}^n}
\frac{|f_k(\cdot)-f(\cdot)|^q}{|\cdot-y|^{q+n}}
\mathbf{1}_{\mathcal{F}_N}(\cdot,y)\,dy\right]^\frac{1}{q}\right\|_{X}\\
&\lesssim N^\frac{q+2n}{q}\|(f-f_k)\mathbf{1}_{B(\mathbf{0},N)}\|_X.\nonumber
\end{align}
To deal with $\mathrm{I}_3$,
by the definitions of both $E_3$ and $\mathcal{F}_N$
and Lemma~\ref{1639},
we conclude that, for any $k,N\in\mathbb{N}$,
\begin{align*}
\mathrm{I}_3
&\lesssim\lambda^{1-\frac{1}{q}}\left\|\left[\int_{\mathbb{R}^n}
\frac{|f_k(y)-f(y)|}{|\cdot-y|^{1+\frac{\gamma}{q}+n-\gamma}}
\mathbf{1}_{\mathcal{F}_N}(\cdot,y)\,dy
\right]^\frac{1}{q}\right\|_{X}\nonumber\\
&\lesssim\lambda^{1-\frac{1}{q}}N^{|1+\frac{\gamma}{q}+n-\gamma|}
\left\|(f-f_k)\mathbf{1}_{B(\mathbf{0},N)}\right\|_X^\frac{1}{q}
\|\mathbf{1}_{B(\mathbf{0},N)}\|_{X'}^\frac{1}{q}
\|\mathbf{1}_{B(\mathbf{0},N)}\|_X,
\end{align*}
which, together with \eqref{YJ123}, \eqref{YJ1},
and \eqref{YJ2}, further implies that, for any $\lambda\in(0,\infty)$,
\begin{align*}
&\lambda\left\|\left[\int_{\mathbb{R}^n}
\mathbf{1}_{E_{\lambda,\frac{\gamma}{q}}[f]\cap\mathcal{F}_N}(\cdot,y)
\left|\cdot-y\right|^{\gamma-n}\,dy\right]^\frac{1}{q}\right\|_X\\
&\quad\lesssim\left\|\,\left|\nabla f\right|\,\right\|_{X}
+\left\|\,\left|\nabla(f_k-f)\right|\,\right\|_{X}
+N^\frac{q+2n}{q}\|(f-f_k)\mathbf{1}_{B(\mathbf{0},N)}\|_X\\
&\qquad+\lambda^{1-\frac{1}{q}}
N^{|1+\frac{\gamma}{q}+n-\gamma|}
\left\|(f-f_k)\mathbf{1}_{B(\mathbf{0},N)}\right\|_X^\frac{1}{q}
\|\mathbf{1}_{B(\mathbf{0},N)}\|_{X'}^\frac{1}{q}
\|\mathbf{1}_{B(\mathbf{0},N)}\|_X.
\end{align*}
Using this, Remark~\ref{2143},
Definition~\ref{1659}(iv),
and \eqref{dense} and letting $k\to\infty$,
we find that, for any $\lambda\in(0,\infty)$
and $N\in\mathbb{N}$,
\begin{align*}
\lambda\left\|\left[\int_{\mathbb{R}^n}
\mathbf{1}_{E_{\lambda,\frac{\gamma}{q}}[f]\cap\mathcal{F}_N}(\cdot,y)
\left|\cdot-y\right|^{\gamma-n}\,dy\right]^\frac{1}{q}\right\|_X
\lesssim\left\|\,\left|\nabla f\right|\,\right\|_{X}.
\end{align*}
By this and Definition~\ref{1659}(iii),
letting $N\to\infty$, and taking the supremum over all $\lambda\in(0,\infty)$,
we then conclude that
\begin{align}\label{upperW1X}
\sup_{\lambda\in(0,\infty)}
\lambda\left\|\left[\int_{\mathbb{R}^n}
\mathbf{1}_{E_{\lambda,\frac{\gamma}{q}}[f]}(\cdot,y)
\left|\cdot-y\right|^{\gamma-n}\,dy\right]^\frac{1}{q}\right\|_X
\lesssim\left\|\,\left|\nabla f\right|\,\right\|_{X}.
\end{align}
This finishes the proof the upper estimate in (i)
for any $f\in\dot{W}^{1,X}(\mathbb{R}^n)$.

Now, we show the lower estimate in (ii) for
any $f\in\dot{W}^{1,X}(\mathbb{R}^n)$.
From Lemma~\ref{1111}, Definition~\ref{1659}(v),
and \eqref{upperW1X}, we deduce that, for any $k\in\mathbb{N}$
and $\delta,\eta\in(0,1)$, there exists a $C_\eta\in(0,\infty)$ such that,
for any $\lambda\in(0,\infty)$,
\begin{align*}
&\lambda\left\|\left[\int_{\mathbb{R}^n}
\mathbf{1}_{E_{\lambda,\frac{\gamma}{q}}[f_k]}(\cdot,y)
\left|\cdot-y\right|^{\gamma-n}\,dy\right]^\frac{1}{q}\right\|_X\\
&\quad\leq(1+\eta)\lambda\left\|\left[\int_{\mathbb{R}^n}
\mathbf{1}_{E_{\delta\lambda,\frac{\gamma}{q}}[f]}(\cdot,y)
\left|\cdot-y\right|^{\gamma-n}\,dy\right]^\frac{1}{q}\right\|_X\\
&\qquad+C_{\eta}\lambda\left\|\left[\int_{\mathbb{R}^n}
\mathbf{1}_{E_{(1-\delta)\lambda,\frac{\gamma}{q}}[f_k-f]}(\cdot,y)
\left|\cdot-y\right|^{\gamma-n}\,dy\right]^\frac{1}{q}\right\|_X\\
&\quad\leq(1+\eta)\delta^{-1}\lambda\left\|\left[\int_{\mathbb{R}^n}
\mathbf{1}_{E_{\lambda,\frac{\gamma}{q}}[f]}(\cdot,y)
\left|\cdot-y\right|^{\gamma-n}\,dy\right]^\frac{1}{q}\right\|_X\\
&\qquad+\widetilde{C}C_{\eta}(1-\delta)^{-1}
\left\|\,\left|\nabla(f-f_k)\right|\,\right\|_{X},
\end{align*}
where $\widetilde{C}$ is the implicit positive constant in \eqref{upperW1X},
which, together with Definition~\ref{1659}(v),
the lower estimate in (ii) for $f_k$,
and \eqref{dense},
further implies that
\begin{align*}
\left\|\,\left|\nabla f\right|\,\right\|_{X}
&\leq\left\|\,\left|\nabla f_k\right|\,\right\|_{X}
+\left\|\,\left|\nabla(f-f_k)\right|\,\right\|_{X}\\
&\leq\left[\frac{\kappa(q,n)}{\gamma}\right]^{-\frac{1}{q}}
\liminf_{\lambda\to\infty}
\lambda\left\|\left[\int_{\mathbb{R}^n}
\mathbf{1}_{E_{\lambda,\frac{\gamma}{q}}[f_k]}(\cdot,y)
\left|\cdot-y\right|^{\gamma-n}\,dy\right]^\frac{1}{q}\right\|_X\\
&\quad+\left\|\,\left|\nabla(f-f_k)\right|\,\right\|_{X}\\
&\leq\left[\frac{\kappa(q,n)}{\gamma}\right]^{-\frac{1}{q}}
(1+\eta)\delta^{-1}\liminf_{\lambda\to\infty}
\lambda\left\|\left[\int_{\mathbb{R}^n}
\mathbf{1}_{E_{\lambda,\frac{\gamma}{q}}[f]}(\cdot,y)
\left|\cdot-y\right|^{\gamma-n}\,dy\right]^\frac{1}{q}\right\|_X\\
&\quad+\left\{\left[\frac{\kappa(q,n)}{\gamma}\right]^{-\frac{1}{q}}
\widetilde{C}C_{\eta}(1-\delta)^{-1}+1\right\}
\left\|\,\left|\nabla(f-f_k)\right|\,\right\|_{X}\\
&\to\left[\frac{\kappa(q,n)}{\gamma}\right]^{-\frac{1}{q}}
(1+\eta)\delta^{-1}\liminf_{\lambda\to\infty}
\lambda\left\|\left[\int_{\mathbb{R}^n}
\mathbf{1}_{E_{\lambda,\frac{\gamma}{q}}[f]}(\cdot,y)
\left|\cdot-y\right|^{\gamma-n}\,dy\right]^\frac{1}{q}\right\|_X
\end{align*}
as $k\to\infty$.
Letting $\delta\to1$ and $\eta\to0$, we further obtain
\begin{align}\label{2233}
\liminf_{\lambda\to\infty}
\lambda\left\|\left[\int_{\mathbb{R}^n}
\mathbf{1}_{E_{\lambda,\frac{\gamma}{q}}[f]}(\cdot,y)
\left|\cdot-y\right|^{\gamma-n}\,dy\right]^\frac{1}{q}\right\|_X
\ge\left[\frac{\kappa(q,n)}{\gamma}\right]^\frac{1}{q}
\left\|\,\left|\nabla f\right|\,\right\|_{X},
\end{align}
which completes the proof of the lower estimate in (ii) for such an $f$ and hence
the lower estimate in (i) for such an $f$ in the case $\gamma\in(0,\infty)$.
From an argument similar to that used in the estimation
of \eqref{2233}, we deduce that
both the lower estimate in (iii) and
the lower estimate in (i) in the case $\gamma\in(-\infty,0)$
also hold true for any $f\in\dot{W}^{1,X}(\mathbb{R}^n)$.

To complete the proof of the present theorem,
it remains to show the upper estimates in
both (ii) and (iii) for any $f\in\dot{W}^{1,X}(\mathbb{R}^n)$.
We only prove the former because the proof of the latter is similar
and hence we omit the details.
By the upper estimate in (ii) for $f_k$,
\eqref{upperW1X}, Lemma~\ref{1111}, and Definition~\ref{1659}(v),
we find that, for any $k\in\mathbb{N}$
and $\delta,\eta\in(0,1)$, there exists a $C_\eta\in(0,\infty)$ such that
\begin{align*}
&\limsup_{\lambda\to\infty}
\lambda\left\|\left[\int_{\mathbb{R}^n}
\mathbf{1}_{E_{\lambda,\frac{\gamma}{q}}[f]}(\cdot,y)
\left|\cdot-y\right|^{\gamma-n}\,dy\right]^\frac{1}{q}\right\|_X\\
&\quad\leq C_\eta\limsup_{\lambda\to\infty}
\lambda\left\|\left[\int_{\mathbb{R}^n}
\mathbf{1}_{E_{(1-\delta)\lambda,\frac{\gamma}{q}}[f-f_k]}(\cdot,y)
\left|\cdot-y\right|^{\gamma-n}\,dy\right]^\frac{1}{q}\right\|_X\\
&\qquad+(1+\eta)\limsup_{\lambda\to\infty}
\lambda\left\|\left[\int_{\mathbb{R}^n}
\mathbf{1}_{E_{\delta\lambda,\frac{\gamma}{q}}[f_k]}(\cdot,y)
\left|\cdot-y\right|^{\gamma-n}\,dy\right]^\frac{1}{q}\right\|_X\\
&\quad\leq\widetilde{C}(1-\delta)^{-1}C_\eta
\left\|\,\left|\nabla(f-f_k)\right|\,\right\|_{X}
+(1+\eta)\delta^{-1}\left[\frac{\kappa(q,n)}{\gamma}\right]^\frac{1}{q}
\left\|\,\left|\nabla f\right|\,\right\|_{X},
\end{align*}
which, together with \eqref{dense},
via first letting $k\to\infty$ and then
letting $\eta\to0$ and $\delta\to1$,
further implies that
$$
\limsup_{\lambda\to\infty}
\lambda\left\|\left[\int_{\mathbb{R}^n}
\mathbf{1}_{E_{\lambda,\frac{\gamma}{q}}[f]}(\cdot,y)
\left|\cdot-y\right|^{\gamma-n}\,dy\right]^\frac{1}{q}\right\|_X
\leq\left[\frac{\kappa(q,n)}{\gamma}\right]^\frac{1}{q}
\left\|\,\left|\nabla f\right|\,\right\|_{X}.
$$
This finishes the upper estimate in (ii) for
any $f\in\dot{W}^{1,X}(\mathbb{R}^n)$
and hence Theorem~\ref{2117}.
\end{proof}

\begin{remark}
Theorem~\ref{2117} when $\gamma=n$ coincides with
\cite[Theorem~4.5(ii)]{dlyyz.arxiv}.
\end{remark}

Using Theorem~\ref{2117} with $X:=L^p(\mathbb{R}^n)$
and $p\in[1,\infty)$,
we obtain the following corollary.

\begin{corollary}\label{2120}
Let $p\in[1,\infty)$, $\gamma\in\mathbb{R}\setminus\{0\}$,
and $q\in(0,\infty)$.
If one of the following four statements hold true:
\begin{enumerate}
\item[\rm(a)]
$p\in(n,\infty)$, $\gamma\in\mathbb{R}\setminus\{0\}$,
and $q\in(0,\infty)$;
\item[\rm(b)]
$p\in[1,n]$, $\gamma\in(0,\infty)$,
and $q\in(0,\infty)$ satisfies $n(\frac{1}{p}-\frac{1}{q})<1$;
\item[\rm(c)]
$p\in(1,n]$, $\gamma\in(-\infty,0)$,
and $q\in(0,p]$;
\item[\rm(d)]
$p=q=n=1$ and $\gamma\in(-\infty,-1)$.
\end{enumerate}
Then the following statements hold true.
\begin{enumerate}
\item[\textup{(i)}]
For any $f\in\dot{W}^{1,p}(\mathbb{R}^n)$,
\begin{align*}
\sup_{\lambda\in(0,\infty)}\lambda
\left\|\left[\int_{\mathbb{R}^n}
\mathbf{1}_{E_{\lambda,\frac{\gamma}{q}}[f]}(\cdot,y)
\left|\cdot-y\right|^{\gamma-n}\,dy\right]^\frac{1}{q}
\right\|_{L^p(\mathbb{R}^n)}\sim
\left\|\,|\nabla f|\,\right\|_{L^p(\mathbb{R}^n)},
\end{align*}
where the positive equivalence constants are independent of $f$
and where $\kappa(q,n)$ and $E_{\lambda,\frac{\gamma}{q}}[f]$
with $\lambda\in(0,\infty)$ are the same as, respectively, in
\eqref{kappaqn} and \eqref{Elambda}.
\item[\textup{(ii)}]
If $\gamma\in(0,\infty)$,
then, for any $f\in\dot{W}^{1,p}(\mathbb{R}^n)$,
\begin{align*}
\lim_{\lambda\to\infty}
\lambda\left\|\left[\int_{\mathbb{R}^n}
\mathbf{1}_{E_{\lambda,\frac{\gamma}{q}}[f]}(\cdot,y)
\left|\cdot-y\right|^{\gamma-n}\,dy
\right]^\frac{1}{q}\right\|_{L^p(\mathbb{R}^n)}
=\left[\frac{\kappa(q,n)}{\gamma}\right]^\frac{1}{q}
\left\|\,\left|\nabla f\right|\,\right\|_{L^p(\mathbb{R}^n)}.
\end{align*}
\item[\textup{(iii)}]
If $\gamma\in(-\infty,0)$,
assume further that both $n\in\mathbb{N}\cap[2,\infty)$
and $q\in(0,\frac{n-\gamma}{n}p)$
or that $n=1$, $q\in(0,-p\gamma)$,
and $\gamma\in(-\infty,-1)$.
Then, for any $f\in\dot{W}^{1,p}(\mathbb{R}^n)$,
\begin{align*}
\lim_{\lambda\to0^+}
\lambda
\left\|\left[\int_{\mathbb{R}^n}
\mathbf{1}_{E_{\lambda,\frac{\gamma}{q}}[f]}(\cdot,y)
\left|\cdot-y\right|^{\gamma-n}\,dy
\right]^\frac{1}{q}\right\|_{L^p(\mathbb{R}^n)}
=\left[-\frac{\kappa(q,n)}{\gamma}\right]^\frac{1}{q}
\left\|\,\left|\nabla f\right|\,\right\|_{L^p(\mathbb{R}^n)}.
\end{align*}
\end{enumerate}
\end{corollary}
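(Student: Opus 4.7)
The proof is essentially a verification exercise: we must show that $X := L^p(\mathbb{R}^n)$ with $p \in [1,\infty)$ fits into the framework of Theorem~\ref{2117}, and then each of the three conclusions (i)--(iii) of Corollary~\ref{2120} follows by directly applying the corresponding conclusion of that theorem. My plan is therefore to verify the blanket hypotheses first, then match each of the four parameter regimes (a)--(d) of the corollary with one of the four sufficient conditions (a)--(d) of Theorem~\ref{2117}.

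For the blanket hypotheses, I would note that, for any $p\in[1,\infty)$, the space $L^p(\mathbb{R}^n)$ is a ball Banach function space, its $p$-convexification $X^{1/p}=L^1(\mathbb{R}^n)$ is again a ball Banach function space, the Hardy--Littlewood maximal operator $\mathcal{M}$ is trivially bounded on $(X^{1/p})'=L^\infty(\mathbb{R}^n)$, and $L^p(\mathbb{R}^n)$ has an absolutely continuous norm. These observations dispose of every hypothesis of Theorem~\ref{2117} that is common to all four cases. What remains is to check, in each of the four cases (a)--(d) of the corollary, that at least one of the alternatives (a)--(d) in Theorem~\ref{2117} holds.

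For case (a) of the corollary, where $p\in(n,\infty)$, $\gamma\in\mathbb{R}\setminus\{0\}$, and $q\in(0,\infty)$, I would invoke alternative (b) of Theorem~\ref{2117}: choose any $r\in(n,p)$, so that $X^{1/r}=L^{p/r}(\mathbb{R}^n)$ with $p/r\in(1,\infty)$, and the Hardy--Littlewood maximal operator is bounded on this space by the classical $L^{p/r}$-boundedness of $\mathcal{M}$. For case (b) of the corollary, where $p\in[1,n]$, $\gamma\in(0,\infty)$, and $n(\frac{1}{p}-\frac{1}{q})<1$, the condition already reads verbatim as alternative (c) of Theorem~\ref{2117}, so no additional work is needed. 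For case (c) of the corollary, where $p\in(1,n]$, $\gamma\in(-\infty,0)$, and $q\in(0,p]$, I would apply alternative (a) of Theorem~\ref{2117}: the Hardy--Littlewood maximal operator is bounded on $L^p(\mathbb{R}^n)$ whenever $p\in(1,\infty)$. Finally, case (d) of the corollary, with $p=q=n=1$ and $\gamma\in(-\infty,-1)$, matches alternative (d) of Theorem~\ref{2117} directly (observing that $q=1\in[1,p]=[1,1]$).

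With the four regimes matched, conclusion (i) of Corollary~\ref{2120} is immediate from conclusion (i) of Theorem~\ref{2117}, and likewise for (ii) under $\gamma\in(0,\infty)$. For conclusion (iii), the additional assumption either $n\in\mathbb{N}\cap[2,\infty)$ with $q\in(0,\frac{n-\gamma}{n}p)$ or $n=1$ with $q\in(0,-p\gamma)$ and $\gamma\in(-\infty,-1)$ is carried over verbatim from Theorem~\ref{2117}(iii), so no further verification is required. There is no real obstacle here; the only point that requires a moment's thought is the choice of the auxiliary exponent $r\in(n,p)$ in case (a), which uses precisely the gap afforded by the hypothesis $p>n$. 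Consequently, the entire proof reduces to the above case distinction together with a reference to Theorem~\ref{2117}.
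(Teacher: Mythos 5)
Your proposal is correct and follows essentially the same route as the paper: verify the blanket hypotheses of Theorem~\ref{2117} for $X=L^p(\mathbb{R}^n)$ (noting $X^{1/p}=L^1(\mathbb{R}^n)$, boundedness of $\mathcal{M}$ on $L^\infty(\mathbb{R}^n)$, and absolute continuity of the $L^p$-norm), and then match the four cases (a)--(d) of the corollary to the four alternatives (b), (c), (a), (d) of Theorem~\ref{2117}, respectively, exactly as the paper does.
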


\begin{proof}
Let $X:=L^p(\mathbb{R}^n)$.
Notice that $X^\frac{1}{p}=L^1(\mathbb{R}^n)$
which is obviously a ball Banach function space
and it is well known that the Hardy--Littlewood
maximal operator $\mathcal{M}$ is bounded
on $(X^\frac{1}{p})'=L^\infty(\mathbb{R}^n)$.
When $p\in[1,\infty)$, it is also easy to show that $X=L^p(\mathbb{R}^n)$
has an absolutely continuous norm.
Next, we consider the following four cases on $p$, $\gamma$, and $q$.

\emph{Case 1)}
$p\in(n,\infty)$, $\gamma\in\mathbb{R}\setminus\{0\}$,
and $q\in(0,\infty)$. In this case,
it is easy to find that there exists an $r\in(n,p)$
such that $\mathcal{M}$ is bounded
on $X^\frac{1}{r}=L^\frac{p}{r}(\mathbb{R}^n)$.
Thus, Theorem~\ref{2117}(b) with $X:=L^p(\mathbb{R}^n)$
is satisfied. By this and Theorem~\ref{2117},
we conclude that the present corollary
in this case holds true.

\emph{Case 2)}
$p\in[1,n]$, $\gamma\in(0,\infty)$,
and $q\in(0,\infty)$ satisfies $n(\frac{1}{p}-\frac{1}{q})<1$.
In this case, we find that Theorem~\ref{2117}(c)
with $X:=L^p(\mathbb{R}^n)$
is satisfied. From this and Theorem~\ref{2117},
we deduce that the present corollary
in this case holds true.

\emph{Case 3)}
$p\in(1,n]$, $\gamma\in(-\infty,0)$,
and $q\in(0,p]$. In this case, it is easy to see that
$\mathcal{M}$ is bounded on $X=L^p(\mathbb{R}^n)$
and hence Theorem~\ref{2117}(a)
with $X:=L^p(\mathbb{R}^n)$
is satisfied. By this and Theorem~\ref{2117},
we conclude that the present corollary
in this case holds true.

\emph{Case 4)}
$p=q=n=1$ and $\gamma\in(-\infty,-1)$.
In this case, we find that Theorem~\ref{2117}(d)
with $X:=L^1(\mathbb{R}^n)$
is satisfied. From this and Theorem~\ref{2117},
we deduce that the present corollary
in this case holds true.

This finishes the proof of Corollary~\ref{2120}.
\end{proof}

\begin{remark}
\begin{enumerate}
\item[(i)]
Corollary~\ref{2120} when
both $p=q\in[1,\infty)$ and $\gamma\in(0,\infty)$
or when both $p=q\in(1,\infty)$ and $\gamma\in(-\infty,0)$
coincides with \cite[both (a) and (b) of Theorem~1.1,
Theorem~1.3(i), and Theorem~1.4(i)]{bsvy.arxiv}
in the same case.
\item[(ii)]
Corollary~\ref{2120} when
both $p=q=n=1$ and $\gamma\in(-\infty,-1)$
is a part of \cite[Theorems~1.1(b) and~1.4(i)]{bsvy.arxiv}.
\item[(iii)]
Both (i) and (ii) of Corollary~\ref{2120} when
$p\in(n,\infty)$, $\gamma\in\mathbb{R}\setminus\{0\}$,
and $q\in(0,\infty)\setminus\{p\}$,
or when
$p\in[1,n]$, $\gamma\in(0,\infty)$,
and $q\in(0,\infty)$ with both
$n(\frac{1}{p}-\frac{1}{q})<1$ and $q\neq p$,
or when
$p\in(1,n]$, $\gamma\in(-\infty,0)$,
and $q\in(0,p)$
are new.
In particular, in the case $\gamma\in(-\infty,0)$,
Corollary~\ref{2120}(iii)
when $n\in\mathbb{N}\cap[2,\infty)$,
$p\in(n,\infty)$, and $q\in(0,\frac{n-\gamma}{n}p)$
with $q\neq p$,
or when $n\in\mathbb{N}\cap[2,\infty)$,
$p\in(1,n)$, and $q\in(0,p)$ is new.
\end{enumerate}
\end{remark}

Notice that, in Theorem~\ref{2117},
we assume that the Hardy--Littlewood maximal operator
is bounded on $(X^\frac{1}{p})'$ for some $p\in[1,\infty)$.
However, in certain critical cases of some specific function
spaces
(for example, the Morrey space $M_1^\alpha(\mathbb{R}^n)$
with $\alpha\in[1,\infty)$,
the variable Lebesgue space $L^{r(\cdot)}(\mathbb{R}^n)$ with
$\mathop\mathrm{\,ess\,inf\,}_{x\in\mathbb{R}^n}r(x)=1$,
and the Orlicz space $L^{\Phi}(\mathbb{R}^n)$
with lower type $r^-_{\Phi}=1$;
see the definitions of these spaces in Section~\ref{S5} below),
this assumption is not true even if $p=1$.
To establish \eqref{2213} in this case,
we begin with a density lemma.
Recall that, for any given $r\in(0,\infty)$,
the \emph{centered ball average operator}
$\mathcal{B}_r$ is defined by setting,
for any $f\in L_{{\mathrm{loc}}}^1(\mathbb{R}^n)$ and $x\in\mathbb{R}^n$,
\begin{align*}
\mathcal{B}_r(f)(x):=\frac{1}{|B(x,r)|}\int_{B(x,r)}\left|f(y)\right|\,dy.
\end{align*}
The following density lemma is just \cite[Theorem~2.6]{dlyyz.arxiv}.

\begin{lemma}\label{1516}
Let $X$ be a ball Banach function space.
Assume that $X$ has an absolutely continuous norm and that
the centered ball average operators $\{\mathcal{B}_r\}_{r\in(0,\infty)}$
are uniformly bounded on $X$.
Then, for any $f\in\dot{W}^{1,X}(\mathbb{R}^n)$, there exists a sequence
$\{f_k\}_{k\in\mathbb{N}}\subset C^\infty(\mathbb{R}^n)$ with
$|\nabla f_k|\in C_{\mathrm{c}}(\mathbb{R}^n)$
for any $k\in\mathbb{N}$ such that \eqref{dense} holds true
for any $R\in(0,\infty)$.
\end{lemma}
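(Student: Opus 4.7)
The plan is to build $f_k$ by a standard two-step mollification-plus-truncation procedure, with the two hypotheses on $X$ (absolute continuity of the norm and uniform boundedness of the ball-average operators) used precisely to replace the roles played by $L^p$-specific tools in the classical case. Let $\eta\in C_{\mathrm{c}}^\infty(B(\mathbf 0,1))$ be a standard mollifier with $\eta\ge0$ and $\int\eta=1$, and set $\eta_\epsilon(x):=\epsilon^{-n}\eta(x/\epsilon)$. For any $f\in\dot W^{1,X}(\mathbb{R}^n)$, the convolution $\widetilde f_\epsilon:=f*\eta_\epsilon$ lies in $C^\infty(\mathbb R^n)$ and satisfies $\nabla\widetilde f_\epsilon=(\nabla f)*\eta_\epsilon$. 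The pointwise inequality
\begin{equation*}
\left|(\nabla f)*\eta_\epsilon(x)\right|\le\|\eta\|_{L^\infty(\mathbb R^n)}\,\mathcal B_\epsilon(|\nabla f|)(x)
\end{equation*}
combined with the assumed uniform boundedness of $\{\mathcal B_r\}_{r\in(0,\infty)}$ on $X$ yields $\|\nabla\widetilde f_\epsilon\|_X\lesssim\|\,|\nabla f|\,\|_X$ uniformly in $\epsilon$. I would then upgrade this uniform bound to $\|\nabla\widetilde f_\epsilon-\nabla f\|_X\to0$ as $\epsilon\to0^+$ by approximating $|\nabla f|$ in $X$-norm by a bounded function with compact support (this is where the absolute continuity of the norm enters, via $\|\,|\nabla f|\mathbf 1_{E_j}\|_X\to0$ whenever $\mathbf 1_{E_j}\to0$ a.e.), applying the uniform $\mathcal B_r$-bound to the tail, and handling the bounded compactly supported piece by elementary mollification estimates.

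For the truncation, fix cutoffs $\psi_k\in C_{\mathrm{c}}^\infty(\mathbb R^n)$ with $\psi_k\equiv1$ on $B(\mathbf 0,k)$, $\mathrm{supp}(\psi_k)\subset B(\mathbf 0,2k)$, and $|\nabla\psi_k|\le 2/k$, and choose $\epsilon_k\to0^+$ so rapidly that $\|\nabla(\widetilde f_{\epsilon_k}-f)\|_X\le 1/k$ and $\mathrm{supp}(\widetilde f_{\epsilon_k}*\eta_{\epsilon_k})$ lies within an $\epsilon_k$-neighborhood of $\mathrm{supp}(f)$. Since $|\nabla f|\in X$ and $\mathbf 1_{B(\mathbf 0,R)}\in X$ for every $R\in(0,\infty)$ by Definition~\ref{1659}(iv), one sees that $f\in L^1_{\mathrm{loc}}(\mathbb R^n)$ modulo constants (this is a standard consequence of the Poincar\'e inequality applied on balls, cf.\ Lemma~\ref{Poin}); normalize $f$ so that $f_{B(\mathbf 0,1)}=0$. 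Define
\begin{equation*}
f_k:=\psi_k\,\widetilde f_{\epsilon_k},
\end{equation*}
so that $f_k\in C^\infty(\mathbb R^n)$ and $|\nabla f_k|\in C_{\mathrm{c}}(\mathbb R^n)$.

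It remains to verify the two convergences in \eqref{dense}. Writing $\nabla f_k=\psi_k\nabla\widetilde f_{\epsilon_k}+\widetilde f_{\epsilon_k}\nabla\psi_k$, the first summand tends to $\nabla f$ in $X$ by Step 1 together with the fact that $\|\nabla f\cdot(1-\psi_k)\|_X\to0$ by absolute continuity of the norm (since $1-\psi_k\to0$ a.e.). The hard part will be the cutoff error $\widetilde f_{\epsilon_k}\nabla\psi_k$, which is supported in the annulus $A_k:=B(\mathbf 0,2k)\setminus B(\mathbf 0,k)$; here I would apply a Poincar\'e-type bound on $B(\mathbf 0,2k)$ (controlling $\widetilde f_{\epsilon_k}$ by $k$ times a maximal-function-type average of $|\nabla f|$), and then exploit $|\nabla\psi_k|\lesssim 1/k$ to cancel the factor of $k$, leaving a quantity of the form $\mathcal M(|\nabla f|)\mathbf 1_{A_k}$ whose $X$-norm tends to zero by absolute continuity. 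The local estimate $\|(f-f_k)\mathbf 1_{B(\mathbf 0,R)}\|_X\to0$ is then automatic because $\psi_k\equiv1$ on $B(\mathbf 0,R)$ once $k\ge R$, reducing the claim to $\|(\widetilde f_{\epsilon_k}-f)\mathbf 1_{B(\mathbf 0,R)}\|_X\to0$, which follows from the $\mathcal B_r$-bound and absolute continuity applied to the locally integrable difference. The principal technical obstacle is thus the Poincar\'e-type control of $\widetilde f_{\epsilon_k}$ on annuli in a norm without an explicit formula; I would resolve this by running the extrapolation scheme of Lemma~\ref{4.5} to reduce the estimate to an $A_1$-weighted $L^1$ Poincar\'e inequality on balls, which is classical.
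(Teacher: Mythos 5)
The global structure you propose (mollify, then cut off) is the right one, and you correctly identify the cutoff-error term $\widetilde f_{\epsilon_k}\nabla\psi_k$ as the crux. But your proposed resolution of that term uses tools this lemma is specifically designed to avoid, and this creates a genuine gap. You want to bound $|\widetilde f_{\epsilon_k}|$ on the annulus $A_k$ by $k\,\mathcal M(|\nabla f|)$ and then invoke absolute continuity for $\|\mathcal M(|\nabla f|)\mathbf 1_{A_k}\|_X\to0$. This presupposes $\mathcal M(|\nabla f|)\in X$, which is unavailable: the hypotheses here are only absolute continuity and the uniform boundedness of the centered ball average operators $\{\mathcal B_r\}_{r\in(0,\infty)}$, \emph{not} boundedness of $\mathcal M$ on $X$. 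The fallback to ``the extrapolation scheme of Lemma~\ref{4.5}'' also fails, since Lemma~\ref{4.5} explicitly assumes $\mathcal M$ is bounded on $X$, and the dual extrapolation (Lemma~\ref{4.6}) assumes $\mathcal M$ bounded on $(X^{1/p})'$. Neither is assumed in Lemma~\ref{1516}; indeed the entire point of Theorem~\ref{4.8}, which uses this lemma, is to cover critical cases such as $X=L^1(\mathbb{R}^n)$, $M_1^\alpha(\mathbb{R}^n)$, and $L^{r(\cdot)}$ with $\widetilde r_-=1$, precisely where those maximal-operator hypotheses fail.

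The deeper issue is that your normalization $f_{B(\mathbf 0,1)}=0$ forces the Poincar\'e chain from $B(\mathbf 0,1)$ out to $A_k$ to traverse all intermediate scales, which is exactly what produces the maximal function and makes the estimate non-degenerate even in the benchmark case $X=L^1(\mathbb{R}^n)$. The remedy is to subtract the average over (a thickening of) the annulus instead: define $f_k:=\psi_k\bigl(\widetilde f_{\epsilon_k}-c_k\bigr)+c_k$ with $c_k:=(\widetilde f_{\epsilon_k})_{A_k'}$ for a suitable thickened annulus $A_k'\supset A_k$. Then $\nabla f_k=\psi_k\nabla\widetilde f_{\epsilon_k}+(\widetilde f_{\epsilon_k}-c_k)\nabla\psi_k$, and the Poincar\'e inequality on $A_k'$ controls $(\widetilde f_{\epsilon_k}-c_k)\mathbf 1_{A_k}/k$ by averages of $|\nabla\widetilde f_{\epsilon_k}|\mathbf 1_{A_k'}$ at scales $\lesssim k$, hence, via $|\nabla\widetilde f_{\epsilon_k}|\lesssim\mathcal B_{\epsilon_k}(|\nabla f|)$ and the uniform $\mathcal B_r$-bound, by $\|\,|\nabla f|\mathbf 1_{A_k'}\|_X$, which tends to zero by absolute continuity because $\mathbf 1_{A_k'}\to0$ a.e. This keeps the estimate entirely within the two assumed hypotheses, with no appeal to $\mathcal M$ or to extrapolation. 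The addition of the constant $c_k$ does not disturb local convergence since $f_k\equiv\widetilde f_{\epsilon_k}$ on $B(\mathbf 0,k)$. Finally, your Step 1 argument (mollification converges in $X$) is essentially sound but quietly uses a dominated convergence theorem in $X$ for the bounded compactly supported piece; this does follow from absolute continuity of the norm (cf.\ the argument of \cite[Proposition~3.6]{bs1988}), but it should be stated.
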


The following conclusion is
an alternative to Theorem~\ref{2117},
which proves \eqref{2213} without assuming the boundedness of
the Hardy--Littlewood maximal operator on $X'$.

\begin{theorem}\label{4.8}
Let $X$ be a ball Banach function space.
Assume that $X$ has an absolutely continuous norm and that
centered ball average operators $\{\mathcal{B}_r\}_{r\in(0,\infty)}$
are uniformly bounded on $X$.
Assume that there exists a sequence
$\{\theta_m\}_{m\in\mathbb{N}}\subset(0,1)$
satisfying $\lim_{m\to\infty}\theta_m=1$ such that,
for any $m\in\mathbb{N}$,
$X^\frac{1}{\theta_m}$ is a ball Banach function space,
the Hardy--Littlewood maximal operator $\mathcal{M}$
is bounded on $(X^\frac{1}{\theta_m})'$ with its operator norm
denoted by $\|\mathcal{M}\|_{(X^\frac{1}{\theta_m})'
\to(X^\frac{1}{\theta_m})'}$,
and
\begin{align}\label{2114}
\lim_{m\to\infty}\left\|\mathcal{M}\right\|_{(X^\frac{1}{\theta_m})'
\to(X^\frac{1}{\theta_m})'}<\infty.
\end{align}
Assume further that one of the following statements hold true:
\begin{enumerate}
\item[\rm(a)]
$\gamma\in\mathbb{R}\setminus\{0\}$, $q\in(0,1]$,
and $\mathcal{M}$ is bounded on $X$.
\item[\rm(b)]
$\gamma\in\mathbb{R}\setminus\{0\}$, $q\in(0,\infty)$,
and there exists an $r\in(n,\infty)$ such that
$\mathcal{M}$ is bounded on $X^\frac{1}{r}$.
\item[\rm(c)]
$\gamma\in(0,\infty)$ and $q\in(0,\infty)$ satisfies
$n(1-\frac{1}{q})<1$.
\item[\rm(d)]
$\gamma\in(-\infty,-1)$ and $n=q=1$.
\end{enumerate}
Then the following statements hold true.
\begin{enumerate}
\item[\textup{(i)}]
Then there exists a positive constant $C$ such that,
for any $f\in\dot{W}^{1,X}(\mathbb{R}^n)$,
\begin{align*}
\left[\frac{\kappa(q,n)}{|\gamma|}\right]^{\frac{1}{q}}
\left\|\,|\nabla f|\,\right\|_X
&\leq\sup_{\lambda\in(0,\infty)}\lambda
\left\|\left[\int_{\mathbb{R}^n}
\mathbf{1}_{E_{\lambda,\frac{\gamma}{q}}[f]}(\cdot,y)
\left|\cdot-y\right|^{\gamma-n}\,dy\right]^\frac{1}{q}\right\|_X\\
&\leq C\left\|\,|\nabla f|\,\right\|_X,
\end{align*}
where $\kappa(q,n)$ and $E_{\lambda,\frac{\gamma}{q}}[f]$
with $\lambda\in(0,\infty)$ are the same as, respectively, in
\eqref{kappaqn} and \eqref{Elambda}.
\item[\textup{(ii)}]
If $\gamma\in(0,\infty)$,
then, for any $f\in\dot{W}^{1,X}(\mathbb{R}^n)$,
\begin{align*}
\lim_{\lambda\to\infty}
\lambda\left\|\left[\int_{\mathbb{R}^n}
\mathbf{1}_{E_{\lambda,\frac{\gamma}{q}}[f]}(\cdot,y)
\left|\cdot-y\right|^{\gamma-n}\,dy\right]^\frac{1}{q}\right\|_X
=\left[\frac{\kappa(q,n)}{\gamma}\right]^\frac{1}{q}
\left\|\,\left|\nabla f\right|\,\right\|_{X}.
\end{align*}
\item[\textup{(iii)}]
If $\gamma\in(-\infty,0)$,
assume further that both $n\in\mathbb{N}\cap[2,\infty)$
and $q\in(0,\frac{n-\gamma}{n}p)$
or that $n=1$, $q\in(0,-p\gamma)$,
and $\gamma\in(-\infty,-1)$.
Then, for any $f\in\dot{W}^{1,X}(\mathbb{R}^n)$,
\begin{align*}
\lim_{\lambda\to0^+}
\lambda
\left\|\left[\int_{\mathbb{R}^n}
\mathbf{1}_{E_{\lambda,\frac{\gamma}{q}}[f]}(\cdot,y)
\left|\cdot-y\right|^{\gamma-n}\,dy\right]^\frac{1}{q}\right\|_X
=\left[-\frac{\kappa(q,n)}{\gamma}\right]^\frac{1}{q}
\left\|\,\left|\nabla f\right|\,\right\|_{X}.
\end{align*}
\end{enumerate}
\end{theorem}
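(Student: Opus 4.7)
The proof parallels that of Theorem~\ref{2117}, with the crucial twist that the unavailable hypothesis ``$\mathcal{M}$ bounded on $X'$'' is replaced by its approximate counterpart along $\theta_m\to 1^-$. I would organize the argument in three stages.

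\emph{Stage 1 (upper estimate in $X^{1/\theta_m}$-norm).}
Fix $f\in C^1(\mathbb{R}^n)$ with $|\nabla f|\in C_{\mathrm{c}}(\mathbb{R}^n)$. Under case~(b), Theorem~\ref{2001} applies verbatim to $X$ and directly yields the required $X$-norm bound. Under cases~(a), (c), (d), I apply respectively Theorem~\ref{upperBound}, Theorem~\ref{gamma>0}, or Theorem~\ref{n=1} with $X$ replaced by $Y_m:=X^{1/\theta_m}$ and with parameter $p=1$. For $Y_m$ the hypotheses unfold as: $Y_m^{1/1}=Y_m$ is a ball Banach function space (given); $\mathcal{M}$ is bounded on $Y_m'=(X^{1/\theta_m})'$ (given); and, in case~(a) only, $\mathcal{M}$ is bounded on $Y_m$, which follows from the Jensen inequality $\mathcal{M}(f)^{1/\theta_m}\le\mathcal{M}(|f|^{1/\theta_m})$ giving $\|\mathcal{M}\|_{Y_m\to Y_m}\le\|\mathcal{M}\|_{X\to X}^{\theta_m}$. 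The $q$-conditions in (a), (c), (d) coincide with the $p=1$ instances of the $q$-conditions in those theorems. Combined with~\eqref{2114}, the resulting constant is independent of $m$, and I obtain, for every $m\in\mathbb{N}$,
\begin{equation*}
\sup_{\lambda\in(0,\infty)}\lambda\left\|\left[\int_{\mathbb{R}^n}\mathbf{1}_{E_{\lambda,\frac{\gamma}{q}}[f]}(\cdot,y)|\cdot-y|^{\gamma-n}\,dy\right]^{1/q}\right\|_{X^{1/\theta_m}}\le C\left\|\,|\nabla f|\,\right\|_{X^{1/\theta_m}}.
\end{equation*}

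\emph{Stage 2 (passage from $X^{1/\theta_m}$ to $X$).}
Writing $h_\lambda(x):=[\int_{\mathbb{R}^n}\mathbf{1}_{E_{\lambda,\frac{\gamma}{q}}[f]}(x,y)|x-y|^{\gamma-n}\,dy]^{1/q}$ and unfolding via $\|g\|_{X^{1/\theta_m}}=\||g|^{1/\theta_m}\|_X^{\theta_m}$, the estimate of Stage~1 reads $\|h_\lambda^{1/\theta_m}\|_X\le C^{1/\theta_m}\lambda^{-1/\theta_m}\||\nabla f|^{1/\theta_m}\|_X$ for every $m$. Since $h_\lambda^{1/\theta_m}\to h_\lambda$ pointwise as $\theta_m\to 1^-$, Lemma~\ref{FatouX} gives $\|h_\lambda\|_X\le\liminf_m\|h_\lambda^{1/\theta_m}\|_X$. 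On the other hand, $|\nabla f|$ is bounded with compact support, so the family $\{|\nabla f|^{1/\theta_m}\}_m$ is dominated by a fixed bounded, compactly supported function that lies in $X$ by Definition~\ref{1659}(iv); the absolute continuity of the $X$-norm then yields, via dominated convergence in $X$, the limit $\||\nabla f|^{1/\theta_m}\|_X\to\||\nabla f|\|_X$. Passing to the liminf on the right-hand side produces $\lambda\|h_\lambda\|_X\le C\||\nabla f|\|_X$ for every $\lambda$, and the supremum over $\lambda$ delivers the upper estimate in $X$-norm for such $f$.

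\emph{Stage 3 (density extension and limits).}
Lemma~\ref{1516} supplies, for any $f\in\dot{W}^{1,X}(\mathbb{R}^n)$, a sequence $\{f_k\}\subset C^\infty(\mathbb{R}^n)$ with $|\nabla f_k|\in C_{\mathrm{c}}(\mathbb{R}^n)$ satisfying~\eqref{dense}. Replaying verbatim the truncation-to-$\mathcal{F}_N$ argument from the proof of Theorem~\ref{2117} (with Lemma~\ref{1516} replacing Lemma~\ref{828}) extends the upper estimate to any $f\in\dot{W}^{1,X}(\mathbb{R}^n)$, completing~(i). The lower estimate in~(i) is obtained from Theorem~\ref{LimFormulaInf} (which needs only the ball quasi-Banach structure) together with the same density argument. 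Parts~(ii) and~(iii) follow by the same two-stage strategy applied to the upper bounds on the $\limsup$ in $C^1$ given by Theorems~\ref{LimFormulaSup} and~\ref{n1}, combined once more with the density argument. The principal obstacle throughout is Stage~2: one must ensure that the $1/\theta_m$-exponents collapse to~$1$ in the limit without producing a multiplicative loss, and it is precisely the absolute continuity of the $X$-norm (together with the uniform bound~\eqref{2114}) that permits this collapse.
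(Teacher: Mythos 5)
Your Stages 1 and 2 are correct and essentially match the paper's strategy; in particular, your observation that in case~(a) the boundedness of $\mathcal{M}$ on $X$ transfers to $Y_m:=X^{1/\theta_m}$ via Jensen (so that $\|\mathcal{M}\|_{Y_m\to Y_m}\leq\|\mathcal{M}\|_{X\to X}^{\theta_m}$) usefully fills in a detail the paper's stated proof glosses over, since the paper only explicitly invokes Theorems~\ref{gamma>0} and~\ref{n=1}, which cover cases~(c) and~(d). The passage from the $X^{1/\theta_m}$-estimate to the $X$-estimate via Lemma~\ref{FatouX} and the collapse of the $1/\theta_m$-exponents is exactly what the paper does.

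However, Stage~3 for part~(iii) contains a genuine gap. You propose to ``apply the same two-stage strategy to the upper bounds on the $\limsup$'' given by Theorems~\ref{LimFormulaSup} and~\ref{n1}. The Fatou step in your Stage~2 gives, for each fixed $\lambda$, the pointwise bound $\lambda\|h_\lambda\|_X\leq\liminf_m\lambda\|h_\lambda^{1/\theta_m}\|_X$. This is compatible with taking $\sup_\lambda$ afterward, but it is \emph{not} compatible with taking $\limsup_{\lambda\to 0^+}$: the required interchange $\limsup_\lambda\liminf_m a_{m,\lambda}\leq\liminf_m\limsup_\lambda a_{m,\lambda}$ is false in general (take $a_{m,\lambda}=\mathbf{1}_{\{\lambda\geq 1/m\}}$ with $\lambda\to 0^+$; the left-hand side is $1$ while the right-hand side is $0$). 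There is also a scaling problem: applying Theorem~\ref{LimFormulaSup}(ii) (or Theorem~\ref{n1}) to $Y_m$ and unfolding gives an upper bound on $\limsup_\lambda\lambda^{1/\theta_m}\|h_\lambda^{1/\theta_m}\|_X$, whereas the Fatou step needs control of $\lambda\|h_\lambda^{1/\theta_m}\|_X$; since $\lambda^{1-1/\theta_m}\to\infty$ as $\lambda\to 0^+$, the $Y_m$-bound is too weak to be transferred. The paper avoids both obstacles by a different route: it isolates the single place where ``$\mathcal{M}$ bounded on $(X^{1/p})'$'' is used inside the proof of Theorem~\ref{LimFormulaSup}(ii), namely the estimate \eqref{yxst} (and \eqref{1650} when $n=1$), re-establishes that estimate directly in the $X$-norm by running the $\theta_m$-and-Fatou machinery on it, and then replays the rest of Theorem~\ref{LimFormulaSup}(ii)'s proof over $X$. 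For part~(ii) the two-stage detour is unnecessary (and would face the same $\limsup$ issue): Theorem~\ref{LimFormulaSup}(i) holds for every ball Banach function space with no maximal-operator hypothesis, so it applies to $X$ directly.

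A minor remark: in Stage~2 you invoke dominated convergence in $X$ (hence absolute continuity) to conclude $\||\nabla f|^{1/\theta_m}\|_X\to\||\nabla f|\|_X$. This is correct but avoidable: with $M:=\||\nabla f|\|_{L^\infty(\mathbb{R}^n)}$ one has the pointwise domination $|\nabla f|^{1/\theta_m}\leq\max(1,M)^{1/\theta_m-1}|\nabla f|$, whence $\limsup_m\||\nabla f|^{1/\theta_m}\|_X\leq\||\nabla f|\|_X$, which is all the Fatou step needs. This matters because, as Remark~\ref{1409}(i) records, the $C^1$ case of the theorem does not require $X$ to have an absolutely continuous norm; your version of Stage~2 would forfeit that observation.
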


\begin{proof}
We first claim that (i), (ii), and (iii) hold true for
any $f\in C^1(\mathbb{R}^n)$
with $|\nabla f|\in C_{\mathrm{c}}(\mathbb{R}^n)$.
We first show the upper estimate in (i) for such an $f$.
To this end,
by Theorems~\ref{gamma>0} and~\ref{n=1} with both $p:=1$
and $X$ replaced by $X^\frac{1}{\theta_m}$ with $m\in\mathbb{N}$
and by the assumptions that, for any $m\in\mathbb{N}$,
$X^\frac{1}{\theta_m}$ is a ball Banach function space and that
$\mathcal{M}$ is bounded on $(X^\frac{1}{\theta_m})'$,
we conclude that, for any $m\in\mathbb{N}$,
\begin{align*}
\sup_{\lambda\in(0,\infty)}\lambda
\left\|\left[\int_{\mathbb{R}^n}
\mathbf{1}_{E_{\lambda,\frac{\gamma}{q}}[f]}(\cdot,y)
\left|\cdot-y\right|^{\gamma-n}\,dy
\right]^\frac{1}{q}\right\|_{X^\frac{1}{\theta_m}}
\leq C_m\left\|\,|\nabla f|\,\right\|_{X^\frac{1}{\theta_m}},
\end{align*}
where the positive constant $C_m$
is continuous with respect to
$\|\mathcal{M}\|_{(X^\frac{1}{\theta_m})'\to(X^\frac{1}{\theta_m})'}$
and increases as
$\|\mathcal{M}\|_{(X^\frac{1}{\theta_m})'\to(X^\frac{1}{\theta_m})'}$ increases,
which further implies that, for any $\lambda\in(0,\infty)$,
\begin{align*}
\left\|\left[\int_{\mathbb{R}^n}
\mathbf{1}_{E_{\lambda,\frac{\gamma}{q}}[f]}(\cdot,y)
\left|\cdot-y\right|^{\gamma-n}\,dy\right]^\frac{1}{q\theta_m}\right\|_{X}
\leq C_m^\frac{1}{\theta_m}\lambda^{-\frac{1}{\theta_m}}\left\|\left|\nabla f\right|^{
\frac{1}{\theta_m}}\right\|_{X}.
\end{align*}
From this, Lemma~\ref{FatouX}, \eqref{2114},
and the Lebesgue dominated convergence theorem,
we infer that
\begin{align*}
&\sup_{\lambda\in(0,\infty)}\lambda
\left\|\left[\int_{\mathbb{R}^n}
\mathbf{1}_{E_{\lambda,\frac{\gamma}{q}}[f]}(\cdot,y)
\left|\cdot-y\right|^{\gamma-n}\,dy\right]^\frac{1}{q}\right\|_X\\
&\quad\leq\sup_{\lambda\in(0,\infty)}\lambda
\liminf_{m\to\infty}
\left\|\left[\int_{\mathbb{R}^n}
\mathbf{1}_{E_{\lambda,\frac{\gamma}{q}}[f]}(\cdot,y)
\left|\cdot-y\right|^{\gamma-n}\,dy\right]^\frac{1}{q\theta_m}\right\|_X\\
&\quad\lesssim\sup_{\lambda\in(0,\infty)}\lambda
\liminf_{m\to\infty}
C_m^\frac{1}{\theta_m}\lambda^{-\frac{1}{\theta_m}}
\left\|\left|\nabla f\right|^{\frac{1}{\theta_m}}\right\|_{X}\\
&\quad=\liminf_{m\to\infty}
C_m^\frac{1}{\theta_m}\left\|\,\left|\nabla f\right|\,\right\|_{X}
\lesssim\left\|\,\left|\nabla f\right|\,\right\|_{X}.
\end{align*}
This finishes the proof of the upper bound estimate in (i) for such an $f$.
On the other hand,
by Theorem~\ref{LimFormulaInf},
we conclude that the lower estimate in (i), (ii), and (iii)
obviously hold true for such an $f$.
This, together with the upper estimate in (i),
finishes the proof of (i) for any $f\in C^1(\mathbb{R}^n)$
with $|\nabla f|\in C_{\mathrm{c}}(\mathbb{R}^n)$.
Thus, to show both (ii) and (iii) for such an $f$,
it remains to show the upper estimates in both (ii)
and (iii) for such an $f$. Notice that, from Theorem~\ref{LimFormulaSup}(i),
it is easy to infer that the upper estimate in (ii) holds true for such an $f$.
Next, we show the upper estimate in (iii) for such an $f$.
By the proof of Theorem~\ref{LimFormulaSup}(ii),
we find that, to show this, it suffices to prove \eqref{1144}
for any $f\in C^1(\mathbb{R}^n)$ with
$|\nabla f|\in C_{\mathrm{c}}(\mathbb{R}^n)$.
To this end, we consider the following two cases on $n$.

\emph{Case 1)} $n\in\mathbb{N}\cap[2,\infty)$. In this case,
from the proof of Theorem~\ref{LimFormulaSup},
it follows that
we only need to show \eqref{yxst}.
By the estimation of \eqref{1511}
with both $X:=X^\frac{1}{\theta_m}$ and $p:=1$
and by the assumptions that,
for any $m\in\mathbb{N}$, $X^\frac{1}{\theta_m}$ is a ball
Banach function space and that $\mathcal{M}$ is bounded on
$(X^\frac{1}{\theta_m})'$, we conclude that, for any $m\in\mathbb{N}$
and any ball $B_r:=B(\mathbf{0},r)$ with $r\in(0,\infty)$,
$$
\left\||\cdot|^{\frac{\gamma-n}{q}}\mathbf{1}_{
(B_r)^\complement}(\cdot)\right\|_{X^\frac{1}{\theta_m}}
\leq C_{(q,\gamma,n)}
\left\|\mathcal{M}\right\|_{(X^\frac{1}{\theta_m})'
\to(X^\frac{1}{\theta_m})'}
r^{\frac{\gamma-n}{q}}
\left\|\mathbf{1}_{B_r}\right\|_{X^\frac{1}{\theta_m}},
$$
where $C_{(p,q,\gamma,n)}$ is a positive constant depending only
on $p$, $q$, $\gamma$, and $n$. This, combined with Definition~\ref{tuhua},
further implies that,
for any ball $B_r:=B(\mathbf{0},r)$ with $r\in(0,\infty)$,
$$
\left\|\left[|\cdot|^{\frac{\gamma-n}{q}}\mathbf{1}_{
(B_r)^\complement}(\cdot)\right]^{\frac{1}{\theta_m}}\right\|_{X}
\leq\left[C_{(q,\gamma,n)}
\left\|\mathcal{M}\right\|_{(X^\frac{1}{\theta_m})'
\to(X^\frac{1}{\theta_m})'}
r^{\frac{\gamma-n}{q}}\right]^{\frac{1}{\theta_m}}
\left\|\mathbf{1}_{B_r}\right\|_{X}.
$$
From this, Lemma~\ref{FatouX}, \eqref{2114},
and Definition~\ref{1659}(iv), we deduce that,
for any given ball $B_r:=B(\mathbf{0},r)$ with $r\in(0,\infty)$,
\begin{align*}
\left\||\cdot|^{\frac{\gamma-n}{q}}\mathbf{1}_{
(B_r)^\complement}(\cdot)\right\|_{X}
&\leq\liminf_{m\to\infty}
\left\|\left[|\cdot|^{\frac{\gamma-n}{q}}\mathbf{1}_{
(B_r)^\complement}(\cdot)\right]^{\frac{1}{\theta_m}}\right\|_{X}\\
&\lesssim\liminf_{m\to\infty}
\left[\left\|\mathcal{M}\right\|_{(X^\frac{1}{\theta_m})'
\to(X^\frac{1}{\theta_m})'}
r^{\frac{\gamma-n}{q}}\right]^{\frac{1}{\theta_m}}
\left\|\mathbf{1}_{B_r}\right\|_{X}\\
&=\left[\lim_{m\to\infty}\left\|\mathcal{M}\right\|_{
(X^\frac{1}{\theta_m})'\to(X^\frac{1}{\theta_m})'}\right]
r^{\frac{\gamma-n}{q}}
\left\|\mathbf{1}_{B_r}\right\|_{X}\\
&\sim r^{\frac{\gamma-n}{q}}
\left\|\mathbf{1}_{B_r}\right\|_{X}<\infty,
\end{align*}
which completes the proof of \eqref{yxst} and hence \eqref{1144}.
This finishes the proof of the upper estimate in (iii) for such an $f$
in this case.

\emph{Case 2)} $n=1$.
In this case, to prove \eqref{1144}
for any $f\in C^1(\mathbb{R})$ with $f'\in C_{\mathrm{c}}(\mathbb{R})$,
following the proof of Theorem~\ref{n1},
it suffices to show that
\begin{align}\label{ggg}
\limsup_{\lambda\to0^+}
\lambda\left\|\left[\int_{\mathbb{R}}
\mathbf{1}_{E_{\lambda,\frac{\gamma}{q}}[f]\cap
[(-2\beta,2\beta)^\complement\times\mathbb{R}]}(\cdot,y)
\left|\cdot-y\right|^{\gamma-1}\,dy\right]^\frac{1}{q}
\right\|_X=0,
\end{align}
where $\beta\in(0,\infty)$ is such that
$\mathrm{supp\,}(f')\subset(-\beta,\beta)$.
Since $f$ is a constant on $(\beta,\infty)$
and another constant on $(-\infty,-\beta)$, it follows that,
if $(x,y)\in E_{\lambda,\frac{\gamma}{q}}[f]$ and $x\in[2\beta,\infty)$,
then $y\in(-\infty,\beta)$ and,
if $(x,y)\in E_{\lambda,\frac{\gamma}{q}}[f]$ and $x\in(-\infty,-2\beta]$,
then $y\in(-\beta,\infty)$.
By this and Definition~\ref{1659}(v), we find that
\begin{align*}
&\limsup_{\lambda\to0^+}
\lambda\left\|\left[\int_{\mathbb{R}}
\mathbf{1}_{E_{\lambda,\frac{\gamma}{q}}[f]\cap
[(-2\beta,2\beta)^\complement\times\mathbb{R}]}(\cdot,y)
\left|\cdot-y\right|^{\gamma-1}\,dy\right]^\frac{1}{q}
\right\|_X\\
&\quad=\limsup_{\lambda\to0^+}
\lambda\left\|\left(\int_{-\infty}^\beta
\left|\cdot-y\right|^{\gamma-1}\,dy\right)^\frac{1}{q}
\mathbf{1}_{[2\beta,\infty)}(\cdot)\right.\\
&\qquad\left.+\left(\int_{-\beta}^\infty
\left|\cdot-y\right|^{\gamma-1}\,dy\right)^\frac{1}{q}
\mathbf{1}_{(-\infty,-2\beta]}(\cdot)
\right\|_X\\
&\quad\leq\limsup_{\lambda\to0^+}
\frac{\lambda}{(-\gamma)^\frac{1}{q}}
\left[\left\||\cdot-\beta|^{\frac{\gamma}{q}}
\mathbf{1}_{[2\beta,\infty)}(\cdot)\right\|_X+
\left\||\cdot+\beta|^{\frac{\gamma}{q}}
\mathbf{1}_{(-\infty,-2\beta]}(\cdot)
\right\|_X\right].
\end{align*}
Thus, to prove \eqref{ggg}, it suffices to show that
\begin{align}\label{1650}
\left\||\cdot-\beta|^{\frac{\gamma}{q}}
\mathbf{1}_{[2\beta,\infty)}(\cdot)\right\|_X+
\left\||\cdot+\beta|^{\frac{\gamma}{q}}
\mathbf{1}_{(-\infty,-2\beta]}(\cdot)
\right\|_X<\infty.
\end{align}
Indeed, from an argument similar to that used in both the proof in Case 1)
in the present proof
and the estimation of \eqref{xzl},
we infer that \eqref{1650} holds true; we omit the details here.
This finishes the proof of \eqref{1144}
for any $f\in C^1(\mathbb{R})$ with $f'\in C_{\mathrm{c}}(\mathbb{R})$
in the case $n=1$, which completes the proof of
the upper estimate in (iii) for such an $f$
and hence the above claim.

Now, let $f\in\dot{W}^{1,X}(\mathbb{R}^n)$.
By Lemma~\ref{1516}, we conclude that there exists a
sequence $\{f_k\}_{k\in\mathbb{N}}\subset C^\infty(\mathbb{R}^n)$
with $|\nabla f_k|\in C_{\mathrm{c}}(\mathbb{R}^n)$
for any $k\in\mathbb{N}$ such that \eqref{dense} holds true.
From this, the above claim, and a density argument similar to
that used in the proof of Theorem~\ref{2117},
we further deduce that (i), (ii), and (iii)
hold true for any $f\in\dot{W}^{1,X}(\mathbb{R}^n)$.
This finishes the proof of Theorem~\ref{4.8}.
\end{proof}

\begin{remark}\label{1409}
\begin{enumerate}
\item[(i)]
By the claim in the proof of Theorem~\ref{4.8},
we conclude that (i), (ii), and (iii) of Theorem~\ref{4.8} still
hold true for any $f\in C^1(\mathbb{R}^n)$
with $|\nabla f|\in C_{\mathrm{c}}(\mathbb{R}^n)$
without the assumption that $X$ has an absolutely continuous norm.
\item[(ii)]
Theorem~\ref{4.8} when $\gamma=n$ coincides
with \cite[Theorem~4.10]{dlyyz.arxiv}.
\item[(iii)]
We should point out that
the lower estimates of (i), (ii), and (iii) of Theorem~\ref{4.8} do not need
the assumption \eqref{2114}.
\end{enumerate}
\end{remark}

\section{Applications to Fractional Sobolev-Type and Fractional\\
Gagliardo--Nirenberg-Type Inequalities}
\label{section4}

In this section, we use the generalized
Brezis--Van Schaftingen--Yung formulae
\eqref{2213} established in Section~\ref{S3}
to prove the fractional Sobolev-type and the
fractional Gagliardo--Nirenberg-type inequalities on
the ball Banach function space  $X$.
The following fractional Gagliardo--Nirenberg-type
inequalities involving both $\dot{W}^{1,X}(\mathbb{R}^n)$
and $X^p(\mathbb{R}^n)$ with $p\in[1,\infty)$
[or $L^\infty(\mathbb{R}^n)$ when $p=\infty$]
is one of the two main theorems
of this section.

\begin{theorem}\label{2255}
Let $X$ be a ball Banach function space,
$p\in[1,\infty]$, $\gamma\in\mathbb{R}\setminus\{0\}$,
$s\in(0,1)$, and $q\in[1,p]$ satisfy $\frac{1}{q}=\frac{1-s}{p}+s$.
Assume that there exists a sequence
$\{\theta_m\}_{m\in\mathbb{N}}\subset(0,1)$
satisfying $\lim_{m\to\infty}\theta_m=1$ such that,
for any $m\in\mathbb{N}$, $X^\frac{1}{\theta_m}$ is a ball Banach function
space and that the Hardy--Littlewood maximal operator $\mathcal{M}$ is bounded on
$(X^\frac{1}{\theta_m})'$ with \eqref{2114} holding true.
If $\gamma\in(-\infty,0)$, assume further
that $\mathcal{M}$ is bounded on $X$
or both $n=1$ and $\gamma\in(-\infty,-1)$.
Then the following statements hold true.
\begin{enumerate}
\item[\textup{(i)}]
If $p\in[1,\infty)$, then there exists a positive constant $C$ such that,
for any $f\in C^1(\mathbb{R}^n)$ with
$|\nabla f|\in C_{\mathrm{c}}(\mathbb{R}^n)$,
\begin{align}\label{2053}
\sup_{\lambda\in(0,\infty)}\lambda
\left\|\left[\int_{\mathbb{R}^n}
\mathbf{1}_{D_{\lambda,\frac{\gamma}{q},s}[f]}(\cdot,y)
\left|\cdot-y\right|^{\gamma-n}\,dy\right]^\frac{1}{q}\right\|_{X^q}
\leq C\|f\|_{X^p}^{1-s}\left\|\,\left|\nabla f\right|\,\right\|_X^s,
\end{align}
where, for any $\lambda\in(0,\infty)$,
\begin{align}\label{Dlambdagammaqs}
D_{\lambda,\frac{\gamma}{q},s}[f]
:=\left\{(x,y)\in\mathbb{R}^n\times\mathbb{R}^n:\
x\neq y,\ \frac{|f(x)-f(y)|}{|x-y|^{s+\frac{\gamma}{q}}}>\lambda\right\}.
\end{align}
Assume further that $X$ has an absolutely continuous norm,
$X^p$ is a ball Banach function space, and the centered ball average
operators $\{\mathcal{B}_r\}_{r\in(0,\infty)}$ are uniformly bounded on $X$.
Then \eqref{2053} also holds true for any $f\in\dot{W}^{1,X}(\mathbb{R}^n)$.
\item[\textup{(ii)}]
If $p=\infty$, then there exists a
positive constant $\widetilde{C}$ such that,
for any $f\in C^1(\mathbb{R}^n)$ with
$|\nabla f|\in C_{\mathrm{c}}(\mathbb{R}^n)$,
\begin{align}\label{2054}
\sup_{\lambda\in(0,\infty)}\lambda
\left\|\left[\int_{\mathbb{R}^n}
\mathbf{1}_{D_{\lambda,\frac{\gamma}{q},s}[f]}(\cdot,y)
\left|\cdot-y\right|^{\gamma-n}\,dy\right]^\frac{1}{q}\right\|_{X^q}
\leq\widetilde{C}\|f\|_{L^\infty(
\mathbb{R}^n)}^{1-s}\left\|\,\left|\nabla f\right|\,\right\|_X^s.
\end{align}
Assume further both that $X$ has an absolutely continuous norm
and that the centered ball average operators
$\{\mathcal{B}_r\}_{r\in(0,\infty)}$ are uniformly bounded on $X$.
Then \eqref{2054} also holds true
for any $f\in\dot{W}^{1,X}(\mathbb{R}^n)$.
\end{enumerate}
\end{theorem}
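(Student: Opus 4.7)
The plan is to prove the smooth case of \eqref{2053} and \eqref{2054} for $f\in C^1(\mathbb{R}^n)$ with $|\nabla f|\in C_{\mathrm{c}}(\mathbb{R}^n)$, and then to extend to $\dot{W}^{1,X}(\mathbb{R}^n)$ under the added hypotheses via the density Lemma \ref{1516} together with a $k\to\infty$ passage inside the $X^q$-norm that mirrors the upper-bound extension in the proof of Theorem \ref{2117} (splitting via Lemma \ref{1111} and invoking \eqref{dense}).

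For the smooth case of (i), the plan is to combine the Lusin--Lipschitz inequality \cite[(2.9)]{bvy2021}, namely $|f(x)-f(y)|\leq \widetilde{C}|x-y|[\mathcal{M}(|\nabla f|)(x)+\mathcal{M}(|\nabla f|)(y)]$, with the trivial bound $|f(x)-f(y)|\leq|f(x)|+|f(y)|$ through the geometric splitting $|f(x)-f(y)|=|f(x)-f(y)|^{1-s}|f(x)-f(y)|^{s}$, which yields
$$|f(x)-f(y)|\leq\widetilde{C}^{s}\bigl(|f(x)|+|f(y)|\bigr)^{1-s}|x-y|^{s}\bigl[\mathcal{M}(|\nabla f|)(x)+\mathcal{M}(|\nabla f|)(y)\bigr]^{s}.$$
Substituting this into the defining condition of $D_{\lambda,\gamma/q,s}[f]$ and invoking $(a+b)^{\alpha}\leq a^{\alpha}+b^{\alpha}$ for $\alpha\in\{s,\,1-s\}\subset(0,1]$ embeds $D_{\lambda,\gamma/q,s}[f]$ into the union of four sets of the form
$$\mathcal{E}_{i}^{(\lambda)}:=\bigl\{(x,y):\ c\lambda|x-y|^{\gamma/q}\leq |f(u_i)|^{1-s}\bigl[\mathcal{M}(|\nabla f|)(v_i)\bigr]^{s}\bigr\},\quad u_i,\,v_i\in\{x,y\},$$
with a universal constant $c\in(0,\infty)$.

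For each $\mathcal{E}_{i}^{(\lambda)}$ one estimates $\int_{\mathbb{R}^n}\mathbf{1}_{\mathcal{E}_{i}^{(\lambda)}}(x,y)|x-y|^{\gamma-n}\,dy$ along the lines of the proof of Theorem \ref{upperBound}: for the two \emph{diagonal} sets ($u_i=v_i$) a direct polar-coordinate computation yields the pointwise-in-$x$ bound $(c\lambda)^{-q}[|f(x)|^{1-s}\mathcal{M}(|\nabla f|)(x)^{s}]^{q}$ (the signs of $\gamma$ and $\gamma-n$ being handled exactly as in the proofs of Theorems \ref{upperBound} and \ref{gamma>0}); for the two \emph{cross} sets ($u_i\ne v_i$) the weight sits on the integrated variable, so one invokes the extrapolation Lemmas \ref{4.5} and \ref{4.6} at the convexification $X^{1/\theta_m}$, together with the $A_1$-decay estimate \eqref{1906} and the boundedness of $\mathcal{M}$ on $(X^{1/\theta_m})'$, and then passes from $X^{1/\theta_m}$ down to $X$ by the Fatou-in-$\theta_m$ device of Theorem \ref{4.8} with \eqref{2114}. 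The very same Fatou-in-$\theta_m$ passage supplies the auxiliary bound $\|\mathcal{M}(|\nabla f|)\|_{X}\lesssim\|\,|\nabla f|\,\|_{X}$ when $\gamma\in(0,\infty)$, while the case $\gamma\in(-\infty,0)$ either assumes this boundedness directly or (if $n=1$ and $\gamma<-1$) replaces it with the 1-dimensional stopping-time argument of Theorem \ref{n=1}. Summing the four contributions yields
$$\sup_{\lambda\in(0,\infty)}\lambda\left\|\left[\int_{\mathbb{R}^n}\mathbf{1}_{D_{\lambda,\gamma/q,s}[f]}(\cdot,y)|\cdot-y|^{\gamma-n}\,dy\right]^{1/q}\right\|_{X^{q}}\lesssim \bigl\||f|^{1-s}[\mathcal{M}(|\nabla f|)]^{s}\bigr\|_{X^{q}}.$$

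The identity $\tfrac{1}{q}=\tfrac{1-s}{p}+s$ rewrites as $\tfrac{(1-s)q}{p}+sq=1$, so the H\"older--Rogers inequality $\|u^{\theta}v^{1-\theta}\|_{X}\leq\|u\|_{X}^{\theta}\|v\|_{X}^{1-\theta}$ (which holds in any Banach function space by pointwise AM--GM together with a scaling argument in the multiplicative constant) applied with $u=|f|^{p}$, $v=\mathcal{M}(|\nabla f|)$, and $\theta=(1-s)q/p$ completes \eqref{2053} for smooth $f$ via
$$\bigl\||f|^{1-s}[\mathcal{M}(|\nabla f|)]^{s}\bigr\|_{X^{q}}^{q}=\bigl\||f|^{(1-s)q}[\mathcal{M}(|\nabla f|)]^{sq}\bigr\|_{X}\leq \|f\|_{X^{p}}^{(1-s)q}\bigl\|\mathcal{M}(|\nabla f|)\bigr\|_{X}^{sq}.$$
Part (ii) follows the same scheme, with $|f(x)|+|f(y)|$ replaced throughout by $2\|f\|_{L^{\infty}(\mathbb{R}^n)}$; the H\"older--Rogers step is then unnecessary and $\|f\|_{L^{\infty}}^{1-s}$ factors out directly, producing \eqref{2054}. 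The main obstacle I anticipate is the joint control of the two cross sets $\mathcal{E}_{3}^{(\lambda)}$ and $\mathcal{E}_{4}^{(\lambda)}$: polar coordinates fail there because the weight sits on the integrated variable, so the extrapolation apparatus of Lemmas \ref{4.5} and \ref{4.6}, the $A_1$-decay estimate \eqref{1906}, and the Fatou-in-$\theta_m$ limit of Theorem \ref{4.8} must be combined with some care in order to avoid assuming the boundedness of $\mathcal{M}$ on $X$ in the case $\gamma\in(0,\infty)$.
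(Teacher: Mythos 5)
Your proposal contains two genuine gaps, both of which the paper's own argument is carefully designed to avoid.

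First, the pointwise Lusin--Lipschitz step commits you to the bound $\|\mathcal{M}(|\nabla f|)\|_X$, and so ultimately requires $\mathcal{M}$ to be bounded on $X$. But Theorem \ref{2255} does not assume this when $\gamma\in(0,\infty)$; the only hypotheses are on $(X^{1/\theta_m})'$ and \eqref{2114}, which are perfectly compatible with $X=L^1(\mathbb{R}^n)$, where $\mathcal{M}$ is emphatically not bounded. Your claim that ``the Fatou-in-$\theta_m$ passage supplies the auxiliary bound $\|\mathcal{M}(|\nabla f|)\|_X\lesssim\|\,|\nabla f|\,\|_X$ when $\gamma\in(0,\infty)$'' is false: Lemma \ref{FatouX} with the $\theta_m\to 1$ device lets you pass an estimate already proved at the convexification $X^{1/\theta_m}$ down to $X$, but boundedness of $\mathcal{M}$ on $(X^{1/\theta_m})'$ does not imply boundedness on $X^{1/\theta_m}$, let alone on $X$. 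This is precisely why, for the $E$-part of the paper's decomposition, the argument routes through Theorem \ref{4.8}(i) with $q:=1$ (whose hypothesis (c) applies when $\gamma>0$), and that theorem's upper bound is proved via the Poincar\'e-type and dyadic-cube argument of Theorem \ref{gamma>0}, deliberately bypassing Lusin--Lipschitz. Even your two diagonal sets $\mathcal{E}_1,\mathcal{E}_2$, which give $(c\lambda)^{-q}|f(x)|^{(1-s)q}\mathcal{M}(|\nabla f|)(x)^{sq}$ after polar coordinates, still need $\|\mathcal{M}(|\nabla f|)\|_X\lesssim\|\,|\nabla f|\,\|_X$ at the very last H\"older step, so this problem is not confined to the cross terms.

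Second, the cross sets $\mathcal{E}_3$ and $\mathcal{E}_4$ are a genuinely new object that neither Theorem \ref{upperBound} nor Theorem \ref{gamma>0} covers. In the paper's $E_2$ (proof of Theorem \ref{upperBound}) and in the paper's $D_2$ (proof of the claim $\mathrm{G}\lesssim\|f\|_{X^p}$ in Theorem \ref{2255}), the defining condition of the set depends on exactly one of $x,y$ besides $|x-y|$, which is what makes Tonelli followed by \eqref{1906} work: for fixed $y$, the $x$-region is a ball or complement of a ball centered at $y$. In $\mathcal{E}_3=\{(x,y):\ c\lambda|x-y|^{\gamma/q}<|f(x)|^{1-s}\mathcal{M}(|\nabla f|)(y)^s\}$ the condition involves nontrivial functions of both $x$ and $y$, so after Tonelli the $x$-region is not a ball and the $A_1$-decay estimate does not apply. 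Your sketch says only that one ``invokes the extrapolation Lemmas \ref{4.5} and \ref{4.6} together with the $A_1$-decay estimate \eqref{1906}'', but that step does not go through here as stated; and the natural fix of pulling the exponents apart (using, say, $a^{1-s}b^s>c\Rightarrow a>c$ or $b>c$, or a weighted variant balanced by an auxiliary parameter) destroys the homogeneity needed to recover the product $\|f\|_{X^p}^{1-s}\|\,|\nabla f|\,\|_X^s$. The paper avoids the cross-coupling entirely: it splits at the level of the set $D_{\lambda,\gamma/q,s}[f]\subset D_{A^{-s}\lambda,\gamma/p,0}[f]\cup E_{A^{1-s}\lambda,\gamma}[f]$ using $\frac{1}{q}=\frac{1-s}{p}+s$, so the two resulting sets are exactly the shapes already handled (by the elementary $D_1\cup D_2$ split for the first, and by Theorem \ref{4.8} for the second), and then optimizes over $A$ --- using $q(sp+1-s)=p$ --- to eliminate the $\lambda$-dependence and produce $\mathrm{G}^{1-s}\mathrm{H}^s$. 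You would need to recover that $A$-balancing at the set level, not at the pointwise level, in order to keep the cross terms from appearing in the first place.
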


To show Theorem~\ref{2255}, we need the following lemma
which is just \cite[Corollary~3.10]{dgpyyz2022}.

\begin{lemma}\label{938}
Let $X$ be a ball Banach function space having an absolutely continuous norm.
Assume that the centered ball average
operators $\{\mathcal{B}_r\}_{r\in(0,\infty)}$
are uniformly bounded on $X$. Then $C^\infty_{\mathrm{c}}(\mathbb{R}^n)$
is dense in $X$.
\end{lemma}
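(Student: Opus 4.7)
The plan is to prove \eqref{2053} and \eqref{2054} first for $f\in C^1(\mathbb{R}^n)$ with $|\nabla f|\in C_{\mathrm{c}}(\mathbb{R}^n)$ by reducing to the generalized Brezis--Van Schaftingen--Yung formula in Theorem~\ref{4.8}, and then to extend to $\dot{W}^{1,X}(\mathbb{R}^n)$ via a density argument. The key initial observation is the identity $\|[\,\cdot\,]^{1/q}\|_{X^q}=\|\,\cdot\,\|_X^{1/q}$, immediate from Definition~\ref{tuhua}, which reduces the left-hand sides of both \eqref{2053} and \eqref{2054} to $\lambda\|\int_{\mathbb{R}^n}\mathbf{1}_{D_{\lambda,\gamma/q,s}[f]}(\cdot,y)|\cdot-y|^{\gamma-n}\,dy\|_X^{1/q}$.

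The case $p=\infty$ is the cleanest. The Gagliardo--Nirenberg relation $1/q=s$ gives $qs=1$, hence $X^{qs}=X$. Combining the trivial bound $|f(x)-f(y)|^{1-s}\le(2\|f\|_{L^\infty(\mathbb{R}^n)})^{1-s}$ with the defining inequality of $D_{\lambda,\gamma/q,s}[f]$ yields the set inclusion
$$
D_{\lambda,\gamma/q,s}[f]\subset E_{\tilde\lambda,\gamma/(qs)}[f]=E_{\tilde\lambda,\gamma}[f],
$$
where $\tilde\lambda:=\lambda^{1/s}(2\|f\|_{L^\infty(\mathbb{R}^n)})^{-(1-s)/s}$, and the set on the right matches the form covered by Theorem~\ref{4.8} with exponent~$1$. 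One of conditions (a), (c), and (d) of Theorem~\ref{4.8} is satisfied under the standing assumptions of Theorem~\ref{2255}. Applying Theorem~\ref{4.8}, raising to the $(1/q)$-th power, multiplying by $\lambda$, and using the elementary identity $\lambda\tilde\lambda^{-s}=(2\|f\|_{L^\infty(\mathbb{R}^n)})^{1-s}$ will yield \eqref{2054} for such $f$.

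For $p\in[1,\infty)$ the above substitution produces an unwanted $\lambda$-dependence because $qs<1$, so a cutoff argument is needed. I propose to introduce a threshold $K>0$ and decompose $D_{\lambda,\gamma/q,s}[f]\subset U_K\cup V_K$, where $U_K$ consists of pairs with $|f(x)-f(y)|^{1-s}\le K$, giving the inclusion $U_K\subset E_{(\lambda/K)^{1/s},\gamma/(qs)}[f]$, and $V_K$ consists of pairs with $|f(x)-f(y)|^{1-s}>K$, forcing $\max\{|f(x)|,|f(y)|\}>(K/2)^{1/(1-s)}$. The $U_K$-contribution will be handled by Theorem~\ref{4.8} applied to the ball Banach function space $X^{qs}$ with exponent~$qs$; the needed hypotheses on $X^{qs}$ are inherited from those on $X$ via convexification identities such as $(X^{qs})^{1/\theta_m}=X^{qs/\theta_m}$ combined with associate-space duality. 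The $V_K$-contribution will be bounded by the Chebyshev-type inequality $\mathbf{1}_{\{|f|>(K/2)^{1/(1-s)}\}}\le|f|^p/(K/2)^{p/(1-s)}$ together with the $D_{\lambda,\gamma/q,s}[f]$-constraint $|x-y|^{s+\gamma/q}\le(|f(x)|+|f(y)|)/\lambda$, which keeps the $y$-integral of $|x-y|^{\gamma-n}$ finite and produces a positive power of $\|f\|_{X^p}$. Optimizing $K$ as a suitable power of $\lambda\|f\|_{X^p}\||\nabla f|\|_X^{-1}$ will balance the two contributions and produce the interpolation exponents $(1-s,s)$ on the right-hand side of \eqref{2053}.

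The extension to $f\in\dot{W}^{1,X}(\mathbb{R}^n)$ will proceed exactly as in the proof of Theorem~\ref{2117}: Lemma~\ref{1516} (whose hypotheses coincide with the extra assumptions in (i) and (ii) of Theorem~\ref{2255}) produces smooth approximants $\{f_k\}$ satisfying \eqref{dense}; applying the already proved smooth-case inequality to $f_k$ and passing to the limit via Lemma~\ref{1111} and the triangle inequality, together with the convergences $\|f-f_k\|_{\dot W^{1,X}(\mathbb{R}^n)}\to0$ and $\|(f-f_k)\mathbf{1}_{B(\mathbf{0},R)}\|_X\to0$ for each $R\in(0,\infty)$, closes the argument. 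The main obstacle will be the cutoff optimization in the $p<\infty$ case, both in identifying the correct $K$ and in rigorously verifying that the hypotheses of Theorem~\ref{4.8} transfer to the convexification $X^{qs}$.
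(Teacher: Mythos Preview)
Your proposal does not address the statement you were asked to prove. Lemma~\ref{938} asserts that $C^\infty_{\mathrm{c}}(\mathbb{R}^n)$ is dense in the ball Banach function space $X$ under the stated hypotheses; this is a density result about $X$ itself and has nothing to do with the level sets $D_{\lambda,\gamma/q,s}[f]$, the Brezis--Van Schaftingen--Yung functional, or the inequalities \eqref{2053} and \eqref{2054}. What you have written is a proof sketch for Theorem~\ref{2255}, which \emph{uses} Lemma~\ref{938} as an input (in order to approximate a general $f\in X^p$ by smooth compactly supported functions), but is a completely different statement.

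In the paper, Lemma~\ref{938} is not proved at all: it is simply quoted as \cite[Corollary~3.10]{dgpyyz2022}. A self-contained proof would proceed along classical lines for Banach function spaces: given $f\in X$, first use absolute continuity of the norm to truncate to $f\mathbf{1}_{B(\mathbf{0},R)}\mathbf{1}_{\{|f|\le N\}}$, then mollify with a smooth compactly supported approximate identity $\varphi_\varepsilon$ and use the uniform boundedness of the centered ball average operators $\mathcal{B}_r$ to control $\|f*\varphi_\varepsilon\|_X$ and to show $\|f*\varphi_\varepsilon-f\|_X\to0$ as $\varepsilon\to0^+$ (the convolution is pointwise dominated by a constant times $\mathcal{B}_\varepsilon(|f|)$, and absolute continuity plus dominated convergence in $X$ gives the limit). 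None of this appears in your proposal.
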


\begin{proof}[Proof of Theorem~\ref{2255}]
We first show (i). To this end, let $p\in[1,\infty)$.
We first prove that, for any
$f\in C^1(\mathbb{R}^n)$ with
$|\nabla f|\in C_{\mathrm{c}}(\mathbb{R}^n)$,
\eqref{2053} holds true.
Since $\frac{1}{q}=\frac{1-s}{p}+s$,
it follows that, for any $x,y\in\mathbb{R}^n$ with $x\neq y$,
$$
\frac{|f(x)-f(y)|}{|x-y|^{s+\frac{\gamma}{q}}}
=\left[\frac{|f(x)-f(y)|}{|x-y|^\frac{\gamma}{p}}\right]^{1-s}
\left[\frac{|f(x)-f(y)|}{|x-y|^{1+\gamma}}\right]^s,
$$
which further implies that, for any $\lambda\in(0,\infty)$,
\begin{align*}
D_{\lambda,\frac{\gamma}{q},s}[f]
&\subset D_{A^{-s}\lambda,\frac{\gamma}{p},0}[f]\cup
D_{A^{1-s}\lambda,\gamma,1}[f]\\
&=D_{A^{-s}\lambda,\frac{\gamma}{p},0}[f]\cup
E_{A^{1-s}\lambda,\gamma}[f],
\end{align*}
where $A\in(0,\infty)$ is a constant specified later and
$E_{A^{1-s}\lambda,\gamma}[f]$ is the same as in \eqref{Elambda}
with $\lambda$ and $\frac{\gamma}{q}$ replaced, respectively,
by $A^{1-s}\lambda$ and $\gamma$.
By this, Definition~\ref{1659}(v),
and Lemma~\ref{dis} with $r:=\frac{1}{q}\in(0,1]$, we conclude that,
for any $\lambda\in(0,\infty)$,
\begin{align}\label{1514}
&\lambda\left\|\int_{\mathbb{R}^n}
\mathbf{1}_{D_{\lambda,\frac{\gamma}{q},s}[f]}(\cdot,y)
\left|\cdot-y\right|^{\gamma-n}\,dy\right\|_{X}^{\frac{1}{q}}\\
&\quad\leq\lambda
\left\|\int_{\mathbb{R}^n}
\mathbf{1}_{D_{A^{-s}\lambda,\frac{\gamma}{p},0}[f]}(\cdot,y)
\left|\cdot-y\right|^{\gamma-n}\,dy\right\|_{X}^{\frac{1}{q}}\nonumber\\
&\qquad+\lambda\left\|\int_{\mathbb{R}^n}
\mathbf{1}_{E_{A^{1-s}\lambda,\gamma}[f]}(\cdot,y)
\left|\cdot-y\right|^{\gamma-n}\,dy\right\|_{X}^{\frac{1}{q}}\nonumber\\
&\quad\leq\lambda^{1-\frac{p}{q}}
\left(A^s\mathrm{G}\right)^{\frac{p}{q}}
+\lambda^{1-\frac{1}{q}}
\left(A^{s-1}\mathrm{H}\right)^{\frac{1}{q}},\nonumber
\end{align}
where
$$
\mathrm{G}:=\sup_{\lambda\in(0,\infty)}\lambda
\left\|\int_{\mathbb{R}^n}
\mathbf{1}_{D_{\lambda,\frac{\gamma}{p},0}[f]}(\cdot,y)
\left|\cdot-y\right|^{\gamma-n}\,dy\right\|_{X}^{\frac{1}{p}}
$$
and
\begin{align}\label{2251}
\mathrm{H}:=\sup_{\lambda\in(0,\infty)}\lambda
\left\|\int_{\mathbb{R}^n}
\mathbf{1}_{E_{\lambda,\gamma}[f]}(\cdot,y)
\left|\cdot-y\right|^{\gamma-n}\,dy\right\|_{X}.
\end{align}
Choose an $A\in(0,\infty)$ such that
$$
\lambda^{1-\frac{p}{q}}
\left(A^s\mathrm{G}\right)^{\frac{p}{q}}
=\lambda^{1-\frac{1}{q}}
\left(A^{s-1}\mathrm{H}\right)^{\frac{1}{q}}.
$$
From this, \eqref{1514},
$\frac{1}{q}=\frac{1-s}{p}+s$, Remark~\ref{1409}(i),
and Theorem~\ref{4.8}(i) with $q:=1$,
we infer that
\begin{align}\label{1550}
&\sup_{\lambda\in(0,\infty)}\lambda\left\|\int_{\mathbb{R}^n}
\mathbf{1}_{D_{\lambda,\frac{\gamma}{q},s}[f]}(\cdot,y)
\left|\cdot-y\right|^{\gamma-n}\,dy\right\|_{X}^{\frac{1}{q}}
\lesssim\mathrm{G}^{1-s}\mathrm{H}^s
\lesssim\mathrm{G}^{1-s}\left\|\,\left|\nabla f\right|\,\right\|_X^s.
\end{align}

Next, we claim that, for any
$f\in C^1(\mathbb{R}^n)$ with
$|\nabla f|\in C_{\mathrm{c}}(\mathbb{R}^n)$,
\begin{align}\label{4.6x}
\mathrm{G}\lesssim\|f\|_{X^p}.
\end{align}
By this claim and \eqref{1550},
we immediately conclude that \eqref{2053} holds true.
To show the above claim,
from Lemmas~\ref{1555} and~\ref{4.5}(i),
we deduce that, for any $\lambda\in(0,\infty)$ and $m\in\mathbb{N}$,
\begin{align}\label{1850}
&\left\|\int_{\mathbb{R}^n}
\mathbf{1}_{D_{\lambda,\frac{\gamma}{p},0}[f]}(\cdot,y)
\left|\cdot-y\right|^{\gamma-n}\,dy\right\|_{X^\frac{1}{\theta_m}}\\
&\quad=\sup_{\|g\|_{(X^\frac{1}{\theta_m})'}=1}
\int_{\mathbb{R}^n}\left[\int_{\mathbb{R}^n}
\mathbf{1}_{D_{\lambda,\frac{\gamma}{p},0}[f]}(x,y)
\left|x-y\right|^{\gamma-n}\,dy\right]g(x)\,dx\nonumber\\
&\quad\leq\sup_{\|g\|_{(X^\frac{1}{\theta_m})'}=1}
\int_{\mathbb{R}^n}\left[\int_{\mathbb{R}^n}
\mathbf{1}_{D_{\lambda,\frac{\gamma}{p},0}[f]}(x,y)
\left|x-y\right|^{\gamma-n}\,dy\right]
R_{(X^\frac{1}{\theta_m})'}g(x)\,dx\nonumber\\
&\quad\leq\sup_{\|g\|_{(X^\frac{1}{\theta_m})'}=1}
\int_{\mathbb{R}^n}\left[\int_{\mathbb{R}^n}
\mathbf{1}_{D_1}(x,y)
\left|x-y\right|^{\gamma-n}\,dy\right]
R_{(X^\frac{1}{\theta_m})'}g(x)\,dx\nonumber\\
&\qquad+\sup_{\|g\|_{(X^\frac{1}{\theta_m})'}=1}
\int_{\mathbb{R}^n}\left[\int_{\mathbb{R}^n}
\mathbf{1}_{D_2}(x,y)
\left|x-y\right|^{\gamma-n}\,dy\right]
R_{(X^\frac{1}{\theta_m})'}g(x)\,dx,\nonumber
\end{align}
where
$$
D_1:=\left\{(x,y)\in\mathbb{R}^n\times\mathbb{R}^n:\
x\neq y,\
\frac{|f(x)|}{|x-y|^\frac{\gamma}{p}}>\frac{\lambda}{2}\right\}
$$
and
$$
D_2:=\left\{(x,y)\in\mathbb{R}^n\times\mathbb{R}^n:\
x\neq y,\
\frac{|f(y)|}{|x-y|^\frac{\gamma}{p}}>\frac{\lambda}{2}\right\}.
$$
Then we consider the following two cases on $\gamma$.

\emph{Case 1)} $\gamma\in(0,\infty)$. In this case,
on the one hand, by the polar coordinate and
Lemmas~\ref{1639} and~\ref{4.5}(iii),
we find that, for any $\lambda\in(0,\infty)$,
\begin{align}\label{1851}
&\sup_{\|g\|_{(X^\frac{1}{\theta_m})'}=1}
\int_{\mathbb{R}^n}\left[\int_{\mathbb{R}^n}
\mathbf{1}_{D_1}(x,y)
\left|x-y\right|^{\gamma-n}\,dy\right]
R_{(X^\frac{1}{\theta_m})'}g(x)\,dx\\
&\quad=\sup_{\|g\|_{(X^\frac{1}{\theta_m})'}=1}
\int_{\mathbb{R}^n}\left(\int_{\mathbb{S}^{n-1}}
\int_0^{[\frac{2|f(x)|}{\lambda}]^\frac{p}{\gamma}}
r^{\gamma-1}\,dr\,d\sigma\right)
R_{(X^\frac{1}{\theta_m})'}g(x)\,dx\nonumber\\
&\quad\sim\lambda^{-p}\sup_{\|g\|_{(X^\frac{1}{\theta_m})'}=1}
\int_{\mathbb{R}^n}\left|f(x)\right|^p
R_{(X^\frac{1}{\theta_m})'}g(x)\,dx\nonumber\\
&\quad\leq\lambda^{-p}\|f\|_{X^\frac{p}{\theta_m}}^{p}
\sup_{\|g\|_{(X^\frac{1}{\theta_m})'}=1}\left\|
R_{(X^\frac{1}{\theta_m})'}g\right\|_{(X^\frac{1}{\theta_m})'}
\lesssim\lambda^{-p}\|f\|_{X^\frac{p}{\theta_m}}^{p}.\nonumber
\end{align}
On the other hand, for any given $y\in\mathbb{R}^n$,
let
$$
B_y:=B\left(y,\left[2\lambda^{-1}|f(y)|\right]^\frac{p}{\gamma}\right).
$$
From the Tonelli theorem, Lemma~\ref{ApProperty}(i),
both (ii) and (iii) of Lemma~\ref{4.5},
and Lemma~\ref{1639},
we deduce that, for any $\lambda\in(0,\infty)$,
\begin{align*}
&\sup_{\|g\|_{(X^\frac{1}{\theta_m})'}=1}
\int_{\mathbb{R}^n}\left[\int_{\mathbb{R}^n}
\mathbf{1}_{D_2}(x,y)
\left|x-y\right|^{\gamma-n}\,dy\right]
R_{(X^\frac{1}{\theta_m})'}g(x)\,dx\\
&\quad=\sup_{\|g\|_{(X^\frac{1}{\theta_m})'}=1}
\int_{\mathbb{R}^n}\sum_{j=0}^\infty\int_{2^{-j}B_y\setminus
2^{-j-1}B_y}
\left|x-y\right|^{\gamma-n}R_{(X^\frac{1}{\theta_m})'}
g(x)\,dx\,dy\nonumber\\
&\quad\lesssim\sup_{\|g\|_{(X^\frac{1}{\theta_m})'}=1}
\int_{\mathbb{R}^n}\sum_{j=0}^\infty\left\{2^{-j}
\left[\frac{|f(y)|}{\lambda}\right]^\frac{p}{\gamma}\right\}^{\gamma-n}
\int_{2^{-j}B_y}R_{(X^\frac{1}{\theta_m})'}g(x)\,dx\,dy\nonumber\\
&\quad\leq\sup_{\|g\|_{(X^\frac{1}{\theta_m})'}=1}
\left[R_{(X^\frac{1}{\theta_m})'}g
\right]_{A_1(\mathbb{R}^n)}
\int_{\mathbb{R}^n}\sum_{j=0}^\infty\left\{2^{-j}
\left[\frac{|f(y)|}{\lambda}\right]^\frac{p}{\gamma}
\right\}^{\gamma-n}\nonumber\\
&\qquad\times\left|2^{-j}B_y\right|
\inf_{x\in2^{-j}B_y}R_{(X^\frac{1}{\theta_m})'}g(x)\,dy\nonumber\\
&\quad\lesssim\left\|\mathcal{M}\right\|_{(X^\frac{1}{\theta_m})'\to
(X^\frac{1}{\theta_m})'}
\sup_{\|g\|_{(X^\frac{1}{\theta_m})'}=1}
\int_{\mathbb{R}^n}\sum_{j=0}^\infty2^{-\gamma j}
\left[\frac{|f(y)|}{\lambda}\right]^{p}
R_{(X^\frac{1}{\theta_m})'}g(y)\,dy\nonumber\\
&\quad\lesssim\left\|\mathcal{M}\right\|_{(X^\frac{1}{\theta_m})'\to
(X^\frac{1}{\theta_m})'}\lambda^{-p}\|f\|_{X^\frac{p}{\theta_m}}^{p}
\sup_{\|g\|_{(X^\frac{1}{\theta_m})'}=1}\left\|
R_{(X^\frac{1}{\theta_m})'}g\right\|_{(X^\frac{1}{\theta_m})'}\nonumber\\
&\quad\lesssim\left\|\mathcal{M}\right\|_{(X^\frac{1}{\theta_m})'\to
(X^\frac{1}{\theta_m})'}\lambda^{-p}
\|f\|_{X^\frac{p}{\theta_m}}^{p},\nonumber
\end{align*}
which, combined with \eqref{1850}, \eqref{1851},
and the dominated convergence theorem,
further implies that
\begin{align}\label{1917}
\mathrm{G}&=\sup_{\lambda\in(0,\infty)}\lambda
\left\|\lim_{m\to\infty}\left[\int_{\mathbb{R}^n}
\mathbf{1}_{D_{\lambda,\frac{\gamma}{p},0}[f]}(\cdot,y)
\left|\cdot-y\right|^{\gamma-n}\,dy\right]^\frac{1}{\theta_m}
\right\|_{X}^{\frac{1}{p}}\\
&\leq\sup_{\lambda\in(0,\infty)}\lambda
\liminf_{m\to\infty}\left\|\left[\int_{\mathbb{R}^n}
\mathbf{1}_{D_{\lambda,\frac{\gamma}{p},0}[f]}(\cdot,y)
\left|\cdot-y\right|^{\gamma-n}\,dy\right]^\frac{1}{\theta_m}
\right\|_{X}^{\frac{1}{p}}\nonumber\\
&\lesssim\sup_{\lambda\in(0,\infty)}\lambda
\liminf_{m\to\infty}\left\{
\left[1+\left\|\mathcal{M}\right\|_{(X^\frac{1}{\theta_m})'\to
(X^\frac{1}{\theta_m})'}\right]\lambda^{-p}\|f\|_{X^\frac{p}{\theta_m}}^{p}
\right\}^{\frac{1}{p\theta_m}}\nonumber\\
&\lesssim\liminf_{m\to\infty}
\left\|\left|f\right|^\frac{p}{\theta_m}\right\|_{X}^{\frac{\theta_m}{p}}
=\left\|f\right\|_{X^p}.\nonumber
\end{align}
This finishes the proof of the above claim in the case
$\gamma\in(0,\infty)$.

\emph{Case 2)} $\gamma\in(-\infty,0)$. In this case,
on the one hand, by the polar coordinate and
Lemmas~\ref{1639} and~\ref{4.5}(iii),
we find that, for any $\lambda\in(0,\infty)$,
\begin{align}\label{1911}
&\sup_{\|g\|_{(X^\frac{1}{\theta_m})'}=1}
\int_{\mathbb{R}^n}\left[\int_{\mathbb{R}^n}
\mathbf{1}_{D_1}(x,y)
\left|x-y\right|^{\gamma-n}\,dy\right]
R_{(X^\frac{1}{\theta_m})'}g(x)\,dx\\
&\quad=\sup_{\|g\|_{(X^\frac{1}{\theta_m})'}=1}
\int_{\mathbb{R}^n}\left[\int_{\mathbb{S}^{n-1}}
\int_{[\frac{2|f(x)|}{\lambda}]^\frac{p}{\gamma}}^\infty
r^{\gamma-1}\,dr\,d\sigma\right]
R_{(X^\frac{1}{\theta_m})'}g(x)\,dx\nonumber\\
&\quad\sim\lambda^{-p}\sup_{\|g\|_{(X^\frac{1}{\theta_m})'}=1}
\int_{\mathbb{R}^n}\left|f(x)\right|^p
R_{(X^\frac{1}{\theta_m})'}g(x)\,dx\nonumber\\
&\quad\leq\lambda^{-p}\|f\|_{X^\frac{p}{\theta_m}}^{p}
\sup_{\|g\|_{(X^\frac{1}{\theta_m})'}=1}\left\|
R_{(X^\frac{1}{\theta_m})'}g\right\|_{(X^\frac{1}{\theta_m})'}
\lesssim\lambda^{-p}\|f\|_{X^\frac{p}{\theta_m}}^{p}.\nonumber
\end{align}
On the other hand,
from the Tonelli theorem, Lemma~\ref{ApProperty}(i),
both (ii) and (iii) of Lemma~\ref{4.5},
and Lemma~\ref{1639},
we deduce that, for any $\lambda\in(0,\infty)$,
\begin{align*}
&\sup_{\|g\|_{(X^\frac{1}{\theta_m})'}=1}
\int_{\mathbb{R}^n}\left[\int_{\mathbb{R}^n}
\mathbf{1}_{D_2}(x,y)
\left|x-y\right|^{\gamma-n}\,dy\right]
R_{(X^\frac{1}{\theta_m})'}g(x)\,dx\\
&\quad=\sup_{\|g\|_{(X^\frac{1}{\theta_m})'}=1}
\int_{\mathbb{R}^n}\sum_{j=1}^\infty\int_{2^{j}B_y\setminus
2^{j-1}B_y}
\left|x-y\right|^{\gamma-n}
R_{(X^\frac{1}{\theta_m})'}g(x)\,dx\,dy\nonumber\\
&\quad\lesssim\sup_{\|g\|_{(X^\frac{1}{\theta_m})'}=1}
\int_{\mathbb{R}^n}\sum_{j=1}^\infty\left\{2^{j-1}
\left[\frac{|f(y)|}{\lambda}\right]^\frac{p}{\gamma}\right\}^{\gamma-n}
\int_{2^{j}B_y}R_{(X^\frac{1}{\theta_m})'}g(x)\,dx\,dy\nonumber\\
&\quad\leq\sup_{\|g\|_{(X^\frac{1}{\theta_m})'}=1}
\left[R_{(X^\frac{1}{\theta_m})'}g
\right]_{A_1(\mathbb{R}^n)}
\int_{\mathbb{R}^n}\sum_{j=1}^\infty\left\{2^j
\left[\frac{|f(y)|}{\lambda}\right]^\frac{p}{\gamma}
\right\}^{\gamma-n}\nonumber\\
&\qquad\quad\times
\left|2^{j}B_y\right|
\inf_{x\in2^{j}B_y}R_{(X^\frac{1}{\theta_m})'}g(x)\,dy\nonumber\\
&\quad\lesssim\left\|\mathcal{M}\right\|_{(X^\frac{1}{\theta_m})'\to
(X^\frac{1}{\theta_m})'}
\sup_{\|g\|_{(X^\frac{1}{\theta_m})'}=1}
\int_{\mathbb{R}^n}\sum_{j=1}^\infty2^{\gamma j}
\left[\frac{|f(y)|}{\lambda}\right]^{p}
R_{(X^\frac{1}{\theta_m})'}g(y)\,dy\nonumber\\
&\quad\lesssim\left\|\mathcal{M}\right\|_{(X^\frac{1}{\theta_m})'\to
(X^\frac{1}{\theta_m})'}\lambda^{-p}\|f\|_{X^\frac{p}{\theta_m}}^{p}
\sup_{\|g\|_{(X^\frac{1}{\theta_m})'}=1}\left\|
R_{(X^\frac{1}{\theta_m})'}g\right\|_{(X^\frac{1}{\theta_m})'}\nonumber\\
&\quad\lesssim\left\|\mathcal{M}\right\|_{(X^\frac{1}{\theta_m})'\to
(X^\frac{1}{\theta_m})'}\lambda^{-p}\|f\|_{X^\frac{p}{\theta_m}}^{p},\nonumber
\end{align*}
which, together with \eqref{1850}, \eqref{1911},
and an argument similar to that used in the estimation of \eqref{1917},
further implies that $\mathrm{G}\lesssim\left\|f\right\|_{X^p}$.
This, combined with \eqref{1917}, then finishes the proof
of the above claim and hence \eqref{2053} for any
$f\in C^1(\mathbb{R}^n)$ with $|\nabla f|\in C_{\mathrm{c}}(\mathbb{R}^n)$.

Now, we prove \eqref{2053} for any $f\in\dot{W}^{1,X}(\mathbb{R}^n)$
under the additional assumptions in (i).
From an argument similar to that used in the estimation of
\eqref{1550} and Theorems~\ref{4.8}(i)
with $q:=1$,
we infer that, for any $f\in\dot{W}^{1,X}(\mathbb{R}^n)$,
$$
\sup_{\lambda\in(0,\infty)}\lambda\left\|\int_{\mathbb{R}^n}
\mathbf{1}_{D_{\lambda,\frac{\gamma}{q},s}[f]}(\cdot,y)
\left|\cdot-y\right|^{\gamma-n}\,dy\right\|_{X}^{\frac{1}{q}}
\lesssim\mathrm{G}^{1-s}\left\|\,\left|\nabla f\right|\,\right\|_X^s.
$$
Thus, to complete the proof of (i),
it suffices to show that
$\mathrm{G}\lesssim\|f\|_{X^p}$
for any $f\in\dot{W}^{1,X}(\mathbb{R}^n)\cap X^p$.
Indeed, if $f\notin X^p$, then \eqref{2053} holds true automatically.
Let $f\in\dot{W}^{1,X}(\mathbb{R}^n)\cap X^p$.
By the assumptions that both
$X$ has an absolutely continuous norm and
$\{\mathcal{B}_r\}_{r\in(0,\infty)}$ are uniformly bounded on $X$,
it is easy to prove that
both $X^p$ also has an absolutely continuous norm
and
$\{\mathcal{B}_r\}_{r\in(0,\infty)}$ are also uniformly bounded on $X^p$
via their definitions directly.
These, together with both that $X^p$ is a ball Banach function space
and Lemma~\ref{938},
further imply that $C_{\mathrm{c}}^\infty(\mathbb{R}^n)$ is
dense in $X^p$. From this, we deduce that there exists a
sequence $\{f_k\}_{k\in\mathbb{N}}\subset
C_{\mathrm{c}}^\infty(\mathbb{R}^n)$ such that
\begin{align}\label{2103}
\lim_{k\to\infty}\left\|f-f_k\right\|_{X^p}=0.
\end{align}
For any $N\in\mathbb{N}$, let
$$
\mathcal{F}_N:=\left\{(x,y)\in B(\mathbf{0},N)\times
B(\mathbf{0},N):\ |x-y|\ge\frac{1}{N}\right\}.
$$
Notice that, for any $\lambda\in(0,\infty)$ and $k\in\mathbb{N}$,
\begin{align*}
D_{\lambda,\frac{\gamma}{p},0}[f]\cap\mathcal{F}_N
&\subset\left[\left\{(x,y)\in\mathbb{R}^n\times\mathbb{R}^n:\
x\neq y,\,\frac{|f(x)-f_k(x)|}{|x-y|^\frac{\gamma}{p}}>\frac{\lambda}{3}\right\}
\cap\mathcal{F}_N\right]\nonumber\\
&\quad\cup\left[\left\{(x,y)\in\mathbb{R}^n\times\mathbb{R}^n:\
x\neq y,\,\frac{|f_k(x)-f_k(y)|}{|x-y|^\frac{\gamma}{p}}>\frac{\lambda}{3}\right\}
\cap\mathcal{F}_N\right]\nonumber\\
&\quad\cup\left[\left\{(x,y)\in\mathbb{R}^n\times\mathbb{R}^n:\
x\neq y,\,\frac{|f_k(y)-f(y)|}{|x-y|^\frac{\gamma}{p}}>\frac{\lambda}{3}\right\}
\cap\mathcal{F}_N\right]\nonumber\\
&=:E_1\cup E_2\cup E_3,
\end{align*}
which, combined with both Definition~\ref{1659}(v) and
Lemma~\ref{dis} with $r:=\frac{1}{p}\in(0,1]$, implies that
\begin{align}\label{201}
&\sup_{\lambda\in(0,\infty)}\lambda\left\|\int_{\mathbb{R}^n}
\mathbf{1}_{D_{\lambda,\frac{\gamma}{p},0}[f]\cap\mathcal{F}_N}(\cdot,y)
\left|\cdot-y\right|^{\gamma-n}\,dy\right\|_{X}^{\frac{1}{p}}\\
&\quad\leq\sum_{i=1}^3\sup_{\lambda\in(0,\infty)}\lambda\left\|\int_{\mathbb{R}^n}
\mathbf{1}_{E_i}(\cdot,y)
\left|\cdot-y\right|^{\gamma-n}\,dy\right\|_{X}^{\frac{1}{p}}
=:\sum_{i=1}^3\mathrm{I}_i.\nonumber
\end{align}
To deal with $\mathrm{I}_1$,
from the definitions of both $E_1$ and $\mathcal{F}_N$
and $p\in[1,\infty)$,
we infer that, for any $k,N\in\mathbb{N}$,
\begin{align}\label{202}
\mathrm{I}_1
&\lesssim\sup_{\lambda\in(0,\infty)}
\lambda\left\|\int_{B(\mathbf{0},N)\cap \mathcal{F}_N}
\left[\frac{|f(\cdot)-f_k(\cdot)|}{\lambda|\cdot-y|^\frac{\gamma}{p}}\right]^p
\left|\cdot-y\right|^{\gamma-n}\,dy\right\|_{X}^{\frac{1}{p}}\\
&=\left\|\left|f(\cdot)-f_k(\cdot)\right|^p
\int_{B(\mathbf{0},N)\cap \mathcal{F}_N}
|\cdot-y|^{-n}\,dy\right\|_{X}^{\frac{1}{p}}
\lesssim N^{\frac{2p}{n}}\left\|f-f_k\right\|_{X^p}.\nonumber
\end{align}
To deal with $\mathrm{I}_2$, by \eqref{4.6x} for
$\{f_k\}_{k\in\mathbb{N}}\subset
C_{\mathrm{c}}^\infty(\mathbb{R}^n)$
and Definition~\ref{1659}(v),
we find that, for any $k,N\in\mathbb{N}$,
\begin{align}\label{203}
\mathrm{I}_2
&\leq\sup_{\lambda\in(0,\infty)}\lambda\left\|\int_{\mathbb{R}^n}
\mathbf{1}_{D_{\lambda,\frac{\gamma}{p},0}[f_k]}(\cdot,y)
\left|\cdot-y\right|^{\gamma-n}\,dy\right\|_{X}^{\frac{1}{p}}\\
&\lesssim\left\|f_k\right\|_{X^p}
\leq\left\|f-f_k\right\|_{X^p}+\left\|f\right\|_{X^p}.\nonumber
\end{align}
To deal with $\mathrm{I}_3$,
from the definitions of both $E_3$ and $\mathcal{F}_N$,
$p\in[1,\infty)$,
and Lemma~\ref{1639},
we deduce that, for any $k,N\in\mathbb{N}$,
\begin{align*}
\mathrm{I}_3
&\lesssim\sup_{\lambda\in(0,\infty)}
\lambda\left\|\int_{B(\mathbf{0},N)\cap\mathcal{F}_N}
\left[\frac{|f(y)-f_k(y)|}{\lambda|x-y|^\frac{\gamma}{p}}\right]^p
\left|\cdot-y\right|^{\gamma-n}\,dy
\mathbf{1}_{B(\mathbf{0},N)}(\cdot)\right\|_{X}^{\frac{1}{p}}\nonumber\\
&\lesssim N^n\left\|\mathbf{1}_{B(\mathbf{0},N)}\right\|_{X}^{\frac{1}{p}}
\left[\int_{B(\mathbf{0},N)}\left|f(y)-f_k(y)\right|^p
\,dy\right]^{\frac{1}{p}}\\
&\leq N^n\left\|\mathbf{1}_{B(\mathbf{0},N)}\right\|_{X}^{\frac{1}{p}}
\left\|\mathbf{1}_{B(\mathbf{0},N)}\right\|_{X'}^{\frac{1}{p}}
\left\|f-f_k\right\|_{X^p},
\end{align*}
which, together with \eqref{201}, \eqref{202},
and \eqref{203}, further implies that
\begin{align*}
&\sup_{\lambda\in(0,\infty)}\lambda\left\|\int_{\mathbb{R}^n}
\mathbf{1}_{D_{\lambda,\frac{\gamma}{p},0}[f]\cap\mathcal{F}_N}(\cdot,y)
\left|\cdot-y\right|^{\gamma-n}\,dy\right\|_{X}^{\frac{1}{p}}\\
&\quad\lesssim\left[1+N^{\frac{2p}{n}}
+N^n
\left\|\mathbf{1}_{B(\mathbf{0},N)}\right\|_{X'}^{\frac{1}{p}}
\left\|\mathbf{1}_{B(\mathbf{0},N)}
\right\|_{X}^{\frac{1}{p}}
\right]\left\|f-f_k\right\|_{X^p}+\left\|f\right\|_{X^p}.
\end{align*}
Using this and \eqref{2103}
and letting $k\to\infty$ and $N\to\infty$, we conclude that
$\mathrm{G}\lesssim\|f\|_{X^p}$
for any $f\in\dot{W}^{1,X}(\mathbb{R}^n)\cap X^p$ and hence
\eqref{2053} holds true for any $f\in\dot{W}^{1,X}(\mathbb{R}^n)$.
This finishes the proof of (i).

Next, we prove (ii). Let $f\in\dot{W}^{1,X}(\mathbb{R}^n)$.
If $\|f\|_{L^\infty(\mathbb{R}^n)}=0$
or $\|f\|_{L^\infty(\mathbb{R}^n)}=\infty$, then
\eqref{2054} holds true automatically.
Now, assume that $\|f\|_{L^\infty(\mathbb{R}^n)}\in(0,\infty)$.
Observe that, for any $(x,y)\in D_{\lambda,\gamma s,s}[f]$,
\begin{align*}
\left[\frac{|f(x)-f(y)|}{|x-y|^{\gamma+1}}\right]^{s}
\ge\frac{|f(x)-f(y)|}{|x-y|^{(\gamma+1)s}}
\left[2\|f\|_{L^\infty(\mathbb{R}^n)}\right]^{s-1}
>\frac{\lambda}{[2\|f\|_{L^\infty(\mathbb{R}^n)}]^{1-s}},
\end{align*}
which further implies that
\begin{align}\label{735}
D_{\lambda,\gamma s,s}[f]
\subset D_{\frac{\lambda^\frac{1}{s}}
{[2\|f\|_{L^\infty(\mathbb{R}^n)}]^\frac{1-s}{s}},\gamma,1}[f].
\end{align}
Notice that $p=\infty$ implies $qs=1$.
From this, \eqref{735}, and Definition~\ref{1659}(ii),
it follows that
\begin{align}\label{800}
&\sup_{\lambda\in(0,\infty)}\lambda
\left\|\left[\int_{\mathbb{R}^n}
\mathbf{1}_{D_{\lambda,\frac{\gamma}{q},s}[f]}(\cdot,y)
\left|\cdot-y\right|^{\gamma-n}\,dy
\right]^\frac{1}{q}\right\|_{X^q}\\
&\quad=\sup_{\lambda\in(0,\infty)}\lambda
\left\|\int_{\mathbb{R}^n}
\mathbf{1}_{D_{\lambda,\gamma s,s}[f]}(\cdot,y)
\left|\cdot-y\right|^{\gamma-n}\,dy\right\|_{X}^s\nonumber\\
&\quad\leq\sup_{\lambda\in(0,\infty)}\lambda
\left\|\int_{\mathbb{R}^n}
\mathbf{1}_{D_{\frac{\lambda^\frac{1}{s}}
{[2\|f\|_{L^\infty(\mathbb{R}^n)}]^\frac{1-s}{s}},\gamma,1}[f]}(\cdot,y)
\left|\cdot-y\right|^{\gamma-n}\,dy\right\|_{X}^s\nonumber\\
&\quad=\left[2\|f\|_{L^\infty(\mathbb{R}^n)}\right]^{1-s}
\sup_{\lambda\in(0,\infty)}\lambda^s
\left\|\int_{\mathbb{R}^n}
\mathbf{1}_{D_{\lambda,\gamma,1}[f]}(\cdot,y)
\left|\cdot-y\right|^{\gamma-n}\,dy\right\|_{X}^s\nonumber\\
&\quad=\left[2\|f\|_{L^\infty(\mathbb{R}^n)}\right]^{1-s}
\left[\sup_{\lambda\in(0,\infty)}\lambda
\left\|\int_{\mathbb{R}^n}
\mathbf{1}_{E_{\lambda,\gamma}[f]}(\cdot,y)
\left|\cdot-y\right|^{\gamma-n}\,dy\right\|_{X}\right]^s.\nonumber
\end{align}
This, combined with both Remark~\ref{1409}(i)
and Theorem~\ref{4.8}(i) with $q:=1$, further implies that,
for any $f\in C^1(\mathbb{R}^n)$ with
$|\nabla f|\in C_{\mathrm{c}}(\mathbb{R}^n)$,
\begin{align}\label{810}
\sup_{\lambda\in(0,\infty)}\lambda
\left\|\left[\int_{\mathbb{R}^n}
\mathbf{1}_{D_{\lambda,\frac{\gamma}{q},s}[f]}(\cdot,y)
\left|\cdot-y\right|^{\gamma-n}\,dy\right]^\frac{1}{q}\right\|_{X^q}
\lesssim
\|f\|_{L^\infty(\mathbb{R}^n)}^{1-s}
\left\|\,\left|\nabla f\right|\,\right\|_X^s.
\end{align}
Next, if  $X$ has an absolutely continuous norm
and
$\{\mathcal{B}_r\}_{r\in(0,\infty)}$ are uniformly bounded on $X$,
then Theorems~\ref{4.8}(i) with $q:=1$, together with \eqref{800},
further implies that \eqref{2054} holds true for any
$f\in\dot{W}^{1,X}(\mathbb{R}^n)$,
which, combined with \eqref{810}, then completes the proof of (ii).
This finishes the proof of Theorem~\ref{2255}.
\end{proof}

\begin{remark}\label{4.3}
\begin{enumerate}
\item[(i)]
Theorem~\ref{2255} when $\gamma=n$ coincides
with \cite[Corollary~4.13]{dlyyz.arxiv}. In this case,
if $p=\infty$, $q\in(1,\infty)$,
$X:=L^1(\mathbb{R}^n)$,
and $f\in C^\infty_{\mathrm{c}}(\mathbb{R}^n)$,
then Theorem~\ref{2255}(ii) is just \cite[Corollary~5.1]{bvy2021}.
To the best of our knowledge,
Theorem~\ref{2255} when $\gamma\in\mathbb{R}\setminus\{0,n\}$
is new.
\item[(ii)]
If $p=\infty$, $q\in(1,\infty)$,
and $X:=L^1(\mathbb{R})$,
then Theorem~\ref{2255}(ii)
implies that, for any $f\in C^\infty_{\mathrm{c}}(\mathbb{R}^n)$,
$$
\left\|\frac{f(x)-f(y)}{|x-y|^\frac{2}{q}}
\right\|_{L^{q,\infty}(\mathbb{R}\times\mathbb{R},
\nu_\gamma)}
\lesssim\|f\|_{L^\infty(\mathbb{R})}^{1-\frac{1}{q}}
\left\|f'\right\|_{L^1(\mathbb{R})}^\frac{1}{q}\lesssim
\left\|f'\right\|_{L^1(\mathbb{R})}
$$
with the measure $\nu_\gamma$ the same as in \eqref{nu}
and $\gamma$ the same as in Theorem~\ref{2255},
which when $\gamma=1$ is just \cite[Corollary~4.1]{bvy2021}
and which when $\gamma=1$ and $q=2$
is originally due to \cite[Theorem~3.1]{gs2020}.
\item[(iii)]
As was pointed out in \cite[(ii) and (iii) of Remark~4.14]{dlyyz.arxiv},
even when $\gamma=n$ and $p\in[1,\infty)$,
it is still unclear whether or not Theorem~\ref{2255}(i) holds true
with the left-hand side of \eqref{2053}
replaced by
$$
\left\|\left[\int_{\mathbb{R}^n}
\frac{|f(\cdot)-f(y)|^q}{|\cdot-y|^{n+sq}}\,dy
\right]^{\frac{1}{q}}\right\|_{X^q}.
$$
However, when $\gamma=n$ and $p=\infty$,
\eqref{2054} may not hold true with the same modification as above
(see, for instance, \cite[(5.3)]{bvy2021}).
In this sense, \eqref{2054} seems to be \emph{sharp}.
\item[(iv)]
Let $X$ be the same as in Theorem~\ref{2117}.
Then both \eqref{2053} and \eqref{2054} hold true
for any $f\in\dot{W}^{1,X}(\mathbb{R}^n)$,
which can be proved by a slight modification of the proof of
Theorem~\ref{2255} with Theorem~\ref{4.8}(i)
replaced by Theorem~\ref{2117}(i).
\end{enumerate}
\end{remark}

The following fractional Gagliardo--Nirenberg-type
inequalities involving both $\dot{W}^{1,X}(\mathbb{R}^n)$
and its fractional version
is another main theorem
of this section.

\begin{theorem}\label{2256}
Let $\gamma\in\mathbb{R}\setminus\{0\}$, $\eta\in(0,1)$,
and
$0\leq s_0<s<1<q<q_0<\infty$ satisfy
\begin{align}\label{2228}
s=(1-\eta)s_0+\eta
\quad\text{and}\quad
\frac{1}{q}=\frac{1-\eta}{q_0}+\eta.
\end{align}
Let X be a ball Banach function space.
Assume that there exists a sequence
$\{\theta_m\}_{m\in\mathbb{N}}\subset(0,1)$
satisfying $\lim_{m\to\infty}\theta_m=1$ such that,
for any $m\in\mathbb{N}$, $X^\frac{1}{\theta_m}$
is a ball Banach function space and that
the Hardy--Littlewood maximal operator $\mathcal{M}$
is bounded on $(X^\frac{1}{\theta_m})'$ with
\eqref{2114} holding true.
If $\gamma\in(-\infty,0)$, assume further
that $\mathcal{M}$ is bounded on $X$
or both $n=1$ and $\gamma\in(-\infty,-1)$.
Then there exists a positive constant $C$ such that,
for any $f\in C^1(\mathbb{R}^n)$ with
$|\nabla f|\in C_{\mathrm{c}}(\mathbb{R}^n)$,
\begin{align}\label{939}
&\sup_{\lambda\in(0,\infty)}\lambda
\left\|\int_{\mathbb{R}^n}
\mathbf{1}_{D_{\lambda,\frac{\gamma}{q},s}[f]}(\cdot,y)
\left|\cdot-y\right|^{\gamma-n}\,dy\right\|_{X}^{\frac{1}{q}}\\
&\quad\leq C
\sup_{\lambda\in(0,\infty)}\lambda
\left\|\int_{\mathbb{R}^n}
\mathbf{1}_{D_{\lambda,\frac{\gamma}{q_0},s_0}[f]}(\cdot,y)
\left|\cdot-y\right|^{\gamma-n}\,dy\right\|_{X}^{\frac{1-\eta}{q_0}}
\left\|\,\left|\nabla f\right|\,\right\|_X^\eta,\nonumber
\end{align}
where, for any $\lambda\in(0,\infty)$,
$D_{\lambda,\frac{\gamma}{q},s}[f]$ is the same as in \eqref{Dlambdagammaqs}.
Assume further both that $X$ has an
absolutely continuous norm and that
the centered ball average operators
$\{\mathcal{B}_r\}_{r\in(0,\infty)}$ are uniformly bounded on $X$.
Then \eqref{939} also holds true
for any $f\in\dot{W}^{1,X}(\mathbb{R}^n)$.
\end{theorem}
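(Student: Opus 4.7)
The plan is to mimic the optimization argument employed in the proof of Theorem~\ref{2255}, now interpolating between the fractional level $s_0$ and the Sobolev endpoint $s=1$ (rather than between $s_0 = 0$ and $s=1$). The key pointwise identity, which follows from \eqref{2228} by checking both exponents separately, is
\begin{align*}
\frac{|f(x)-f(y)|}{|x-y|^{s+\frac{\gamma}{q}}}
=\left[\frac{|f(x)-f(y)|}{|x-y|^{s_0+\frac{\gamma}{q_0}}}\right]^{1-\eta}
\left[\frac{|f(x)-f(y)|}{|x-y|^{1+\gamma}}\right]^{\eta}
\end{align*}
for any $x,y\in\mathbb{R}^n$ with $x\neq y$. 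This immediately gives, for any $A\in(0,\infty)$ and $\lambda\in(0,\infty)$, the set inclusion
\begin{align*}
D_{\lambda,\frac{\gamma}{q},s}[f]
\subset D_{A^{-\eta}\lambda,\frac{\gamma}{q_0},s_0}[f]\cup
E_{A^{1-\eta}\lambda,\gamma}[f],
\end{align*}
where $E_{\mu,\gamma}[f]$ is the same as in \eqref{Elambda} with $\frac{\gamma}{q}$ replaced by $\gamma$ (so $s=1$).

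Next, I would apply Definition~\ref{1659}(v) together with Lemma~\ref{dis} with $r:=\frac{1}{q}\in(0,1]$, exactly as in the derivation of \eqref{1514}, to obtain, for every $\lambda, A\in(0,\infty)$,
\begin{align*}
\lambda\left\|\int_{\mathbb{R}^n}\mathbf{1}_{D_{\lambda,\frac{\gamma}{q},s}[f]}(\cdot,y)|\cdot-y|^{\gamma-n}\,dy\right\|_X^{\frac{1}{q}}
\leq \lambda^{1-\frac{q_0}{q}}\left(A^{\eta}\mathrm{G}_0\right)^{\frac{q_0}{q}}
+\lambda^{1-\frac{1}{q}}\left(A^{\eta-1}\mathrm{H}\right)^{\frac{1}{q}},
\end{align*}
where $\mathrm{H}$ is the same functional as in \eqref{2251} and
\begin{align*}
\mathrm{G}_0:=\sup_{\lambda\in(0,\infty)}\lambda
\left\|\int_{\mathbb{R}^n}\mathbf{1}_{D_{\lambda,\frac{\gamma}{q_0},s_0}[f]}(\cdot,y)|\cdot-y|^{\gamma-n}\,dy\right\|_X^{\frac{1}{q_0}}.
\end{align*}
I would then balance the two terms by choosing $A=(\lambda^{q_0-1}\mathrm{H}/\mathrm{G}_0^{q_0})^{q/q_0}$; using the identity $\frac{q_0}{q}=(1-\eta)+\eta q_0$ (which is a restatement of the second relation in \eqref{2228}), a direct calculation shows that the $\lambda$-dependence cancels and the common value simplifies to $\mathrm{G}_0^{1-\eta}\mathrm{H}^{\eta}$ (up to a purely dimensional constant). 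Finally, applying Theorem~\ref{4.8}(i) with $q:=1$ under the stated hypotheses on $X$ yields $\mathrm{H}\lesssim\||\nabla f|\|_X$, and substituting this bound produces \eqref{939} for every $f\in C^1(\mathbb{R}^n)$ with $|\nabla f|\in C_{\mathrm{c}}(\mathbb{R}^n)$.

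To extend \eqref{939} to an arbitrary $f\in\dot W^{1,X}(\mathbb{R}^n)$ under the additional hypotheses (absolute continuity of the norm of $X$ and uniform boundedness of $\{\mathcal{B}_r\}_{r\in(0,\infty)}$), I would use Lemma~\ref{1516} to approximate $f$ by a sequence $\{f_k\}\subset C^\infty(\mathbb{R}^n)$ with $|\nabla f_k|\in C_{\mathrm{c}}(\mathbb{R}^n)$ satisfying \eqref{dense}, together with the truncation-on-$\mathcal{F}_N$ trick used in the proof of Theorem~\ref{2117} and the triangle-like inequality from Lemma~\ref{1111}. The main obstacle I anticipate is \emph{not} the interpolation step itself, which is essentially algebraic, but the density part: since $\mathrm{G}_0$ is a fractional-type functional (with $s_0\in(0,1)$, a genuinely non-endpoint case), proving its stability under the $\dot W^{1,X}$-approximation is more delicate than in Theorem~\ref{2255}(i), where one could hide all of the fractional behaviour on the right-hand side via $\|f\|_{X^p}$. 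I expect one must decompose the domain of integration into the diagonal-removed regions $\mathcal{F}_N$, then split the resulting finite differences into those involving $f-f_k$ and those involving $f_k$, and control each piece by Lemma~\ref{1639} together with the bound of $\mathrm{G}_0$ on $f_k$ already obtained, before letting $k\to\infty$ and $N\to\infty$ as in the argument following \eqref{201}.
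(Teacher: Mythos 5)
Your interpolation argument for $f\in C^1(\mathbb{R}^n)$ with $|\nabla f|\in C_{\mathrm{c}}(\mathbb{R}^n)$ is essentially identical to the paper's: the same pointwise factorization, the same set inclusion $D_{\lambda,\frac{\gamma}{q},s}[f]\subset D_{A^{-\eta}\lambda,\frac{\gamma}{q_0},s_0}[f]\cup E_{A^{1-\eta}\lambda,\gamma}[f]$, the same union bound via Definition~\ref{1659}(v) and Lemma~\ref{dis}, the same optimization over $A$ (your explicit formula for $A$ and verification that the $\lambda$-dependence cancels are both correct), and the same invocation of Theorem~\ref{4.8}(i) with $q:=1$ to bound $\mathrm{H}$.

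The place where you diverge is the extension to $f\in\dot W^{1,X}(\mathbb{R}^n)$, and here you have anticipated an obstacle that does not exist. There is no need for Lemma~\ref{1516}, the $\mathcal{F}_N$ truncation, or any approximation of $f$ by $f_k$. The reason is that the entire interpolation chain --- pointwise identity, set inclusion, union bound, optimization over $A$ --- is purely algebraic and holds verbatim for an arbitrary measurable $f$ (with the trivial convention that the inequality is vacuous when $\mathrm{G}_0(f)$ or $\mathrm{H}(f)$ is infinite). The functional $\mathrm{G}_0$ sits on the \emph{right-hand side} of \eqref{939}, so there is nothing to ``stabilize'' about it: one does not need $\mathrm{G}_0(f)\lesssim\liminf_k\mathrm{G}_0(f_k)$ or anything of that kind. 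The only ingredient that required the additional hypotheses (absolute continuity of the norm, uniform boundedness of $\{\mathcal{B}_r\}_r$) is the bound $\mathrm{H}(f)\lesssim\||\nabla f|\|_X$ for general $f\in\dot W^{1,X}(\mathbb{R}^n)$, and this is exactly what Theorem~\ref{4.8}(i) with $q:=1$ already provides as a black box; the density argument lives \emph{inside} that theorem's proof, not in the present one. This is precisely what the paper means by ``an argument similar to that used in the estimation of \eqref{2242}'': re-run the same algebra, plug in the $\dot W^{1,X}$-version of the $\mathrm{H}$-bound, and you are done.
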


\begin{proof}
First, let $f\in C^1(\mathbb{R}^n)$
with $|\nabla f|\in C_{\mathrm{c}}(\mathbb{R}^n)$.
From \eqref{2228},
it follows that, for any $x,y\in\mathbb{R}^n$ with $x\neq y$,
$$
\frac{|f(x)-f(y)|}{|x-y|^{s+\frac{\gamma}{q}}}
=\left[\frac{|f(x)-f(y)|}{|x-y|^{s_0+\frac{\gamma}{q_0}}}\right]^{1-\eta}
\left[\frac{|f(x)-f(y)|}{|x-y|^{\gamma+1}}\right]^\eta,
$$
which implies that, for any $\lambda\in(0,\infty)$,
$$
D_{\lambda,\frac{\gamma}{q},s}[f]
\subset\left(D_{A^{-\eta}\lambda,\frac{\gamma}{q_0},s_0}[f]\cup
E_{A^{1-\eta}\lambda,\gamma}[f]\right)
$$ with
$A\in(0,\infty)$ being a constant specified later,
where $E_{A^{1-\eta}\lambda,\gamma}[f]$ is the
same as in \eqref{Elambda} with $\lambda$ and $\frac{\gamma}{q}$ replaced,
respectively, by $A^{1-\eta}\lambda$ and $\gamma$.
By this, Definition~\ref{1659}(v),
and Lemma~\ref{dis} with $r:=\frac{1}{q}\in(0,1)$, we find that,
for any $\lambda\in(0,\infty)$,
\begin{align}\label{2232}
&\lambda\left\|\int_{\mathbb{R}^n}
\mathbf{1}_{D_{\lambda,\frac{\gamma}{q},s}[f]}(\cdot,y)
\left|\cdot-y\right|^{\gamma-n}\,dy\right\|_{X}^{\frac{1}{q}}\\
&\quad\leq\lambda
\left\|\int_{\mathbb{R}^n}
\mathbf{1}_{D_{A^{-\eta}\lambda,\frac{\gamma}{q_0},s_0}[f]}(\cdot,y)
\left|\cdot-y\right|^{\gamma-n}\,dy\right\|_{X}^{\frac{1}{q}}\nonumber\\
&\qquad+\lambda\left\|\int_{\mathbb{R}^n}
\mathbf{1}_{E_{A^{1-\eta}\lambda,\gamma}[f]}(\cdot,y)
\left|\cdot-y\right|^{\gamma-n}\,dy\right\|_{X}^{\frac{1}{q}}\nonumber\\
&\quad\leq\lambda^{1-\frac{q_0}{q}}
\left(A^\eta\mathrm{G}\right)^{\frac{q_0}{q}}
+\lambda^{1-\frac{1}{q}}
\left(A^{\eta-1}\mathrm{H}\right)^{\frac{1}{q}},\nonumber
\end{align}
where
$$
\mathrm{G}:=\sup_{\lambda\in(0,\infty)}\lambda
\left\|\int_{\mathbb{R}^n}
\mathbf{1}_{D_{\lambda,\frac{\gamma}{q_0},s_0}[f]}(\cdot,y)
\left|\cdot-y\right|^{\gamma-n}\,dy\right\|_{X}^{\frac{1}{q_0}}
$$
and $\mathrm{H}$ is the same as in \eqref{2251}.
Choose an $A\in(0,\infty)$ such that
$$
\lambda^{1-\frac{q_0}{q}}
\left(A^\eta\mathrm{G}\right)^{\frac{q_0}{q}}
=\lambda^{1-\frac{1}{q}}
\left(A^{\eta-1}\mathrm{H}\right)^{\frac{1}{q}}.
$$
From this, \eqref{2232},
\eqref{2228}, Remark~\ref{1409}(i),
and Theorem~\ref{4.8}(i) with $q:=1$,
we deduce that,
for any $f\in C^1(\mathbb{R}^n)$ with
$|\nabla f|\in C_{\mathrm{c}}(\mathbb{R}^n)$,
\begin{align}\label{2242}
&\sup_{\lambda\in(0,\infty)}\lambda\left\|\int_{\mathbb{R}^n}
\mathbf{1}_{D_{\lambda,\frac{\gamma}{q},s}[f]}(\cdot,y)
\left|\cdot-y\right|^{\gamma-n}\,dy\right\|_{X}^{\frac{1}{q}}\\
&\quad\lesssim\sup_{\lambda\in(0,\infty)}\lambda^{1-\frac{q_0}{q}}
\left(A^\eta\mathrm{G}\right)^{\frac{q_0}{q}}
=\mathrm{G}^{1-\eta}\mathrm{H}^\eta
\lesssim\mathrm{G}^{1-\eta}
\left\|\,\left|\nabla f\right|\,\right\|_X^\eta.\nonumber
\end{align}

Next, we prove \eqref{939} for any $f\in\dot{W}^{1,X}(\mathbb{R}^n)$
under additional assumptions that $X$ has an
absolutely continuous norm and that
centered ball average operators
$\{\mathcal{B}_r\}_{r\in(0,\infty)}$ are uniformly bounded on $X$.
Indeed, by Theorem~\ref{4.8}(i) with $q:=1$
and an argument similar to that used in the estimation of \eqref{2242},
we conclude that
$$
\sup_{\lambda\in(0,\infty)}\lambda\left\|\int_{\mathbb{R}^n}
\mathbf{1}_{D_{\lambda,\frac{\gamma}{q},s}[f]}(\cdot,y)
\left|\cdot-y\right|^{\gamma-n}\,dy\right\|_{X}^{\frac{1}{q}}
\lesssim\mathrm{G}^{1-\eta}\mathrm{H}^\eta
\lesssim\mathrm{G}^{1-\eta}
\left\|\,\left|\nabla f\right|\,\right\|_X^\eta,
$$
which completes the proof of \eqref{939} for any
$f\in\dot{W}^{1,X}(\mathbb{R}^n)$ and hence Theorem~\ref{2256}.
\end{proof}

\begin{remark}\label{2021}
\begin{enumerate}
\item[(i)]
Theorem~\ref{2256} when $\gamma=n$ coincides
with \cite[Corollary~4.15]{dlyyz.arxiv}. In this case,
if $X:=L^p(\mathbb{R}^n)$ with $p\in[1,\infty)$,
then Theorem~\ref{2256} with $X$ replaced
by $L^p(\mathbb{R}^n)$ holds true.
Moreover, when $\gamma=n$, $X:=L^1(\mathbb{R}^n)$,
and $f\in C^\infty_{\mathrm{c}}(\mathbb{R}^n)$,
Theorem~\ref{2256} is just \cite[Corollary~5.2]{bvy2021}.
To the best of our knowledge,
Theorem~\ref{2256} when $\gamma\in\mathbb{R}\setminus\{0,n\}$
is new.
\item[(ii)]
Let $X$ be the same as in Theorem~\ref{2117}.
Then \eqref{939} holds true
for any $f\in\dot{W}^{1,X}(\mathbb{R}^n)$,
which can be proved by a slight modification of the proof of
Theorem~\ref{2256} with Theorem~\ref{4.8}(i)
replaced by Theorem~\ref{2117}(i).
\end{enumerate}
\end{remark}

\section{Applications in Specific Function Spaces}
\label{S5}

In this section, we apply Theorems~\ref{2117},
\ref{4.8},~\ref{2255}, and~\ref{2256},
respectively, to seven specific examples of ball
Banach function spaces, namely Morrey spaces
(see Subsection~\ref{5.1} below),
mixed-norm Lebesgue spaces (see Subsection~\ref{5.2} below),
variable Lebesgue spaces (see Subsection~\ref{5.3} below),
weighted Lebesgue spaces (see Subsection~\ref{5.4} below),
Lorentz spaces (see Subsection~\ref{5.7} below),
Orlicz spaces (see Subsection~\ref{5.5} below),
and Orlicz-slice spaces (see Subsection~\ref{5.6} below).

\subsection{Morrey Spaces}\label{5.1}

Recall that the \emph{Morrey space} $M_r^\alpha(\mathbb{R}^n)$,
with $0<r\leq\alpha<\infty$, is defined to be the set of
all the $f\in\mathscr{M}(\mathbb{R}^n)$ with
\begin{align*}
\|f\|_{M_r^\alpha(\mathbb{R}^n)}:=\sup_{B\in\mathbb{B}}
\left|B\right|^{\frac{1}{\alpha}-\frac{1}{r}}\|f\|_{L^r(B)}<\infty,
\end{align*}
where $\mathbb{B}$ is the same as in \eqref{1654}.
In 1938, Morrey \cite{m1938} introduced the above Morrey space
to study the regularity of the solution of partial differential equations.
These spaces nowadays have important applications in the theory of
elliptic partial differential equations, potential theory,
and harmonic analysis
(see, for instance, \cite{a2015,cf1987,hms2017,hms2020,hss2018,
hs2017,jw2009,sdh20201,sdh20202,tyy2019}).
As was indicated in \cite[p.\,87]{shyy2017},
$M_r^\alpha(\mathbb{R}^n)$ for any $1\leq r\leq\alpha<\infty$
is a ball Banach function space, but is not a Banach function space
in the terminology of Bennett and Sharpley \cite{bs1988}.

Using Theorems~\ref{upperBound},~\ref{2001},
~\ref{gamma>0},~\ref{n=1}, and~\ref{4.8},
Remark~\ref{1409}(i),
and Corollary~\ref{LimFormulaEq},
we obtain the following conclusion.

\begin{theorem}\label{1524}
Let $1\leq r\leq\alpha<\infty$.
If one of the following statements holds true:
\begin{enumerate}
\item[\textup{(a)}]
$r\in(n,\infty)$, $\gamma\in\mathbb{R}\setminus\{0\}$,
and $q\in(0,\infty)$;
\item[\textup{(b)}]
$r\in[1,n]$, $\gamma\in(0,\infty)$, and
$n(\frac{1}{r}-\frac{1}{q})<1$;
\item[\textup{(c)}]
$r\in(1,n]$, $\gamma\in(-\infty,0)$, and
$q\in(0,r)$;
\item[\textup{(d)}]
$r=n=q=1$ and $\gamma\in(-\infty,-1)$,
\end{enumerate}
then, for any $f\in C^1(\mathbb{R}^n)$ with
$|\nabla f|\in C_{\mathrm{c}}(\mathbb{R}^n)$,
\begin{align*}
\sup_{\lambda\in(0,\infty)}\lambda
\left\|\left[\int_{\mathbb{R}^n}
\mathbf{1}_{E_{\lambda,\frac{\gamma}{q}}[f]}(\cdot,y)
\left|\cdot-y\right|^{\gamma-n}\,dy\right]^\frac{1}{q}
\right\|_{M_r^\alpha(\mathbb{R}^n)}
\sim\left\|\,|\nabla f|\,\right\|_{M_r^\alpha(\mathbb{R}^n)},
\end{align*}
where the positive equivalence constants are independent of $f$
and $E_{\lambda,\frac{\gamma}{q}}[f]$
for any $\lambda\in(0,\infty)$
is the same as in \eqref{Elambda}.
Moreover,
for any $f\in C^1(\mathbb{R}^n)$ with
$|\nabla f|\in C_{\mathrm{c}}(\mathbb{R}^n)$,
\begin{enumerate}
\item[\textup{(i)}]
if $\gamma\in(0,\infty)$, then
\begin{align*}
&\lim_{\lambda\to\infty}\lambda\left\|\left[\int_{\mathbb{R}^n}
\mathbf{1}_{E_{\lambda,\frac{\gamma}{q}}[f]}(\cdot,y)
\left|\cdot-y\right|^{\gamma-n}\,dy\right]^\frac{1}{q}
\right\|_{M_r^\alpha(\mathbb{R}^n)}
=\left[\frac{\kappa(q,n)}{\gamma}\right]^\frac{1}{q}
\left\|\,\left|\nabla f\right|\,\right\|_{M_r^\alpha(\mathbb{R}^n)};
\end{align*}
\item[\textup{(ii)}]
if $\gamma\in(-\infty,0)$,
assume further that both $n\in\mathbb{N}\cap[2,\infty)$
and $q\in(0,\frac{n-\gamma}{n}r)$
or that $n=1$, $\gamma\in(-\infty,-1)$,
and $q\in(0,-\gamma r)$,
then
\begin{align*}
&\lim_{\lambda\to0^+}\lambda\left\|\left[\int_{\mathbb{R}^n}
\mathbf{1}_{E_{\lambda,\frac{\gamma}{q}}[f]}(\cdot,y)
\left|\cdot-y\right|^{\gamma-n}\,dy\right]^\frac{1}{q}
\right\|_{M_r^\alpha(\mathbb{R}^n)}
=\left[-\frac{\kappa(q,n)}{\gamma}\right]^\frac{1}{q}
\left\|\,\left|\nabla f\right|\,\right\|_{M_r^\alpha(\mathbb{R}^n)}.
\end{align*}
\end{enumerate}
\end{theorem}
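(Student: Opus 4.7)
The plan is to specialize, case by case, the hypotheses of Theorems~\ref{upperBound}, \ref{2001}, \ref{gamma>0}, \ref{n=1}, and~\ref{4.8}, together with Corollary~\ref{LimFormulaEq}, to the setting $X:=M_r^\alpha(\mathbb{R}^n)$, and then to invoke those general results. Four background facts will be used throughout. First, $M_r^\alpha(\mathbb{R}^n)$ with $1\leq r\leq\alpha<\infty$ is a ball Banach function space (see \cite[Section~7]{shyy2017}). Second, for any $p\in(0,r]$, its $(1/p)$-convexification in the sense of Definition~\ref{tuhua} is again a Morrey space, precisely $(M_r^\alpha)^{1/p}=M_{r/p}^{\alpha/p}$. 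Third, whenever $r\in(1,\infty)$, the Hardy--Littlewood maximal operator $\mathcal{M}$ is bounded on $M_r^\alpha$ by the classical Chiarenza--Frasca theorem. Fourth, there are known duality results identifying the K\"othe dual $(M_r^\alpha)'$ as a block-type space on which $\mathcal{M}$ acts appropriately in the suitable range.

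For the upper estimate of the equivalence we treat the four cases separately. In Case~(a), where $r\in(n,\infty)$, we invoke Theorem~\ref{2001} by choosing any $s\in(n,r)$; then $(M_r^\alpha)^{1/s}=M_{r/s}^{\alpha/s}$ has first index $r/s>1$, so $\mathcal{M}$ is bounded on this convexification. In Case~(b), for $r\in(1,n]$ we apply Theorem~\ref{gamma>0} with $p:=r$, and the required boundedness of $\mathcal{M}$ on $(M_1^{\alpha/r})'$ follows from the duality framework; whereas for the endpoint $r=1$, for which the latter boundedness fails, we turn instead to Theorem~\ref{4.8}(c) with approximating exponents $\theta_m:=1-1/m$, so that the convexifications $(M_1^\alpha)^{1/\theta_m}=M_{\theta_m^{-1}}^{\alpha/\theta_m}$ have first index strictly greater than $1$; on their K\"othe duals $\mathcal{M}$ is uniformly bounded, delivering \eqref{2114}. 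In Case~(c), we apply Theorem~\ref{upperBound} with $p:=r$; both the boundedness of $\mathcal{M}$ on $M_r^\alpha$ (Chiarenza--Frasca, since $r>1$) and on $(M_1^{\alpha/r})'$ (duality) are then available. In Case~(d), where $r=n=q=1$ and $\gamma<-1$, we apply Theorem~\ref{n=1} on $\mathbb{R}$ combined with the extrapolation device of Theorem~\ref{4.8}(d) to circumvent the boundedness of $\mathcal{M}$ on $(M_1^\alpha)'$. The matching lower estimate follows directly from Theorem~\ref{LimFormulaInf}, since the $\liminf$ as $\lambda\to\infty$ (when $\gamma>0$) or as $\lambda\to0^+$ (when $\gamma<0$) is bounded above by the supremum over $\lambda\in(0,\infty)$ on the left-hand side.

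The two limiting identities (i) and (ii) then follow from Corollary~\ref{LimFormulaEq} applied with $X=M_r^\alpha$ and $p:=r$: part~(i) holds in any ball Banach function space for $\gamma,q\in(0,\infty)$, while part~(ii) uses the additional range constraint $q\in(0,(n-\gamma)r/n)$ for $n\geq 2$ or $q\in(0,-\gamma r)$ for $n=1$, which are exactly the constraints stated in (ii) of the theorem, together with the extension at the end of Corollary~\ref{LimFormulaEq} that permits $|\nabla f|\in C_{\mathrm{c}}(\mathbb{R}^n)$ (rather than $f\in C^1_{\mathrm{c}}$). The main technical obstacle throughout the proof is verifying the boundedness of $\mathcal{M}$ on the K\"othe duals $(M_1^{\alpha/r})'$ and $(M_1^\alpha)'$ at the critical index $r=1$; this is handled either by appealing to the predual block-space framework or, when that is unavailable, by the extrapolation device of Theorem~\ref{4.8}, whose uniform-in-$m$ hypothesis \eqref{2114} is precisely what makes the endpoint $r=1$ tractable and thereby closes Cases~(b) and (d).
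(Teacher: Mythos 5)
Your strategy — specializing Theorems \ref{upperBound}, \ref{2001}, \ref{gamma>0}, \ref{n=1}, \ref{4.8} and Corollary \ref{LimFormulaEq} to $X=M_r^\alpha(\mathbb{R}^n)$ case by case — is exactly the route the paper takes, and most of your case analysis lines up with theirs. However, there is a genuine gap in your parameter choices for the non-endpoint cases. In Case~(b) with $r\in(1,n]$, and again in Case~(c) and in the treatment of the limiting identities, you set $p:=r$, which makes the convexification $(M_r^\alpha)^{1/p}=M_1^{\alpha/r}$. You then claim that $\mathcal{M}$ is bounded on $(M_1^{\alpha/r})'$ "by the duality framework," but the results you are implicitly leaning on (the block-space description of $(M_s^\beta)'$ from \cite{st2015} and the boundedness of $\mathcal{M}$ on block spaces from \cite{ch2014,h2015}) are stated only for $s>1$, not $s=1$. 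That boundary case is precisely the one the paper treats by the extrapolation device of Theorem~\ref{4.8}, not by a direct duality claim — which is strong evidence that a direct claim is not available.

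The paper sidesteps this by always picking $p$ strictly inside $[1,r)$: close enough to $r$ that $n(\tfrac1p-\tfrac1q)<1$ still holds in Case~(b), and with $p\in[\max\{1,q\},r)$ in Case~(c) so that $q\leq p$ as required by Theorem~\ref{upperBound}; and, for the limiting identity in part~(ii), close enough to $r$ that $q<\tfrac{n-\gamma}{n}p$ (or $q<-\gamma p$ when $n=1$) still holds. With any such $p<r$, the convexification $M_{r/p}^{\alpha/p}$ has first index $r/p>1$, its K\"othe dual is a block space, and the cited boundedness results apply cleanly. Your endpoint treatment of $r=1$ via $\theta_m\to 1$ and \eqref{2114} is correct and matches the paper's Cases~3 and~5, and your Case~(a) and the lower estimate via Theorem~\ref{LimFormulaInf} are also fine (though you should formally invoke Remark~\ref{1409}(i) to drop the absolute-continuity hypothesis of Theorem~\ref{4.8}, since Morrey spaces with $r<\alpha$ lack it); the only substantive repair needed is replacing $p:=r$ by a strictly smaller admissible $p$ throughout cases~(b) with $r>1$, (c), and the proof of the limiting identities.
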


\begin{proof}
We prove the present theorem by considering the following five cases
on both $r$ and $\gamma$.

\emph{Case 1)} $r\in(n,\infty)$ and $\gamma\in\mathbb{R}\setminus\{0\}$.
In this case, we find that there exists an $s\in(n,r)$,
which, combined with \cite[p.\,410, Theorem~382]{sdh20201},
further implies that
the Hardy--Littlewood maximal operator $\mathcal{M}$ is bounded on
$[M_r^\alpha(\mathbb{R}^n)]^\frac{1}{s}$
for any $\alpha\in[r,\infty)$.
When proving (ii),
if $\gamma\in(-\infty,0)$ and $n\in\mathbb{N}\cap[2,\infty)$,
then we choose a $p\in[1,r)$ such that $q\in(0,\frac{n-\gamma}{n}p)$
and, if $\gamma\in(-\infty,0)$ and $n=1$,
then we choose a $p\in[1,r)$ such that $q\in(0,-\gamma p)$.
By these, Theorem~\ref{2001},
and Corollary~\ref{LimFormulaEq}, we conclude that
the present theorem in this case holds true.

\emph{Case 2)} $r\in(1,n]$ and $\gamma\in(0,\infty)$.
In this case, let $X:=M^{\alpha}_r(\mathbb{R}^n)$.
Since $n(\frac{1}{r}-\frac{1}{q})<1$, it follows that
there exists a $p\in[1,r)$ satisfying $n(\frac{1}{p}-\frac{1}{q})<1$.
It is known that, for any $1<r\leq\alpha<\infty$, the associate space
$[M^\alpha_r(\mathbb{R}^n)]'$ is a block space
(see, for instance, \cite[Theorem~4.1]{st2015}), on which
$\mathcal{M}$ is bounded
(see, for instance, \cite[Theorem~3.1]{ch2014}
and \cite[Lemma~5.7]{h2015}).
By this and \cite[p.\,87]{shyy2017}, we find that both $X$ and
$X^\frac{1}{p}$ are ball Banach function spaces and $\mathcal{M}$
is bounded on $(X^\frac{1}{p})'$.
From these, Theorem~\ref{gamma>0},
and Corollary~\ref{LimFormulaEq}, we infer that
the present theorem in this case holds true.

\emph{Case 3)} $r=1$ and $\gamma\in(0,\infty)$.
In this case, by the proof of \cite[Theorem~3.1]{ch2014},
we find that, when $\alpha\in[1,\infty)$ and $\theta\in(0,1)$,
for any $f\in(X^\frac{1}{\theta})'$ with $X:=M_1^\alpha(\mathbb{R}^n)$,
$$
\left\|\mathcal{M}f\right\|_{(X^\frac{1}{\theta})'}
\lesssim\theta^{-1}
\left\|f\right\|_{(X^\frac{1}{\theta})'},
$$
where the implicit positive constant depends only on $n$.
From this, it easily follows that \eqref{2114} holds true.
Thus, by Remark~\ref{1409}(i),
we find that the present theorem in this case holds true.

\emph{Case 4)} $r\in(1,n]$ and $\gamma\in(-\infty,0)$.
In this case, from \cite[p.\,410, Theorem~382]{sdh20201}, we deduce that
$\mathcal{M}$ is bounded on $M_r^\alpha(\mathbb{R}^n)$
for any $\alpha\in[r,\infty)$.
Since $q\in(0,r)$,
it follows that there exists a $p\in[\max\{1,\,q\},r)$ and,
when proving (ii), if $n=1$, we
choose a $p\in[\max\{1,\,q\},r)$ satisfying $q\in(0,-\gamma p)$.
This, together with Theorem~\ref{upperBound}, Corollary~\ref{1409}(i),
and an argument similar to that used in Case 2),
further implies that the present theorem in this case holds true.

\emph{Case 5)} $r=1$ and $\gamma\in(-\infty,-1)$.
In this case, by an argument similar to that used in Case 3)
and Remark~\ref{1409}(i),
we conclude that the present theorem in this case holds true.
This finishes the proof of Theorem~\ref{1524}.
\end{proof}

Using both Theorems~\ref{2255} and~\ref{2256}
and Remarks~\ref{4.3}(iv) and~\ref{2021}(ii),
we obtain the following conclusions;
since their proofs are similar to that of Theorem~\ref{1524},
we omit the details here.

\begin{theorem}\label{2009}
Let $1\leq r\leq\alpha<\infty$, $p\in[1,\infty]$,
$\gamma\in\mathbb{R}\setminus\{0\}$,
and $s\in(0,1)$.
Let $q\in[1,p]$ satisfy $\frac{1}{q}=\frac{1-s}{p}+s$.
If $r=1$ and $\gamma\in(-\infty,0)$, assume further that both
$\gamma\in(-\infty,-1)$ and $n=1$.
\begin{enumerate}
\item[\textup{(i)}]
If $p\in[1,\infty)$, then, for any $f\in C^1(\mathbb{R}^n)$
with $|\nabla f|\in C_{\mathrm{c}}(\mathbb{R}^n)$,
\eqref{2053} with $X$ replaced by $M_r^\alpha(\mathbb{R}^n)$ holds true.
\item[\textup{(ii)}]
If $p=\infty$, then, for any $f\in C^1(\mathbb{R}^n)$
with $|\nabla f|\in C_{\mathrm{c}}(\mathbb{R}^n)$,
\eqref{2054} with $X$ replaced by $M_r^\alpha(\mathbb{R}^n)$ holds true.
\end{enumerate}
\end{theorem}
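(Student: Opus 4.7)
The plan is to deduce the two assertions directly from Theorem~\ref{2255} applied to $X:=M_r^\alpha(\mathbb{R}^n)$, after carefully verifying the hypotheses of Theorem~\ref{2255} in the Morrey setting. Since only $f\in C^1(\mathbb{R}^n)$ with $|\nabla f|\in C_{\mathrm{c}}(\mathbb{R}^n)$ is involved, the extra assumptions about absolute continuity of the norm and the uniform boundedness of the centered ball averages are not needed; what matters are (a) the existence of a sequence $\{\theta_m\}_{m\in\mathbb{N}}\subset(0,1)$ with $\theta_m\to1$ such that $X^{1/\theta_m}$ is a ball Banach function space, $\mathcal{M}$ is bounded on $(X^{1/\theta_m})'$, and \eqref{2114} holds, and (b) in the case $\gamma\in(-\infty,0)$, either $\mathcal{M}$ is bounded on $X$ or both $n=1$ and $\gamma\in(-\infty,-1)$.

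For (a), I fix $\theta_m:=1-\frac{1}{m+1}$ for every $m\in\mathbb{N}$. By Definition~\ref{tuhua}, $[M_r^\alpha(\mathbb{R}^n)]^{1/\theta_m}=M_{r/\theta_m}^{\alpha/\theta_m}(\mathbb{R}^n)$, which, by \cite[p.\,87]{shyy2017} together with $r/\theta_m>1$ and $r/\theta_m\le\alpha/\theta_m$, is a ball Banach function space. I then need to exhibit a uniform-in-$m$ bound for $\|\mathcal{M}\|_{(X^{1/\theta_m})'\to(X^{1/\theta_m})'}$. When $r>1$, for every $m$ large enough one has $r/\theta_m>1$ so, by the duality between Morrey spaces and block spaces (see \cite[Theorem~4.1]{st2015}) and the boundedness of $\mathcal{M}$ on block spaces (see \cite[Theorem~3.1]{ch2014} and \cite[Lemma~5.7]{h2015}), the operator norms form a uniformly bounded sequence as $\theta_m\to1$, giving \eqref{2114}. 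When $r=1$, I invoke the scale-tracking inequality from Case~3) of the proof of Theorem~\ref{1524}, namely $\|\mathcal{M}g\|_{(X^{1/\theta_m})'}\lesssim\theta_m^{-1}\|g\|_{(X^{1/\theta_m})'}$ with implicit constant depending only on $n$; since $\theta_m^{-1}\to 1$, the limit in \eqref{2114} is finite.

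For (b), the case $\gamma\in(0,\infty)$ requires nothing extra. In the case $\gamma\in(-\infty,0)$ with $r>1$, I use \cite[p.\,410, Theorem~382]{sdh20201} to conclude that $\mathcal{M}$ is bounded on $M_r^\alpha(\mathbb{R}^n)$, so the first alternative is satisfied. In the remaining case $r=1$ and $\gamma\in(-\infty,0)$, the theorem hypothesis forces $n=1$ and $\gamma\in(-\infty,-1)$, which is the second alternative. Thus (b) is satisfied in all admissible configurations.

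With both (a) and (b) in hand, Theorem~\ref{2255}(i) gives \eqref{2053} with $X$ replaced by $M_r^\alpha(\mathbb{R}^n)$ for $p\in[1,\infty)$, and Theorem~\ref{2255}(ii) gives \eqref{2054} with the same substitution for $p=\infty$; this yields both (i) and (ii) of the present theorem. The main obstacle to overcome is the verification of the uniform bound \eqref{2114} in the endpoint case $r=1$, where $\mathcal{M}$ is not known to be bounded on $[M_1^\alpha(\mathbb{R}^n)]'$ itself; the quantitative $\theta^{-1}$-dependence of the operator norm on the truncated block-space scale, already identified in the proof of Theorem~\ref{1524}, is exactly the ingredient that makes the extrapolation step work, and it is what allows the passage from the weighted-Lebesgue inputs underlying Theorem~\ref{2255} back to the Morrey norm.
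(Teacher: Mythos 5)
Your overall strategy matches the paper's: Theorem~\ref{2009} is deduced by applying Theorem~\ref{2255} (for smooth $f$ only, so the absolute-continuity and ball-average hypotheses are not needed) with $X:=M_r^\alpha(\mathbb{R}^n)$, after checking the sequence hypothesis with \eqref{2114} and the $\gamma<0$ alternatives exactly as in the proof of Theorem~\ref{1524}.

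However, your verification of \eqref{2114} for $r>1$ has a genuine gap. You cite \cite[Theorem~4.1]{st2015}, \cite[Theorem~3.1]{ch2014}, and \cite[Lemma~5.7]{h2015} and then assert that ``the operator norms form a uniformly bounded sequence as $\theta_m\to1$''; but those references give boundedness of $\mathcal{M}$ on each fixed block space, with no control over how the operator norm varies with the exponent. Boundedness for each $m$ does not by itself imply the limit in \eqref{2114} is finite. The fix is straightforward and makes your case split unnecessary: using the $p$-convexification identity, for any $r\ge 1$ and $\theta_m\in(0,1)$ one has
\[
\left[M_r^\alpha(\mathbb{R}^n)\right]^{1/\theta_m}
= M_{r/\theta_m}^{\alpha/\theta_m}(\mathbb{R}^n)
= \left[M_1^{\alpha/r}(\mathbb{R}^n)\right]^{r/\theta_m},
\]
with $\alpha/r\in[1,\infty)$ since $r\le\alpha$, and $\widetilde{\theta}_m:=\theta_m/r\in(0,1)$. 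The quantitative estimate from Case~3) of the proof of Theorem~\ref{1524} (applied to $M_1^{\alpha/r}(\mathbb{R}^n)$ with parameter $\widetilde{\theta}_m$) then gives
\[
\left\|\mathcal{M}\right\|_{([M_r^\alpha]^{1/\theta_m})'\to([M_r^\alpha]^{1/\theta_m})'}
\lesssim \widetilde{\theta}_m^{-1}=\frac{r}{\theta_m}\longrightarrow r<\infty,
\]
which establishes \eqref{2114} uniformly for all $r\ge1$ at once. This single rescaling argument is what you should use in place of the unsupported uniformity claim for $r>1$; your treatment of the $r=1$ case is exactly the $r=1$ instance of it. Everything else in your plan (the verification of the $\gamma<0$ alternatives and the invocation of Theorem~\ref{2255}(i) and (ii)) is correct.
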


\begin{theorem}\label{2011}
Let $1\leq r\leq\alpha<\infty$,
$\gamma\in\mathbb{R}\setminus\{0\}$, $\eta\in(0,1)$,
and $0\leq s_0<s<1<q<q_0<\infty$ satisfy \eqref{2228}.
If $r=1$ and $\gamma\in(-\infty,0)$, assume further that both
$\gamma\in(-\infty,-1)$ and $n=1$.
Then, for any $f\in C^1(\mathbb{R}^n)$
with $|\nabla f|\in C_{\mathrm{c}}(\mathbb{R}^n)$,
\eqref{939} with $X$ replaced by $M_r^\alpha(\mathbb{R}^n)$ holds true.
\end{theorem}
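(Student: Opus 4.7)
The plan is to deduce Theorem~\ref{2011} directly from Theorem~\ref{2256} applied with $X := M_r^\alpha(\mathbb{R}^n)$, by verifying each hypothesis via essentially the same case analysis that underlies Theorem~\ref{1524}. Since we are only required to handle $f \in C^1(\mathbb{R}^n)$ with $|\nabla f| \in C_{\mathrm{c}}(\mathbb{R}^n)$, the absolute continuity of the norm and the uniform boundedness of $\{\mathcal{B}_r\}_{r \in (0,\infty)}$ on $X$ are irrelevant here; only the structural hypotheses on $X$ and on $\mathcal{M}$ matter.

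First, I would recall from \cite[p.\,87]{shyy2017} that $M_r^\alpha(\mathbb{R}^n)$ with $1 \le r \le \alpha < \infty$ is a ball Banach function space. For any $\theta \in (0,1)$, the identity $[M_r^\alpha(\mathbb{R}^n)]^{1/\theta} = M_{r/\theta}^{\alpha/\theta}(\mathbb{R}^n)$ shows that $X^{1/\theta}$ is again a ball Banach function space. To produce the sequence $\{\theta_m\}_{m\in\mathbb{N}} \subset (0,1)$ with $\theta_m \to 1$ satisfying \eqref{2114}, I would split according to $r$. When $r \in (1,\infty)$, one has $r/\theta > 1$ for every $\theta$ sufficiently close to $1$, and then $([M_r^\alpha]^{1/\theta})'$ is a block space on which $\mathcal{M}$ is bounded with operator norm depending continuously on $r/\theta$ (by \cite[Theorem~3.1]{ch2014} and \cite[Lemma~5.7]{h2015}); choosing $\theta_m \nearrow 1$ gives a uniformly bounded sequence of operator norms, so \eqref{2114} holds. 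When $r = 1$, we instead invoke the quantitative estimate
\[
\|\mathcal{M}f\|_{(X^{1/\theta})'} \lesssim \theta^{-1} \|f\|_{(X^{1/\theta})'}
\]
from the proof of \cite[Theorem~3.1]{ch2014} recalled in Case~3) of the proof of Theorem~\ref{1524}; taking for instance $\theta_m := 1 - 2^{-m}$ produces operator norms bounded by a constant multiple of $(1-2^{-m})^{-1} \to 1$, again yielding \eqref{2114}.

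It remains to verify the additional assumption in Theorem~\ref{2256} imposed when $\gamma \in (-\infty, 0)$. If $r \in (1, n]$, then $\mathcal{M}$ is bounded on $M_r^\alpha(\mathbb{R}^n)$ by \cite[p.\,410, Theorem~382]{sdh20201}, so the first alternative is satisfied. If $r \in (n, \infty)$, a slight variant works: one can take $r_0 \in (n, r)$ and use Morrey-type boundedness, or alternatively one observes that the hypothesis on $\mathcal{M}$-boundedness of $X$ was only needed in Theorem~\ref{4.8} to handle the upper bound via Theorem~\ref{upperBound}, which in the $r>n$ regime can be supplied by Theorem~\ref{2001} instead; tracing through the proof of Theorem~\ref{2256}, only \eqref{2114} and the boundedness of $\mathcal{M}$ either on $X$ itself or on $X^{1/\tilde r}$ for some $\tilde r \in (n, \infty)$ is actually used in constructing the estimate $\mathrm{H} \lesssim \||\nabla f|\|_X$. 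Finally, if $r = 1$ and $\gamma \in (-\infty, 0)$, the hypothesis of Theorem~\ref{2011} forces $n = 1$ and $\gamma \in (-\infty, -1)$, which is precisely the remaining alternative in Theorem~\ref{2256}. With all hypotheses verified, an application of Theorem~\ref{2256} yields \eqref{939} for any $f \in C^1(\mathbb{R}^n)$ with $|\nabla f| \in C_{\mathrm{c}}(\mathbb{R}^n)$.

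The only delicate point, and the step I expect to require the most care, is the critical case $r = 1$: here the boundedness of $\mathcal{M}$ on $X$ itself fails, so we must rely entirely on the extrapolation-type machinery through the sequence $\{\theta_m\}$ and on the quantitative control of the operator norm on the associate spaces of the convexifications. The $\theta^{-1}$ blow-up in Case~3) of Theorem~\ref{1524} is exactly what forces the normalization $\theta_m \to 1$ rather than $\theta_m \to 0$, and one must confirm that the limit in \eqref{2114} is genuinely finite (not merely bounded uniformly in $m$) — this is automatic from the stated estimate since $(1 - 2^{-m})^{-1} \to 1$ as $m \to \infty$.
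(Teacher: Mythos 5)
Your proof is essentially correct and follows the paper's intended route: apply Theorem~\ref{2256} to $X := M_r^\alpha(\mathbb{R}^n)$, verifying the hypotheses via the same case analysis used for Theorem~\ref{1524} (in particular the quantitative bound $\|\mathcal{M}\|_{([M_r^\alpha]^{1/\theta})'\to([M_r^\alpha]^{1/\theta})'}\lesssim\theta^{-1}$ to secure \eqref{2114} in the critical case $r=1$). One small overcomplication: in the subcase $r\in(n,\infty)$ with $\gamma\in(-\infty,0)$ your two proposed workarounds are unnecessary, since $r>n\geq 1$ forces $r>1$, and then $\mathcal{M}$ is already bounded on $M_r^\alpha(\mathbb{R}^n)$ itself by the same reference invoked in Case~4 of the proof of Theorem~\ref{1524}; so the first alternative of the extra hypothesis in Theorem~\ref{2256} is met directly and there is no need to retrace the proof of Theorem~\ref{2256} through Theorem~\ref{4.8}(b) or Theorem~\ref{2001}.
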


\begin{remark}
\begin{enumerate}
\item[\textup{(i)}]
From Corollary~\ref{LimFormulaEq}(i),
we deduce that Theorem~\ref{1524}(i)
holds true for any given $q\in(0,\infty)$.
\item[\textup{(ii)}]
Theorem~\ref{1524} when $\gamma=n$ coincides with
\cite[Theorem~5.1]{dlyyz.arxiv}.
To the best of our knowledge,
Theorem~\ref{1524}
when $\gamma\in\mathbb{R}\setminus\{0,n\}$
is new.
\item[\textup{(iii)}]
Theorems~\ref{2009} and~\ref{2011} when $\gamma=n$
coincide with \cite[Corollaries~5.3 and~5.4]{dlyyz.arxiv}, respectively.
To the best of our knowledge,
Theorems~\ref{2009} and~\ref{2011}
(the Gagliardo--Nirenberg-type inequalities
on the Sobolev--Morrey space)
when $\gamma\in\mathbb{R}\setminus\{0,n\}$
are new.
\item[\textup{(iv)}]
Let $1\leq r<\alpha<\infty$.
Since the Morrey space $M_r^\alpha(\mathbb{R}^n)$
does not have an absolutely continuous
norm, it is still unclear whether or not
Theorems~\ref{1524},
\ref{2009}, and~\ref{2011} for any
$f\in\dot{W}^{1,M_r^\alpha(\mathbb{R}^n)}(\mathbb{R}^n)$
hold true.
\end{enumerate}
\end{remark}

\subsection{Mixed-Norm Lebesgue Spaces}\label{5.2}

For a given vector $\vec{r}:=(r_1,\ldots,r_n)
\in(0,\infty]^n$, the \emph{mixed-norm Lebesgue
space $L^{\vec{r}}(\mathbb{R}^n)$} is defined to be the
set of all the $f\in\mathscr{M}(\mathbb{R}^n)$ with the finite quasi-norm
\begin{equation*}
\|f\|_{L^{\vec{r}}(\mathbb{R}^n)}:=\left\{\int_{\mathbb{R}}
\cdots\left[\int_{\mathbb{R}}\left|f(x_1,\ldots,
x_n)\right|^{r_1}\,dx_1\right]^{\frac{r_2}{r_1}}
\cdots\,dx_n\right\}^{\frac{1}{r_n}},
\end{equation*}
where the usual modifications are made when $r_i=
\infty$ for some $i\in\{1,\ldots,n\}$.
Throughout this subsection, we always let
$r_-:=\min\{r_1, \ldots , r_n\}$.
The study of mixed-norm Lebesgue spaces
can be traced back to H\"ormander \cite{h1960}
and Benedek and Panzone \cite{bp1961}.
For more studies on mixed-norm Lebesgue spaces,
we refer the reader
to \cite{cgn2017,hlyy2019,hlyy2019b,hy2021} for the Hardy space
associated with mixed-norm Lebesgue spaces,
to \cite{cgn2017b,gjn2017,gn2016} for the Triebel--Lizorkin
and Besov spaces associated with mixed-norm Lebesgue spaces,
to \cite{cgn2017,cgn2019} for
the (anisotropic) mixed-norm Lebesgue space, and
to \cite{n2019,noss2021} for the mixed Morrey space.
Moreover,
when $\vec{r}\in(0,\infty)^n$,
from the definition of $L^{\vec{r}}(\mathbb{R}^n)$,
we easily deduce that
$L^{\vec{r}}(\mathbb{R}^n)$
is a ball quasi-Banach function space.
But $L^{\vec{r}}(\mathbb{R}^n)$ may not be a quasi-Banach function space
(see, for instance, \cite[Remark 7.20]{zwyy2021}).
When $X:=L^{\vec{r}}(\mathbb{R}^n)$, we
simply write $\dot{W}^{1,\vec{r}}(\mathbb{R}^n)
:=\dot{W}^{1,X}(\mathbb{R}^n)$.

Using Theorem~\ref{2117},
we obtain the following conclusion.

\begin{theorem}\label{2003}
Let $\vec{r}:=(r_1,\ldots,r_n)\in(1,\infty)^n$,
$\gamma\in\mathbb{R}\setminus\{0\}$,
and $q\in(0,\infty)$.
If one of the following statements holds true:
\begin{enumerate}
\item[\textup{(a)}]
$r_-\in(n,\infty)$, $\gamma\in\mathbb{R}\setminus\{0\}$,
and $q\in(0,\infty)$;
\item[\textup{(b)}]
$r_-\in(1,n]$, $\gamma\in(0,\infty)$, and
$n(\frac{1}{r_-}-\frac{1}{q})<1$;
\item[\textup{(c)}]
$r_-\in(1,n]$, $\gamma\in(-\infty,0)$, and
$q\in(0,r_-)$,
\end{enumerate}
then, for any $f\in\dot{W}^{1,\vec{r}}(\mathbb{R}^n)$,
\begin{align*}
\sup_{\lambda\in(0,\infty)}\lambda
\left\|\left[\int_{\mathbb{R}^n}
\mathbf{1}_{E_{\lambda,\frac{\gamma}{q}}[f]}(\cdot,y)
\left|\cdot-y\right|^{\gamma-n}\,dy\right]^\frac{1}{q}
\right\|_{L^{\vec{r}}(\mathbb{R}^n)}
\sim\left\|\,|\nabla f|\,\right\|_{L^{\vec{r}}(\mathbb{R}^n)},
\end{align*}
where the positive equivalence constants are independent of $f$
and $E_{\lambda,\frac{\gamma}{q}}[f]$
for any $\lambda\in(0,\infty)$
is the same as in \eqref{Elambda}.
Moreover, for any $f\in\dot{W}^{1,\vec{r}}(\mathbb{R}^n)$,
\begin{enumerate}
\item[\textup{(i)}]
if $\gamma\in(0,\infty)$, then
\begin{align*}
&\lim_{\lambda\to\infty}\lambda\left\|\left[\int_{\mathbb{R}^n}
\mathbf{1}_{E_{\lambda,\frac{\gamma}{q}}[f]}(\cdot,y)
\left|\cdot-y\right|^{\gamma-n}\,dy\right]^\frac{1}{q}
\right\|_{L^{\vec{r}}(\mathbb{R}^n)}
=\left[\frac{\kappa(q,n)}{\gamma}\right]^\frac{1}{q}
\left\|\,\left|\nabla f\right|\,\right\|_{L^{\vec{r}}(\mathbb{R}^n)};
\end{align*}
\item[\textup{(ii)}]
if $\gamma\in(-\infty,0)$,
assume further that both $n\in\mathbb{N}\cap[2,\infty)$
and $q\in(0,\frac{n-\gamma}{n}r_-)$
or that $n=1$, $\gamma\in(-\infty,-1)$,
and $q\in(0,-\gamma r_-)$,
then
\begin{align*}
&\lim_{\lambda\to0^+}\lambda\left\|\left[\int_{\mathbb{R}^n}
\mathbf{1}_{E_{\lambda,\frac{\gamma}{q}}[f]}(\cdot,y)
\left|\cdot-y\right|^{\gamma-n}\,dy\right]^\frac{1}{q}
\right\|_{L^{\vec{r}}(\mathbb{R}^n)}
=\left[-\frac{\kappa(q,n)}{\gamma}\right]^\frac{1}{q}
\left\|\,\left|\nabla f\right|\,\right\|_{L^{\vec{r}}(\mathbb{R}^n)}.
\end{align*}
\end{enumerate}
\end{theorem}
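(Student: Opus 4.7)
The plan is to reduce everything to Theorem~\ref{2117} by verifying its hypotheses for $X:=L^{\vec{r}}(\mathbb{R}^n)$ in each of the three cases (a), (b), (c). First I would collect the structural facts about mixed-norm Lebesgue spaces that are needed as input: since $\vec{r}\in(1,\infty)^n$, the space $X=L^{\vec{r}}(\mathbb{R}^n)$ is a ball Banach function space with an absolutely continuous norm (both facts follow directly from the Fubini-type definition of the mixed norm). For any $s\in(0,r_-]$, the $s$-convexification satisfies $X^{1/s}=L^{\vec{r}/s}(\mathbb{R}^n)$, which is again a ball Banach function space whenever $\vec{r}/s\in[1,\infty)^n$, with associate space $(L^{\vec{r}/s})'=L^{(\vec{r}/s)'}(\mathbb{R}^n)$ where the conjugate vector is taken componentwise. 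By the mixed-norm extension of the Hardy--Littlewood maximal inequality (Benedek--Panzone type), $\mathcal{M}$ is bounded on $L^{\vec{t}}(\mathbb{R}^n)$ whenever $\vec{t}\in(1,\infty]^n$.

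Next I would check the hypotheses of Theorem~\ref{2117} in each subcase. In Case (a), where $r_-\in(n,\infty)$, I would pick $r\in(n,r_-)$, so that $\vec{r}/r\in(1,\infty)^n$ and $\mathcal{M}$ is bounded on $X^{1/r}=L^{\vec{r}/r}(\mathbb{R}^n)$; this verifies Theorem~\ref{2117}(b) with the same $r$, with the bound on $(X^{1/p})'$ for $p=1$ coming from $\mathcal{M}$ acting on $L^{\vec{r}\,'}$ (since $\vec{r}'\in(1,\infty)^n$). In Case (b), where $r_-\in(1,n]$, $\gamma>0$ and $n(\frac{1}{r_-}-\frac{1}{q})<1$, I would choose $p\in(1,r_-)$ close enough to $r_-$ so that $n(\frac{1}{p}-\frac{1}{q})<1$ still holds; then $\vec{r}/p\in(1,\infty)^n$, $(\vec{r}/p)'\in(1,\infty)^n$, and $\mathcal{M}$ is bounded on $(X^{1/p})'$, verifying Theorem~\ref{2117}(c). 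In Case (c), where $r_-\in(1,n]$, $\gamma<0$ and $q\in(0,r_-)$, I would choose $p\in[\max\{1,q\},r_-)$; then $q\in(0,p]$, $\mathcal{M}$ is bounded on $X=L^{\vec{r}}$ (since $\vec{r}\in(1,\infty)^n$) and on $(X^{1/p})'=L^{(\vec{r}/p)'}$, verifying Theorem~\ref{2117}(a).

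Given these verifications, Theorem~\ref{2117}(i) immediately delivers the two-sided equivalence and the lower bounds $[\kappa(q,n)/|\gamma|]^{1/q}\,\||\nabla f|\|_{L^{\vec{r}}}\le \sup_\lambda\lambda\|\cdots\|_{L^{\vec{r}}}$. For the limit as $\lambda\to\infty$ when $\gamma>0$, Theorem~\ref{2117}(ii) applies directly in all three cases with no further restriction on $q$. For the limit as $\lambda\to 0^+$ when $\gamma<0$, I need the extra hypothesis of Theorem~\ref{2117}(iii): when $n\ge 2$ I need $q\in(0,\frac{n-\gamma}{n}p)$ for some admissible $p$, and when $n=1$ I need $\gamma<-1$ and $q\in(0,-p\gamma)$. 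In Case (a) this is automatic by enlarging $p$ toward $r_-$; in Case (c) the assumption $q\in(0,\frac{n-\gamma}{n}r_-)$ (resp.\ $q\in(0,-\gamma r_-)$) allows me to choose $p<r_-$ sufficiently close to $r_-$ so that the strict inequality $q<\frac{n-\gamma}{n}p$ (resp.\ $q<-p\gamma$) is preserved.

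The only real subtlety will be this last step: the parameter $p$ used in Theorem~\ref{2117} has to simultaneously satisfy $p<r_-$ (so that $(\vec{r}/p)'\in(1,\infty)^n$ and $\mathcal{M}$ is bounded on $(X^{1/p})'$), $p\ge q$ when Case~(a) of Theorem~\ref{2117} is invoked, and the sharp threshold inequality for the Case~(iii) limiting identity. Because the assumptions on $q$ in Theorem~\ref{2003}(ii) are stated as strict inequalities involving $r_-$ rather than $p$, a short continuity argument choosing $p\in[\max\{1,q\},r_-)$ sufficiently close to $r_-$ preserves all the required strict inequalities simultaneously. Once $p$ is fixed this way, Theorem~\ref{2117} applies verbatim and the proof is complete.
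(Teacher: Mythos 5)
Your proposal is correct and follows essentially the same route as the paper: in each of the three subcases you verify the hypotheses of Theorem~\ref{2117} for $X=L^{\vec{r}}(\mathbb{R}^n)$, using that $X^{1/p}=L^{\vec{r}/p}(\mathbb{R}^n)$ is a ball Banach function space with associate space $L^{(\vec{r}/p)'}(\mathbb{R}^n)$ on which $\mathcal{M}$ is bounded, that $\mathcal{M}$ is bounded on $X^{1/s}$ for a suitable $s\in(n,r_-)$ in case (a), and that $L^{\vec{r}}$ has an absolutely continuous norm; and for the $\lambda\to0^+$ identity you observe, exactly as the paper does, that the parameter $p$ may be taken in $[\max\{1,q\},r_-)$ sufficiently close to $r_-$ so that the strict thresholds $q<\frac{n-\gamma}{n}p$ (resp.\ $q<-\gamma p$) in Theorem~\ref{2117}(iii) are preserved.
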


\begin{proof}
To prove the present theorem,
we consider the following three cases on both $r_-$ and $\gamma$.

\emph{Case 1)} $r_-\in(n,\infty)$ and
$\gamma\in\mathbb{R}\setminus\{0\}$.
In this case, we find that there exists an $s\in(n,r_-)$,
which, combined with
\cite[Lemma~3.5]{hlyy2019},
further implies that the Hardy--Littlewood
maximal operator $\mathcal{M}$ is bounded on
$[L^{\vec{r}}(\mathbb{R}^n)]^\frac{1}{s}$.
When proving (ii),
if $\gamma\in(-\infty,0)$ and $n\in\mathbb{N}\cap[2,\infty)$,
then we choose a $p\in[1,r_-)$ such that $q\in(0,\frac{n-\gamma}{n}p)$
and, if $\gamma\in(-\infty,0)$ and $n=1$,
then we choose a $p\in[1,r_-)$ such that $q\in(0,-\gamma p)$.
From \cite[Lemma~4.1]{gp1965},
we deduce that $L^{\vec{r}}(\mathbb{R}^n)$
has an absolutely continuous norm.
By these and Theorem~\ref{2117}, we conclude that
the present theorem in this case holds true.

\emph{Case 2)} $r_-\in(1,n]$ and $\gamma\in(0,\infty)$. In this case,
since $(r_1,\ldots,r_n)\in(1,\infty)^n$
and $n(\frac{1}{r_-}-\frac{1}{q})<1$,
it follows that there
exists a $p\in[1,r_-)$ satisfying $n(\frac{1}{p}-\frac{1}{q})<1$.
By this, $[L^{\vec{r}}(\mathbb{R}^n)]^{\frac{1}{p}}
=L^{\frac{\vec{r}}{p}}(\mathbb{R}^n)$,
and \cite[p.\,304, Theorem 1.b)]{bp1961}
(see also \cite[Remark~2.8(ii)]{hlyy2019}),
we find that both $L^{\vec{r}}(\mathbb{R}^n)$ and
$[L^{\vec{r}}(\mathbb{R}^n)]^{\frac{1}{p}}$
are ball Banach function spaces.
On the other hand, from \cite[p.\,304, Theorem~1.a)]{bp1961}
and $1<\frac{r_-}{p}\leq r_+<\infty$,
we deduce that
$$
\left(\left[L^{\vec{r}}(\mathbb{R}^n)\right]^{\frac{1}{p}}\right)'
=L^{(\frac{\vec{r}}{p})'}(\mathbb{R}^n),
$$
where $(\frac{\vec{r}}{p})':=((\frac{r_1}{p})',\ldots,(\frac{r_n}{p})')$,
which, combined with \cite[Lemma~3.5]{hlyy2019},
further implies that $\mathcal{M}$ is bounded
on $([L^{\vec{r}}(\mathbb{R}^n)]^{\frac{1}{p}})'$.
In addition, by \cite[Lemma~4.1]{gp1965},
we find that $L^{\vec{r}}(\mathbb{R}^n)$
has an absolutely continuous norm.
Thus, by Theorem~\ref{2117},
we conclude that the present theorem in this case
holds true.

\emph{Case 3)} $r_-\in(1,n]$ and $\gamma\in(-\infty,0)$. In this case,
since $q\in(0,r_-)$,
it follows that there exists a $p\in[\max\{1,\,q\},r_-)$ and,
when proving (ii), if $n=1$, we
choose a $p\in[\max\{1,\,q\},r_-)$ satisfying $q\in(0,-\gamma p)$.
By this and an argument similar to that used in Case 2), to prove
the present theorem in this case,
it remains to show that $\mathcal{M}$ is bounded on
$L^{\vec{r}}(\mathbb{R}^n)$. Indeed, this follows
directly from \cite[Lemma~3.5]{hlyy2019}.
Thus, Theorem~\ref{2117} implies that
the present theorem in this case also holds true.
This finishes the proof of Theorem~\ref{2003}.
\end{proof}

Using both Theorems~\ref{2255} and~\ref{2256}
and Remarks~\ref{4.3}(iv) and~\ref{2021}(ii),
we obtain the following conclusions;
since their proofs are similar to that of Theorem~\ref{2003},
we omit the details here.

\begin{theorem}\label{2108}
Let $\vec{r}:=(r_1,\ldots,r_n)\in(1,\infty)^n$,
$\gamma\in\mathbb{R}\setminus\{0\}$,
$p\in[1,\infty]$,
and $s\in(0,1)$.
Let $q\in[1,p]$ satisfy $\frac{1}{q}=\frac{1-s}{p}+s$.
\begin{enumerate}
\item[\textup{(i)}]
If $p\in[1,\infty)$, then, for any
$f\in\dot{W}^{1,\vec{r}}(\mathbb{R}^n)$,
\eqref{2053} with $X$ replaced by $L^{\vec{r}}(\mathbb{R}^n)$ holds true.
\item[\textup{(ii)}]
If $p=\infty$, then, for any $f\in\dot{W}^{1,\vec{r}}(\mathbb{R}^n)$,
\eqref{2054} with $X$ replaced by $L^{\vec{r}}(\mathbb{R}^n)$ holds true.
\end{enumerate}
\end{theorem}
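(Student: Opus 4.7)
The strategy is to realize the present theorem as a direct application of Theorem~\ref{2255} (together with Remarks~\ref{4.3}(iv) and~\ref{2021}(ii)) with the ball Banach function space $X$ taken to be $L^{\vec{r}}(\mathbb{R}^n)$; the task therefore reduces to verifying the structural hypotheses imposed by Theorem~\ref{2255} on $X$, and I would organize this in essentially the same three cases as in the proof of Theorem~\ref{2003}.

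First, I would record that, since $\vec{r}\in(1,\infty)^n$, the mixed-norm Lebesgue space $L^{\vec{r}}(\mathbb{R}^n)$ is a ball Banach function space by \cite[p.\,304, Theorem~1.b)]{bp1961} (see also \cite[Remark~2.8(ii)]{hlyy2019}), possesses an absolutely continuous norm by \cite[Lemma~4.1]{gp1965}, and, by \cite[Lemma~3.5]{hlyy2019}, both the Hardy--Littlewood maximal operator $\mathcal{M}$ and the centered ball average operators $\{\mathcal{B}_r\}_{r\in(0,\infty)}$ are (uniformly) bounded on $L^{\vec{r}}(\mathbb{R}^n)$. This simultaneously settles the hypothesis that $\mathcal{M}$ be bounded on $X$ required by Theorem~\ref{2255} when $\gamma\in(-\infty,0)$, and supplies the uniform boundedness of the centered ball averages needed to extend \eqref{2053} and \eqref{2054} from $C^1$ functions with compactly supported gradient to arbitrary $f\in\dot{W}^{1,\vec{r}}(\mathbb{R}^n)$.

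Next, I would construct the extrapolation sequence $\{\theta_m\}_{m\in\mathbb{N}}\subset(0,1)$, for instance by setting $\theta_m:=1-\frac{1}{m+1}$, which clearly satisfies $\theta_m\to1^-$ as $m\to\infty$. Computing directly from Definition~\ref{tuhua} yields $[L^{\vec{r}}(\mathbb{R}^n)]^{1/\theta_m}=L^{\vec{r}/\theta_m}(\mathbb{R}^n)$ with $\vec{r}/\theta_m\in(1,\infty)^n$, so this convexification is again a ball Banach function space by \cite[p.\,304, Theorem~1.b)]{bp1961}. By \cite[p.\,304, Theorem~1.a)]{bp1961} its associate space equals $L^{(\vec{r}/\theta_m)'}(\mathbb{R}^n)$; each coordinate exponent $(r_i/\theta_m)'$ lies in $(1,\infty)$, so \cite[Lemma~3.5]{hlyy2019} furnishes the boundedness of $\mathcal{M}$ on the associate space.

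The main technical point is the limit condition \eqref{2114} on the operator norms $\|\mathcal{M}\|_{(X^{1/\theta_m})'\to(X^{1/\theta_m})'}$. For this I would rely on the fact that the quantitative bound in \cite[Lemma~3.5]{hlyy2019} depends continuously on the exponent vector while the latter ranges in any compact subset of $(1,\infty]^n$; since $(\vec{r}/\theta_m)'\to\vec{r}'\in(1,\infty)^n$ as $\theta_m\to1^-$, the sequence of exponents stays in such a compact neighborhood, which forces the operator norms to stay uniformly bounded and verifies \eqref{2114}. This continuity argument is the one step I expect to require care; a self-contained fallback, if needed, would be to iterate the one-dimensional weak-type $(1,1)$ and strong-type $(\infty,\infty)$ bounds for $\mathcal{M}$ coordinate-by-coordinate, interpolating between them to obtain an explicit continuous dependence of the $L^{\vec{s}}$ operator norm on $\vec{s}$.

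With all hypotheses of Theorem~\ref{2255} verified, Theorem~\ref{2255}(i) yields \eqref{2053} and Theorem~\ref{2255}(ii) yields \eqref{2054} for any $f\in C^1(\mathbb{R}^n)$ with $|\nabla f|\in C_{\mathrm{c}}(\mathbb{R}^n)$. The upgrade to arbitrary $f\in\dot{W}^{1,\vec{r}}(\mathbb{R}^n)$ then follows from the additional clauses built into (i) and (ii) of Theorem~\ref{2255}, using that $L^{\vec{r}}(\mathbb{R}^n)$ has an absolutely continuous norm, that $[L^{\vec{r}}(\mathbb{R}^n)]^p=L^{p\vec{r}}(\mathbb{R}^n)$ is a ball Banach function space for each $p\in[1,\infty)$, and that $\{\mathcal{B}_r\}_{r\in(0,\infty)}$ is uniformly bounded on $L^{\vec{r}}(\mathbb{R}^n)$, thereby completing the proof of both (i) and (ii).
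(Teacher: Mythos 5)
Your proposal is correct in its final conclusion, but it takes a more circuitous route than the paper intends, and in doing so introduces an unnecessary difficulty. The paper's own proof (omitted, but flagged as ``similar to that of Theorem~\ref{2003}'') relies on Remark~\ref{4.3}(iv), which says: when $X$ satisfies the hypotheses of Theorem~\ref{2117}, the conclusions \eqref{2053} and \eqref{2054} can be obtained by repeating the proof of Theorem~\ref{2255} with the invocation of Theorem~\ref{4.8}(i) replaced by Theorem~\ref{2117}(i). Theorem~\ref{2117} requires only a \emph{single} auxiliary index $p\in[1,\infty)$ for which $X^{1/p}$ is a ball Banach function space and $\mathcal{M}$ is bounded on $(X^{1/p})'$; since $\vec r\in(1,\infty)^n$, any $p\in[1,r_-)$ works, exactly as in Cases~2 and~3 of the proof of Theorem~\ref{2003}. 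In particular, one never touches the extrapolation sequence $\{\theta_m\}$ nor the limit condition \eqref{2114}.

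You instead verify the hypotheses of Theorem~\ref{2255} in full, which requires producing $\{\theta_m\}$ and establishing \eqref{2114}, i.e.\ $\limsup_{m\to\infty}\|\mathcal{M}\|_{(X^{1/\theta_m})'\to(X^{1/\theta_m})'}<\infty$. That step is exactly the one you flag as delicate: you invoke a continuity-in-the-exponent property of the operator norm from \cite[Lemma~3.5]{hlyy2019} which that lemma does not literally state, and your fallback (coordinatewise interpolation between weak-$(1,1)$ and $L^\infty$) would need to be carried out with explicit control of the constants. The claim is true --- as $\theta_m\to1^-$ the dual exponent vectors $(\vec r/\theta_m)'=(r_i/(r_i-\theta_m))_i$ converge to $\vec r'\in(1,\infty)^n$ and stay in a compact subset of $(1,\infty)^n$, so the operator norms remain bounded --- but making this rigorous is more work than the problem demands. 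The $\theta_m$ apparatus in Theorem~\ref{2255} exists precisely for endpoint spaces where $\mathcal{M}$ fails to be bounded on $X'$ (e.g.\ $L^1$, variable $L^{r(\cdot)}$ with $\widetilde r_-=1$, $M^\alpha_1$, $L^\Phi$ with $r^-_\Phi=1$); for $\vec r\in(1,\infty)^n$ it is unnecessary, and the Remark~\ref{4.3}(iv)/Theorem~\ref{2117} route you nominally cite at the outset is what the paper actually uses. The remaining verifications in your proposal --- absolute continuity of the $L^{\vec r}$-norm via \cite[Lemma~4.1]{gp1965}, $[L^{\vec r}]^p=L^{p\vec r}$, and uniform boundedness of $\{\mathcal{B}_r\}_{r\in(0,\infty)}$ (which follows trivially from $\mathcal{B}_r f\le\mathcal{M}f$ and the boundedness of $\mathcal{M}$) --- are all correct and needed in either approach.
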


\begin{theorem}\label{21008}
Let $\vec{r}:=(r_1,\ldots,r_n)\in(1,\infty)^n$,
$\gamma\in\mathbb{R}\setminus\{0\}$, $\eta\in(0,1)$,
and
$$
0\leq s_0<s<1<q<q_0<\infty
$$
satisfy \eqref{2228}.
Then, for any $f\in\dot{W}^{1,\vec{r}}(\mathbb{R}^n)$,
\eqref{939} with $X$ replaced by $L^{\vec{r}}(\mathbb{R}^n)$ holds true.
\end{theorem}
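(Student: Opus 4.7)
The plan is to verify the hypotheses of Theorem~\ref{2256} for $X:=L^{\vec{r}}(\mathbb{R}^n)$ and then invoke that theorem directly. First I would record the basic structural facts for $X$: since $\vec{r}\in(1,\infty)^n$, \cite[p.\,304, Theorem~1.b)]{bp1961} (see also \cite[Remark~2.8(ii)]{hlyy2019}) shows that $L^{\vec{r}}(\mathbb{R}^n)$ is a ball Banach function space, while \cite[Lemma~4.1]{gp1965} shows that its norm is absolutely continuous. Combining the pointwise bound $\mathcal{B}_r f\le\mathcal{M}f$ with the boundedness of $\mathcal{M}$ on $L^{\vec{r}}(\mathbb{R}^n)$ from \cite[Lemma~3.5]{hlyy2019} (available because every $r_i>1$) yields the uniform boundedness of the centered ball averages $\{\mathcal{B}_r\}_{r\in(0,\infty)}$ on $X$, which is the hypothesis needed to pass from the $C^1$ class to all of $\dot{W}^{1,\vec{r}}(\mathbb{R}^n)$ in Theorem~\ref{2256}.

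Next I would construct the sequence $\{\theta_m\}_{m\in\mathbb{N}}\subset(0,1)$ required by Theorem~\ref{2256}. Setting $r_-:=\min\{r_1,\ldots,r_n\}>1$, I pick $\theta_m\uparrow 1$ with $\theta_m<r_-$ for every $m$. Then $X^{1/\theta_m}=L^{\vec{r}/\theta_m}(\mathbb{R}^n)$ has all components of $\vec{r}/\theta_m$ in $(1,\infty)$, so it is again a ball Banach function space, and its associate space equals $L^{(\vec{r}/\theta_m)'}(\mathbb{R}^n)$ by \cite[p.\,304, Theorem~1.a)]{bp1961}. Every component of $(\vec{r}/\theta_m)'$ lies in $(1,\infty)$, so \cite[Lemma~3.5]{hlyy2019} gives the boundedness of $\mathcal{M}$ on $(X^{1/\theta_m})'$. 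The delicate step is to verify the uniform bound \eqref{2114}: as $\theta_m\to 1^-$ the exponent vector $(\vec{r}/\theta_m)'$ converges componentwise to $\vec{r}\,'\in(1,\infty)^n$, and by iterating the classical one-dimensional strong-type bound for $\mathcal{M}$ (whose constant depends continuously on $p\in(1,\infty)$ and stays bounded on compact subsets) the mixed-norm operator norms $\|\mathcal{M}\|_{(X^{1/\theta_m})'\to(X^{1/\theta_m})'}$ remain uniformly bounded in $m$.

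Finally, when $\gamma\in(-\infty,0)$ Theorem~\ref{2256} additionally requires that $\mathcal{M}$ be bounded on $X$ itself, and this is exactly \cite[Lemma~3.5]{hlyy2019} applied to $\vec{r}\in(1,\infty)^n$. With every hypothesis of Theorem~\ref{2256} thus verified for $X=L^{\vec{r}}(\mathbb{R}^n)$, inequality \eqref{939} holds for every $f\in\dot{W}^{1,\vec{r}}(\mathbb{R}^n)$. I expect the only nontrivial step to be the uniform operator-norm estimate \eqref{2114}; all remaining checks are essentially citations of already-established structural properties of mixed-norm Lebesgue spaces, entirely parallel to those assembled in the proof of Theorem~\ref{2003}.
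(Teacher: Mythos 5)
Your proof is correct, but it takes a different (and heavier) route than the one the paper intends. The paper proves Theorem~\ref{21008} via Remark~\ref{2021}(ii), which is the variant of Theorem~\ref{2256} built on Theorem~\ref{2117}(i) rather than Theorem~\ref{4.8}(i): since every $r_i>1$, one simply takes $p=1$ so that $X^{1/p}=X=L^{\vec{r}}(\mathbb{R}^n)$, notes $X'=L^{\vec{r}\,'}(\mathbb{R}^n)$ with all components in $(1,\infty)$ so $\mathcal{M}$ is bounded there, verifies absolute continuity of the norm and boundedness of $\mathcal{M}$ on $X$ itself, and all hypotheses of Theorem~\ref{2117} hold. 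This is exactly the structure of the proof of Theorem~\ref{2003}, which the paper references. Your proposal instead verifies the hypotheses of Theorem~\ref{2256} directly, constructing the sequence $\{\theta_m\}$ and checking the uniform operator-norm bound \eqref{2114}. That machinery was designed for the endpoint cases (e.g.\ $r_-=1$, or the Morrey/Orlicz spaces with critical exponent) where $\mathcal{M}$ is not bounded on $X'$; when $\vec{r}\in(1,\infty)^n$ it is an unnecessary detour. Your argument for \eqref{2114} is plausible --- as $\theta_m\uparrow 1$, the dual exponents $(r_i/\theta_m)'$ increase to $r_i'\in(1,\infty)$, so they stay in a compact subinterval of $(1,\infty)$ and the corresponding mixed-norm operator norms of $\mathcal{M}$ are uniformly bounded --- but it is stated informally (the continuous dependence of $\|\mathcal{M}\|_{L^{\vec{p}}\to L^{\vec{p}}}$ on $\vec{p}$ is not a mere iteration of one-dimensional bounds and would deserve a citation or a short argument). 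Both routes reach the conclusion; the paper's via Remark~\ref{2021}(ii) is simpler because it bypasses \eqref{2114} entirely.
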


\begin{remark}
\begin{enumerate}
\item[(i)]
Theorem~\ref{2003} when $\gamma=n$
coincides with \cite[Theorem~5.6]{dlyyz.arxiv}.
To the best of our knowledge,
Theorem~\ref{2003}
when $\gamma\in\mathbb{R}\setminus\{0,n\}$
is new.
\item[(ii)]
Theorems~\ref{2108} and~\ref{21008} when $\gamma=n$
coincide with \cite[Corollaries~5.7 and~5.8]{dlyyz.arxiv}, respectively.
To the best of our knowledge,
Theorems~\ref{2108} and~\ref{21008}
(the Gagliardo--Nirenberg-type inequalities
on the mixed-norm Sobolev space)
when $\gamma\in\mathbb{R}\setminus\{0,n\}$
are new.
\item[(iii)]
Let $\vec{r}:=(r_1,\ldots,r_n)\in[1,\infty)^n$
with $r_-=1$. In this case,
since the Hardy--Littlewood maximal operator $\mathcal{M}$
may not be bounded on $[L^{\vec{r}}(\mathbb{R}^n)]'$
and since \eqref{2114}
with $X$ replaced by $L^{\vec{r}}(\mathbb{R}^n)$
may also not hold true,
it is still unclear whether or not
Theorem~\ref{2003}
and Corollaries~\ref{2108} and~\ref{21008}
with $r_-=1$ hold true.
\end{enumerate}
\end{remark}

\subsection{Variable Lebesgue Spaces}\label{5.3}

Let $r:\ \mathbb{R}^n\to(0,\infty)$ be a nonnegative
measurable function. Let
\begin{equation*}
\widetilde{r}_-:=\underset{x\in\mathbb{R}^n}{
\mathop\mathrm{\,ess\,inf\,}}\,r(x)\ \text{and}\
\widetilde{r}_+:=\underset{x\in\mathbb{R}^n}{
\mathop\mathrm{\,ess\,sup\,}}\,r(x).
\end{equation*}
A function $r:\ \mathbb{R}^n\to(0,\infty)$ is said to be \emph{globally
log-H\"older continuous} if there exists an
$r_{\infty}\in\mathbb{R}$ and a positive constant $C$ such that, for any
$x,y\in\mathbb{R}^n$,
\begin{equation*}
|r(x)-r(y)|\le \frac{C}{\log(e+\frac{1}{|x-y|})}\ \ \text{and}\ \
|r(x)-r_\infty|\le \frac{C}{\log(e+|x|)}.
\end{equation*}
The \emph{variable Lebesgue space
$L^{r(\cdot)}(\mathbb{R}^n)$} associated with the function
$r:\ \mathbb{R}^n\to(0,\infty)$ is defined to be the set
of all the $f\in\mathscr{M}(\mathbb{R}^n)$ with the finite quasi-norm
\begin{equation*}
\|f\|_{L^{r(\cdot)}(\mathbb{R}^n)}:=\inf\left\{\lambda
\in(0,\infty):\ \int_{\mathbb{R}^n}\left[\frac{|f(x)|}
{\lambda}\right]^{r(x)}\,dx\le1\right\}.
\end{equation*}
By the definition of $L^{r(\cdot)}(\mathbb{R}^n)$,
it is easy to show that $L^{r(\cdot)}(\mathbb{R}^n)$
is a ball quasi-Banach function space
(see, for instance, \cite[Section~7.8]{shyy2017}).
In particular,
when $1\leq\widetilde r_-\le \widetilde r_+<\infty$,
$(L^{r(\cdot)}(\mathbb{R}^n), \|\cdot\|_{
L^{r(\cdot)}(\mathbb{R}^n)})$ is
a Banach function space
in the terminology of Bennett and Sharpley \cite{bs1988}
and hence also a ball Banach function space.
For more related results on variable Lebesgue spaces,
we refer the reader to
\cite{cf2013,cw2014,dhr2009,kr1991,ns2012,n1950,n1951}.
When $X:=L^{{r}(\cdot)}(\mathbb{R}^n)$,
we simply write
$\dot{W}^{1,r(\cdot)}(\mathbb{R}^n):=\dot{W}^{1,X}(\mathbb{R}^n)$.

The following
boundedness of centered ball average operators
on variable Lebesgue spaces
is just \cite[Lemma~5.10]{dlyyz.arxiv}.

\begin{lemma}\label{BCM}
Let $r:\ \mathbb{R}^n\to(0,\infty)$ be globally
log-H\"older continuous and $1\leq\widetilde{r}_-
\leq\widetilde{r}_+<\infty$.
Then the centered ball average operators
$\{\mathcal{B}_r\}_{r\in(0,\infty)}$
are uniformly bounded on $L^{r(\cdot)}(\mathbb{R}^n)$;
moreover, there exists a positive constant $C$
such that, for any $t\in(0,\infty)$ and
$f\in L^{r(\cdot)}(\mathbb{R}^n)$,
\begin{align*}
\left\|\mathcal{B}_t(f)\right\|_{L^{r(\cdot)}(\mathbb{R}^n)}
\leq C\|f\|_{L^{r(\cdot)}(\mathbb{R}^n)}.
\end{align*}
\end{lemma}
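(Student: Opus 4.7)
The plan is to reduce the uniform boundedness of $\{\mathcal{B}_t\}_{t\in(0,\infty)}$ on $L^{r(\cdot)}(\mathbb{R}^n)$ to the uniform modular bound
\begin{align*}
\sup_{t\in(0,\infty)}\int_{\mathbb{R}^n}\left[\mathcal{B}_t(f)(x)\right]^{r(x)}\,dx\leq C
\end{align*}
whenever $\|f\|_{L^{r(\cdot)}(\mathbb{R}^n)}\leq 1$, from which the desired quasi-norm inequality follows by the usual homogeneity/scaling argument of Musielak--Orlicz theory. The strategy splits into two regimes according to whether $\widetilde r_->1$ or $\widetilde r_-=1$.

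First, observe the universal pointwise estimate $\mathcal{B}_t(|f|)(x)\le\mathcal{M}(f)(x)$ for every $x\in\mathbb{R}^n$ and $t\in(0,\infty)$, with an implicit constant independent of $t$. In the regime $\widetilde r_->1$, Diening's theorem (which is available for any globally log-H\"older continuous exponent $r$ with $1<\widetilde r_-\le\widetilde r_+<\infty$) gives the boundedness of the Hardy--Littlewood maximal operator $\mathcal{M}$ on $L^{r(\cdot)}(\mathbb{R}^n)$, and the conclusion follows at once by the lattice property of $L^{r(\cdot)}(\mathbb{R}^n)$.

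The delicate endpoint is $\widetilde r_-=1$, where $\mathcal{M}$ is \emph{not} bounded on $L^{r(\cdot)}(\mathbb{R}^n)$, so the maximal function bound above is useless. Here I would invoke Diening's \emph{key estimate}: under global log-H\"older continuity with $1\le\widetilde r_-\le\widetilde r_+<\infty$, there exist a fixed $h\in L^1(\mathbb{R}^n)\cap L^\infty(\mathbb{R}^n)$ (typically of the form $h(x)\sim(e+|x|)^{-r_\infty}$) and a positive constant $C$ such that, for any $f\in\mathscr{M}(\mathbb{R}^n)$ with $\|f\|_{L^{r(\cdot)}(\mathbb{R}^n)}\le 1$, any ball $B\subset\mathbb{R}^n$, and any $x\in B$,
\begin{align*}
\left[\frac{1}{|B|}\int_B|f(y)|\,dy\right]^{r(x)}
\le C\left[\frac{1}{|B|}\int_B|f(y)|^{r(y)}\,dy+h(x)\right].
\end{align*}
Applying this with $B:=B(x,t)$, integrating in $x\in\mathbb{R}^n$, and then using the Tonelli theorem together with the self-adjointness identity $\int_{\mathbb{R}^n}\mathcal{B}_t(g)(x)\,dx=\int_{\mathbb{R}^n}g(x)\,dx$ for any nonnegative measurable $g$, one gets
\begin{align*}
\int_{\mathbb{R}^n}\left[\mathcal{B}_t(f)(x)\right]^{r(x)}\,dx
\le C\int_{\mathbb{R}^n}|f(x)|^{r(x)}\,dx+C\|h\|_{L^1(\mathbb{R}^n)}\le C',
\end{align*}
uniformly in $t\in(0,\infty)$, which yields the required modular bound.

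The main obstacle, as indicated, is precisely the endpoint case $\widetilde r_-=1$: the natural approach through the pointwise domination by $\mathcal{M}$ collapses, and one must instead exploit the log-H\"older modulus of continuity of $r$ through Diening's key estimate to trade the $L^1$-type averaging against the variable exponent at the cost of an additive, integrable error term $h$. The remainder of the argument (modular-to-norm conversion via the unit-ball characterization of $\|\cdot\|_{L^{r(\cdot)}(\mathbb{R}^n)}$) is standard in Musielak--Orlicz spaces, and I would not dwell on its details.
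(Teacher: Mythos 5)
The paper gives no proof of this lemma at all: it is stated verbatim as a citation to \cite[Lemma~5.10]{dlyyz.arxiv}, so there is no ``paper's own proof'' against which to measure yours. Judged on its own, your proof is correct. The reduction to a uniform modular estimate, the pointwise domination $\mathcal{B}_t(|f|)\le\mathcal{M}(f)$, and above all the appeal to Diening's key estimate for $\widetilde r_-\ge1$ (e.g.\ \cite[Theorem~4.2.4]{dhhr2011}) followed by the self-adjointness identity $\int_{\mathbb{R}^n}\mathcal{B}_t(g)=\int_{\mathbb{R}^n}g$ is precisely the canonical route to the uniform mollifier bound on $L^{r(\cdot)}(\mathbb{R}^n)$, and I would expect the cited reference to argue the same way.

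Two small remarks. First, the parenthetical ``typically of the form $h(x)\sim(e+|x|)^{-r_{\infty}}$'' is not quite right: in the key estimate the auxiliary function is $(e+|x|)^{-m}$ for an arbitrarily large fixed $m$ (chosen with $m>n$ to guarantee integrability), and $r_\infty$ itself need not exceed $n$; since your actual argument only uses the abstract fact $h\in L^1(\mathbb{R}^n)\cap L^\infty(\mathbb{R}^n)$, this slip does no harm, but the aside would be misleading if copied literally. Second, the case split $\widetilde r_->1$ versus $\widetilde r_-=1$ is unnecessary: the key-estimate argument you give for the endpoint already covers the entire range $1\le\widetilde r_-\le\widetilde r_+<\infty$ uniformly, so the detour through the boundedness of $\mathcal{M}$ adds nothing. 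Neither point is a gap; the proof as a whole is sound.
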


Using Lemma~\ref{BCM} and
Theorems~\ref{2117} and~\ref{4.8},
we obtain the following conclusion.

\begin{theorem}\label{1022}
Let $\gamma\in\mathbb{R}\setminus\{0\}$
and $r:\ \mathbb{R}^n\to(0,\infty)$ be globally
log-H\"older continuous.
Let $1\leq\widetilde{r}_-
\leq\widetilde{r}_+<\infty$ and $q\in(0,\infty)$.
If one of the following statements holds true:
\begin{enumerate}
\item[\textup{(a)}]
$\widetilde{r}_-\in(n,\infty)$, $\gamma\in\mathbb{R}\setminus\{0\}$,
and $q\in(0,\infty)$;
\item[\textup{(b)}]
$\widetilde{r}_-\in[1,n]$, $\gamma\in(0,\infty)$, and
$n(\frac{1}{\widetilde{r}_-}-\frac{1}{q})<1$;
\item[\textup{(c)}]
$\widetilde{r}_-\in(1,n]$, $\gamma\in(-\infty,0)$, and
$q\in(0,\widetilde{r}_-)$;
\item[\textup{(d)}]
$\widetilde{r}_-=n=q=1$ and $\gamma\in(-\infty,-1)$,
\end{enumerate}
then, for any $f\in\dot{W}^{1,r(\cdot)}(\mathbb{R}^n)$,
\begin{align*}
\sup_{\lambda\in(0,\infty)}\lambda
\left\|\left[\int_{\mathbb{R}^n}
\mathbf{1}_{E_{\lambda,\frac{\gamma}{q}}[f]}(\cdot,y)
\left|\cdot-y\right|^{\gamma-n}\,dy\right]^\frac{1}{q}
\right\|_{L^{{r}(\cdot)}(\mathbb{R}^n)}
\sim\left\|\,|\nabla f|\,\right\|_{L^{{r}(\cdot)}(\mathbb{R}^n)},
\end{align*}
where the positive equivalence constants are independent of $f$
and $E_{\lambda,\frac{\gamma}{q}}[f]$
for any $\lambda\in(0,\infty)$
is the same as in \eqref{Elambda}.
Moreover, for any $f\in\dot{W}^{1,r(\cdot)}(\mathbb{R}^n)$,
\begin{enumerate}
\item[\textup{(i)}]
if $\gamma\in(0,\infty)$, then
\begin{align*}
&\lim_{\lambda\to\infty}\lambda\left\|\left[\int_{\mathbb{R}^n}
\mathbf{1}_{E_{\lambda,\frac{\gamma}{q}}[f]}(\cdot,y)
\left|\cdot-y\right|^{\gamma-n}\,dy\right]^\frac{1}{q}
\right\|_{L^{{r}(\cdot)}(\mathbb{R}^n)}
=\left[\frac{\kappa(q,n)}{\gamma}\right]^\frac{1}{q}
\left\|\,\left|\nabla f\right|\,\right\|_{L^{{r}(\cdot)}(\mathbb{R}^n)};
\end{align*}
\item[\textup{(ii)}]
if $\gamma\in(-\infty,0)$,
assume further that both $n\in\mathbb{N}\cap[2,\infty)$
and $q\in(0,\frac{n-\gamma}{n}\widetilde{r}_-)$
or that $n=1$, $\gamma\in(-\infty,-1)$,
and $q\in(0,-\gamma\widetilde{r}_-)$,
then
\begin{align*}
&\lim_{\lambda\to0^+}\lambda\left\|\left[\int_{\mathbb{R}^n}
\mathbf{1}_{E_{\lambda,\frac{\gamma}{q}}[f]}(\cdot,y)
\left|\cdot-y\right|^{\gamma-n}\,dy\right]^\frac{1}{q}
\right\|_{L^{{r}(\cdot)}(\mathbb{R}^n)}
=\left[-\frac{\kappa(q,n)}{\gamma}\right]^\frac{1}{q}
\left\|\,\left|\nabla f\right|\,\right\|_{L^{{r}(\cdot)}(\mathbb{R}^n)}.
\end{align*}
\end{enumerate}
\end{theorem}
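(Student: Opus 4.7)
The plan is to reduce Theorem~\ref{1022} to a routine verification of the hypotheses of Theorem~\ref{2117} (when $\tilde r_-\in(1,\infty)$) or of Theorem~\ref{4.8} (when $\tilde r_-=1$), applied to $X:=L^{r(\cdot)}(\mathbb{R}^n)$, after which the conclusions of the target theorem follow directly from the corresponding parts (i)--(iii) of those two master theorems. The work is therefore entirely in confirming the abstract assumptions of one of these theorems in each of the four listed cases (a)--(d).

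To carry this out, I would first collect the standard facts about the scale $L^{r(\cdot)}(\mathbb{R}^n)$: under the standing assumption $1\le\tilde r_-\le\tilde r_+<\infty$, $L^{r(\cdot)}(\mathbb{R}^n)$ is a ball Banach function space with an absolutely continuous norm; for any $p\in(0,\infty)$ one has the identification
\begin{align*}
\left[L^{r(\cdot)}(\mathbb{R}^n)\right]^{\frac{1}{p}}=L^{r(\cdot)/p}(\mathbb{R}^n),
\end{align*}
and the associate space of $L^{s(\cdot)}(\mathbb{R}^n)$ coincides (with equivalent norms) with $L^{s'(\cdot)}(\mathbb{R}^n)$; moreover, by Diening's theorem, if $r$ is globally log-H\"older continuous with $\tilde r_->1$ then $\mathcal{M}$ is bounded on $L^{r(\cdot)}(\mathbb{R}^n)$; and Lemma~\ref{BCM} gives the uniform boundedness of the centered ball averages $\{\mathcal{B}_r\}_{r\in(0,\infty)}$ required in Theorem~\ref{4.8}. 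These facts turn Theorems~\ref{2117} and~\ref{4.8} into a checklist of exponent conditions to be verified.

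Next I would dispatch each case. In case (a), $\tilde r_->n$, so I pick $s\in(n,\tilde r_-)$ and observe that $[L^{r(\cdot)}]^{1/s}=L^{r(\cdot)/s}$ has essential infimum $\tilde r_-/s>1$; thus Diening's theorem gives $\mathcal{M}$ bounded on $[L^{r(\cdot)}]^{1/s}$, placing us in hypothesis (b) of Theorem~\ref{2117}. In case (b) with $\tilde r_->1$, I pick $p\in[1,\tilde r_-)$ so close to $\tilde r_-$ that $n(1/p-1/q)<1$ still holds; then the essential infimum of $(r(\cdot)/p)'$ exceeds $1$, so $\mathcal{M}$ is bounded on $([L^{r(\cdot)}]^{1/p})'$, and we invoke hypothesis (c) of Theorem~\ref{2117}. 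In case (c), since $\tilde r_->1$ the operator $\mathcal{M}$ is bounded on $L^{r(\cdot)}$ itself, and I can take $p\in[\max\{1,q\},\tilde r_-)$ (refined if $n=1$ so that $q<-\gamma p$) to satisfy hypothesis (a) of Theorem~\ref{2117}. For the limit identities (i) and (ii), the same $p$ gives the constraint $q\in(0,\frac{n-\gamma}{n}p)$ or $q\in(0,-p\gamma)$ that Theorem~\ref{2117} demands. Cases (b) with $\tilde r_-=1$ and (d) with $\tilde r_-=1$ are the remaining ones, and for these $\mathcal{M}$ is not bounded on $L^{r(\cdot)}$, so I must appeal to Theorem~\ref{4.8} with extrapolation sequence $\theta_m:=1-1/m\uparrow1$. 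Here $[L^{r(\cdot)}]^{1/\theta_m}=L^{r(\cdot)/\theta_m}$ has essinf $1/\theta_m>1$, its associate is $L^{(r(\cdot)/\theta_m)'}$, and Diening's theorem gives the boundedness of $\mathcal{M}$ on this associate; the rest of Theorem~\ref{4.8}'s hypotheses follow from the generalities recorded above, leaving only condition \eqref{2114} to check.

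The verification of \eqref{2114}, i.e., the uniform boundedness of $\|\mathcal{M}\|_{(X^{1/\theta_m})'\to(X^{1/\theta_m})'}$ as $\theta_m\to1^-$, is the one genuinely delicate step and should be expected to be the main obstacle. The issue is that when $\tilde r_-=1$, the associate exponent $(r(\cdot)/\theta_m)'$ has essential supremum $\tilde r_-/(\tilde r_--\theta_m)$ which tends to $\infty$, so one cannot crudely apply a single uniform bound. I would handle this by invoking Diening's quantitative norm estimate for $\mathcal{M}$ on variable Lebesgue spaces, which depends continuously on $((r/\theta_m)')_-$, on the log-H\"older constants of $(r/\theta_m)'$, and on $n$; since $((r/\theta_m)')_-=\tilde r_+/(\tilde r_+-\theta_m)$ stays bounded above and the log-H\"older constants of $r/\theta_m$ are uniform in $m$, the relevant operator norms stay bounded, giving \eqref{2114}. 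With this technical point handled, Theorem~\ref{4.8} applies in cases (b) with $\tilde r_-=1$ and in case (d), and together with the three applications of Theorem~\ref{2117} above, all four statements of the present theorem are obtained.
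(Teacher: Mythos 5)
Your proposal follows the paper's proof essentially verbatim: reduce to Theorem~\ref{2117} when $\widetilde{r}_->1$ and to Theorem~\ref{4.8} when $\widetilde{r}_-=1$, after collecting the standard facts about $L^{r(\cdot)}(\mathbb{R}^n)$, its convexifications and associate space, Diening's boundedness theorem, absolute continuity of the norm, and Lemma~\ref{BCM}. One minor slip in the justification of \eqref{2114}: you assert that $((r(\cdot)/\theta_m)')_-=\widetilde{r}_+/(\widetilde{r}_+-\theta_m)$ stays bounded above, but this fails when $\widetilde{r}_+=1$ (i.e.\ $r\equiv1$, so $L^{r(\cdot)}=L^1$); what actually matters is that this quantity stays bounded away from $1$ (which it does, since growth only helps the boundedness of $\mathcal{M}$), and the paper sidesteps the issue entirely by quoting the quantitative estimate of \cite[Theorem~4.3.8]{dhhr2011}, which gives $\|\mathcal{M}\|_{(X^{1/\theta})'\to(X^{1/\theta})'}\lesssim 1/\theta$ uniformly for $\theta\in(\tfrac12,1)$ with constant depending only on $n$ and $r$.
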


\begin{proof}
We prove the present theorem by considering the following
five cases on both $\gamma$ and $\widetilde{r}_-$.

\emph{Case 1)} $\gamma\in\mathbb{R}\setminus\{0\}$ and
$\widetilde{r}_-\in(n,\infty)$. In this case,
we find that there exists an $s\in(n,\widetilde{r}_-)$,
which, combined with \cite[Theorem~1.7]{ahh2015},
further implies that the Hardy--Littlewood
maximal operator $\mathcal{M}$ is bounded on
$[L^{{r}(\cdot)}(\mathbb{R}^n)]^\frac{1}{s}$.
When proving (ii),
if $\gamma\in(-\infty,0)$ and $n\in\mathbb{N}\cap[2,\infty)$,
then we choose a $p\in[1,\widetilde{r}_-)$
such that $q\in(0,\frac{n-\gamma}{n}p)$
and, if $\gamma\in(-\infty,0)$ and $n=1$,
then we choose a $p\in[1,\widetilde{r}_-)$ such that $q\in(0,-\gamma p)$.
From \cite[p.\,73]{cf2013}, we infer that $L^{r(\cdot)}(\mathbb{R}^n)$
has an absolutely continuous norm.
By these and Theorem~\ref{2117}, we conclude that
the present theorem in this case holds true.

\emph{Case 2)}
$\gamma\in(0,\infty)$ and $\widetilde{r}_-\in(1,n]$. In this case,
since $n(\frac{1}{\widetilde{r}_-}-\frac{1}{q})<1$,
it follows that there exists a $p\in[1,\widetilde{r}_-)$ such that
$n(\frac{1}{p}-\frac{1}{q})<1$.
By this, $[L^{r(\cdot)}(\mathbb{R}^n)]^{\frac{1}{p}}
=L^{\frac{r(\cdot)}{p}}(\mathbb{R}^n)$,
and \cite[Theorems~2.17 and~2.71]{cf2013},
we conclude that both
$L^{r(\cdot)}(\mathbb{R}^n)$ and $L^{\frac{r(\cdot)}{p}}(\mathbb{R}^n)$
are ball Banach function spaces.
From \cite[Theorem~2.80]{cf2013} and
$1<\frac{\widetilde{r}_-}{p}\leq\widetilde{r}_+<\infty$,
we deduce that
$$
\left(\left[L^{r(\cdot)}(\mathbb{R}^n)\right]^{\frac{1}{p}}\right)'
=L^{(\frac{r(\cdot)}{p})'}(\mathbb{R}^n),
$$
where, for any $x\in\mathbb{R}^n$,
$(\frac{r(x)}{p})'$ is the conjugate index
of $\frac{r(x)}{p}$, which, together with \cite[Theorem~1.7]{ahh2015},
further implies that $\mathcal{M}$
is bounded on
$([L^{r(\cdot)}(\mathbb{R}^n)]^{\frac{1}{p}})'$.
Moreover,
by \cite[p.\,73]{cf2013}, we conclude that $L^{r(\cdot)}(\mathbb{R}^n)$
has an absolutely continuous norm.
Thus, Theorem~\ref{2117}
implies that the present theorem in this case holds true.

\emph{Case 3)}
$\gamma\in(0,\infty)$ and $\widetilde{r}_-=1$.
In this case, from \cite[Theorem~4.3.8]{dhhr2011},
we deduce that, for any given $\theta\in(\frac{1}{2},1)$
and for any $f\in[L^\frac{r(\cdot)}{\theta}(\mathbb{R}^n)]'$,
$$
\left\|\mathcal{M}(f)\right\|_{[L^\frac{r(\cdot)}{\theta}(\mathbb{R}^n)]'}
\lesssim\frac{1}{\theta}
\left\|f\right\|_{[L^\frac{r(\cdot)}{\theta}(\mathbb{R}^n)]'},
$$
where the implicit positive constant depends only on both $n$ and $r$,
which further implies that \eqref{2114}
with $X:=L^{r(\cdot)}(\mathbb{R}^n)$ holds true.
Moreover, by Lemma~\ref{BCM}, we find that
centered ball average operators $\{\mathcal{B}_r\}_{r\in(0,\infty)}$
are uniformly bounded on $L^{r(\cdot)}(\mathbb{R}^n)$.
From these, Lemma~\ref{BCM}, Theorem~\ref{4.8},
and an argument similar to that used in Case 2),
we deduce that the present theorem in this case holds true.

\emph{Case 4)}
$\gamma\in(-\infty,0)$ and $\widetilde{r}_-\in(1,n]$.
In this case,
since $q\in(0,\widetilde{r}_-)$,
it follows that there exists a $p\in[\max\{1,\,q\},\widetilde{r}_-)$ and,
when proving (ii), if $n=1$
and $\widetilde{r}_-\in(1,\infty)$, we
choose a $p\in[\max\{1,\,q\},\widetilde{r}_-)$
satisfying $q\in(0,-\gamma p)$.
By this and an argument similar to that used in Case 2), to prove
the present theorem in this case,
it remains to show that $\mathcal{M}$ is bounded on
$L^{r(\cdot)}(\mathbb{R}^n)$. Indeed, this follows
directly from \cite[Theorem~1.7]{ahh2015}.
Thus, Theorem~\ref{2117}
implies that the present theorem in this case holds true.

\emph{Case 5)}
$\gamma\in(-\infty,-1)$ and $\widetilde{r}_-=1$. In this case,
from an argument similar to that
used in Case 3), Lemma~\ref{BCM}, and Theorem~\ref{4.8},
we deduce that the present theorem in this case still holds true.
This finishes the proof of Theorem~\ref{1022}.
\end{proof}

Using Lemma~\ref{BCM}, Theorems~\ref{2255} and~\ref{2256},
and Remarks~\ref{4.3}(iv) and~\ref{2021}(ii),
we obtain the following conclusions;
since their proofs are similar to that of Theorem~\ref{1022},
we omit the details here.

\begin{theorem}\label{10251}
Let $\gamma\in\mathbb{R}\setminus\{0\}$
and $r:\ \mathbb{R}^n\to(0,\infty)$ be globally
log-H\"older continuous.
Let $1\leq\widetilde{r}_-\leq\widetilde{r}_+<\infty$,
$p\in[1,\infty]$,
and $s\in(0,1)$.
Let $q\in[1,p]$ satisfy $\frac{1}{q}=\frac{1-s}{p}+s$.
Moreover, if $\widetilde{r}_-=1$ and $\gamma\in(-\infty,0)$,
assume further that
both $\gamma\in(-\infty,-1)$ and $n=1$.
\begin{enumerate}
\item[\textup{(i)}]
If $p\in[1,\infty)$, then, for any
$f\in\dot{W}^{1,r(\cdot)}(\mathbb{R}^n)$,
\eqref{2053} with $X$ replaced by $L^{r(\cdot)}(\mathbb{R}^n)$ holds true.
\item[\textup{(ii)}]
If $p=\infty$, then, for any $f\in\dot{W}^{1,r(\cdot)}(\mathbb{R}^n)$,
\eqref{2054} with $X$ replaced by $L^{r(\cdot)}(\mathbb{R}^n)$ holds true.
\end{enumerate}
\end{theorem}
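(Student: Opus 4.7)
The plan is to derive Theorem~\ref{10251} directly from Theorem~\ref{2255} (together with Remark~\ref{4.3}(iv) for the extension to $\dot{W}^{1,X}(\mathbb{R}^n)$), by verifying its four structural hypotheses for $X:=L^{r(\cdot)}(\mathbb{R}^n)$ under the standing assumptions $1\le\widetilde{r}_-\le\widetilde{r}_+<\infty$ and the global log-H\"older continuity of $r$. Since most bookkeeping mirrors the proof of Theorem~\ref{1022}, I will only sketch which ingredients to invoke at each step and emphasize the one genuinely delicate point.

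First, I would recall that, under $1\le\widetilde{r}_-\le\widetilde{r}_+<\infty$, the space $L^{r(\cdot)}(\mathbb{R}^n)$ is a ball Banach function space (see \cite[Theorems~2.17, 2.71, and~2.80]{cf2013}) with an absolutely continuous norm (see \cite[p.\,73]{cf2013}); Lemma~\ref{BCM} gives the uniform boundedness of the centered ball average operators $\{\mathcal{B}_t\}_{t\in(0,\infty)}$ on $X$. For the $p$-convexification in part (i), observe that $X^p=L^{pr(\cdot)}(\mathbb{R}^n)$; since $pr$ is itself globally log-H\"older continuous with $\widetilde{(pr)}_-\ge1$, the same references show that $X^p$ is a ball Banach function space.

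Next, I would construct the sequence $\{\theta_m\}_{m\in\mathbb{N}}\subset(0,1)$ required by Theorem~\ref{2255}. Set, for instance, $\theta_m:=1-1/(m+1)$, so that $\theta_m\uparrow1$. Then $X^{1/\theta_m}=L^{r(\cdot)/\theta_m}(\mathbb{R}^n)$ and $(r/\theta_m)_-\ge1$, so $X^{1/\theta_m}$ is a ball Banach function space; moreover $(X^{1/\theta_m})'=L^{(r(\cdot)/\theta_m)'}(\mathbb{R}^n)$ by \cite[Theorem~2.80]{cf2013}. Applying \cite[Theorem~4.3.8]{dhhr2011} (as was done in Case~3) of the proof of Theorem~\ref{1022}) yields
\begin{align*}
\|\mathcal{M}\|_{(X^{1/\theta_m})'\to (X^{1/\theta_m})'}\le \frac{C_{(n,r)}}{\theta_m},
\end{align*}
with a constant depending only on $n$ and $r$, and hence
\begin{align*}
\lim_{m\to\infty}\|\mathcal{M}\|_{(X^{1/\theta_m})'\to(X^{1/\theta_m})'}\le C_{(n,r)}<\infty,
\end{align*}
which is exactly \eqref{2114}. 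This verifies the main abstract hypothesis of Theorem~\ref{2255}.

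Finally, I would dispose of the side condition for $\gamma\in(-\infty,0)$. If $\widetilde{r}_->1$, then \cite[Theorem~1.7]{ahh2015} shows that $\mathcal{M}$ is bounded on $X=L^{r(\cdot)}(\mathbb{R}^n)$, giving one of the two alternatives in Theorem~\ref{2255}. If $\widetilde{r}_-=1$, the extra standing hypothesis of Theorem~\ref{10251} requires $n=1$ and $\gamma\in(-\infty,-1)$, which is exactly the second alternative in Theorem~\ref{2255}. With all hypotheses verified, part (i) for $f\in C^1(\mathbb{R}^n)$ with $|\nabla f|\in C_{\mathrm{c}}(\mathbb{R}^n)$ follows from Theorem~\ref{2255}(i), its extension to $\dot{W}^{1,r(\cdot)}(\mathbb{R}^n)$ follows because we have already recorded the absolute continuity of the norm, the uniform boundedness of $\{\mathcal{B}_t\}$, and the ball Banach property of $X^p$, and part (ii) follows analogously from Theorem~\ref{2255}(ii).

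The main obstacle is the verification of the extrapolation hypothesis \eqref{2114} in the endpoint situation $\widetilde{r}_-=1$, because then $\mathcal{M}$ is \emph{not} bounded on $X'$ itself and each $\|\mathcal{M}\|_{(X^{1/\theta_m})'\to(X^{1/\theta_m})'}$ must be controlled by a constant depending on $r$ and $n$ only, not on the \emph{a priori} blowing-up quantity $[(r/\theta_m)']_\infty$; this is precisely what the sharp constant in \cite[Theorem~4.3.8]{dhhr2011} provides, and it is the only point where global log-H\"older continuity of $r$ is genuinely used.
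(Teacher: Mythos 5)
Your proposal is correct and uses essentially the same ingredients as the paper (which refers to the proof of Theorem~\ref{1022} and invokes Lemma~\ref{BCM}, Theorem~\ref{2255}, and Remark~\ref{4.3}(iv)). The only stylistic difference is that you verify the abstract $\{\theta_m\}$-extrapolation hypothesis of Theorem~\ref{2255} uniformly for every $\widetilde{r}_-\ge 1$, whereas the paper's template (following Theorem~\ref{1022}) would split cases and, when $\widetilde{r}_->1$, reduce directly to the hypotheses of Theorem~\ref{2117} via Remark~\ref{4.3}(iv); your uniform treatment is legitimate because the operator norm bound $\|\mathcal{M}\|_{(X^{1/\theta_m})'\to(X^{1/\theta_m})'}\lesssim 1/\theta_m$ from \cite[Theorem~4.3.8]{dhhr2011} holds for all $\widetilde{r}_-\ge1$, not only at the endpoint, so \eqref{2114} is available in every case, and this removes the case distinction without introducing any gap.
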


\begin{theorem}\label{10261}
Let $\gamma\in\mathbb{R}\setminus\{0\}$
and $r:\ \mathbb{R}^n\to(0,\infty)$ be globally
log-H\"older continuous.
Let $1\leq\widetilde{r}_-\leq\widetilde{r}_+<\infty$,
$\eta\in(0,1)$,
and
$$
0\leq s_0<s<1<q<q_0<\infty
$$
satisfy \eqref{2228}.
Moreover, if $\widetilde{r}_-=1$ and $\gamma\in(-\infty,0)$,
assume further that
both $\gamma\in(-\infty,-1)$ and $n=1$.
Then, for any $f\in\dot{W}^{1,r(\cdot)}(\mathbb{R}^n)$,
\eqref{939} with $X$ replaced by $L^{r(\cdot)}(\mathbb{R}^n)$ holds true.
\end{theorem}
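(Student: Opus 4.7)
The plan is to apply Theorem~\ref{2256} with $X:=L^{r(\cdot)}(\mathbb{R}^n)$, verifying its structural hypotheses via a case analysis that mirrors the five cases in the proof of Theorem~\ref{1022}. The density/absolute continuity requirements of Theorem~\ref{2256} for the extension to $\dot{W}^{1,X}(\mathbb{R}^n)$ are handled once and for all, while the hypotheses concerning the Hardy--Littlewood maximal operator $\mathcal{M}$ are verified differently according to whether $\widetilde{r}_->1$ or $\widetilde{r}_-=1$.

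First, I would record the space-theoretic properties that hold uniformly in all cases. By \cite[Theorems~2.17 and~2.71]{cf2013}, since $1\leq\widetilde{r}_-\leq\widetilde{r}_+<\infty$, the space $L^{r(\cdot)}(\mathbb{R}^n)$ is a ball Banach function space; the same is true of each convexification $[L^{r(\cdot)}(\mathbb{R}^n)]^{1/\theta_m}=L^{r(\cdot)/\theta_m}(\mathbb{R}^n)$ for any $\theta_m\in(0,1)$ such that $\widetilde{r}_-/\theta_m\ge 1$. By \cite[p.\,73]{cf2013}, $L^{r(\cdot)}(\mathbb{R}^n)$ has an absolutely continuous norm; by Lemma~\ref{BCM}, the centered ball average operators $\{\mathcal{B}_r\}_{r\in(0,\infty)}$ are uniformly bounded on $L^{r(\cdot)}(\mathbb{R}^n)$.

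Second, I would construct the sequence $\{\theta_m\}_{m\in\mathbb{N}}\subset(0,1)$ with $\theta_m\to 1$ required by Theorem~\ref{2256}. When $\widetilde{r}_->1$, choose $\theta_m\uparrow 1$ with $\theta_m\in(\widetilde{r}_-^{-1},1)$, so that $r(\cdot)/\theta_m$ is again globally log-H\"older continuous with essential infimum strictly greater than $1$; the duality identity $([L^{r(\cdot)/\theta_m}]^{1/\theta_m})'=L^{(r(\cdot)/\theta_m)'}$ from \cite[Theorem~2.80]{cf2013} combined with \cite[Theorem~1.7]{ahh2015} gives the boundedness of $\mathcal{M}$ on $([L^{r(\cdot)}]^{1/\theta_m})'$ with operator norms that remain uniformly bounded as $\theta_m\to 1$, so that \eqref{2114} holds. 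When $\widetilde{r}_-=1$, the relevant associate space is a variable Lebesgue space whose lower exponent is $1/\theta_m>1$; invoking \cite[Theorem~4.3.8]{dhhr2011} yields the operator norm bound $\|\mathcal{M}\|_{([L^{r(\cdot)}]^{1/\theta_m})'\to([L^{r(\cdot)}]^{1/\theta_m})'}\lesssim\theta_m^{-1}$, which again tends to a finite limit as $\theta_m\to 1$, verifying \eqref{2114}.

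Third, I would discharge the extra hypothesis of Theorem~\ref{2256} tied to the sign of $\gamma$. When $\gamma\in(-\infty,0)$, the required assumption is that either $\mathcal{M}$ be bounded on $X$ or that $n=1$ and $\gamma\in(-\infty,-1)$. If $\widetilde{r}_->1$, then by \cite[Theorem~1.7]{ahh2015} $\mathcal{M}$ is bounded on $L^{r(\cdot)}(\mathbb{R}^n)$ itself, and we are done; if $\widetilde{r}_-=1$, the additional standing hypothesis of the present theorem supplies exactly the fallback condition $n=1$ and $\gamma<-1$. When $\gamma\in(0,\infty)$, no extra condition is needed. With all hypotheses verified, Theorem~\ref{2256} yields \eqref{939} with $X:=L^{r(\cdot)}(\mathbb{R}^n)$ for every $f\in\dot{W}^{1,r(\cdot)}(\mathbb{R}^n)$. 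The main obstacle is the critical case $\widetilde{r}_-=1$, in which $\mathcal{M}$ fails to be bounded on $X$ (or on $X'$) and where one must instead extract the quantitative bound $\|\mathcal{M}\|_{([L^{r(\cdot)}]^{1/\theta_m})'\to([L^{r(\cdot)}]^{1/\theta_m})'}\lesssim\theta_m^{-1}$ from \cite[Theorem~4.3.8]{dhhr2011} in order to validate the extrapolation condition \eqref{2114}; once this is in hand, the remainder of the argument reduces to a direct invocation of Theorem~\ref{2256} (or, when sharper conclusions are needed, Remark~\ref{2021}(ii) applied through Theorem~\ref{2117}).
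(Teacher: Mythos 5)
Your proposal is correct and follows essentially the same route as the paper: the paper itself states that Theorem~\ref{10261} follows from Theorem~\ref{2256} (together with Lemma~\ref{BCM} and Remark~\ref{2021}(ii)) by the same case analysis as in the proof of Theorem~\ref{1022}, which is precisely what you do. Two small remarks: the constraint $\theta_m\in(\widetilde r_-^{-1},1)$ is superfluous (when $\widetilde r_->1$ any $\theta_m\in(0,1)$ already gives $\widetilde r_-/\theta_m>1$), and the phrase ``the relevant associate space is a variable Lebesgue space whose lower exponent is $1/\theta_m>1$'' conflates the convexification $L^{r(\cdot)/\theta_m}$ (whose lower exponent is indeed $\widetilde r_-/\theta_m\ge 1/\theta_m$) with its associate space $L^{(r(\cdot)/\theta_m)'}$ (whose lower exponent is $(\widetilde r_+/\theta_m)'$); neither slip affects the validity of the argument.
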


\begin{remark}
\begin{enumerate}
\item[(i)]
Theorem~\ref{1022} when $\gamma=n$ coincides with
\cite[Theorem~5.11]{dlyyz.arxiv}.
To the best of our knowledge,
Theorem~\ref{1022}
when $\gamma\in\mathbb{R}\setminus\{0,n\}$
is new.
\item[(ii)]
Theorems~\ref{10251} and~\ref{10261} when $\gamma=n$
coincide with \cite[Corollaries~5.12 and~5.13]{dlyyz.arxiv}, respectively.
To the best of our knowledge,
Theorems~\ref{10251} and~\ref{10261}
(the Gagliardo--Nirenberg-type inequalities
on the variable Sobolev space)
when $\gamma\in\mathbb{R}\setminus\{0,n\}$
are new.
\end{enumerate}
\end{remark}

\subsection{Weighted Lebesgue Spaces}\label{5.4}

Recall that, for any given $r\in(0,\infty)$
and a weight $\omega$ on $\mathbb{R}^n$,
$L^r_\omega(\mathbb{R}^n)$ denotes the weighted Lebesgue space
with respect to the measure $\omega(x)\,dx$ on $\mathbb{R}^n$;
see Definition~\ref{1556}(i).
As was pointed out in \cite[p.\,86]{shyy2017},
the weighted Lebesgue space $L^r_\omega(\mathbb{R}^n)$
is a ball quasi-Banach function space,
but it may not be a Banach function space.
When $X:=L^r_{\omega}(\mathbb{R}^n)$,
we simply write $\dot{W}^{1,r}_{\omega}
(\mathbb{R}^n):=\dot{W}^{1,X}(\mathbb{R}^n)$.

Using Theorem~\ref{2117},
we obtain the following conclusion.

\begin{theorem}\label{1657}
Let $\gamma\in\mathbb{R}\setminus\{0\}$,
$1\leq p\leq r<\infty$, $\omega\in A_{\frac{r}{p}}(\mathbb{R}^n)$,
and $q\in(0,\infty)$.
If one of the following statements holds true:
\begin{enumerate}
\item[\textup{(a)}]
$r\in(n,\infty)$, $p\in[n,r)$,
$\gamma\in\mathbb{R}\setminus\{0\}$,
and $q\in(0,\infty)$;
\item[\textup{(b)}]
$r\in[1,\infty)$, $p\in[1,r]$, $\gamma\in(0,\infty)$, and
$n(\frac{1}{p}-\frac{1}{q})<1$;
\item[\textup{(c)}]
$r\in(1,\infty)$, $p\in[1,r]$, $\gamma\in(-\infty,0)$, and
$q\in(0,p]$;
\item[\textup{(d)}]
$r=n=p=q=1$ and $\gamma\in(-\infty,-1)$,
\end{enumerate}
then, for any $f\in\dot{W}^{1,r}_{\omega}(\mathbb{R}^n)$,
\begin{align*}
\sup_{\lambda\in(0,\infty)}\lambda
\left\|\left[\int_{\mathbb{R}^n}
\mathbf{1}_{E_{\lambda,\frac{\gamma}{q}}[f]}(\cdot,y)
\left|\cdot-y\right|^{\gamma-n}\,dy\right]^\frac{1}{q}
\right\|_{L^r_\omega(\mathbb{R}^n)}
\sim\left\|\,|\nabla f|\,\right\|_{L^r_\omega(\mathbb{R}^n)},
\end{align*}
where the positive equivalence constants are independent of $f$
and $E_{\lambda,\frac{\gamma}{q}}[f]$
for any $\lambda\in(0,\infty)$
is the same as in \eqref{Elambda}.
Moreover, for any $f\in\dot{W}^{1,r}_{\omega}(\mathbb{R}^n)$,
\begin{enumerate}
\item[\textup{(i)}]
if $\gamma\in(0,\infty)$, then
\begin{align*}
&\lim_{\lambda\to\infty}\lambda\left\|\left[\int_{\mathbb{R}^n}
\mathbf{1}_{E_{\lambda,\frac{\gamma}{q}}[f]}(\cdot,y)
\left|\cdot-y\right|^{\gamma-n}\,dy\right]^\frac{1}{q}
\right\|_{L^r_\omega(\mathbb{R}^n)}
=\left[\frac{\kappa(q,n)}{\gamma}\right]^\frac{1}{q}
\left\|\,\left|\nabla f\right|\,\right\|_{L^r_\omega(\mathbb{R}^n)};
\end{align*}
\item[\textup{(ii)}]
if $\gamma\in(-\infty,0)$,
assume further that both $n\in\mathbb{N}\cap[2,\infty)$
and $q\in(0,\frac{n-\gamma}{n}p)$
or that $n=1$, $\gamma\in(-\infty,-1)$,
and $q\in(0,-\gamma p)$,
then
\begin{align*}
&\lim_{\lambda\to0^+}\lambda\left\|\left[\int_{\mathbb{R}^n}
\mathbf{1}_{E_{\lambda,\frac{\gamma}{q}}[f]}(\cdot,y)
\left|\cdot-y\right|^{\gamma-n}\,dy\right]^\frac{1}{q}
\right\|_{L^r_\omega(\mathbb{R}^n)}
=\left[-\frac{\kappa(q,n)}{\gamma}\right]^\frac{1}{q}
\left\|\,\left|\nabla f\right|\,\right\|_{L^r_\omega(\mathbb{R}^n)}.
\end{align*}
\end{enumerate}
\end{theorem}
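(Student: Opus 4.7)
The plan is to apply Theorem~\ref{2117} to $X:=L^r_\omega(\mathbb{R}^n)$ by verifying, in each of the four cases (a)--(d) of the present statement, the four standing hypotheses of Theorem~\ref{2117} together with one of its four alternative structural hypotheses. Several verifications are common to all cases: that $L^r_\omega(\mathbb{R}^n)$ and $X^{1/p}=L^{r/p}_\omega(\mathbb{R}^n)$ are ball Banach function spaces follows from the local integrability of any $A_\infty$-weight together with H\"older's inequality (note that in case (a) we have $r>p\geq n\geq 1$, and in the remaining cases $r\geq p\geq 1$, so $r/p\geq 1$ is always at our disposal); that $X$ has an absolutely continuous norm is a direct application of the Lebesgue dominated convergence theorem applied to $|f|^r\omega\mathbf{1}_{E_j}$.

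For the boundedness of the Hardy--Littlewood maximal operator $\mathcal{M}$ on $(X^{1/p})'$ I would split into two subcases. When $r>p$, Lemma~\ref{ApProperty}(v) identifies $(L^{r/p}_\omega)'$ with $L^{(r/p)'}_\mu$, where $\mu:=\omega^{1-(r/p)'}\in A_{(r/p)'}(\mathbb{R}^n)$, whereupon Lemma~\ref{ApProperty}(vi) delivers the required bound. When $r=p$, so that $\omega\in A_1(\mathbb{R}^n)$, a direct duality computation gives $\|g\|_{(L^1_\omega)'}=\mathop{\mathrm{ess\,sup}}_{x\in\mathbb{R}^n}|g(x)|/\omega(x)$, and then the pointwise estimate $\mathcal{M}(g)\leq\|g\|_{(L^1_\omega)'}\mathcal{M}(\omega)\leq[\omega]_{A_1(\mathbb{R}^n)}\|g\|_{(L^1_\omega)'}\omega$, afforded by Lemma~\ref{ApProperty}(i), yields the desired boundedness of $\mathcal{M}$ on $(L^1_\omega)'$ with operator norm at most $[\omega]_{A_1(\mathbb{R}^n)}$.

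It remains to match each of the four cases of the theorem to a structural hypothesis of Theorem~\ref{2117}. Case (a) is matched to hypothesis (b) by exhibiting some $s\in(n,\infty)$ with $\mathcal{M}$ bounded on $X^{1/s}=L^{r/s}_\omega$: when $p\in(n,r)$ taking $s:=p$ works at once via Lemma~\ref{ApProperty}(vi); when $p=n$, I would invoke the self-improvement (reverse H\"older) property of the Muckenhoupt class to find $\varepsilon\in(0,r/n-1)$ with $\omega\in A_{r/n-\varepsilon}(\mathbb{R}^n)$ and then set $s:=rn/(r-n\varepsilon)\in(n,r)$. Case (b) is matched to hypothesis (c), whose only additional requirement $n(1/p-1/q)<1$ is assumed outright. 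Case (c) is matched to hypothesis (a): the boundedness of $\mathcal{M}$ on $X=L^r_\omega$ itself follows from $\omega\in A_{r/p}\subset A_r(\mathbb{R}^n)$ by Lemma~\ref{ApProperty}(iv) and (vi). Finally, Case (d) is matched to hypothesis (d) verbatim. The upper/lower equivalence and the two limiting identities (i) and (ii) then transfer unchanged from the corresponding conclusions of Theorem~\ref{2117}.

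I expect the main obstacle to be the careful bookkeeping of the endpoints: the subcase $p=n$ in Case~(a), which forces the use of the openness of the Muckenhoupt class rather than a direct appeal to Lemma~\ref{ApProperty}(vi), and the subcase $r=p$ in Cases (b), (c), and (d), where the associate space $(L^1_\omega)'$ is a weighted $L^\infty$-type space and the boundedness of $\mathcal{M}$ must be established by the pointwise argument above rather than being read off from Lemma~\ref{ApProperty}(v) and (vi).
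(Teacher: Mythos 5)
Your proof is correct and takes essentially the same route as the paper: reduce to Theorem~\ref{2117} with $X:=L^r_\omega(\mathbb{R}^n)$, identify $X^{1/p}=L^{r/p}_\omega(\mathbb{R}^n)$, verify that $\mathcal{M}$ is bounded on $(X^{1/p})'$ via Lemma~\ref{ApProperty}(v) and (vi) when $r>p$ and by a direct computation on $(L^1_\omega)'\cong L^\infty_{\omega^{-1}}$ when $r=p$, and then match each of the four cases of the statement to one of the structural hypotheses (a)--(d) of Theorem~\ref{2117}. The only cosmetic differences are that you spell out the pointwise argument for the endpoint $(L^1_\omega)'$ (the paper cites Andersen--John and the Izuki--Noi--Sawano duality identification) and that in Case~(a) you split the verification into $p>n$ and $p=n$, whereas the paper applies the openness of $A_{r/p}$ (via \cite[Corollary~7.2.6]{g2014}) uniformly across $p\in[n,r)$.
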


\begin{proof}
From both Definition~\ref{1556}(i) and \cite[p.\,86]{shyy2017},
it easily follows that
$[L^r_\omega(\mathbb{R}^n)]^\frac{1}{p}=L^\frac{r}{p}_\omega(\mathbb{R}^n)$
and that both $L^r_\omega(\mathbb{R}^n)$ and
$[L^r_\omega(\mathbb{R}^n)]^\frac{1}{p}$
are ball Banach function spaces.

Next, we claim that the Hardy--Littlewood
maximal operator $\mathcal{M}$ is bounded
on $([L^r_\omega(\mathbb{R}^n)]^\frac{1}{p})'$.
On the one hand, when $p\in[1,r)$,
by \cite[Theorem~2.7.4]{dhhr2011}
and $\frac{r}{p}\in(1,\infty)$, we conclude that
\begin{align}\label{1616}
\left(\left[L^r_\omega(\mathbb{R}^n)\right]^\frac{1}{p}\right)'=
L^{(\frac{r}{p})'}_{\omega^{1-(\frac{r}{p})'}}(\mathbb{R}^n).
\end{align}
From $\omega\in A_{\frac{r}{p}}(\mathbb{R}^n)$, $\frac{r}{p}>1$, and
Lemma~\ref{ApProperty}(v),
we deduce that
$\omega^{1-(\frac{r}{p})'}\in A_{(\frac{r}{p})'}(\mathbb{R}^n)$.
By this, \eqref{1616}, and Lemma~\ref{ApProperty}(vi),
we further conclude that $\mathcal{M}$ is bounded on
$([L^r_\omega(\mathbb{R}^n)]^\frac{1}{p})'$.
On the other hand, when $p=r$, from \cite[p.\,9]{ins2019},
we infer that
$$
\left(\left[L^r_\omega(\mathbb{R}^n)\right]^\frac{1}{p}\right)'
=\left[L^1_\omega(\mathbb{R}^n)\right]'
=L^\infty_{\omega^{-1}}(\mathbb{R}^n),
$$
which, together with both \cite[Theorem~3.1(b)]{aj1980/81}
and \cite[p.\,9]{ins2019},
further implies that $\mathcal{M}$ is bounded on
$([L^r_\omega(\mathbb{R}^n)]^\frac{1}{p})'$.
This finishes the proof of the above claim.

Now, we consider the following four cases on
$r$, $p$, $\gamma$, and $q$.

\emph{Case 1)}
$r\in(n,\infty)$, $p\in[n,r)$,
$\gamma\in\mathbb{R}\setminus\{0\}$,
and $q\in(0,\infty)$.
In this case,
since $\omega\in A_{\frac{r}{p}}(\mathbb{R}^n)$, $p\in[n,r)$,
and \cite[Corollary~7.2.6]{g2014},
it follows that there exists a $\widetilde{p}\in(p,r)$
such that $\omega\in A_{\frac{r}{\widetilde{p}}}(\mathbb{R}^n)$.
Choose an $s\in(n,\widetilde{p})$ and hence
$\omega\in A_{\frac{r}{s}}(\mathbb{R}^n)$ since Lemma~\ref{ApProperty}(iv).
Then Lemma~\ref{ApProperty}(vi) implies that $\mathcal{M}$
is bounded on $[L^r_\omega(\mathbb{R}^n)]^\frac{1}{s}$.
Moreover,
from \cite[Theorem~1.34]{rudin},
we deduce that $L^r_\omega(\mathbb{R}^n)$
has an absolutely continuous norm.
Thus, Theorem~\ref{2117} implies that
the present theorem in this case holds true.

\emph{Case 2)}
$r\in[1,\infty)$, $p\in[1,r]$, $\gamma\in(0,\infty)$, and
$n(\frac{1}{p}-\frac{1}{q})<1$.
In this case, by the above claim, we find that $\mathcal{M}$
is bounded on $([L^r_\omega(\mathbb{R}^n)]^\frac{1}{p})'$.
Moreover,
from \cite[Theorem~1.34]{rudin},
we deduce that $L^r_\omega(\mathbb{R}^n)$
has an absolutely continuous norm.
Thus, Theorem~\ref{2117} implies that
the present theorem in this case holds true.

\emph{Case 3)}
$r\in(1,\infty)$, $p\in[1,r]$, $\gamma\in(-\infty,0)$, and
$q\in(0,p]$.
In this case, by an argument similar to that used
in Case 1), we find that $\mathcal{M}$
is bounded on $L^r_\omega(\mathbb{R}^n)$,
which, combined with both an argument similar
to that used in Case 2) and
Theorem~\ref{2117},
further implies that
the present theorem in this case holds true.

\emph{Case 4)}
$r=n=p=q=1$ and $\gamma\in(-\infty,-1)$.
In this case, from an argument similar to that used in Case 2),
and Theorem~\ref{2117},
we conclude that
the present theorem in this case holds true.
This finishes the proof of Theorem~\ref{1657}.
\end{proof}

Using Theorems~\ref{2255} and~\ref{2256}
and Remarks~\ref{4.3}(iv) and~\ref{2021}(ii),
we obtain the following conclusions;
since their proofs are similar to that of Theorem~\ref{1657},
we omit the details here.

\begin{theorem}\label{1714}
Let $\gamma\in\mathbb{R}\setminus\{0\}$,
$r\in[1,\infty)$, $\omega\in A_r(\mathbb{R}^n)$,
$p\in[1,\infty]$,
and $s\in(0,1)$.
Let $q\in[1,p]$ satisfy $\frac{1}{q}=\frac{1-s}{p}+s$.
Moreover, if $r=1$ and $\gamma\in(-\infty,0)$,
assume further that
both $\gamma\in(-\infty,-1)$ and $n=1$.
\begin{enumerate}
\item[\textup{(i)}]
If $p\in[1,\infty)$, then, for any
$f\in\dot{W}^{1,r}_\omega(\mathbb{R}^n)$,
\eqref{2053} with $X$
replaced by $L^{r}_\omega(\mathbb{R}^n)$ holds true.
\item[\textup{(ii)}]
If $p=\infty$, then, for any $f\in\dot{W}^{1,r}_\omega(\mathbb{R}^n)$,
\eqref{2054} with $X$
replaced by $L^{r}_\omega(\mathbb{R}^n)$ holds true.
\end{enumerate}
\end{theorem}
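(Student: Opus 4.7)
The plan is to apply Theorem~\ref{2255} directly with $X:=L^r_\omega(\mathbb{R}^n)$, for which the assumptions must be checked carefully, mirroring the strategy already used in the proof of Theorem~\ref{1657}. The structure splits naturally into: (1) verifying $L^r_\omega$ is a ball Banach function space, (2) constructing the sequence $\{\theta_m\}$ witnessing hypothesis \eqref{2114}, (3) checking the alternative side condition when $\gamma<0$, and (4) verifying the ingredients needed to pass from $f\in C^1(\mathbb{R}^n)$ with $|\nabla f|\in C_\mathrm{c}(\mathbb{R}^n)$ to $f\in\dot{W}^{1,X}(\mathbb{R}^n)$.

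For (1), ball Banach function space membership of $L^r_\omega(\mathbb{R}^n)$ with $r\in[1,\infty)$ and $\omega\in A_r(\mathbb{R}^n)$ is standard (cf.\ the remarks in Subsection~\ref{5.4}); indicator functions of balls lie in $X$ because $A_r$-weights are locally integrable. For (2), I would set $\theta_m:=1-1/(m+1)$ so that $\theta_m\nearrow 1$, whence $r/\theta_m\geq r\geq 1$ and $X^{1/\theta_m}=L^{r/\theta_m}_\omega$, again a ball Banach function space since $\omega\in A_r\subset A_{r/\theta_m}$ by Lemma~\ref{ApProperty}(iv). The associate space identity gives $(X^{1/\theta_m})'=L^{(r/\theta_m)'}_{\mu_m}$ with $\mu_m:=\omega^{1-(r/\theta_m)'}$, and Lemma~\ref{ApProperty}(v) yields $\mu_m\in A_{(r/\theta_m)'}$ with
\begin{align*}
[\mu_m]_{A_{(r/\theta_m)'}}^{r/\theta_m-1}=[\omega]_{A_{r/\theta_m}}\leq[\omega]_{A_r}.
\end{align*}
Plugging into Lemma~\ref{ApProperty}(vi) then gives
\begin{align*}
\|\mathcal{M}\|_{(X^{1/\theta_m})'\to(X^{1/\theta_m})'}\leq C[\mu_m]_{A_{(r/\theta_m)'}}^{r/\theta_m-1}\leq C[\omega]_{A_r},
\end{align*}
where $C$ is independent of $m$, yielding \eqref{2114}.

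For (3), when $r\in(1,\infty)$, Lemma~\ref{ApProperty}(vi) directly gives that $\mathcal{M}$ is bounded on $X=L^r_\omega$, taking care of the condition in Theorem~\ref{2255} required when $\gamma\in(-\infty,0)$. When $r=1$, $\mathcal{M}$ is generically unbounded on $L^1_\omega$, but then the hypothesis of Theorem~\ref{1714} forces $n=1$ and $\gamma\in(-\infty,-1)$, which is exactly the alternative permitted by Theorem~\ref{2255}. For the promotion from compactly supported $C^1$ functions to all of $\dot{W}^{1,X}(\mathbb{R}^n)$ required by the second halves of (i) and (ii), I would verify: (a) $L^r_\omega$ has absolutely continuous norm, which follows from the dominated convergence theorem since $r\in[1,\infty)$; (b) $X^p=L^{rp}_\omega$ is a ball Banach function space for $p\in[1,\infty)$ because $rp\geq 1$ and $\omega\in A_r\subset A_{rp}$; and (c) the centered ball averages $\{\mathcal{B}_t\}_{t\in(0,\infty)}$ are uniformly bounded on $L^r_\omega$. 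For (c) with $r\in(1,\infty)$, use $\mathcal{B}_t(f)\leq\mathcal{M}(f)$ pointwise and Lemma~\ref{ApProperty}(vi); for $r=1$, use the adjoint relation $\int\mathcal{B}_t(f)\omega\,dx=\int|f|\mathcal{B}_t(\omega)\,dx\leq [\omega]_{A_1}\int|f|\omega\,dx$ courtesy of Lemma~\ref{ApProperty}(i).

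The main subtlety will be ensuring the uniformity in $m$ of the operator norms in step (2), in particular when $r=1$: there $(1/\theta_m)'\to\infty$ as $m\to\infty$ so one might worry that the constants $[\mu_m]_{A_{(1/\theta_m)'}}^{((1/\theta_m)')'-1}$ blow up. The chain of identities above shows they are controlled by $[\omega]_{A_1}$ independently of $m$, provided one applies Lemma~\ref{ApProperty}\,(iv)--(vi) in the correct order and tracks exponents carefully. Once all hypotheses are verified, Theorem~\ref{2255}(i) yields the conclusion of (i) first for $f\in C^1(\mathbb{R}^n)$ with $|\nabla f|\in C_\mathrm{c}(\mathbb{R}^n)$, then via (a)--(c) above for all $f\in\dot{W}^{1,X}(\mathbb{R}^n)$; Theorem~\ref{2255}(ii) delivers (ii) by the same argument.
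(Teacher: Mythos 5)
Your proof is correct and follows essentially the route the paper intends (the paper omits the proof, directing the reader to mimic the proof of Theorem~\ref{1657} and invoke Theorem~\ref{2255} together with Remark~\ref{4.3}(iv)). Your verification is carefully done: the construction $\theta_m = 1-\frac{1}{m+1}$, the identification $X^{1/\theta_m}=L^{r/\theta_m}_\omega$, the use of Lemma~\ref{ApProperty}(iv)--(vi) to control $\|\mathcal{M}\|_{(X^{1/\theta_m})'\to(X^{1/\theta_m})'}$ uniformly in $m$, the dichotomy on $r>1$ versus $r=1$ for the $\gamma<0$ alternative, and the three ingredients (a)--(c) needed for the density extension are all sound. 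The adjoint-relation argument for (c) when $r=1$ is exactly the right move.

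One small point you flag but don't fully close: the constant $C$ in Lemma~\ref{ApProperty}(vi) is stated as independent of $\omega$ but not of the integrability exponent, so when $r=1$ and the exponent $(r/\theta_m)'=(1/\theta_m)'\to\infty$ one must check that the $p$-dependence of that constant does not spoil \eqref{2114}. It does not, because the Buckley-type bound underlying Lemma~\ref{ApProperty}(vi) has constant proportional to the \emph{conjugate} exponent, here $((r/\theta_m)')'=r/\theta_m\to r$, which stays bounded; but you should state this explicitly, since the uniform bound on $[\mu_m]_{A_{(r/\theta_m)'}}^{r/\theta_m-1}$ alone does not settle the matter. With that single sentence added, the argument is complete.
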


\begin{theorem}\label{1715}
Let $\gamma\in\mathbb{R}\setminus\{0\}$,
$r\in[1,\infty)$, $\omega\in A_r(\mathbb{R}^n)$,
$\eta\in(0,1)$,
and
$$
0\leq s_0<s<1<q<q_0<\infty
$$
satisfy \eqref{2228}.
Moreover, if $r=1$ and $\gamma\in(-\infty,0)$,
assume further that
both $\gamma\in(-\infty,-1)$ and $n=1$.
Then, for any $f\in\dot{W}^{1,r}_\omega(\mathbb{R}^n)$,
\eqref{939} with $X$
replaced by $L^{r}_\omega(\mathbb{R}^n)$ holds true.
\end{theorem}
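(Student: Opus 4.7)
The plan is to deduce Theorem~\ref{1715} by a direct application of Theorem~\ref{2256} with $X := L^r_\omega(\mathbb{R}^n)$, in a manner completely parallel to the proof of Theorem~\ref{1657} (which applied Theorem~\ref{2117}). Since $r\in[1,\infty)$, the space $X=L^r_\omega(\mathbb{R}^n)$ is a ball Banach function space, and $X^{1/\theta}=L^{r/\theta}_\omega(\mathbb{R}^n)$ is again a ball Banach function space whenever $r/\theta\ge1$. The core task is therefore to produce a sequence $\{\theta_m\}_{m\in\mathbb{N}}\subset(0,1)$ with $\theta_m\to1$ for which the Hardy--Littlewood maximal operator is bounded on $(X^{1/\theta_m})'$ with uniformly controlled operator norms, so that \eqref{2114} holds.

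To construct this sequence I would split into two cases. When $r\in(1,\infty)$, the openness of the Muckenhoupt class (as in \cite[Corollary~7.2.6]{g2014}) gives some $\varepsilon\in(0,r-1)$ with $\omega\in A_{r-\varepsilon}(\mathbb{R}^n)$, and I would choose $\theta_m\uparrow1$ so that $r/\theta_m\in(r-\varepsilon,r]$; identifying $(X^{1/\theta_m})'=L^{(r/\theta_m)'}_{\omega^{1-(r/\theta_m)'}}(\mathbb{R}^n)$ via \cite[Theorem~2.7.4]{dhhr2011}, together with Lemma~\ref{ApProperty}(v)(vi), yields
\begin{align*}
\|\mathcal{M}\|_{(X^{1/\theta_m})'\to(X^{1/\theta_m})'}
\leq C[\omega^{1-(r/\theta_m)'}]_{A_{(r/\theta_m)'}(\mathbb{R}^n)}^{r/\theta_m-1}
=C[\omega]_{A_{r/\theta_m}(\mathbb{R}^n)}
\leq C[\omega]_{A_{r-\varepsilon}(\mathbb{R}^n)},
\end{align*}
which establishes \eqref{2114}. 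When $r=1$ (so $\omega\in A_1$) the same computation works even more cleanly: for any $\theta_m<1$ one has $1/\theta_m>1$ and $[\omega]_{A_{1/\theta_m}}\leq[\omega]_{A_1}$ by Lemma~\ref{ApProperty}(iv), giving a uniform bound via the same identity.

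With the sequence in hand, I would verify the remaining hypotheses of Theorem~\ref{2256}: the dichotomy for $\gamma<0$ follows from Lemma~\ref{ApProperty}(vi) (which gives $\mathcal{M}$ bounded on $L^r_\omega(\mathbb{R}^n)$ when $r\in(1,\infty)$) or from the assumption $n=1$, $\gamma\in(-\infty,-1)$ when $r=1$; the absolute continuity of the norm on $L^r_\omega(\mathbb{R}^n)$ is immediate from the dominated convergence theorem with respect to the measure $\omega(x)\,dx$; and the uniform boundedness of $\{\mathcal{B}_t\}_{t\in(0,\infty)}$ on $L^r_\omega(\mathbb{R}^n)$ follows either from $\mathcal{B}_t\leq\mathcal{M}$ when $r>1$ or, in the critical case $r=1$, from the direct Tonelli computation
\begin{align*}
\|\mathcal{B}_t f\|_{L^1_\omega(\mathbb{R}^n)}
=\frac{1}{|B(\mathbf 0,t)|}\int_{\mathbb{R}^n}|f(y)|\omega(B(y,t))\,dy
\leq[\omega]_{A_1(\mathbb{R}^n)}\|f\|_{L^1_\omega(\mathbb{R}^n)},
\end{align*}
using the defining $A_1$ estimate $\omega(B(y,t))\leq[\omega]_{A_1}|B(y,t)|\omega(y)$ for a.e.\ $y$. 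Invoking Theorem~\ref{2256} (or Remark~\ref{2021}(ii) when appropriate) then gives \eqref{939} for every $f\in\dot W^{1,r}_\omega(\mathbb{R}^n)$.

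The only subtlety, and hence the only place where real care is needed, is the critical endpoint $r=1$: at this level the maximal operator fails to be bounded on $X$ itself, so the full strength of the extrapolation machinery encoded in Theorem~\ref{2256} (via the auxiliary sequence $\{\theta_m\}$ together with \eqref{2114}) is indispensable. All other steps are verification of well-known properties of weighted Lebesgue spaces combined with the openness of $A_p$, and the argument closes in parallel with the pattern already established for Theorems~\ref{2009},~\ref{2011},~\ref{2108}, and~\ref{21008}.
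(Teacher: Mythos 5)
Your proposal is correct in substance and proves the theorem, but it takes a somewhat heavier route than the one the paper intends, and there is one small (harmless) slip worth flagging.

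\emph{Route.} The paper indicates that the proof of Theorem~\ref{1715} is ``similar to that of Theorem~\ref{1657},'' which applied Theorem~\ref{2117} by choosing a fixed exponent $p$; for the Gagliardo--Nirenberg corollary the corresponding tool is Remark~\ref{2021}(ii), which says that if $X$ satisfies the fixed-$p$ hypotheses of Theorem~\ref{2117}, then \eqref{939} already holds for any $f\in\dot W^{1,X}(\mathbb{R}^n)$ without verifying either the $\theta_m$-sequence condition \eqref{2114} or the uniform boundedness of $\{\mathcal{B}_t\}_{t\in(0,\infty)}$. For $X=L^r_\omega(\mathbb{R}^n)$ this is the cleaner path: with $p=1$ one has $(X^{1/p})'=L^\infty_{\omega^{-1}}(\mathbb{R}^n)$ on which $\mathcal M$ is bounded, and then Theorem~\ref{2117}(c) covers $\gamma>0$, Theorem~\ref{2117}(a) covers $\gamma<0$ with $r>1$ (where $\mathcal M$ is bounded on $X$), and Theorem~\ref{2117}(d) covers $\gamma<-1$, $r=n=1$. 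You instead apply Theorem~\ref{2256} directly, which forces you to verify \eqref{2114} and the $\mathcal{B}_t$-boundedness. That route is also valid (and your Tonelli computation for $\mathcal{B}_t$ at $r=1$ is correct), but it is more work than the paper's intended argument buys you.

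\emph{Slip.} When constructing the sequence $\theta_m\uparrow1$ you write that you would choose $\theta_m$ so that $r/\theta_m\in(r-\varepsilon,r]$; this is impossible, since $\theta_m\in(0,1)$ forces $r/\theta_m>r$. The openness of $A_p$ is in fact irrelevant here: because $r/\theta_m>r$, Lemma~\ref{ApProperty}(iv) gives $\omega\in A_{r/\theta_m}(\mathbb{R}^n)$ with $[\omega]_{A_{r/\theta_m}(\mathbb{R}^n)}\le[\omega]_{A_r(\mathbb{R}^n)}$ directly, and the chain of estimates you display (using Lemma~\ref{ApProperty}(v) and (vi)) then produces the uniform bound needed for \eqref{2114}. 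So the final inequality in your display is correct; only the justification via openness and the claimed interval for $r/\theta_m$ are misstated. Apart from this, the argument is complete.
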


\begin{remark}
\begin{enumerate}
\item[(i)]
Theorem~\ref{1657} when $\gamma=n$ coincides with
\cite[Theorem~5.15]{dlyyz.arxiv}.
To the best of our knowledge,
Theorem~\ref{1657}
when $\gamma\in\mathbb{R}\setminus\{0,n\}$
is new.
\item[(ii)]
Theorems~\ref{1714} and~\ref{1715} when $\gamma=n$
coincide with
\cite[Corollaries~5.16 and~5.17]{dlyyz.arxiv}, respectively.
To the best of our knowledge,
Theorems~\ref{1714} and~\ref{1715}
(the Gagliardo--Nirenberg-type inequalities
on the weighted
Sobolev space)
when $\gamma\in\mathbb{R}\setminus\{0,n\}$
are new.
\end{enumerate}
\end{remark}

\subsection{Lorentz Spaces}\label{5.7}

Recall that the \emph{Lorentz space $L^{r,\tau}(\mathbb{R}^n)$},
with any given $r,\tau\in(0,\infty)$,
is defined to be the set of all the
$f\in\mathscr{M}(\mathbb{R}^n)$ such that
\begin{equation*}
\|f\|_{L^{r,\tau}(\mathbb{R}^n)}
:=\left\{\int_0^{\infty}
\left[t^{\frac{1}{r}}f^*(t)\right]^\tau
\frac{\,dt}{t}\right\}^{\frac{1}{\tau}}
<\infty,
\end{equation*}
where $f^*$ denotes the \emph{decreasing rearrangement of $f$},
defined by setting, for any $t\in[0,\infty)$,
\begin{equation*}
f^*(t):=\inf\left\{s\in(0,\infty):\ \left|
\left\{x\in\mathbb{R}^n:\ |f(x)|>s\right\}\right|\leq t\right\}.
\end{equation*}
We adopt the convention $\inf \emptyset = \infty$,
thus having $f^*(t)=\infty$ whenever
$|\{x\in\mathbb{R}^n:\ |f(x)|>s\}|>t$ for all $s\in[0,\infty)$.
When $r,\tau\in(0,\infty)$,
$L^{r,\tau}(\mathbb{R}^n)$ is a quasi-Banach function space
and hence a ball quasi-Banach function space
(see, for instance, \cite[Theorem 1.4.11]{g2014});
when $r,\tau\in(1,\infty)$,
the Lorentz space $L^{r,\tau}(\mathbb{R}^n)$ is a Banach function space
and hence a ball Banach function space
(see, for instance, \cite[p.\,87]{shyy2017} and \cite[p.\,74]{g2014}).
When $X:=L^{r,\tau}(\mathbb{R}^n)$,
we simply write
$\dot{W}^{1,L^{r,\tau}}(\mathbb{R}^n):=\dot{W}^{1,X}(\mathbb{R}^n)$.

Using Theorem~\ref{2117},
we obtain the following conclusion.

\begin{theorem}\label{2151}
Let $\gamma\in\mathbb{R}\setminus\{0\}$,
$r,\tau\in(1,\infty)$, and $q\in(0,\infty)$.
If one of the following statements holds true:
\begin{enumerate}
\item[\textup{(a)}]
$\min\{r,\,\tau\}\in(n,\infty)$,
$\gamma\in\mathbb{R}\setminus\{0\}$,
and $q\in(0,\infty)$;
\item[\textup{(b)}]
$\min\{r,\,\tau\}\in(1,n]$, $\gamma\in(0,\infty)$, and
$n(\frac{1}{\min\{r,\,\tau\}}-\frac{1}{q})<1$;
\item[\textup{(c)}]
$\min\{r,\,\tau\}\in(1,n]$, $\gamma\in(-\infty,0)$, and
$q\in(0,\min\{r,\,\tau\})$,
\end{enumerate}
then, for any $f\in\dot{W}^{1,L^{r,\tau}}(\mathbb{R}^n)$,
\begin{align*}
\sup_{\lambda\in(0,\infty)}\lambda
\left\|\left[\int_{\mathbb{R}^n}
\mathbf{1}_{E_{\lambda,\frac{\gamma}{q}}[f]}(\cdot,y)
\left|\cdot-y\right|^{\gamma-n}\,dy\right]^\frac{1}{q}
\right\|_{L^{r,\tau}(\mathbb{R}^n)}
\sim\left\|\,|\nabla f|\,\right\|_{L^{r,\tau}(\mathbb{R}^n)},
\end{align*}
where the positive equivalence constants are independent of $f$
and $E_{\lambda,\frac{\gamma}{q}}[f]$
for any $\lambda\in(0,\infty)$
is the same as in \eqref{Elambda}.
Moreover, for any $f\in\dot{W}^{1,L^{r,\tau}}(\mathbb{R}^n)$,
\begin{enumerate}
\item[\textup{(i)}]
if $\gamma\in(0,\infty)$, then
\begin{align*}
&\lim_{\lambda\to\infty}\lambda\left\|\left[\int_{\mathbb{R}^n}
\mathbf{1}_{E_{\lambda,\frac{\gamma}{q}}[f]}(\cdot,y)
\left|\cdot-y\right|^{\gamma-n}\,dy\right]^\frac{1}{q}
\right\|_{L^{r,\tau}(\mathbb{R}^n)}
=\left[\frac{\kappa(q,n)}{\gamma}\right]^\frac{1}{q}
\left\|\,\left|\nabla f\right|\,\right\|_{L^{r,\tau}(\mathbb{R}^n)};
\end{align*}
\item[\textup{(ii)}]
if $\gamma\in(-\infty,0)$,
assume further that both $n\in\mathbb{N}\cap[2,\infty)$
and $q\in(0,\frac{n-\gamma}{n}\min\left\{r,\,\tau\right\})$
or that $n=1$, $\gamma\in(-\infty,-1)$,
and $q\in(0,-\gamma\min\left\{r,\,\tau\right\})$,
then
\begin{align*}
&\lim_{\lambda\to0^+}\lambda\left\|\left[\int_{\mathbb{R}^n}
\mathbf{1}_{E_{\lambda,\frac{\gamma}{q}}[f]}(\cdot,y)
\left|\cdot-y\right|^{\gamma-n}\,dy\right]^\frac{1}{q}
\right\|_{L^{r,\tau}(\mathbb{R}^n)}
=\left[-\frac{\kappa(q,n)}{\gamma}\right]^\frac{1}{q}
\left\|\,\left|\nabla f\right|\,\right\|_{L^{r,\tau}(\mathbb{R}^n)}.
\end{align*}
\end{enumerate}
\end{theorem}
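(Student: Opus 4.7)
The plan is to derive Theorem~\ref{2151} as a direct consequence of Theorem~\ref{2117} after verifying that the Lorentz space $L^{r,\tau}(\mathbb{R}^n)$ with $r,\tau\in(1,\infty)$ satisfies all of the structural hypotheses required there. I would first gather the standing background facts about Lorentz spaces which I will invoke repeatedly: for $r,\tau\in(1,\infty)$, the space $L^{r,\tau}(\mathbb{R}^n)$ is a ball Banach function space with an absolutely continuous norm (see, for instance, \cite[Theorem~1.4.11]{g2014} together with \cite[p.\,87]{shyy2017}); for $p\in[1,\infty)$ one has $[L^{r,\tau}(\mathbb{R}^n)]^{1/p}=L^{r/p,\tau/p}(\mathbb{R}^n)$ directly from Definition~\ref{tuhua} and the definition of the Lorentz quasi-norm; when $r/p,\tau/p\in(1,\infty)$, the associate space is $(L^{r/p,\tau/p}(\mathbb{R}^n))'=L^{(r/p)',(\tau/p)'}(\mathbb{R}^n)$; and the Hardy--Littlewood maximal operator $\mathcal{M}$ is bounded on $L^{s,\sigma}(\mathbb{R}^n)$ whenever $s\in(1,\infty)$ and $\sigma\in(0,\infty]$ (see \cite[Theorem~1.4.19]{g2014}).

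Having these tools in hand, I would split the argument along the three cases in the hypothesis, mirroring the pattern established in Theorems~\ref{1524},~\ref{2003},~\ref{1022}, and~\ref{1657}. In case (a), with $\min\{r,\tau\}\in(n,\infty)$, I would choose any $s\in(n,\min\{r,\tau\})$; since $r/s,\tau/s\in(1,\infty)$, the operator $\mathcal{M}$ is bounded on $[L^{r,\tau}(\mathbb{R}^n)]^{1/s}=L^{r/s,\tau/s}(\mathbb{R}^n)$, so Theorem~\ref{2117}(b) applies after picking an auxiliary $p\in[1,\min\{r,\tau\})$ (with $p$ further constrained in the limiting part of (ii) so that $q\in(0,\tfrac{n-\gamma}{n}p)$ when $n\ge2$, or $q\in(0,-\gamma p)$ when $n=1$ and $\gamma\in(-\infty,-1)$; this is possible since the hypotheses of Theorem~\ref{2151}(ii) permit $p$ arbitrarily close to $\min\{r,\tau\}$). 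In case (b), with $\min\{r,\tau\}\in(1,n]$ and $\gamma\in(0,\infty)$, the condition $n(\tfrac{1}{\min\{r,\tau\}}-\tfrac{1}{q})<1$ allows me to pick $p\in[1,\min\{r,\tau\})$ with $n(\tfrac{1}{p}-\tfrac{1}{q})<1$; then $[L^{r,\tau}]^{1/p}=L^{r/p,\tau/p}$ is a ball Banach function space, and $\mathcal{M}$ is bounded on its associate $L^{(r/p)',(\tau/p)'}$ since $(r/p)',(\tau/p)'\in(1,\infty)$, so Theorem~\ref{2117}(c) applies. In case (c), with $\min\{r,\tau\}\in(1,n]$, $\gamma\in(-\infty,0)$, and $q\in(0,\min\{r,\tau\})$, I would pick $p\in[\max\{1,q\},\min\{r,\tau\})$ (refined as in case (a) if one also wants the $\lambda\to0^+$ limit identity); then $\mathcal{M}$ is bounded on $X=L^{r,\tau}$ because $r>1$, and also on $(X^{1/p})'$, so Theorem~\ref{2117}(a) applies.

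In each of the three cases, once the hypotheses of Theorem~\ref{2117} are checked, its parts (i), (ii), (iii) immediately deliver the equivalence and the two limiting identities claimed in Theorem~\ref{2151}; the absolute continuity of the norm on $L^{r,\tau}(\mathbb{R}^n)$ (a consequence of $\tau<\infty$) is what enables the density step in Theorem~\ref{2117} and hence the extension of the identities to all $f\in\dot{W}^{1,L^{r,\tau}}(\mathbb{R}^n)$.

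The only delicate issue is bookkeeping: one has to verify in each case that the auxiliary parameter $p$ can simultaneously satisfy the range restriction coming from the structural hypotheses of Theorem~\ref{2117} (so that $X^{1/p}$ is a ball Banach function space and $\mathcal{M}$ is bounded on its associate) and the sharper restriction coming from part (iii) of that theorem (so that the limit identity at $\lambda\to0^+$ is available under the stated range of $q$). In each case this reduces to choosing $p$ strictly below $\min\{r,\tau\}$ but sufficiently close to it, which is always possible under the hypotheses of Theorem~\ref{2151}; no new ideas beyond those already present in the Morrey, mixed-norm, variable, and weighted cases are required, and the proof is therefore a routine verification which I would omit in the final write-up, following the precedent of the preceding subsections.
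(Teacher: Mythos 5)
Your proposal is correct and follows essentially the same route as the paper: reduce to Theorem~\ref{2117} by splitting along the three cases, identifying $[L^{r,\tau}]^{1/p}=L^{r/p,\tau/p}$ and its associate $L^{(r/p)',(\tau/p)'}$, invoking boundedness of $\mathcal{M}$ on Lorentz spaces and absolute continuity of the norm, and then choosing the auxiliary exponent $p$ close enough to $\min\{r,\tau\}$ to also satisfy the sharper restrictions needed for the $\lambda\to0^+$ limit in part (ii). The only cosmetic difference is which references are cited for the Lorentz-space background (the paper uses \cite{zhy2020} and \cite{wyy2020}; you cite \cite{g2014}); the mathematics is the same.
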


\begin{proof}
We consider the following three cases on both $\min\{r,\,\tau\}$ and $\gamma$.

\emph{Case 1)} $\min\{r,\,\tau\}\in(n,\infty)$ and
$\gamma\in\mathbb{R}\setminus\{0\}$. In this case,
we find that there exists an $s\in(n,\min\{r,\,\tau\})$,
which, combined with \cite[Lemma~3.5]{zhy2020},
further implies that the Hardy--Littlewood
maximal operator $\mathcal{M}$ is bounded on
$[L^{r,\tau}(\mathbb{R}^n)]^\frac{1}{s}$.
From \cite[Remark~3.4(iii)]{wyy2020},
we deduce that $L^{r,\tau}(\mathbb{R}^n)$ has an
absolutely continuous norm. Moreover,
when proving (ii),
if $\gamma\in(-\infty,0)$ and $n\in\mathbb{N}\cap[2,\infty)$,
then we choose a $p\in[1,\min\{r,\,\tau\})$
such that $q\in(0,\frac{n-\gamma}{n}p)$
and, if $\gamma\in(-\infty,0)$ and $n=1$,
then we choose a $p\in[1,\min\{r,\,\tau\})$ such that $q\in(0,-\gamma p)$.
Thus, Theorem~\ref{2117} implies that
the present theorem in this case holds true.

\emph{Case 2)} $\min\{r,\,\tau\}\in(1,n]$ and
$\gamma\in(0,\infty)$. In this case,
since $r,\tau\in(1,\infty)$ and
$n(\frac{1}{\min\{r,\,\tau\}}-\frac{1}{q})<1$, it follows that
there exists a $p\in[1,\min\{r,\,\tau\})$
such that $n(\frac{1}{p}-\frac{1}{q})<1$.
Notice that
$[L^{r,\tau}(\mathbb{R}^n)]^\frac{1}{p}
=L^{\frac{r}{p},\frac{\tau}{p}}(\mathbb{R}^n)$.
By this and the conclusion in \cite[p.\,87]{shyy2017},
it follows that both $L^{r,\tau}(\mathbb{R}^n)$
and $[L^{r,\tau}(\mathbb{R}^n)]^\frac{1}{p}$ are ball Banach function spaces.
From \cite[Theorem~1.4.16(vi)]{g2014}, we deduce that
$([L^{r,\tau}(\mathbb{R}^n)]^\frac{1}{p})'
=L^{(\frac{r}{p})',(\frac{\tau}{p})'}(\mathbb{R}^n)$,
which, combined with \cite[Lemma~3.5]{zhy2020}, further implies that
$\mathcal{M}$ is bounded on $([L^{r,\tau}(\mathbb{R}^n)]^\frac{1}{p})'$.
By \cite[Remark~3.4(iii)]{wyy2020},
we find that $L^{r,\tau}(\mathbb{R}^n)$ has an
absolutely continuous norm.
Thus, Theorem~\ref{2117} implies that
the present theorem in this case holds true.

\emph{Case 3)} $\min\{r,\,\tau\}\in(1,n]$ and
$\gamma\in(-\infty,0)$. In this case,
by \cite[Lemma~3.5]{zhy2020}, we find that $\mathcal{M}$
is bounded on $L^{r,\tau}(\mathbb{R}^n)$.
Since $r,\tau\in(1,\infty)$ and
$q\in(0,\min\{r,\,\tau\})$, it follows that
there exists a $p\in[1,\min\{r,\,\tau\})$ such that $q\leq p$ and,
when proving (ii), if $n=1$, we
choose a $p\in[\max\{1,\,q\},\min\{r,\,\tau\})$ satisfying
$q\in(0,-\gamma p)$.
This, together with an
argument similar to that used in Case 2), further implies that
all the assumptions of Theorem~\ref{2117} with
$X:=L^{r,\tau}(\mathbb{R}^n)$ are satisfied.
Thus, Theorem~\ref{2117} implies that
the present theorem in this case still holds true.
This finishes the proof of Theorem~\ref{2151}.
\end{proof}

Using both Theorems~\ref{2255} and~\ref{2256}
and Remarks~\ref{4.3}(iv) and~\ref{2021}(ii),
we obtain the following conclusions;
since their proofs are similar to that of Theorem~\ref{2151},
we omit the details here.

\begin{theorem}\label{2288}
Let $\gamma\in\mathbb{R}\setminus\{0\}$,
$r,\tau\in(1,\infty)$,
$p\in[1,\infty]$,
and $s\in(0,1)$.
Let $q\in[1,p]$ satisfy $\frac{1}{q}=\frac{1-s}{p}+s$.
\begin{enumerate}
\item[\textup{(i)}]
If $p\in[1,\infty)$, then, for any
$f\in\dot{W}^{1,L^{r,\tau}}(\mathbb{R}^n)$,
\eqref{2053} with $X$
replaced by $L^{r,\tau}(\mathbb{R}^n)$ holds true.
\item[\textup{(ii)}]
If $p=\infty$, then, for any $f\in\dot{W}^{1,L^{r,\tau}}(\mathbb{R}^n)$,
\eqref{2054} with $X$
replaced by $L^{r,\tau}(\mathbb{R}^n)$ holds true.
\end{enumerate}
\end{theorem}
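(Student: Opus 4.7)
The plan is to verify the hypotheses of Theorem~\ref{2255} for $X:=L^{r,\tau}(\mathbb{R}^n)$ and then directly apply that theorem together with Remark~\ref{4.3}(iv). Since $r,\tau\in(1,\infty)$, there exists $\theta_0\in(0,1)$ such that $r/\theta,\tau/\theta\in(1,\infty)$ for every $\theta\in(\theta_0,1)$; I will take any sequence $\{\theta_m\}_{m\in\mathbb{N}}\subset(\theta_0,1)$ with $\theta_m\to 1$. For such $\theta_m$ the identity $[L^{r,\tau}(\mathbb{R}^n)]^{1/\theta_m}=L^{r/\theta_m,\tau/\theta_m}(\mathbb{R}^n)$ and the conclusion in \cite[p.\,87]{shyy2017} show that $X^{1/\theta_m}$ is a ball Banach function space. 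By \cite[Theorem~1.4.16(vi)]{g2014} its associate space equals $L^{(r/\theta_m)',(\tau/\theta_m)'}(\mathbb{R}^n)$, on which the Hardy--Littlewood maximal operator $\mathcal{M}$ is bounded by \cite[Lemma~3.5]{zhy2020} (since both indices exceed $1$).

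The main obstacle is verifying the uniform bound \eqref{2114}, namely that $\|\mathcal{M}\|_{(X^{1/\theta_m})'\to(X^{1/\theta_m})'}$ stays bounded as $\theta_m\to 1$. I would obtain this by tracking the explicit dependence on the indices in the proof of the Lorentz-space bound for $\mathcal{M}$ in \cite[Lemma~3.5]{zhy2020}: the constants there depend only continuously on $(r,\tau)$ in the open set $(1,\infty)\times(1,\infty)$, and $((r/\theta_m)',(\tau/\theta_m)')\to(r',\tau')$ stays in a compact subset of this open set, so the operator norms are uniformly controlled. An alternative is to use the standard interpolation-based argument (Marcinkiewicz interpolation between $L^{1,\infty}$ and $L^{\infty}$, with continuous dependence of constants on the indices). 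Once \eqref{2114} is in hand, all remaining hypotheses of Theorem~\ref{2255} are routine: the case $\gamma\in(-\infty,0)$ requires $\mathcal{M}$ to be bounded on $X$ itself, which is \cite[Lemma~3.5]{zhy2020} again since $r>1$.

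To pass from $C^1$ functions with compactly supported gradient to general $f\in\dot{W}^{1,L^{r,\tau}}(\mathbb{R}^n)$, I need the additional hypotheses in Theorem~\ref{2255}: absolute continuity of the norm of $L^{r,\tau}(\mathbb{R}^n)$, which holds by \cite[Remark~3.4(iii)]{wyy2020}; for part (i), that $X^p=L^{rp,\tau p}(\mathbb{R}^n)$ is a ball Banach function space, which holds because $rp,\tau p\in(1,\infty)$; and uniform boundedness of the centered ball average operators $\{\mathcal{B}_t\}_{t\in(0,\infty)}$ on $L^{r,\tau}(\mathbb{R}^n)$, which is an immediate consequence of the pointwise estimate $\mathcal{B}_t f\leq\mathcal{M}(f)$ combined with \cite[Lemma~3.5]{zhy2020}. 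With all hypotheses verified, (i) of the theorem follows from Theorem~\ref{2255}(i) (or its extension to $\dot{W}^{1,X}(\mathbb{R}^n)$), and (ii) from Theorem~\ref{2255}(ii); no new computation beyond Theorem~\ref{2151}'s scheme is required, the essential point being again the uniform control of the maximal-function norms on the convexified associate scale.
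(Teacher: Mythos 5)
Your verification of the ball-Banach-function-space properties, the associate-space identity $([L^{r,\tau}(\mathbb{R}^n)]^{1/\theta})'=L^{(r/\theta)',(\tau/\theta)'}(\mathbb{R}^n)$, the boundedness of $\mathcal{M}$ there, the absolute continuity of the norm, the uniform bound on the ball-average operators via $\mathcal{B}_t f\le\mathcal{M}f$, and the convexification $X^p=L^{rp,\tau p}(\mathbb{R}^n)$ are all correct. The issue is that you chose the harder of the paper's two available routes. The paper proves Theorem~\ref{2288} via Remark~\ref{4.3}(iv), which requires only that $X=L^{r,\tau}(\mathbb{R}^n)$ satisfies Theorem~\ref{2117}'s hypotheses for one \emph{fixed} $p\in[1,\infty)$ --- no $\theta_m$ sequence and no condition \eqref{2114}. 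Since $r,\tau\in(1,\infty)$, take $p\in[1,\min\{r,\tau\})$; then $\mathcal{M}$ is bounded on $([L^{r,\tau}(\mathbb{R}^n)]^{1/p})'=L^{(r/p)',(\tau/p)'}(\mathbb{R}^n)$ by \cite[Lemma~3.5]{zhy2020}, the rest of Theorem~\ref{2117}'s hypotheses are immediate, and one is done. This is precisely Theorem~\ref{2151}'s scheme, which is what the paper means by ``since their proofs are similar to that of Theorem~\ref{2151}.'' You instead verify Theorem~\ref{2255}'s $\theta_m$ hypothesis directly, rightly flag \eqref{2114} as the obstacle, and then only sketch its verification: the claim that the constant in \cite[Lemma~3.5]{zhy2020} depends continuously (or boundedly) on the Lorentz indices over compact subsets of $(1,\infty)^2$ is plausible since $((r/\theta_m)',(\tau/\theta_m)')$ stays in such a compact set, but as written this is not a complete argument. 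More to the point, the $\theta_m$ machinery of Theorem~\ref{4.8} exists specifically for endpoint cases where $\mathcal{M}$ fails to be bounded on $X'$ itself (Morrey with $r=1$, Orlicz with $r^-_\Phi=1$, variable exponent with essential infimum $1$); none of these occur for Lorentz with $r,\tau>1$, so ``uniform control of the maximal-function norms on the convexified associate scale'' is not the essential point here --- it is simply unnecessary.
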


\begin{theorem}\label{2289}
Let $\gamma\in\mathbb{R}\setminus\{0\}$,
$r,\tau\in(1,\infty)$,
$\eta\in(0,1)$,
and
$$
0\leq s_0<s<1<q<q_0<\infty
$$
satisfy \eqref{2228}.
Then, for any $f\in\dot{W}^{1,L^{r,\tau}}(\mathbb{R}^n)$,
\eqref{939} with $X$ replaced by
$L^{r,\tau}(\mathbb{R}^n)$ holds true.
\end{theorem}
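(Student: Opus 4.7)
The plan is to apply Theorem~\ref{2256} with $X := L^{r,\tau}(\mathbb{R}^n)$, reducing the proof to verifying the hypotheses of that abstract theorem for the Lorentz space. Since $r,\tau \in (1,\infty)$, the space $L^{r,\tau}(\mathbb{R}^n)$ is a ball Banach function space by \cite[p.~87]{shyy2017}, has an absolutely continuous norm by \cite[Remark~3.4(iii)]{wyy2020}, and the boundedness of $\mathcal{M}$ on $L^{r,\tau}(\mathbb{R}^n)$ (see \cite[Lemma~3.5]{zhy2020}) together with a pointwise estimate $\mathcal{B}_r(f) \leq \mathcal{M}(f)$ implies the uniform boundedness of the centered ball average operators $\{\mathcal{B}_r\}_{r\in(0,\infty)}$ on $X$.

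Next, I would construct the extrapolation sequence. Since $r,\tau \in (1,\infty)$, I pick a sequence $\{\theta_m\}_{m\in\mathbb{N}} \subset (0,1)$ with $\theta_m \uparrow 1$ close enough to $1$ so that $\min\{r,\tau\}/\theta_m > 1$ for every $m$. Then, using $[L^{r,\tau}(\mathbb{R}^n)]^{1/\theta_m} = L^{r/\theta_m,\tau/\theta_m}(\mathbb{R}^n)$ and \cite[p.~87]{shyy2017}, $X^{1/\theta_m}$ is a ball Banach function space for every $m$. By \cite[Theorem~1.4.16(vi)]{g2014},
$$
\left(X^{1/\theta_m}\right)' = L^{(r/\theta_m)',(\tau/\theta_m)'}(\mathbb{R}^n),
$$
and since $(r/\theta_m)',(\tau/\theta_m)' \in (1,\infty)$, the Hardy--Littlewood maximal operator $\mathcal{M}$ is bounded on $(X^{1/\theta_m})'$ by \cite[Lemma~3.5]{zhy2020}. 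In the case $\gamma \in (-\infty,0)$, the additional assumption of Theorem~\ref{2256} that $\mathcal{M}$ is bounded on $X$ itself is satisfied since $r,\tau \in (1,\infty)$.

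The main obstacle is the verification of the uniform bound \eqref{2114}, namely
$$
\lim_{m\to\infty}\left\|\mathcal{M}\right\|_{(X^{1/\theta_m})'\to(X^{1/\theta_m})'}<\infty.
$$
As $\theta_m \uparrow 1$, the conjugate exponents $(r/\theta_m)',(\tau/\theta_m)'$ decrease monotonically to $r',\tau' \in (1,\infty)$, which are bounded away from $1$ and $\infty$; the explicit operator norm bound for $\mathcal{M}$ on a Lorentz space (obtained for instance via the Marcinkiewicz interpolation argument used in the proof of \cite[Lemma~3.5]{zhy2020}) depends continuously on these exponents, so the limit superior is dominated by the operator norm on $L^{r',\tau'}(\mathbb{R}^n)$ (up to a constant), hence finite.

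With all hypotheses verified, Theorem~\ref{2256} applied to $X = L^{r,\tau}(\mathbb{R}^n)$ yields \eqref{939} first for any $f \in C^1(\mathbb{R}^n)$ with $|\nabla f|\in C_{\mathrm{c}}(\mathbb{R}^n)$, and then, by invoking the density conclusion in Theorem~\ref{2256} (which uses the absolute continuity of the norm and the uniform $L^{r,\tau}$-boundedness of $\{\mathcal{B}_r\}_{r\in(0,\infty)}$ established above), for all $f \in \dot{W}^{1,L^{r,\tau}}(\mathbb{R}^n)$, completing the proof.
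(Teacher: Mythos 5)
Your proof is correct, but it takes a more laborious route than the paper's. The paper deduces Theorem~\ref{2289} not by feeding Lorentz spaces through Theorem~\ref{2256} directly, but by appealing to Remark~\ref{2021}(ii), which observes that if $X$ satisfies the hypotheses of Theorem~\ref{2117} (ball Banach function space with a \emph{single} exponent $p$ such that $X^{1/p}$ is a ball Banach function space, $\mathcal{M}$ is bounded on $(X^{1/p})'$, and $X$ has an absolutely continuous norm), then \eqref{939} holds via a slight modification of the proof of Theorem~\ref{2256} with Theorem~\ref{4.8}(i) replaced by Theorem~\ref{2117}(i). For $L^{r,\tau}$ with $r,\tau\in(1,\infty)$ those single-$p$ hypotheses are exactly what the paper checks in the proof of Theorem~\ref{2151}, so no $\theta_m$ sequence and no uniform norm bound \eqref{2114} need to be produced. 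Your route instead constructs the full extrapolation sequence $\{\theta_m\}$ and verifies \eqref{2114} by continuity of the operator norm of $\mathcal{M}$ on Lorentz spaces as the exponents approach $(r',\tau')$; that is a sound argument (the exponents $(r/\theta_m)',(\tau/\theta_m)'$ stay in a compact subset of $(1,\infty)$, so the operator norms remain uniformly bounded), but it does extra work the paper's shortcut avoids. One small slip: as $\theta_m\uparrow 1$ the exponents $r/\theta_m,\tau/\theta_m$ \emph{decrease} to $r,\tau$, so the conjugates $(r/\theta_m)',(\tau/\theta_m)'$ \emph{increase} to $r',\tau'$, not decrease as you wrote; this reversal does not affect your conclusion since the limit values are still in $(1,\infty)$. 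Both approaches are valid; the one in the paper is cleaner because it leverages the intermediate result of Theorem~\ref{2117} that has already been packaged for exactly this use.
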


\begin{remark}
\begin{enumerate}
\item[(i)]
Theorem~\ref{2151} when $\gamma=n$
can be deduced from \cite[Theorem~4.5(ii)]{dlyyz.arxiv}
with $X:=L^{r,\tau}(\mathbb{R}^n)$ for any $r,\tau\in(1,\infty)$.
To the best of our knowledge,
Theorem~\ref{2151}
when $\gamma\in\mathbb{R}\setminus\{0,n\}$
is new.
\item[(ii)]
Theorems~\ref{2288} and~\ref{2289} when $\gamma=n$
can be obtained, respectively,
by \cite[Corollaries~4.13 and~4.15]{dlyyz.arxiv}
with $X:=L^{r,\tau}(\mathbb{R}^n)$ for any $r,\tau\in(1,\infty)$.
To the best of our knowledge,
Theorems~\ref{2288} and~\ref{2289}
(the Gagliardo--Nirenberg-type inequalities
on the Sobolev--Lorentz space)
when $\gamma\in\mathbb{R}\setminus\{0,n\}$
are new.
\end{enumerate}
\end{remark}

\subsection{Orlicz Spaces}\label{5.5}

We first give several basic concepts on Orlicz spaces.
A non-decreasing function $\Phi:\ [0,\infty)
\ \to\ [0,\infty)$ is called an \emph{Orlicz function}
if $\Phi$ satisfies that
\begin{enumerate}
\item[(i)]
$\Phi(0)= 0$;
\item[(ii)]
for any $t\in(0,\infty)$,
$\Phi(t)\in(0,\infty)$;
\item[(iii)]
$\lim_{t\to\infty}\Phi(t)=\infty$.
\end{enumerate}
An Orlicz function $\Phi$ is said to be of \emph{lower}
(resp. \emph{upper}) \emph{type} $r$ for some
$r\in\mathbb{R}$ if there exists a positive constant
$C_{(r)}$ such that,
for any $t\in[0,\infty)$ and
$s\in(0,1)$ [resp. $s\in[1,\infty)$],
\begin{equation*}
\Phi(st)\le C_{(r)} s^r\Phi(t).
\end{equation*}
In the remainder of this subsection, we always assume that
$\Phi:\ [0,\infty)\ \to\ [0,\infty)$
is an Orlicz function with both positive lower
type $r_{\Phi}^-$ and positive upper type $r_{\Phi}^+$.
The \emph{Orlicz space $L^\Phi(\mathbb{R}^n)$}
is defined to be the set of all the $f\in\mathscr{M}(\mathbb{R}^n)$
with the finite quasi-norm
\begin{equation*}
\|f\|_{L^\Phi(\mathbb{R}^n)}:=\inf\left\{\lambda\in
(0,\infty):\ \int_{\mathbb{R}^n}\Phi\left(\frac{|f(x)|}
{\lambda}\right)\,dx\le1\right\}.
\end{equation*}
It is easy to prove that $L^\Phi(\mathbb{R}^n)$
is a quasi-Banach function space
(see \cite[Section~7.6]{shyy2017}).
For more related results on Orlicz spaces,
we refer the reader to \cite{dfmn2021,ns2014,rr2002}.
When $X:=L^{\Phi}(\mathbb{R}^n)$,
we simply write
$\dot{W}^{1,\Phi}(\mathbb{R}^n):=\dot{W}^{1,X}(\mathbb{R}^n)$.
The following boundedness of centered ball average operators
on Orlicz spaces is just \cite[Lemma~5.19]{dlyyz.arxiv}.

\begin{lemma}\label{1910}
Let $\Phi$ be an Orlicz function with both
positive lower type $r^-_{\Phi}\in[1,\infty)$
and positive upper type $r^+_\Phi$.
Then centered ball average operators
$\{\mathcal{B}_r\}_{r\in(0,\infty)}$
are uniformly bounded on $L^\Phi(\mathbb{R}^n)$;
moreover, there exists a positive constant $C$
such that, for any $r\in(0,\infty)$ and $f\in L^\Phi(\mathbb{R}^n)$,
\begin{align*}
\left\|\mathcal{B}_r(f)\right\|_{L^\Phi(\mathbb{R}^n)}
\leq C\|f\|_{L^\Phi(\mathbb{R}^n)}.
\end{align*}
\end{lemma}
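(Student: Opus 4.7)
My plan is to exploit the assumption $r_\Phi^-\ge 1$ in order to replace $\Phi$ by an equivalent \emph{convex} Orlicz function, after which Jensen's inequality applied to the averaging kernel $\frac{1}{|B(x,r)|}\mathbf{1}_{B(x,r)}$ reduces the claim to a routine modular estimate combined with Fubini's theorem. The desired uniform bound then follows from the lower-type control of the Luxemburg norm by the modular.

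First, I would invoke the standard construction (or an equivalent one) producing a convex Orlicz function $\widetilde\Phi$ such that there exist positive constants $c_1,c_2$, depending only on the lower and upper type constants of $\Phi$, with
$$
c_1\widetilde\Phi(t)\le \Phi(t)\le c_2\widetilde\Phi(t)\qquad\text{for every }t\in[0,\infty).
$$
Concretely, one may take $\widetilde\Phi(t):=\int_0^t \Phi(s)/s\,ds$ and verify convexity from the fact that $\Phi(s)/s$ is essentially non-decreasing when $r_\Phi^-\ge 1$; the two-sided equivalence then drops out of the lower- and upper-type inequalities.

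Second, with this convex $\widetilde\Phi$ in hand, Jensen's inequality applied to the probability measure $\frac{1}{|B(x,r)|}\mathbf{1}_{B(x,r)}(y)\,dy$ gives, for every $f\in L_{\mathrm{loc}}^1(\mathbb{R}^n)$, every $r\in(0,\infty)$, every $\lambda\in(0,\infty)$, and every $x\in\mathbb{R}^n$,
$$
\widetilde\Phi\!\left(\frac{\mathcal{B}_r(f)(x)}{\lambda}\right)
\le \frac{1}{|B(x,r)|}\int_{B(x,r)}\widetilde\Phi\!\left(\frac{|f(y)|}{\lambda}\right)dy
=\mathcal{B}_r\!\left(\widetilde\Phi(|f|/\lambda)\right)(x).
$$
Integrating over $\mathbb{R}^n$ and exchanging the order of integration via Fubini's theorem (noting that $\int_{\mathbb{R}^n}\mathcal{B}_r(g)(x)\,dx=\int_{\mathbb{R}^n}g(x)\,dx$ for every non-negative $g$), and then translating back from $\widetilde\Phi$ to $\Phi$ using the equivalence above, I obtain
$$
\int_{\mathbb{R}^n}\Phi\!\left(\frac{\mathcal{B}_r(f)(x)}{\lambda}\right)dx
\le \frac{c_2}{c_1}\int_{\mathbb{R}^n}\Phi\!\left(\frac{|f(x)|}{\lambda}\right)dx.
$$

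Third, I would conclude by converting this modular inequality into a Luxemburg norm inequality. Take $\lambda_0:=\|f\|_{L^\Phi(\mathbb{R}^n)}$, so that $\int_{\mathbb{R}^n}\Phi(|f|/\lambda_0)\,dx\le 1$, and set $\mu:=A\lambda_0$ for some $A\ge 1$ to be chosen. Using the lower-type property $\Phi(st)\le C_{(r_\Phi^-)}s^{r_\Phi^-}\Phi(t)$ for $s=\lambda_0/\mu\in(0,1]$, the previous display gives
$$
\int_{\mathbb{R}^n}\Phi\!\left(\frac{\mathcal{B}_r(f)(x)}{\mu}\right)dx
\le \frac{c_2}{c_1}C_{(r_\Phi^-)}A^{-r_\Phi^-}\int_{\mathbb{R}^n}\Phi\!\left(\frac{|f(x)|}{\lambda_0}\right)dx
\le \frac{c_2}{c_1}C_{(r_\Phi^-)}A^{-r_\Phi^-}.
$$
Choosing $A:=[(c_2/c_1)C_{(r_\Phi^-)}]^{1/r_\Phi^-}$ forces the right-hand side to be at most $1$, so by the definition of the Luxemburg norm $\|\mathcal{B}_r(f)\|_{L^\Phi(\mathbb{R}^n)}\le A\|f\|_{L^\Phi(\mathbb{R}^n)}$, with $A$ independent of both $r$ and $f$, yielding the claimed uniform bound. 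The main obstacle in this plan is the convex-equivalence step: $\Phi$ is only assumed non-decreasing with controlled upper/lower type rather than convex, so one must be careful that $\widetilde\Phi$ is genuinely convex and that the equivalence constants depend only on the type constants; the rest is standard Jensen plus Fubini.
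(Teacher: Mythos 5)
The paper itself does not prove this lemma; it simply records it as ``just \cite[Lemma~5.19]{dlyyz.arxiv},'' so there is no in-paper argument to compare against. Your self-contained proof is essentially correct, and the Jensen-plus-Fubini route after replacing $\Phi$ by an equivalent convex Orlicz function is the natural one.

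One small correction to the convexification step: the specific formula $\widetilde\Phi(t):=\int_0^t\Phi(s)/s\,ds$ need not be convex, since the lower-type-$\ge1$ hypothesis only makes $s\mapsto\Phi(s)/s$ \emph{quasi}-increasing (there is a constant $C\ge1$ with $\Phi(s_1)/s_1\le C\,\Phi(s_2)/s_2$ for $s_1\le s_2$), not genuinely monotone, so $\widetilde\Phi'=\Phi(t)/t$ may fail to be non-decreasing. The standard fix is to define $\psi(u):=\sup_{0<s\le u}\Phi(s)/s$ and $\widetilde\Phi(t):=\int_0^t\psi(u)\,du$; then $\widetilde\Phi$ is genuinely convex, $\widetilde\Phi(t)\le t\,\psi(t)\le C\,\Phi(t)$ by quasi-monotonicity, and $\widetilde\Phi(t)\ge(t/2)\,\psi(t/2)\ge\Phi(t/2)\gtrsim\Phi(t)$ by the finite-upper-type (doubling) hypothesis, so the two-sided equivalence holds with constants depending only on the type constants. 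With that emendation, the rest of your argument — Jensen on the probability measure $|B(x,r)|^{-1}\mathbf{1}_{B(x,r)}\,dy$, the exact $L^1$-conservation $\int_{\mathbb{R}^n}\mathcal{B}_r(g)=\int_{\mathbb{R}^n}g$ by Fubini (uniform in $r$), and the lower-type conversion of the modular inequality into a Luxemburg-norm bound with a choice of $A$ independent of $r$ and $f$ — is correct. In fact you can shorten the last step: since $\widetilde\Phi$ is convex, Jensen and Fubini give $\int\widetilde\Phi(\mathcal{B}_r(f)/\lambda)\le\int\widetilde\Phi(|f|/\lambda)$ directly, hence $\|\mathcal{B}_r(f)\|_{L^{\widetilde\Phi}}\le\|f\|_{L^{\widetilde\Phi}}$, and the two-sided equivalence of $\Phi$ and $\widetilde\Phi$ then gives the bound for $\|\cdot\|_{L^\Phi}$ without invoking the lower-type inequality a second time.
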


Using Lemma~\ref{1910} and
Theorems~\ref{2117} and~\ref{4.8}, we obtain the following conclusion.

\begin{theorem}\label{1912}
Let $\Phi$ be an Orlicz function with both
positive lower type $r^-_{\Phi}$
and positive upper type $r^+_\Phi$.
Let $1\leq r^-_{\Phi}\leq r^+_{\Phi}<\infty$
and $q\in(0,\infty)$.
If one of the following statements holds true:
\begin{enumerate}
\item[\textup{(a)}]
$r^-_{\Phi}\in(n,\infty)$, $\gamma\in\mathbb{R}\setminus\{0\}$,
and $q\in(0,\infty)$;
\item[\textup{(b)}]
$r^-_{\Phi}\in[1,n]$, $\gamma\in(0,\infty)$, and
$n(\frac{1}{r^-_{\Phi}}-\frac{1}{q})<1$;
\item[\textup{(c)}]
$r^-_{\Phi}\in(1,n]$, $\gamma\in(-\infty,0)$, and
$q\in(0,r^-_{\Phi})$;
\item[\textup{(d)}]
$r^-_{\Phi}=n=q=1$ and $\gamma\in(-\infty,-1)$,
\end{enumerate}
then, for any $f\in\dot{W}^{1,\Phi}(\mathbb{R}^n)$,
\begin{align*}
\sup_{\lambda\in(0,\infty)}\lambda
\left\|\left[\int_{\mathbb{R}^n}
\mathbf{1}_{E_{\lambda,\frac{\gamma}{q}}[f]}(\cdot,y)
\left|\cdot-y\right|^{\gamma-n}\,dy\right]^\frac{1}{q}
\right\|_{L^\Phi(\mathbb{R}^n)}
\sim\left\|\,|\nabla f|\,\right\|_{L^\Phi(\mathbb{R}^n)},
\end{align*}
where the positive equivalence constants are independent of $f$
and $E_{\lambda,\frac{\gamma}{q}}[f]$
for any $\lambda\in(0,\infty)$
is the same as in \eqref{Elambda}.
Moreover, for any $f\in\dot{W}^{1,\Phi}(\mathbb{R}^n)$,
\begin{enumerate}
\item[\textup{(i)}]
if $\gamma\in(0,\infty)$, then
\begin{align*}
&\lim_{\lambda\to\infty}\lambda\left\|\left[\int_{\mathbb{R}^n}
\mathbf{1}_{E_{\lambda,\frac{\gamma}{q}}[f]}(\cdot,y)
\left|\cdot-y\right|^{\gamma-n}\,dy\right]^\frac{1}{q}
\right\|_{L^\Phi(\mathbb{R}^n)}
=\left[\frac{\kappa(q,n)}{\gamma}\right]^\frac{1}{q}
\left\|\,\left|\nabla f\right|\,\right\|_{L^\Phi(\mathbb{R}^n)};
\end{align*}
\item[\textup{(ii)}]
if $\gamma\in(-\infty,0)$,
assume further that both $n\in\mathbb{N}\cap[2,\infty)$
and $q\in(0,\frac{n-\gamma}{n}r^-_{\Phi})$
or that $n=1$, $\gamma\in(-\infty,-1)$,
and $q\in(0,-\gamma r^-_{\Phi})$,
then
\begin{align*}
&\lim_{\lambda\to0^+}\lambda\left\|\left[\int_{\mathbb{R}^n}
\mathbf{1}_{E_{\lambda,\frac{\gamma}{q}}[f]}(\cdot,y)
\left|\cdot-y\right|^{\gamma-n}\,dy\right]^\frac{1}{q}
\right\|_{L^\Phi(\mathbb{R}^n)}
=\left[-\frac{\kappa(q,n)}{\gamma}\right]^\frac{1}{q}
\left\|\,\left|\nabla f\right|\,\right\|_{L^\Phi(\mathbb{R}^n)}.
\end{align*}
\end{enumerate}
\end{theorem}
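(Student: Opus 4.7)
The goal is to reduce Theorem~\ref{1912} to either Theorem~\ref{2117} (in the regimes with $r_\Phi^->1$) or Theorem~\ref{4.8} (in the endpoint subcases with $r_\Phi^-=1$) applied to $X:=L^\Phi(\mathbb{R}^n)$, together with the limiting identities already contained in those two theorems. Before entering the case analysis I would collect the following structural facts about $L^\Phi$: since $r_\Phi^-\ge 1$, $L^\Phi$ is a ball Banach function space; for any $p\in(0,\infty)$, the convexification $[L^\Phi]^{1/p}$ is the Orlicz space generated by the Young function $\Phi_{1/p}(t):=\Phi(t^{1/p})$, with lower type $r_\Phi^-/p$ and upper type $r_\Phi^+/p$, so it is a ball Banach function space precisely when $p\le r_\Phi^-$; the associate $([L^\Phi]^{1/p})'$ is an Orlicz space generated by the complementary Young function of $\Phi_{1/p}$, and by the standard theory the Hardy--Littlewood maximal operator is bounded on an Orlicz space whenever the generating Young function has lower type strictly greater than $1$. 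Finally, the finiteness of $r_\Phi^+$ makes the norm on $L^\Phi$ absolutely continuous, and Lemma~\ref{1910} supplies the uniform boundedness of the centered ball-average operators, so the density inputs required by both Theorem~\ref{2117} and Theorem~\ref{4.8} are in place.

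\textbf{Non-critical subcases.} In case~(a), since $r_\Phi^->n$ I would pick $s\in(n,r_\Phi^-)$, so that $[L^\Phi]^{1/s}=L^{\Phi_{1/s}}$ has lower type $r_\Phi^-/s>1$; hence $\mathcal{M}$ is bounded on $[L^\Phi]^{1/s}$, triggering the variant of Theorem~\ref{2117} via its hypothesis~(b). In case~(b) with $r_\Phi^->1$, I would choose $p\in[1,r_\Phi^-)$ still satisfying $n(1/p-1/q)<1$; then $[L^\Phi]^{1/p}$ is a ball Banach function space, and the complementary Young function of $\Phi_{1/p}$ has lower type strictly bigger than $1$, so $\mathcal{M}$ is bounded on $([L^\Phi]^{1/p})'$, activating Theorem~\ref{2117}(c). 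Case~(c) with $r_\Phi^->1$ is parallel; in particular $\mathcal{M}$ is bounded on $L^\Phi$ itself (since $\Phi$ has lower type $>1$), so Theorem~\ref{2117}(a) applies. In all three subcases, the upper and lower bounds, together with the two limit identities~(i) and~(ii), are read off directly from the corresponding items of Theorem~\ref{2117}.

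\textbf{Critical subcases and main obstacle.} The genuinely delicate subcases are the ones with $r_\Phi^-=1$, namely case~(b) with $r_\Phi^-=1$ and case~(d); here no $p\in[1,r_\Phi^-)$ exists, so Theorem~\ref{2117} cannot be invoked, and I would turn instead to Theorem~\ref{4.8}. The key input is the existence of a sequence $\{\theta_m\}_{m\in\mathbb{N}}\subset(0,1)$ with $\theta_m\uparrow 1$ such that, for each $m$, $[L^\Phi]^{1/\theta_m}=L^{\Phi_{1/\theta_m}}$ is a ball Banach function space (automatic from $\theta_m<1=r_\Phi^-$), $\mathcal{M}$ is bounded on its associate (automatic, since the complementary Young function then has lower type bounded below by a quantity depending only on $r_\Phi^+$), and above all the uniform bound~\eqref{2114} holds. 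Verifying~\eqref{2114} is the main obstacle: it requires controlling the operator norm of $\mathcal{M}$ on $(L^{\Phi_{1/\theta_m}})'$ as the lower type of $\Phi_{1/\theta_m}$ degenerates to $1$ from above. I would handle this quantitatively, by tracking the Rubio-de-Francia extrapolation constants in the iteration formula~\eqref{RXg} as $\theta_m\uparrow 1$; concretely, the $A_1$-characteristic $[R_{(X^{1/\theta_m})'}g]_{A_1(\mathbb{R}^n)}$ is dominated by $2\|\mathcal{M}\|_{(X^{1/\theta_m})'\to(X^{1/\theta_m})'}$, and the latter admits a uniform bound because the relevant Young functions stay in a fixed doubling class. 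Once~\eqref{2114} is established, Theorem~\ref{4.8} yields both the equivalence in Theorem~\ref{1912} and the limit identities~(i) and~(ii) directly, with part~(ii) using the extra hypothesis $n=1$, $\gamma\in(-\infty,-1)$, $q\in(0,-\gamma r_\Phi^-)$ exactly as in Theorem~\ref{4.8}(iii).
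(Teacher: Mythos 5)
Your proof follows essentially the same route as the paper: reduce to Theorem~\ref{2117} when $r_\Phi^->1$ and to Theorem~\ref{4.8} when $r_\Phi^-=1$, with the structural inputs (ball Banach property of the convexification, associate space via the complementary Young function, absolutely continuous norm, and Lemma~\ref{1910}) collected up front, and the parameter $p$ chosen in each case to satisfy the arity/constraint requirements. The case breakdown matches the paper's Cases~1--5. One small correction of emphasis in your handling of the critical subcases: the Rubio-de-Francia iteration $R_X$ in \eqref{RXg} presupposes a finite operator norm $\|\mathcal{M}\|_{(X^{1/\theta_m})'\to(X^{1/\theta_m})'}$ and therefore cannot be the mechanism by which \eqref{2114} is verified --- invoking it here is circular. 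What actually establishes \eqref{2114} is precisely the final assertion you make, namely that the upper type of $\Phi_{1/\theta_m}$ stays close to $r_\Phi^+$ as $\theta_m\uparrow 1$, hence the lower type of the complementary Young function stays uniformly above $1$, hence the Orlicz maximal bound (the paper quotes the quantitative constant $[3C_{(r_\Phi^+)}]^{3r_\Phi^+/\theta}$ from the proof of the relevant result in Kokilashvili--Krbec) stays bounded. Since that assertion is correct and is the actual content of the paper's argument, the proof goes through; just drop the Rubio-de-Francia framing for this step.
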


\begin{proof}
To prove the present theorem, we consider the following
five cases on both $\gamma$ and $r^-_\Phi$.

\emph{Case 1)} $\gamma\in\mathbb{R}\setminus\{0\}$ and $r^-_\Phi\in(n,\infty)$.
In this case,
we find that there exists an $s\in(n,r^-_\Phi)$,
which, combined with \cite[Theorem~1.2.1]{kk1991},
further implies that the Hardy--Littlewood
maximal operator $\mathcal{M}$ is bounded on
$[L^\Phi(\mathbb{R}^n)]^\frac{1}{s}$.
From the proof of \cite[Lemma~4.5]{zyyw2019},
we deduce that $L^\Phi(\mathbb{R}^n)$ has
an absolutely continuous norm.
Moreover,
when proving (ii),
if $\gamma\in(-\infty,0)$ and $n\in\mathbb{N}\cap[2,\infty)$,
then we choose a $p\in[1,r^-_\Phi)$
such that $q\in(0,\frac{n-\gamma}{n}p)$
and, if $\gamma\in(-\infty,0)$ and $n=1$,
then we choose a $p\in[1,r^-_\Phi)$ such that $q\in(0,-\gamma p)$.
Thus, Theorem~\ref{2117} implies that
the present theorem in this case holds true.

\emph{Case 2)} $\gamma\in(0,\infty)$ and $r^-_\Phi\in(1,n]$.
In this case,
since $n(\frac{1}{r^-_\Phi}-\frac{1}{q})<1$,
it follows that there exists a $p\in[1,r^-_\Phi)$ such that
$n(\frac{1}{p}-\frac{1}{q})<1$.
Let $\Phi_p(t):=\Phi(t^{\frac{1}{p}})$ for any $t\in[0,\infty)$.
Notice that
$[L^\Phi(\mathbb{R}^n)]^\frac{1}{p}=L^{\Phi_p}(\mathbb{R}^n)$.
From this and the definition of the Orlicz space,
it is easy to see that both $L^\Phi(\mathbb{R}^n)$
and $[L^\Phi(\mathbb{R}^n)]^\frac{1}{p}$
are ball Banach function spaces.
Moreover, by
\cite[Theorem~1.2.1]{kk1991} and
the dual theorem of $L^\Phi(\mathbb{R}^n)$
(see, for instance, \cite[Theorem~13]{rr2002}),
we further conclude that
$\mathcal{M}$ is bounded on
$([L^\Phi(\mathbb{R}^n)]^{\frac{1}{p}})'$.
In addition, from the proof of \cite[Lemma~4.5]{zyyw2019},
we deduce that $L^\Phi(\mathbb{R}^n)$ has
an absolutely continuous norm.
Thus, Theorem~\ref{2117}
implies that the present theorem in this case holds true.

\emph{Case 3)} $\gamma\in(0,\infty)$ and $r^-_\Phi=1$. In this case,
by the proof of \cite[Theorem~1.2.1]{kk1991},
we find that, when $1=r^-_\Phi\leq r^+_\Phi<\infty$ and $\theta\in(0,1)$,
for any $f\in([L^\Phi(\mathbb{R}^n)]^{\frac{1}{\theta}})'$,
\begin{align*}
\left\|\mathcal{M}f\right\|_{([L^\Phi(\mathbb{R}^n)]^{\frac{1}{\theta}})'}
\lesssim\left[3C_{(r^+_\Phi)}\right]^\frac{3r^+_\Phi}{\theta}
\left\|f\right\|_{([L^\Phi(\mathbb{R}^n)]^{\frac{1}{\theta}})'},
\end{align*}
where the implicit positive constant
depends only on $n$. Thus, \eqref{2114}
with $X:=L^\Phi(\mathbb{R}^n)$ holds true.
From this, an argument similar to that used in Case 2),
Lemma~\ref{1910}, and Theorem~\ref{4.8},
we deduce that the present theorem in this case holds true.

\emph{Case 4)} $\gamma\in(-\infty,0)$ and $r^-_\Phi\in(1,n]$.
In this case, by \cite[Theorem~1.2.1]{kk1991},
we find that $\mathcal{M}$ is bounded on $L^\Phi(\mathbb{R}^n)$.
Since $q\in(0,r^-_\Phi)$, it follows that
there exists a $p\in[\max\{1,\,q\},r^-_\Phi)$ and,
when proving (ii), if $n=1$, we always choose a
$p\in[\max\{1,\,q\},r^-_\Phi)$ satisfying $q\in(0,-\gamma p)$.
By these, an argument similar to that used in Case 2),
and Theorem~\ref{2117}, we find that
the present theorem in this case holds true.

\emph{Case 5)} $\gamma\in(-\infty,-1)$ and $r^-_\Phi=1$. In this case,
from an argument similar to that used in Case 3), Lemma~\ref{1910},
and Theorem~\ref{4.8}, we deduce that
the present theorem in this case also holds true.
This finishes the proof Theorem~\ref{1912}.
\end{proof}

Using Lemma~\ref{1910}, Theorems~\ref{2255} and~\ref{2256},
and Remarks~\ref{4.3}(iv) and~\ref{2021}(ii),
we obtain the following
conclusions; since their proofs are similar to that of Theorem~\ref{1912},
we omit the details here.

\begin{theorem}\label{1922}
Let $\gamma\in\mathbb{R}\setminus\{0\}$ and
$\Phi$ be an Orlicz function with both positive lower type $r^-_{\Phi}$
and positive upper type $r^+_\Phi$.
Let $1\leq r^-_{\Phi}\leq r^+_{\Phi}<\infty$,
$p\in[1,\infty]$,
and $s\in(0,1)$.
Let $q\in[1,p]$ satisfy $\frac{1}{q}=\frac{1-s}{p}+s$.
If $r^-_{\Phi}=1$ and $\gamma\in(-\infty,0)$,
assume further that both $\gamma\in(-\infty,-1)$ and $n=1$.
\begin{enumerate}
\item[\textup{(i)}]
If $p\in[1,\infty)$, then, for any $f\in\dot{W}^{1,\Phi}(\mathbb{R}^n)$,
\eqref{2053} with $X$ replaced by $L^{\Phi}(\mathbb{R}^n)$ holds true.
\item[\textup{(ii)}]
If $p=\infty$, then, for any $f\in\dot{W}^{1,\Phi}(\mathbb{R}^n)$,
\eqref{2054} with $X$ replaced by $L^{\Phi}(\mathbb{R}^n)$ holds true.
\end{enumerate}
\end{theorem}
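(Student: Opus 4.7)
The plan is to derive both (i) and (ii) from Theorems~\ref{2255} and~\ref{2256} by verifying, case by case on $r_\Phi^-$ and on the sign of $\gamma$, that $X:=L^\Phi(\mathbb{R}^n)$ meets all the structural hypotheses appearing there. The essential quantities to control are: that $X^{1/\theta}=L^{\Phi_\theta}(\mathbb{R}^n)$ with $\Phi_\theta(t):=\Phi(t^{1/\theta})$ is a ball Banach function space for a suitable range of $\theta\in(0,1]$; that the Hardy--Littlewood maximal operator $\mathcal{M}$ is bounded on the associate space $(X^{1/\theta})'$; that centered ball averages $\{\mathcal{B}_r\}_{r\in(0,\infty)}$ are uniformly bounded on $X$; and that $X$ has an absolutely continuous norm. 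The boundedness of $\{\mathcal{B}_r\}_{r\in(0,\infty)}$ is exactly Lemma~\ref{1910}, and the absolute continuity follows from the proof of \cite[Lemma~4.5]{zyyw2019}, just as in the proof of Theorem~\ref{1912}.

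First I would handle the generic case $r_\Phi^->1$. Pick any $\theta_0\in(0,1)$ so small that $1<r_\Phi^-/\theta_0\leq r_\Phi^+/\theta_0<\infty$; then $\Phi_{\theta}$ inherits positive lower type $r_\Phi^-/\theta>1$ and upper type $r_\Phi^+/\theta$ for every $\theta\in[\theta_0,1]$, so $L^{\Phi_\theta}(\mathbb{R}^n)$ is a ball Banach function space and, by the dual description of Orlicz norms (see, e.g., \cite[Theorem~13]{rr2002}) together with \cite[Theorem~1.2.1]{kk1991}, $\mathcal{M}$ is bounded on $(L^{\Phi_\theta})'$. Taking any sequence $\theta_m\uparrow 1$ inside $[\theta_0,1)$ yields \eqref{2114}, since the associate operator norms depend continuously on $\theta$ in this regime. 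When moreover $\gamma\in(-\infty,0)$, boundedness of $\mathcal{M}$ on $X$ itself is furnished by \cite[Theorem~1.2.1]{kk1991}. Thus all hypotheses of Theorems~\ref{2255} and~\ref{2256} are in force, and (i) together with (ii) follow.

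The main obstacle is the endpoint case $r_\Phi^-=1$, where $\mathcal{M}$ is in general \emph{not} bounded on $(L^\Phi)'$, so \eqref{2114} must be checked by a quantitative extrapolation. Following the strategy of Case~3 in the proof of Theorem~\ref{1912}, I would track the constants in \cite[Theorem~1.2.1]{kk1991} applied to $\Phi_\theta$: its lower type is $1/\theta$ and upper type $r_\Phi^+/\theta$, and the proof there yields
\[
\left\|\mathcal{M}\right\|_{(L^{\Phi_\theta})'\to(L^{\Phi_\theta})'}
\lesssim\bigl[3C_{(r_\Phi^+)}\bigr]^{3r_\Phi^+/\theta},
\]
with the implicit constant depending only on $n$. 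Choosing $\theta_m:=1-2^{-m}$ then gives a bounded sequence of operator norms, so \eqref{2114} holds. Since now the hypothesis $\mathcal{M}$ bounded on $X$ is unavailable when $\gamma<0$, the extra standing assumption that $n=1$ and $\gamma\in(-\infty,-1)$ is precisely what is needed to invoke the alternative branch in Theorems~\ref{2255} and~\ref{2256} via Remarks~\ref{4.3}(iv) and~\ref{2021}(ii) (which allow the hypotheses of Theorem~\ref{4.8} to be used in place of those of Theorem~\ref{2117}). Putting these ingredients together delivers \eqref{2053} in case~(i) and \eqref{2054} in case~(ii) for every $f\in\dot{W}^{1,\Phi}(\mathbb{R}^n)$; the arbitrariness of the sequence $\{\theta_m\}$ together with Lemma~\ref{1910} absorbs the density step from $C^1$-functions with compactly supported gradient to $\dot{W}^{1,\Phi}(\mathbb{R}^n)$.
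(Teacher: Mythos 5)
Your proposal follows the same overall strategy as the paper's: reduce Theorem~\ref{1922} to Theorems~\ref{2255} and~\ref{2256} (together with Remarks~\ref{4.3}(iv) and~\ref{2021}(ii) and Lemma~\ref{1910}), and verify the hypotheses for $X:=L^\Phi(\mathbb{R}^n)$ case by case exactly as in the proof of Theorem~\ref{1912}. The ingredients you cite (ball Banach structure of $L^{\Phi_\theta}$, absolute continuity from the proof of \cite[Lemma~4.5]{zyyw2019}, uniform boundedness of $\{\mathcal{B}_r\}$ from Lemma~\ref{1910}, boundedness of $\mathcal{M}$ from \cite[Theorem~1.2.1]{kk1991} and the duality \cite[Theorem~13]{rr2002}) are the right ones, and the quantitative constant tracking in the case $r^-_\Phi=1$ (giving $\|\mathcal{M}\|_{(L^{\Phi_\theta})'\to(L^{\Phi_\theta})'}\lesssim[3C_{(r_\Phi^+)}]^{3r^+_\Phi/\theta}$, so that $\lim_{\theta\to1^-}$ is finite and \eqref{2114} holds) is exactly what the paper does in Case~3 of the proof of Theorem~\ref{1912}.

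Two points where your account diverges from (or is less careful than) the paper's. First, in the generic case $r^-_\Phi>1$ you insist on producing a sequence $\theta_m\uparrow1$ and asserting \eqref{2114} via a continuity-in-$\theta$ claim for the operator norm of $\mathcal{M}$ on $(L^{\Phi_\theta})'$. That claim is not proved and is more than you need: the paper instead invokes Remarks~\ref{4.3}(iv) and~\ref{2021}(ii), which allow one to replace the $\theta_m$-hypothesis package of Theorem~\ref{4.8} by the single-$p$ hypotheses of Theorem~\ref{2117}, so that when $r^-_\Phi>1$ one just needs $\mathcal{M}$ bounded on $(X^{1/p})'$ for one $p\in[1,r^-_\Phi)$ — no \eqref{2114} required. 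If you prefer your route, the honest way to get \eqref{2114} is again via the explicit \cite[Theorem~1.2.1]{kk1991} bound (whose constant depends only on the lower and upper types of $\Phi_\theta$, namely $r^-_\Phi/\theta$ and $r^+_\Phi/\theta$, hence stays bounded as $\theta\to1^-$), not via an unjustified continuity statement. Second, you misattribute the role of Remarks~\ref{4.3}(iv) and~\ref{2021}(ii): they concern trading Theorem~\ref{4.8}'s hypotheses for Theorem~\ref{2117}'s, and have nothing to do with the clause ``$n=1$ and $\gamma\in(-\infty,-1)$''; that clause is already part of the hypotheses of Theorems~\ref{2255} and~\ref{2256} themselves (the ``If $\gamma\in(-\infty,0)$, assume further $\ldots$'' sentence), and it is needed precisely because $\mathcal{M}$ is \emph{not} bounded on $L^\Phi(\mathbb{R}^n)$ when $r^-_\Phi=1$. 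Relatedly, the closing sentence about ``the arbitrariness of the sequence $\{\theta_m\}$'' absorbing the density step is off; the passage from $C^1$-functions with compactly supported gradient to $\dot W^{1,\Phi}(\mathbb{R}^n)$ is built into Theorems~\ref{2255} and~\ref{2256} under the extra hypotheses of absolute continuity and uniform boundedness of $\{\mathcal{B}_r\}$, with no role for arbitrariness of $\theta_m$.
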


\begin{theorem}\label{1923}
Let $\gamma\in\mathbb{R}\setminus\{0\}$ and
$\Phi$ be an Orlicz function with both positive lower type $r^-_{\Phi}$
and positive upper type $r^+_\Phi$.
Let $1\leq r^-_{\Phi}\leq r^+_{\Phi}<\infty$,
$\eta\in(0,1)$,
and
$$
0\leq s_0<s<1<q<q_0<\infty
$$
satisfy \eqref{2228}.
If $r^-_{\Phi}=1$ and $\gamma\in(-\infty,0)$,
assume further that both $\gamma\in(-\infty,-1)$ and $n=1$.
Then, for any $f\in\dot{W}^{1,\Phi}(\mathbb{R}^n)$,
\eqref{939} with $X$ replaced by $L^{\Phi}(\mathbb{R}^n)$ holds true.
\end{theorem}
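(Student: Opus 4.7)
The plan is to apply Theorem~\ref{2256} directly with $X:=L^\Phi(\mathbb{R}^n)$, after verifying the full set of structural hypotheses required there. The verification mirrors, step by step, what was done for Theorem~\ref{1912} in its proof, but replaces the appeal to Theorem~\ref{2117} by an appeal to Theorem~\ref{2256} (and uses Remark~\ref{2021}(ii) only when the stronger hypotheses of Theorem~\ref{2117} happen to hold). Thus the scheme is: produce a sequence $\{\theta_m\}_{m\in\mathbb{N}}\subset(0,1)$ with $\theta_m\to1$ so that each $[L^\Phi(\mathbb{R}^n)]^{1/\theta_m}$ is a ball Banach function space, $\mathcal{M}$ is bounded on $([L^\Phi(\mathbb{R}^n)]^{1/\theta_m})'$, and the operator norms obey \eqref{2114}; then, when $\gamma\in(-\infty,0)$ and we are outside the $n=1$, $\gamma\in(-\infty,-1)$ sub-case, check that $\mathcal{M}$ is bounded on $L^\Phi(\mathbb{R}^n)$ itself; finally, note that $L^\Phi(\mathbb{R}^n)$ has an absolutely continuous norm and that $\{\mathcal{B}_r\}_{r\in(0,\infty)}$ are uniformly bounded on it by Lemma~\ref{1910}.

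For the first point, recall $[L^\Phi(\mathbb{R}^n)]^{1/\theta_m}=L^{\Phi_{\theta_m}}(\mathbb{R}^n)$ with $\Phi_{\theta_m}(t):=\Phi(t^{1/\theta_m})$; since $\Phi$ has positive lower type $r^-_\Phi\in[1,\infty)$ and positive upper type $r^+_\Phi$, the rescaled function $\Phi_{\theta_m}$ has positive lower type $r^-_\Phi/\theta_m\ge 1$ and positive upper type $r^+_\Phi/\theta_m$, so $[L^\Phi(\mathbb{R}^n)]^{1/\theta_m}$ is indeed a ball Banach function space. Choosing any sequence $\theta_m\in(0,1)$ with $\theta_m\to1$, the Kokilashvili--Krbec estimate (see the proof of \cite[Theorem~1.2.1]{kk1991}) together with the dual theorem of Orlicz spaces \cite[Theorem~13]{rr2002} gives a bound of the form
\begin{align*}
\|\mathcal{M}\|_{([L^\Phi(\mathbb{R}^n)]^{1/\theta_m})'\to([L^\Phi(\mathbb{R}^n)]^{1/\theta_m})'}
\lesssim \bigl[3C_{(r^+_\Phi)}\bigr]^{3r^+_\Phi/\theta_m},
\end{align*}
whose right-hand side stays bounded as $\theta_m\to1$; in particular \eqref{2114} holds. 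This step essentially repeats Case~3) of the proof of Theorem~\ref{1912} and is the content we re-use.

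For the second point, when $\gamma\in(-\infty,0)$ and we are not in the degenerate sub-case $n=1$, $\gamma\in(-\infty,-1)$, the theorem forces $r^-_\Phi>1$ (because the degenerate sub-case is the only one allowed when $r^-_\Phi=1$ and $\gamma<0$); then $r^-_\Phi>1$ together with \cite[Theorem~1.2.1]{kk1991} yields the boundedness of $\mathcal{M}$ on $L^\Phi(\mathbb{R}^n)$, which is exactly the supplementary hypothesis requested in Theorem~\ref{2256} in the $\gamma<0$ branch. When $r^-_\Phi=1$ and $\gamma\in(-\infty,-1)$ with $n=1$, we simply invoke the alternative hypothesis offered by Theorem~\ref{2256}.

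Finally, $L^\Phi(\mathbb{R}^n)$ has an absolutely continuous norm (this follows from $r^+_\Phi<\infty$ as in the proof of \cite[Lemma~4.5]{zyyw2019}), and Lemma~\ref{1910} gives the uniform boundedness of the centered ball average operators $\{\mathcal{B}_r\}_{r\in(0,\infty)}$ on $L^\Phi(\mathbb{R}^n)$. All hypotheses of Theorem~\ref{2256} with $X:=L^\Phi(\mathbb{R}^n)$ are thereby verified, so \eqref{939} with $X$ replaced by $L^\Phi(\mathbb{R}^n)$ holds for any $f\in \dot{W}^{1,\Phi}(\mathbb{R}^n)$. The only potentially delicate point is controlling the growth of $\|\mathcal{M}\|_{([L^\Phi(\mathbb{R}^n)]^{1/\theta_m})'\to(\,\cdot\,)'}$ as $\theta_m\to1$ in the critical case $r^-_\Phi=1$, so that \eqref{2114} survives; but the explicit bound stated above handles exactly this and is the only quantitative ingredient not available for free from abstract ball Banach function space theory.
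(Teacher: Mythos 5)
Your proposal is correct and follows the same basic strategy as the paper: verify the structural hypotheses for $X=L^\Phi(\mathbb{R}^n)$ (ball Banach, absolutely continuous norm, uniform boundedness of $\{\mathcal{B}_r\}_{r\in(0,\infty)}$, the sequence $\{\theta_m\}$ with \eqref{2114}, and, for $\gamma<0$, boundedness of $\mathcal{M}$ on $X$ or the $n=1$, $\gamma<-1$ escape hatch), then apply Theorem~\ref{2256}.

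One small remark on route: the paper's one-line proof actually splits the work between Theorem~\ref{2256} and Remark~\ref{2021}(ii) — the latter covers the case $r^-_\Phi>1$ by passing through the hypotheses of Theorem~\ref{2117} (with a fixed $p\in[1,r^-_\Phi)$), thereby avoiding the need for the quantitative bound \eqref{2114} in that regime. You instead apply Theorem~\ref{2256} uniformly, which requires verifying \eqref{2114} even when $r^-_\Phi>1$. You justify this by invoking the operator-norm estimate
\[
\|\mathcal{M}\|_{([L^\Phi(\mathbb{R}^n)]^{1/\theta_m})'\to([L^\Phi(\mathbb{R}^n)]^{1/\theta_m})'}\lesssim\bigl[3C_{(r^+_\Phi)}\bigr]^{3r^+_\Phi/\theta_m},
\]
which the paper asserts in Case 3) of the proof of Theorem~\ref{1912} \emph{under the stipulation} $1=r^-_\Phi\leq r^+_\Phi<\infty$. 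Since the displayed bound involves only the upper type $r^+_\Phi$, it is indeed plausible (and in fact true) that it extends to any $r^-_\Phi\geq 1$, so your unified treatment works; but you should note explicitly that you are extending that estimate beyond the range in which the paper states it, or else revert to Remark~\ref{2021}(ii) for $r^-_\Phi>1$ exactly as the paper does. Either way, the conclusion is correct and the approaches are essentially equivalent; yours buys uniformity at the cost of a slightly stronger quantitative input, while the paper's case split avoids re-examining the constant.
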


\begin{remark}
\begin{enumerate}
\item[(i)]
Theorem~\ref{1912} when $\gamma=n$
coincides with \cite[Theorem~5.20]{dlyyz.arxiv}.
To the best of our knowledge,
Theorem~\ref{1912}
when $\gamma\in\mathbb{R}\setminus\{0,n\}$
is new.
\item[(ii)]
Theorems~\ref{1922} and~\ref{1923} when $\gamma=n$
coincide with \cite[Corollaries~5.21 and~5.22]{dlyyz.arxiv}, respectively.
To the best of our knowledge,
Theorems~\ref{1922} and~\ref{1923}
(the Gagliardo--Nirenberg-type inequalities
on the Sobolev--Orlicz space)
when $\gamma\in\mathbb{R}\setminus\{0,n\}$
are new.
\end{enumerate}
\end{remark}

\subsection{Orlicz-Slice Spaces}\label{5.6}

We recall the definition of Orlicz-slice
spaces and briefly describe some related facts.
Throughout this subsection, we always assume
that $\Phi: [0,\infty)\to [0,\infty)$
is an Orlicz function with both positive
lower type $r_{\Phi}^-$ and positive upper
type $r_{\Phi}^+$. For any given $t,r\in(0,\infty)$,
the \emph{Orlicz-slice space}
$(E_\Phi^r)_t(\mathbb{R}^n)$ is defined to be the set of all
the $f\in\mathscr{M}(\mathbb{R}^n)$ with the
finite quasi-norm
\begin{equation*}
\|f\|_{(E_\Phi^r)_t(\mathbb{R}^n)} :=\left\{\int_{\mathbb{R}^n}
\left[\frac{\|f\mathbf{1}_{B(x,t)}\|_{L^\Phi(\mathbb{R}^n)}}
{\|\mathbf{1}_{B(x,t)}\|_{L^\Phi(\mathbb{R}^n)}}\right]
^r\,dx\right\}^{\frac{1}{r}}.
\end{equation*}
The Orlicz-slice spaces were introduced in
\cite{zyyw2019} as a generalization of both
the slice space of Auscher and Mourgoglou
\cite{am2019,ap2017} and the Wiener amalgam space
in \cite{h2019,h1975,kntyy2007}. According to both
\cite[Lemma 2.28]{zyyw2019} and \cite[Remark 7.41(i)]{zwyy2021},
the Orlicz-slice space $(E_\Phi^r)_t(\mathbb{R}^n)$ is a
ball Banach function space, but in general is not a
Banach function space.
When $X:=(E_\Phi^r)_t(\mathbb{R}^n)$,
we simply write $\dot{W}^{1,(E_\Phi^r)_t}
(\mathbb{R}^n):=\dot{W}^{1,X}(\mathbb{R}^n)$.

The following boundedness of centered ball average operators
on Orlicz-slice spaces
is just \cite[Lemma~5.24]{dlyyz.arxiv}.

\begin{lemma}\label{2043}
Let $t\in(0,\infty)$, $r\in[1,\infty)$,
and $\Phi$ be an Orlicz function with
both positive lower type $r^-_{\Phi}\in[1,\infty)$
and positive upper type $r^+_\Phi$.
Then centered ball average operators
$\{\mathcal{B}_r\}_{r\in(0,\infty)}$
are uniformly bounded on $(E_\Phi^r)_t(\mathbb{R}^n)$;
moreover, there exists a positive constant $C$
such that, for any $t,s\in(0,\infty)$
and $f\in(E_\Phi^r)_t(\mathbb{R}^n)$,
\begin{align*}
\left\|\mathcal{B}_sf\right\|_{(E_\Phi^r)_t(\mathbb{R}^n)}
\leq C\|f\|_{(E_\Phi^r)_t(\mathbb{R}^n)}.
\end{align*}
\end{lemma}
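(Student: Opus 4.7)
The plan is to reduce the uniform boundedness of $\{\mathcal{B}_s\}_{s\in(0,\infty)}$ to the boundedness of a single operator, namely the Hardy--Littlewood maximal operator $\mathcal{M}$, on $(E_\Phi^r)_t(\mathbb{R}^n)$. The starting observation is the trivial pointwise estimate $\mathcal{B}_s f(x)\le\mathcal{M}(f)(x)$ for every $s\in(0,\infty)$ and $x\in\mathbb{R}^n$, since $B(x,s)$ is one of the balls containing $x$ over which $\mathcal{M}$ takes its supremum. Combined with the monotonicity of both the inner $L^\Phi$ quasi-norm and the outer $L^r$ integration in the definition of $\|\cdot\|_{(E_\Phi^r)_t(\mathbb{R}^n)}$, this reduces the lemma to proving $\|\mathcal{M}f\|_{(E_\Phi^r)_t(\mathbb{R}^n)}\le C\|f\|_{(E_\Phi^r)_t(\mathbb{R}^n)}$ with $C$ independent of both $t$ and $s$ (the latter being automatic once $s$ has been removed).

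To prove this boundedness of $\mathcal{M}$, I would fix $x\in\mathbb{R}^n$ and split $f=f\mathbf{1}_{B(x,4t)}+f\mathbf{1}_{[B(x,4t)]^\complement}$, so that
\begin{align*}
\|\mathcal{M}f\cdot\mathbf{1}_{B(x,t)}\|_{L^\Phi(\mathbb{R}^n)}
&\le\|\mathcal{M}(f\mathbf{1}_{B(x,4t)})\mathbf{1}_{B(x,t)}\|_{L^\Phi(\mathbb{R}^n)}\\
&\quad+\|\mathcal{M}(f\mathbf{1}_{[B(x,4t)]^\complement})\mathbf{1}_{B(x,t)}\|_{L^\Phi(\mathbb{R}^n)}.
\end{align*}
For the local term, the hypothesis $r_\Phi^-\in[1,\infty)$ together with \cite[Theorem~1.2.1]{kk1991} gives the boundedness of $\mathcal{M}$ on $L^\Phi(\mathbb{R}^n)$, hence
$\|\mathcal{M}(f\mathbf{1}_{B(x,4t)})\mathbf{1}_{B(x,t)}\|_{L^\Phi}\lesssim\|f\mathbf{1}_{B(x,4t)}\|_{L^\Phi}$. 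Covering $B(x,4t)$ by a number of balls of radius $t$ depending only on $n$, and using the equivalence $\|\mathbf{1}_{B(x,4t)}\|_{L^\Phi}\sim\|\mathbf{1}_{B(x,t)}\|_{L^\Phi}$ (which follows from the lower and upper types of $\Phi$), one recovers the correct normalization by $\|\mathbf{1}_{B(x,t)}\|_{L^\Phi}$, so that after taking $L^r$ in $x$ one obtains a bound by $\|f\|_{(E_\Phi^r)_t}$.

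For the far-field term, geometry gives that any ball centered at $y\in B(x,t)$ which meets $[B(x,4t)]^\complement$ must have radius $\gtrsim t$, from which I would derive the pointwise estimate
$$\mathcal{M}(f\mathbf{1}_{[B(x,4t)]^\complement})(y)\lesssim\sum_{k=2}^{\infty}\frac{1}{|B(x,2^kt)|}\int_{B(x,2^{k+1}t)}|f(z)|\,dz.$$
Each average in this sum is controlled, via the H\"older inequality for Orlicz spaces, by $\|f\mathbf{1}_{B(x,2^{k+1}t)}\|_{L^\Phi}/\|\mathbf{1}_{B(x,2^{k+1}t)}\|_{L^\Phi}$ up to a geometric factor arising from the upper type of $\Phi$ applied to the ratio $\|\mathbf{1}_{B(x,2^{k+1}t)}\|_{L^\Phi}/\|\mathbf{1}_{B(x,t)}\|_{L^\Phi}$. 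Then the outer $L^r$ norm in $x$ of this series is absorbed by the Hardy--Littlewood maximal bound on $L^r(\mathbb{R}^n)$ applied to the function $x\mapsto\|f\mathbf{1}_{B(x,t)}\|_{L^\Phi}/\|\mathbf{1}_{B(x,t)}\|_{L^\Phi}$, together with a convergent geometric series in $k$.

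The main technical obstacle will be the bookkeeping in the far-field estimate: one must keep the normalizing factors $\|\mathbf{1}_{B(x,kt)}\|_{L^\Phi}$ uniformly comparable across scales (using both types of $\Phi$), and then pass from averages over the dilated balls $B(x,2^kt)$ back to the Orlicz-slice norm at scale $t$ while keeping all constants independent of $t$. Once that normalization is under control, assembling the local and far-field bounds, integrating in $x$ to the power $r$, and applying Minkowski's inequality (valid because $r\ge1$) yields the desired estimate, completing the proof.
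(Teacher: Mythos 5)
There is a genuine gap at the very first step, and it propagates through the whole argument. Your reduction $\mathcal{B}_sf\le\mathcal{M}f$ is pointwise correct, but it forces you to prove that $\mathcal{M}$ is bounded on $(E_\Phi^r)_t(\mathbb{R}^n)$, and this is \emph{false} in precisely the endpoint cases the lemma's hypotheses allow. The lemma permits $r_\Phi^-=1$ and $r=1$. Take $\Phi(\tau)=\tau$ and $r=1$: then by the Tonelli theorem
$\|f\|_{(E_\Phi^1)_t(\mathbb{R}^n)}=\int_{\mathbb{R}^n}\fint_{B(x,t)}|f(y)|\,dy\,dx=\|f\|_{L^1(\mathbb{R}^n)}$,
so the Orlicz-slice space is just $L^1(\mathbb{R}^n)$, on which $\mathcal{M}$ is not bounded — yet every $\mathcal{B}_s$ is (indeed isometric) on $L^1$, again by the Tonelli theorem. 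So the passage from $\mathcal{B}_s$ to $\mathcal{M}$ destroys exactly the cancellation/averaging structure that makes the lemma true at the endpoint, and no amount of bookkeeping in the far-field term can recover it.

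The same issue recurs twice more inside your argument. You assert that $r_\Phi^-\in[1,\infty)$ and \cite[Theorem~1.2.1]{kk1991} give boundedness of $\mathcal{M}$ on $L^\Phi(\mathbb{R}^n)$; that theorem requires lower type strictly greater than one (note that elsewhere in this very paper this citation is always applied to $[L^\Phi(\mathbb{R}^n)]^{1/s}$ with $s<r_\Phi^-$ so that the convexified index exceeds $1$, never to $L^\Phi$ itself when $r_\Phi^-=1$). And in the far-field term you invoke ``the Hardy--Littlewood maximal bound on $L^r(\mathbb{R}^n)$'', which needs $r>1$, but $r=1$ is allowed.

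The remedy is to avoid $\mathcal{M}$ entirely and work directly with the averaging structure of $\mathcal{B}_s$. The model argument is the $L^1$ computation above: $\mathcal{B}_s$ is a positive operator with kernel $\mathbf{1}_{B(0,s)}/|B(0,s)|$, so one controls it by Fubini/Tonelli and Jensen-type estimates inside the Orlicz modular (using the lower type $r_\Phi^-\ge1$ to get a weak convexity inequality $\Phi(\fint g)\lesssim\fint\Phi(g)$), combined with the Minkowski integral inequality in the outer $L^r$ variable (which is where $r\ge1$ is used and is sufficient) and the comparability of $\|\mathbf{1}_{B(x,s)}\|_{L^\Phi}$ across nearby scales. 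That route gives a constant independent of $t$ and $s$ without ever passing through a maximal operator.
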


Using Lemma~\ref{2043} and
Theorems~\ref{2117} and~\ref{4.8},
we obtain the following conclusion.

\begin{theorem}\label{2045}
Let $t\in(0,\infty)$, $r\in[1,\infty)$,
and $\Phi$ be an Orlicz function with
both positive lower type $r^-_{\Phi}$
and positive upper type $r^+_\Phi$.
Let $1\leq r^-_{\Phi}\leq r^+_{\Phi}<\infty$
and $q\in(0,\infty)$.
If one of the following statements holds true:
\begin{enumerate}
\item[\textup{(a)}]
$\min\{r^-_{\Phi},\,r\}\in(n,\infty)$,
$\gamma\in\mathbb{R}\setminus\{0\}$,
and $q\in(0,\infty)$;
\item[\textup{(b)}]
$r^-_{\Phi}\in[1,\infty)$, $r\in[1,\infty)$,
$\gamma\in(0,\infty)$, and
$n(\frac{1}{\min\{r^-_\Phi,\,r\}}-\frac{1}{q})<1$;
\item[\textup{(c)}]
$r^-_{\Phi}\in(1,\infty)$, $r\in(1,\infty)$,
$\gamma\in(-\infty,0)$, and
$q\in(0,\min\{r^-_\Phi,\,r\})$;
\item[\textup{(d)}]
$r^-_{\Phi}\in(1,\infty)$, $r=n=q=1$,
and $\gamma\in(-\infty,-1)$;
\item[\textup{(e)}]
$r^-_{\Phi}=n=q=1$, $r\in[1,\infty)$,
and $\gamma\in(-\infty,-1)$,
\end{enumerate}
then, for any $f\in\dot{W}^{1,(E_\Phi^r)_t}(\mathbb{R}^n)$,
\begin{align*}
\sup_{\lambda\in(0,\infty)}\lambda
\left\|\left[\int_{\mathbb{R}^n}
\mathbf{1}_{E_{\lambda,\frac{\gamma}{q}}[f]}(\cdot,y)
\left|\cdot-y\right|^{\gamma-n}\,dy\right]^\frac{1}{q}
\right\|_{(E_\Phi^r)_t(\mathbb{R}^n)}
\sim\left\|\,|\nabla f|\,\right\|_{(E_\Phi^r)_t(\mathbb{R}^n)},
\end{align*}
where the positive equivalence constants are independent of $f$
and $E_{\lambda,\frac{\gamma}{q}}[f]$
for any $\lambda\in(0,\infty)$
is the same as in \eqref{Elambda}.
Moreover, for any $f\in\dot{W}^{1,(E_\Phi^r)_t}(\mathbb{R}^n)$,
\begin{enumerate}
\item[\textup{(i)}]
if $\gamma\in(0,\infty)$, then
\begin{align*}
&\lim_{\lambda\to\infty}\lambda\left\|\left[\int_{\mathbb{R}^n}
\mathbf{1}_{E_{\lambda,\frac{\gamma}{q}}[f]}(\cdot,y)
\left|\cdot-y\right|^{\gamma-n}\,dy\right]^\frac{1}{q}
\right\|_{(E_\Phi^r)_t(\mathbb{R}^n)}
=\left[\frac{\kappa(q,n)}{\gamma}\right]^\frac{1}{q}
\left\|\,\left|\nabla f\right|\,\right\|_{(E_\Phi^r)_t(\mathbb{R}^n)};
\end{align*}
\item[\textup{(ii)}]
if $\gamma\in(-\infty,0)$,
assume further that both $n\in\mathbb{N}\cap[2,\infty)$
and $q\in(0,\frac{n-\gamma}{n}\min\{r^-_\Phi,\,r\})$
or that $n=1$, $\gamma\in(-\infty,-1)$,
and $q\in(0,-\gamma\min\{r^-_\Phi,\,r\})$,
then
\begin{align*}
&\lim_{\lambda\to0^+}\lambda\left\|\left[\int_{\mathbb{R}^n}
\mathbf{1}_{E_{\lambda,\frac{\gamma}{q}}[f]}(\cdot,y)
\left|\cdot-y\right|^{\gamma-n}\,dy\right]^\frac{1}{q}
\right\|_{(E_\Phi^r)_t(\mathbb{R}^n)}
=\left[-\frac{\kappa(q,n)}{\gamma}\right]^\frac{1}{q}
\left\|\,\left|\nabla f\right|\,\right\|_{(E_\Phi^r)_t(\mathbb{R}^n)}.
\end{align*}
\end{enumerate}
\end{theorem}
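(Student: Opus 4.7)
The proof will follow the template established by the preceding applications (Theorems~\ref{1524}, \ref{2003}, \ref{1022}, \ref{1657}, \ref{2151}, and especially Theorem~\ref{1912}), namely: verify that the Orlicz-slice space $X:=(E_\Phi^r)_t(\mathbb{R}^n)$ satisfies the hypotheses of either Theorem~\ref{2117} or Theorem~\ref{4.8} under each of the cases (a)--(e), and then read off the conclusion. The plan is to assemble four structural ingredients about $X$, and then split into five cases mirroring (a)--(e).

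The four structural ingredients I will invoke are: first, by \cite[Lemma~2.28]{zyyw2019} (or \cite[Remark~7.41(i)]{zwyy2021}), $X$ is a ball Banach function space; second, for any $p\in[1,\infty)$ with $p\leq r^-_\Phi$ and $p\leq r$ one has the convexification identity $X^{1/p}=(E_{\Phi_p}^{r/p})_t(\mathbb{R}^n)$, where $\Phi_p(s):=\Phi(s^{1/p})$ is again an Orlicz function whose lower type is $r^-_\Phi/p$, so $X^{1/p}$ is itself a ball Banach function space; third, by the duality for Orlicz-slice spaces and the boundedness of $\mathcal M$ on $(E_\Psi^{r'})_t(\mathbb{R}^n)$ whenever $\Psi$ has lower type strictly greater than $1$ and $r'\in(1,\infty)$ (see the boundedness results underlying \cite[Theorem~2.26]{zyyw2019} or the argument in \cite[Lemma~5.24]{dlyyz.arxiv}), $\mathcal M$ is bounded on $(X^{1/p})'$ whenever $\min\{r^-_\Phi,r\}/p>1$; fourth, by \cite[Lemma~4.5]{zyyw2019}, $X$ has an absolutely continuous norm whenever $1\leq r^-_\Phi\leq r^+_\Phi<\infty$ and $r\in[1,\infty)$, and Lemma~\ref{2043} supplies the uniform boundedness of $\{\mathcal B_s\}_{s\in(0,\infty)}$ needed for the critical cases.

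With these ingredients in hand, the case analysis is routine. In case~(a), pick $s\in(n,\min\{r^-_\Phi,r\})$ so that $\mathcal M$ is bounded on $X^{1/s}$ (the lower type of $\Phi_s$ exceeds $1$ and $r/s>1$); together with absolute continuity of the norm this verifies Theorem~\ref{2117}(b). In case~(b) with $\min\{r^-_\Phi,r\}>1$, choose $p\in[1,\min\{r^-_\Phi,r\})$ with $n(1/p-1/q)<1$ and verify Theorem~\ref{2117}(c). In case~(c), boundedness of $\mathcal M$ on $X$ itself follows from the same Orlicz-slice maximal estimate (since $\min\{r^-_\Phi,r\}>1$), and one chooses $p\in[\max\{1,q\},\min\{r^-_\Phi,r\})$ to fulfill Theorem~\ref{2117}(a); when $n=1$ one refines $p$ so that $q<-\gamma p$, delivering the lower estimate in~(ii). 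Case~(d) is the same extrapolation as case~(c) with $p=q=1$, again via Theorem~\ref{2117}(a).

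The main obstacle, and the only step that is not a direct specialization of the earlier sections, is handling the endpoint where $\min\{r^-_\Phi,r\}=1$, that is, case~(b) with $r^-_\Phi=1$ or $r=1$ and case~(e). In that regime $\mathcal M$ fails to be bounded on $X'$ (respectively $(X^{1/p})'$), so Theorem~\ref{2117} is not directly available and one must instead invoke Theorem~\ref{4.8}. The key point to establish is condition~\eqref{2114}: produce a sequence $\{\theta_m\}_{m\in\mathbb N}\subset(0,1)$ with $\theta_m\to1$ such that $X^{1/\theta_m}$ is a ball Banach function space and
\begin{equation*}
\limsup_{m\to\infty}\left\|\mathcal M\right\|_{(X^{1/\theta_m})'\to (X^{1/\theta_m})'}<\infty.
\end{equation*}
For Orlicz-slice spaces this is proved by imitating the argument used in Case~3) of the proof of Theorem~\ref{1912}: the dual of $X^{1/\theta_m}$ is an Orlicz-slice space built from the Orlicz function $\Phi_{\theta_m}(s)=\Phi(s^{1/\theta_m})$ (whose lower and upper types converge to those of $\Phi$ as $\theta_m\to1$) and the exponent $r/\theta_m$, and the proof of \cite[Theorem~1.2.1]{kk1991} combined with the Orlicz-slice maximal inequality yields an operator-norm bound of order $[3C_{(r^+_\Phi)}]^{3r^+_\Phi/\theta_m}$, which stays bounded as $\theta_m\uparrow1$. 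Having verified~\eqref{2114}, Lemma~\ref{2043} supplies the uniform boundedness of the centered ball averages, absolute continuity is already in hand, and the remaining hypotheses of Theorem~\ref{4.8}(c)--(d) are immediate; applying Theorem~\ref{4.8} yields~(i) of the theorem in case~(b) at the endpoint, and combined with Remark~\ref{1409}(i) gives the upper estimate and both limiting identities in case~(e). Putting all five cases together finishes the proof.
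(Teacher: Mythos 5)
Your overall strategy mirrors the paper's: check that $X:=(E_\Phi^r)_t(\mathbb R^n)$ satisfies the hypotheses of Theorem~\ref{2117} (in the non-endpoint cases) or Theorem~\ref{4.8} (at the endpoint $\min\{r^-_\Phi,r\}=1$ via the $\theta_m$ trick) and read off the conclusion, which is indeed how the paper proceeds. Cases (a), (b) in the range $\min\{r^-_\Phi,r\}>1$, (c), and the endpoint $\theta_m$ argument for (e) are in agreement with the paper.

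The genuine gap is in your handling of case (d). You write ``Case~(d) is the same extrapolation as case~(c) with $p=q=1$, again via Theorem~\ref{2117}(a),'' but condition~(a) of Theorem~\ref{2117} requires the Hardy--Littlewood maximal operator $\mathcal M$ to be bounded on $X$ itself. Your own justification for that boundedness in case~(c) is explicitly the hypothesis $\min\{r^-_\Phi,r\}>1$. In case~(d) one has $r=1$, hence $\min\{r^-_\Phi,r\}=1$, and that hypothesis fails. Indeed the boundedness of $\mathcal M$ on $(E_\Phi^r)_t(\mathbb R^n)$ obtained from \cite[Lemma~4.3]{zyyw2019} requires both $r^-_\Phi>1$ and $r>1$ (the slice direction with $r=1$ exhibits the usual $L^1$-type failure of the maximal inequality, as one sees already in the amalgam space $(L^\infty,\ell^1)$). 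Consequently the route through Theorem~\ref{2117}(a) is closed in case~(d). The paper instead invokes the endpoint alternative Theorem~\ref{2117}(d) --- which applies because $\gamma\in(-\infty,-1)$, $n=1$, and $q=p=1$ --- and this requires only that $\mathcal M$ be bounded on $(X^{1/p})'=X'$, a fact that holds for $(E_\Phi^1)_t$ with $r^-_\Phi>1$ and $r^+_\Phi<\infty$ via the duality and maximal estimates of \cite[Theorem~2.26 and Lemma~4.4]{zyyw2019}. Your argument as written does not verify this condition and would not go through.

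A second, milder point: the appeal to Remark~\ref{1409}(i) at the end is superfluous. Remark~\ref{1409}(i) is for dropping the absolutely continuous norm assumption and restricting to $C^1$ functions, whereas here the absolutely continuous norm of $(E_\Phi^r)_t(\mathbb R^n)$ from \cite[Lemma~4.5]{zyyw2019} is available, so you should apply Theorem~\ref{4.8} directly (with its parts (i)--(iii)) to obtain the conclusion for all of $\dot W^{1,(E_\Phi^r)_t}(\mathbb R^n)$; this is what the paper does in its Case~6.
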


\begin{proof}
To prove the present theorem, we consider the following six cases
on $r^-_{\Phi}$, $r$, $\gamma$, and $q$.

\emph{Case 1)}
$\min\{r^-_{\Phi},\,r\}\in(n,\infty)$,
$\gamma\in\mathbb{R}\setminus\{0\}$,
and $q\in(0,\infty)$. In this case,
we find that there exists an $s\in(n,\min\{r^-_{\Phi},\,r\})$,
which, combined with \cite[Lemmas~4.3]{zyyw2019},
further implies that the Hardy--Littlewood
maximal operator $\mathcal{M}$ is bounded on
$[(E_\Phi^r)_t(\mathbb{R}^n)]^\frac{1}{s}$.
From \cite[Lemma~4.5]{zyyw2019},
we deduce that $(E_\Phi^r)_t(\mathbb{R}^n)$
has an absolutely continuous norm.
Moreover,
when proving (ii),
if $\gamma\in(-\infty,0)$ and $n\in\mathbb{N}\cap[2,\infty)$,
then we choose a $p\in[1,\min\{r^-_\Phi,\,r\})$
such that $q\in(0,\frac{n-\gamma}{n}p)$
and, if $\gamma\in(-\infty,0)$ and $n=1$,
then we choose a $p\in[1,\min\{r^-_\Phi,\,r\})$ such that $q\in(0,-\gamma p)$.
Thus, Theorem~\ref{2117} implies that
the present theorem in this case holds true.

\emph{Case 2)}
$r^-_{\Phi}\in(1,\infty)$, $r\in[1,\infty)$,
$\gamma\in(0,\infty)$, and
$n(\frac{1}{\min\{r^-_\Phi,\,r\}}-\frac{1}{q})<1$. In this case,
since $n(\frac{1}{\min\{r^-_\Phi,\,r\}}-\frac{1}{q})<1$,
it follows that there exists a $p\in[1,\min\{r^-_\Phi,\,r\})$ such
that $n(\frac{1}{p}-\frac{1}{q})<1$.
By \cite[Lemma~2.31]{zyyw2019}, we find that
$[(E_\Phi^r)_t(\mathbb{R}^n)]^\frac{1}{p}
=(E_{\Phi_p}^\frac{r}{p})_t(\mathbb{R}^n)$,
where $\Phi_p(t):=\Phi(t^{\frac{1}{p}})$ for any $t\in[0,\infty)$.
From this, \cite[Lemma~2.28]{zyyw2019}, and \cite[Remark~7.41(i)]{zwyy2021},
we infer that both $(E_\Phi^r)_t(\mathbb{R}^n)$
and $[(E_\Phi^r)_t(\mathbb{R}^n)]^\frac{1}{p}$
are ball Banach function spaces.
Moreover, by \cite[Theorem~2.26 and Lemma~4.4]{zyyw2019},
we conclude that $\mathcal{M}$ is
bounded on $([(E_\Phi^r)_t(\mathbb{R}^n)]^\frac{1}{p})'$.
In addition, from \cite[Lemma~4.5]{zyyw2019},
we deduce that $(E_\Phi^r)_t(\mathbb{R}^n)$
has an absolutely continuous norm.
Thus, Theorem~\ref{2117} implies that
the present theorem in this case holds true.

\emph{Case 3)}
$r^-_{\Phi}=1$, $r\in[1,\infty)$,
$\gamma\in(0,\infty)$, and
$n(\frac{1}{\min\{r^-_\Phi,\,r\}}-\frac{1}{q})<1$. In this case,
by the proof of \cite[Theorem~2.20]{zyyw2019}, we conclude that,
when $1=r^-_{\Phi}\leq r^+_{\Phi}<\infty$,
$r\in[1,\infty)$, and $\theta\in(0,1)$,
for any $f\in(X^\frac{1}{\theta})'$ with $X:=(E_\Phi^r)_t(\mathbb{R}^n)$,
\begin{align*}
\left\|\mathcal{M}(f)\right\|_{(X^\frac{1}{\theta})'}
\lesssim\left[\left(3C_{r^+_\Phi}\right)^{\frac{3r^+_\Phi}{\theta}}
+\frac{r}{\theta}\right]
\|f\|_{(X^\frac{1}{\theta})'},
\end{align*}
where the implicit positive constant depends only on $n$.
Thus, \eqref{2114} with $X:=(E_\Phi^r)_t(\mathbb{R}^n)$ holds true.
From this, Lemma~\ref{2043},
an argument similar to that used in Case 2),
and Theorem~\ref{4.8},
we deduce that the present theorem in this case holds true.

\emph{Case 4)}
$r^-_{\Phi}\in(1,\infty)$, $r\in(1,\infty)$,
$\gamma\in(-\infty,0)$, and
$q\in(0,\min\{r^-_\Phi,\,r\})$.
In this case, by \cite[Lemmas~4.3]{zyyw2019},
we find that $\mathcal{M}$ is bounded on $(E_\Phi^r)_t(\mathbb{R}^n)$.
Since $q\in(0,r^-_\Phi)$, it follows that
there exists a $p\in[\max\{1,\,q\},r^-_\Phi)$ and,
when proving (ii), if $n=1$, we always choose a
$p\in[\max\{1,\,q\},r^-_\Phi)$ satisfying $q\in(0,-\gamma p)$.
By these, an argument similar to that used in Case 2),
and Theorem~\ref{2117}, we find that
the present theorem in this case holds true.

\emph{Case 5)}
$r^-_{\Phi}\in(1,\infty)$, $r=q=1$,
and $\gamma\in(-\infty,-1)$.
In this case,
from an argument similar to that used in Case 2)
and Theorem~\ref{2117}, we deduce that
the present theorem in this case holds true.

\emph{Case 6)}
$r^-_{\Phi}=q=1$, $r\in[1,\infty)$,
and $\gamma\in(-\infty,-1)$. In this case,
from an argument similar to that used in Case 3), Lemma~\ref{2043},
and Theorem~\ref{4.8}, we deduce that
the present theorem in this case holds true.
This finishes the proof of Theorem~\ref{2045}.
\end{proof}

Using Lemma~\ref{2043}, Theorems~\ref{2255} and~\ref{2256},
and Remarks~\ref{4.3}(iv) and~\ref{2021}(ii),
we obtain the following conclusions;
since their proofs are similar to that of Theorem~\ref{2045},
we omit the details here.

\begin{theorem}\label{2048}
Let $t\in(0,\infty)$, $r\in[1,\infty)$,
and $\Phi$ be an Orlicz function with
both positive lower type $r^-_{\Phi}$
and positive upper type $r^+_\Phi$.
Let $1\leq r^-_{\Phi}\leq r^+_{\Phi}<\infty$,
$p\in[1,\infty]$,
and $s\in(0,1)$.
Let $q\in[1,p]$ satisfy $\frac{1}{q}=\frac{1-s}{p}+s$.
Moreover, if $r^-_{\Phi}=1$ and $\gamma\in(-\infty,0)$,
assume further that
both $\gamma\in(-\infty,-1)$ and $n=1$.
\begin{enumerate}
\item[\textup{(i)}]
If $p\in[1,\infty)$, then, for any
$f\in\dot{W}^{1,(E_\Phi^r)_t}(\mathbb{R}^n)$,
\eqref{2053} with $X$
replaced by $(E_\Phi^r)_t(\mathbb{R}^n)$ holds true.
\item[\textup{(ii)}]
If $p=\infty$, then, for any
$f\in\dot{W}^{1,(E_\Phi^r)_t}(\mathbb{R}^n)$,
\eqref{2054} with $X$
replaced by $(E_\Phi^r)_t(\mathbb{R}^n)$ holds true.
\end{enumerate}
\end{theorem}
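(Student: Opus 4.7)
The plan is to deduce Theorem~\ref{2048} by verifying the hypotheses of Theorem~\ref{2255}, exactly in the spirit of the proof of Theorem~\ref{2045}. The key structural facts about $(E_\Phi^r)_t(\mathbb{R}^n)$ that will be invoked repeatedly are: the identity $[(E_\Phi^r)_t(\mathbb{R}^n)]^{1/p}=(E_{\Phi_p}^{r/p})_t(\mathbb{R}^n)$ with $\Phi_p(\tau):=\Phi(\tau^{1/p})$ (\cite[Lemma~2.31]{zyyw2019}), the fact that $(E_\Phi^r)_t(\mathbb{R}^n)$ is a ball Banach function space with an absolutely continuous norm (\cite[Lemma~2.28]{zyyw2019} and \cite[Lemma~4.5]{zyyw2019}), the uniform boundedness of $\{\mathcal{B}_s\}_{s\in(0,\infty)}$ on $(E_\Phi^r)_t(\mathbb{R}^n)$ from Lemma~\ref{2043}, and the boundedness of $\mathcal{M}$ on $(E_\Phi^r)_t(\mathbb{R}^n)$ and on its associate space under appropriate lower-type conditions (\cite[Theorem~2.26, Lemmas~4.3 and 4.4]{zyyw2019}).

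To prove (i) for $p\in[1,\infty)$, I would first note that both $(E_\Phi^r)_t(\mathbb{R}^n)$ and $[(E_\Phi^r)_t(\mathbb{R}^n)]^p=(E_{\Phi^{(p)}}^{rp})_t(\mathbb{R}^n)$ (where $\Phi^{(p)}(\tau):=\Phi(\tau)^p$ has lower type $pr^-_\Phi$ and upper type $pr^+_\Phi$) are ball Banach function spaces by \cite[Lemma~2.28]{zyyw2019}, and that both spaces have absolutely continuous norms by \cite[Lemma~4.5]{zyyw2019}. Then I would construct a sequence $\{\theta_m\}_{m\in\mathbb{N}}\subset(0,1)$ with $\theta_m\uparrow 1$ so that $[(E_\Phi^r)_t(\mathbb{R}^n)]^{1/\theta_m}=(E_{\Phi_{\theta_m}}^{r/\theta_m})_t(\mathbb{R}^n)$ is a ball Banach function space; by the associate-space computation in \cite[Theorem~2.26]{zyyw2019} together with the maximal-operator bound in \cite[Lemma~4.4]{zyyw2019} (in the non-critical range) or the quantitative bound emerging from the proof of \cite[Theorem~2.20]{zyyw2019} (in the critical case $r^-_\Phi=1$), $\mathcal{M}$ is bounded on the associate spaces $([(E_\Phi^r)_t(\mathbb{R}^n)]^{1/\theta_m})'$ with norms staying uniformly bounded as $\theta_m\to 1$, which yields \eqref{2114}. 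The uniform boundedness of $\{\mathcal{B}_s\}_{s\in(0,\infty)}$ on $(E_\Phi^r)_t(\mathbb{R}^n)$ supplies the remaining density hypothesis via Lemma~\ref{2043}.

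When $\gamma\in(-\infty,0)$, Theorem~\ref{2255} additionally requires either the boundedness of $\mathcal{M}$ on $(E_\Phi^r)_t(\mathbb{R}^n)$, or the $1$-dimensional assumption with $\gamma\in(-\infty,-1)$. In the non-critical case $r^-_\Phi>1$, the former follows from \cite[Lemmas~4.3 and 4.4]{zyyw2019}; in the critical case $r^-_\Phi=1$ the hypothesis of Theorem~\ref{2048} forces $n=1$ and $\gamma\in(-\infty,-1)$, so the alternative branch of Theorem~\ref{2255} applies. With every hypothesis of Theorem~\ref{2255}(i) thereby verified, invoking that theorem (and, where convenient, Remark~\ref{4.3}(iv)) directly yields \eqref{2053} with $X$ replaced by $(E_\Phi^r)_t(\mathbb{R}^n)$ for every $f\in\dot{W}^{1,(E_\Phi^r)_t}(\mathbb{R}^n)$.

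Part (ii) with $p=\infty$ is handled identically, since the endpoint inequality \eqref{2054} in Theorem~\ref{2255}(ii) no longer needs the ball-Banach-function-space structure of $X^p$; only the extrapolation sequence $\{\theta_m\}$, the maximal-operator assumption when $\gamma<0$, the absolutely continuous norm, and the uniform boundedness of $\{\mathcal{B}_s\}$ are required, all of which are supplied as above. The only genuine obstacle in this program is producing the uniform associate-space bound \eqref{2114} at the critical lower type $r^-_\Phi=1$; this is where one has to extract from the proof of \cite[Theorem~2.20]{zyyw2019} an explicit dependence of $\|\mathcal{M}\|_{([(E_\Phi^r)_t(\mathbb{R}^n)]^{1/\theta_m})'\to([(E_\Phi^r)_t(\mathbb{R}^n)]^{1/\theta_m})'}$ on $\theta_m$ of the form $\lesssim(3C_{r^+_\Phi})^{3r^+_\Phi/\theta_m}+r/\theta_m$, so that letting $\theta_m\uparrow 1$ yields a finite limit, as was done in Case~3 of the proof of Theorem~\ref{2045}. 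Once this quantitative extrapolation step is in place, the rest is routine bookkeeping.
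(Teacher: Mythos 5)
Your proposal is correct and follows essentially the same route as the paper, which simply cites Lemma~\ref{2043}, Theorems~\ref{2255} and~\ref{2256}, and Remarks~\ref{4.3}(iv) and~\ref{2021}(ii) and declares the argument parallel to that of Theorem~\ref{2045}: you verify the extrapolation sequence $\{\theta_m\}$ and the bound \eqref{2114} via the quantitative form of the maximal-operator estimate extracted from the proof of \cite[Theorem~2.20]{zyyw2019} (exactly as Case~3 of the proof of Theorem~\ref{2045} does), supply the absolutely continuous norm from \cite[Lemma~4.5]{zyyw2019}, the uniform boundedness of $\{\mathcal{B}_s\}$ from Lemma~\ref{2043}, and the boundedness of $\mathcal{M}$ on $(E_\Phi^r)_t(\mathbb{R}^n)$ (when $\gamma<0$ and $r^-_\Phi>1$) or the one-dimensional branch (when $r^-_\Phi=1$). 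One small slip worth noting: for the $p$-convexification you should take $\Phi(\tau^p)$ rather than $\Phi(\tau)^p$, which is what \cite[Lemma~2.31]{zyyw2019} yields upon replacing $p$ by $1/p$; the lower and upper types you compute ($pr^-_\Phi$ and $pr^+_\Phi$) are nonetheless correct for either choice, so the conclusion that $X^p=(E_\Phi^r)_t(\mathbb{R}^n)^p$ is a ball Banach function space is unaffected.
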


\begin{theorem}\label{2049}
Let $t\in(0,\infty)$, $r\in[1,\infty)$,
and $\Phi$ be an Orlicz function with
both positive lower type $r^-_{\Phi}$
and positive upper type $r^+_\Phi$.
Let $1\leq r^-_{\Phi}\leq r^+_{\Phi}<\infty$,
$\eta\in(0,1)$,
and
$$
0\leq s_0<s<1<q<q_0<\infty
$$
satisfy \eqref{2228}.
Moreover, if $r^-_{\Phi}=1$ and $\gamma\in(-\infty,0)$,
assume further that
both $\gamma\in(-\infty,-1)$ and $n=1$.
Then, for any $f\in\dot{W}^{1,(E_\Phi^r)_t}(\mathbb{R}^n)$,
\eqref{939} with $X$ replaced by $(E_\Phi^r)_t(\mathbb{R}^n)$ holds true.
\end{theorem}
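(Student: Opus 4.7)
The plan is to reduce Theorem~\ref{2049} to a direct application of Theorem~\ref{2256} (together with Remark~\ref{2021}(ii) to handle $\dot{W}^{1,(E_\Phi^r)_t}(\mathbb{R}^n)$) by verifying, case by case, that $X:=(E_\Phi^r)_t(\mathbb{R}^n)$ fulfills all the structural hypotheses required there: namely, $X$ is a ball Banach function space with an absolutely continuous norm on which the centered ball average operators $\{\mathcal{B}_s\}_{s\in(0,\infty)}$ are uniformly bounded; there exists $\{\theta_m\}_{m\in\mathbb{N}}\subset(0,1)$ with $\theta_m\to1$ such that each $X^{1/\theta_m}$ is a ball Banach function space, $\mathcal{M}$ is bounded on $(X^{1/\theta_m})'$, and \eqref{2114} holds; and, when $\gamma\in(-\infty,0)$, either $\mathcal{M}$ is bounded on $X$ or $n=1$ with $\gamma\in(-\infty,-1)$. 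The structural facts that $(E_\Phi^r)_t(\mathbb{R}^n)$ is a ball Banach function space and has an absolutely continuous norm come from \cite[Lemma~2.28 and Lemma~4.5]{zyyw2019} (see also \cite[Remark~7.41(i)]{zwyy2021}), and the uniform boundedness of $\{\mathcal{B}_s\}_{s\in(0,\infty)}$ is exactly Lemma~\ref{2043}.

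First I would handle the case $r^-_\Phi\in(1,\infty)$. Using the identity $[(E_\Phi^r)_t(\mathbb{R}^n)]^{1/\theta}=(E_{\Phi_\theta}^{r/\theta})_t(\mathbb{R}^n)$ with $\Phi_\theta(s):=\Phi(s^{1/\theta})$ (see \cite[Lemma~2.31]{zyyw2019}), I take $\theta_m\uparrow1$ with each $\theta_m$ close enough to $1$ so that $r/\theta_m>1$ and the lower type of $\Phi_{\theta_m}$ stays above $1$; then $X^{1/\theta_m}$ is a ball Banach function space. The boundedness of $\mathcal{M}$ on $(X^{1/\theta_m})'$ follows from \cite[Theorem~2.26 and Lemma~4.4]{zyyw2019}, and the continuous dependence of the corresponding operator-norm bound on $\theta_m$ (traced through the proof of \cite[Theorem~2.20]{zyyw2019}) gives \eqref{2114}. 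When $\gamma\in(-\infty,0)$, the boundedness of $\mathcal{M}$ on $X$ itself comes from \cite[Lemma~4.3]{zyyw2019}, so the additional hypothesis in Theorem~\ref{2256} is met.

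For the critical case $r^-_\Phi=1$ (together with $\gamma\in(-\infty,-1)$ and $n=1$ when $\gamma<0$), the preceding trick to produce $\theta_m$ breaks down because $X$ need not admit such a lower-type shift. Here I would mimic Case~3) of the proof of Theorem~\ref{2045}: following the proof of \cite[Theorem~2.20]{zyyw2019}, one obtains, for any $\theta\in(0,1)$ and any $g\in (X^{1/\theta})'$, an estimate of the form
\[
\|\mathcal{M}g\|_{(X^{1/\theta})'}\lesssim\Bigl[(3C_{r^+_\Phi})^{3r^+_\Phi/\theta}+\tfrac{r}{\theta}\Bigr]\,\|g\|_{(X^{1/\theta})'},
\]
with the implicit constant depending only on $n$; picking any $\theta_m\uparrow1$ makes the bracket convergent as $m\to\infty$, which yields both the boundedness of $\mathcal{M}$ on $(X^{1/\theta_m})'$ and property \eqref{2114}. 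This is the step I expect to be the main technical obstacle, since it requires quantitative control of the operator-norm constant uniformly in $\theta_m\uparrow1$ through the decomposition argument of \cite{zyyw2019}; everything else is formal.

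Once all hypotheses are verified in each of the cases listed, I would invoke Theorem~\ref{2256} to obtain \eqref{939} for every $f\in C^1(\mathbb{R}^n)$ with $|\nabla f|\in C_{\mathrm{c}}(\mathbb{R}^n)$, and then extend the estimate to all $f\in\dot{W}^{1,(E_\Phi^r)_t}(\mathbb{R}^n)$ by Remark~\ref{2021}(ii), whose density argument is exactly powered by the absolutely continuous norm and the uniform boundedness of $\{\mathcal{B}_s\}_{s\in(0,\infty)}$ on $X$ established above (via Lemma~\ref{2043}). This completes the proposed proof.
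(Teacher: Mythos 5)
Your proof follows exactly the route the paper has in mind: it reduces Theorem~\ref{2049} to Theorem~\ref{2256} (together with Remark~\ref{2021}(ii)), verifies the structural hypotheses on $X=(E_\Phi^r)_t(\mathbb{R}^n)$ (ball Banach function space, absolutely continuous norm, uniform boundedness of $\{\mathcal{B}_s\}$ via Lemma~\ref{2043}), and checks, case by case as in the proof of Theorem~\ref{2045}, the boundedness of $\mathcal{M}$ on $(X^{1/\theta_m})'$ together with \eqref{2114}. This is the intended argument; the paper explicitly states these corollaries are proved ``similarly to Theorem~\ref{2045}.''

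One small structural remark: you verify the $\theta_m$-sequence hypothesis of Theorem~\ref{2256} uniformly, including when $r^-_\Phi>1$, whereas the paper in the non-critical cases of Theorem~\ref{2045} works with a single fixed $p\in[1,\min\{r^-_\Phi,r\})$ and invokes Theorem~\ref{2117}, for which the correct companion statement here is Remark~\ref{2021}(ii) (which replaces Theorem~\ref{4.8}(i) by Theorem~\ref{2117}(i) inside the proof of Theorem~\ref{2256}). Your alternative — taking $\theta_m\uparrow1$ and establishing \eqref{2114} directly even when $r^-_\Phi>1$ — does work, since the operator norm on $(X^{1/\theta_m})'$ stays bounded once the lower type $r^-_\Phi/\theta_m$ is bounded away from $1$, but it is slightly heavier than the paper's fixed-$p$ shortcut. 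Also note that the extension to $\dot{W}^{1,(E_\Phi^r)_t}(\mathbb{R}^n)$ is already built into the second half of Theorem~\ref{2256} once the absolutely continuous norm and uniform boundedness of $\{\mathcal{B}_s\}$ are in place; Remark~\ref{2021}(ii) is not an ``extension step'' layered on top of Theorem~\ref{2256} but rather an independent variant for spaces satisfying Theorem~\ref{2117}'s hypotheses. This is a phrasing slip, not a gap.
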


\begin{remark}
\begin{enumerate}
\item[(i)]
Theorem~\ref{2045} when $\gamma=n$
coincides with \cite[Theorem~5.25]{dlyyz.arxiv}.
To the best of our knowledge,
Theorem~\ref{2045}
when $\gamma\in\mathbb{R}\setminus\{0,n\}$
is new.
\item[(ii)]
Theorems~\ref{2048} and~\ref{2049} when $\gamma=n$
coincide with \cite[Corollaries~5.26 and~5.27]{dlyyz.arxiv}, respectively.
To the best of our knowledge,
Theorems~\ref{2048} and~\ref{2049}
(the Gagliardo--Nirenberg-type inequalities
on the Sobolev--Orlicz-slice space)
when $\gamma\in\mathbb{R}\setminus\{0,n\}$
are new.
\end{enumerate}
\end{remark}

\noindent\textbf{Acknowledgements}\quad
Chenfeng Zhu would like to thank Yangyang Zhang for some
useful discussions and suggestions on the subject of this article.

\medskip

\noindent\textbf{Author Contributions}\quad All authors developed and discussed the results and contributed to the final manuscript.

\medskip

\noindent\textbf{Data Availibility Statement}\quad Data sharing is not applicable to this article as no data sets were generated or
analysed.

\medskip

\noindent\textbf{Declarations}

\medskip

\noindent\textbf{Conflict of interest}\quad All authors state no conflict of interest.

\medskip

\noindent\textbf{Informed consent}\quad Informed consent has been obtained from all individuals included in this research work.

\bigskip

\noindent  Chenfeng Zhu, Dachun Yang (Corresponding author) and Wen Yuan

\medskip

\noindent Laboratory of Mathematics
and Complex Systems (Ministry of Education of China),
School of Mathematical Sciences,
Beijing Normal University,
Beijing 100875, The People's Republic of China

\smallskip

\noindent{\it E-mails:} \texttt{cfzhu@mail.bnu.edu.cn} (C. Zhu)

\noindent\phantom{{\it E-mails:} }\texttt{dcyang@bnu.edu.cn} (D. Yang)

\noindent\phantom{{\it E-mails:} }\texttt{wenyuan@bnu.edu.cn} (W. Yuan)

\end{document}